\setlist[itemize]{align=parleft,left=0pt..1.5em}
\DeclareMathAlphabet{\mathcalligra}{T1}{calligra}{m}{n}
\DeclareMathOperator{\Res}{Res}
\DeclareMathOperator{\Hom}{Hom}
\DeclareMathOperator{\Sing}{Sing}
\DeclareMathOperator{\Rat}{Rat}
\DeclareMathOperator{\End}{End}
\DeclareMathOperator{\diag}{diag}
\DeclareMathOperator{\obj}{obj}
\begin{document}

\newtheorem{thm}{Theorem}[section]
\newtheorem{prop}[thm]{Proposition}
\newtheorem{coro}[thm]{Corollary}
\newtheorem{conj}[thm]{Conjecture}
\newtheorem{example}[thm]{Example}
\newtheorem{lem}[thm]{Lemma}
\newtheorem{rem}[thm]{Remark}
\newtheorem{hy}[thm]{Hypothesis}
\newtheorem*{acks}{Acknowledgements}
\theoremstyle{definition}
\newtheorem{de}[thm]{Definition}
\newtheorem{ex}[thm]{Example}

\newtheorem{convention}[thm]{Convention}

\newtheorem{bfproof}[thm]{{\bf Proof}}

\newcommand{\C}{{\mathbb{C}}}
\newcommand{\Z}{{\mathbb{Z}}}
\newcommand{\N}{{\mathbb{N}}}
\newcommand{\Q}{{\mathbb{Q}}}
\newcommand{\te}[1]{\mbox{#1}}
\newcommand{\set}[2]{{
    \left.\left\{
        {#1}
    \,\right|\,
        {#2}
    \right\}
}}
\newcommand{\sett}[2]{{
    \left\{
        {#1}
    \,\left|\,
        {#2}
    \right\}\right.
}}

\newcommand{\choice}[2]{{
\left[
\begin{array}{c}
{#1}\\{#2}
\end{array}
\right]
}}
\def \<{{\langle}}
\def \>{{\rangle}}

\def\({\left(}

\def\){\right)}

\def \:{\mathopen{\overset{\circ}{
    \mathsmaller{\mathsmaller{\circ}}}
    }}
\def \;{\mathclose{\overset{\circ}{\mathsmaller{\mathsmaller{\circ}}}}}

\newcommand{\overit}[2]{{
    \mathop{{#1}}\limits^{{#2}}
}}
\newcommand{\belowit}[2]{{
    \mathop{{#1}}\limits_{{#2}}
}}

\newcommand{\wt}[1]{\widetilde{#1}}

\newcommand{\wh}[1]{\widehat{#1}}

\newcommand{\wck}[1]{\reallywidecheck{#1}}

\newlength{\dhatheight}
\newcommand{\dwidehat}[1]{%
    \settoheight{\dhatheight}{\ensuremath{\widehat{#1}}}%
    \addtolength{\dhatheight}{-0.45ex}%
    \widehat{\vphantom{\rule{1pt}{\dhatheight}}%
    \smash{\widehat{#1}}}}
\newcommand{\dhat}[1]{%
    \settoheight{\dhatheight}{\ensuremath{\hat{#1}}}%
    \addtolength{\dhatheight}{-0.35ex}%
    \hat{\vphantom{\rule{1pt}{\dhatheight}}%
    \smash{\hat{#1}}}}

\newcommand{\dwh}[1]{\dwidehat{#1}}

\newcommand{\dis}{\displaystyle}

\newcommand{\pd}[1]{\frac{\partial}{\partial {#1}}}

\newcommand{\pdiff}[2]{\frac{\partial^{#2}}{\partial #1^{#2}}}


\newcommand{\g}{{\mathfrak g}}
\newcommand{\ff}{{\mathfrak f}}
\newcommand{\f}{\ff}
\newcommand{\gc}{{\bar{\g'}}}
\newcommand{\h}{{\mathfrak h}}
\newcommand{\cent}{{\mathfrak c}}
\newcommand{\notc}{{\not c}}
\newcommand{\Loop}{{\mathcal L}}
\newcommand{\G}{{\mathcal G}}
\newcommand{\D}{\mathcal D}
\newcommand{\T}{\mathcal T}
\newcommand{\Free}{\mathcal F}
\newcommand{\Cfk}{\mathcal C}
\newcommand{\nil}{\mathfrak n}
\newcommand{\al}{\alpha}
\newcommand{\be}{\beta}
\newcommand{\beck}{\be^\vee}
\newcommand{\ssl}{{\mathfrak{sl}}}
\newcommand{\id}{\te{id}}
\newcommand{\rtu}{{\xi}}
\newcommand{\period}{{N}}
\newcommand{\half}{{\frac{1}{2}}}
\newcommand{\reciprocal}[1]{{\frac{1}{#1}}}
\newcommand{\inverse}{^{-1}}
\newcommand{\inv}{\inverse}
\newcommand{\SumInZm}[2]{\sum\limits_{{#1}\in\Z_{#2}}}
\newcommand{\uce}{{\mathfrak{uce}}}
\newcommand{\Rcat}{\mathcal R}
\newcommand{\cS}{{\mathcal{S}}}


\newcommand{\E}{{\mathcal{E}}}
\newcommand{\F}{{\mathcal{F}}}

\newcommand{\Etopo}{{\mathcal{E}_{\te{topo}}}}

\newcommand{\Ye}{{\mathcal{Y}_\E}}

\newcommand{\rh}{{{\bf h}}}
\newcommand{\rp}{{{\bf p}}}
\newcommand{\rrho}{{{\pmb \varrho}}}
\newcommand{\ral}{{{\pmb \al}}}

\newcommand{\comp}{{\mathfrak{comp}}}
\newcommand{\ctimes}{{\widehat{\boxtimes}}}
\newcommand{\ptimes}{{\widehat{\otimes}}}
\newcommand{\ptimeslt}{{
{}_{\te{t}}\ptimes
}}
\newcommand{\ptimesrt}{{\ot_{\te{t}} }}
\newcommand{\ttp}[1]{{
    {}_{{#1}}\ptimes
}}
\newcommand{\bigptimes}{{\widehat{\bigotimes}}}
\newcommand{\bigptimeslt}{{
{}_{\te{t}}\bigptimes
}}
\newcommand{\bigptimesrt}{{\bigptimes_{\te{t}} }}
\newcommand{\bigttp}[1]{{
    {}_{{#1}}\bigptimes
}}

\newcommand{\ot}{\otimes}
\newcommand{\Ot}{\bigotimes}
\newcommand{\bt}{\boxtimes}

\newcommand{\affva}[1]{V_{\wh\g}\(#1,0\)}
\newcommand{\saffva}[1]{L_{\wh\g}\(#1,0\)}
\newcommand{\saffmod}[1]{L_{\wh\g}\(#1\)}

\newcommand{\otcopies}[2]{\belowit{\underbrace{{#1}\ot \cdots \ot {#1}}}{{#2}\te{-times}}}

\newcommand{\wtotcopies}[3]{\belowit{\underbrace{{#1}\wh\ot_{#2} \cdots \wh\ot_{#2} {#1}}}{{#3}\te{-times}}}


\newcommand{\tar}{{\mathcal{DY}}_0\(\mathfrak{gl}_{\ell+1}\)}
\newcommand{\U}{{\mathcal{U}}}
\newcommand{\htar}{\mathcal{DY}_\hbar\(A\)}
\newcommand{\hhtar}{\widetilde{\mathcal{DY}}_\hbar\(A\)}
\newcommand{\htarz}{\mathcal{DY}_0\(\mathfrak{gl}_{\ell+1}\)}
\newcommand{\hhtarz}{\widetilde{\mathcal{DY}}_0\(A\)}
\newcommand{\qhei}{\U_\hbar\left(\hat{\h}\right)}
\newcommand{\n}{{\mathfrak{n}}}
\newcommand{\vac}{{{\mathbbm 1}}}
\newcommand{\vtar}{{{
    \mathcal{V}_{\hbar,\tau}\left(\ell,0\right)
}}}

\newcommand{\qtar}{
    \U_q\(\wh\g_\mu\)}
\newcommand{\rk}{{\bf k}}


\makeatletter
\@addtoreset{equation}{section}
\def\theequation{\thesection.\arabic{equation}}
\makeatother \makeatletter

\title{Twisted tensor products of quantum affine vertex algebras and coproducts}

\author{Fei Kong$^1$}
\email{kongmath@hunnu.edu.cn}
\address{Key Laboratory of Computing and Stochastic Mathematics (Ministry of Education), School of Mathematics and Statistics, Hunan Normal University, Changsha, China 410081}
\thanks{$^1$Partially supported by the NSF of China  (No.12371027).}


\begin{abstract}
Let $\g$ be a symmetrizable Kac-Moody Lie algebra, and let $V_{\hat\g,\hbar}^\ell$, $L_{\hat\g,\hbar}^\ell$ be the quantum affine vertex algebras constructed in \cite{K-Quantum-aff-va}.
For any complex numbers $\ell$ and $\ell'$, we present an $\hbar$-adic quantum vertex algebra homomorphism $\Delta$ from $V_{\hat\g,\hbar}^{\ell+\ell'}$ to the twisted tensor product $\hbar$-adic quantum vertex algebra $V_{\hat\g,\hbar}^\ell\wh\ot V_{\hat\g,\hbar}^{\ell'}$.
In addition, if both $\ell$ and $\ell'$ are positive integers, we show that $\Delta$ induces an $\hbar$-adic quantum vertex algebra homomorphism from $L_{\hat\g,\hbar}^{\ell+\ell'}$ to the twisted tensor product $\hbar$-adic quantum vertex algebra $L_{\hat\g,\hbar}^\ell\wh\ot L_{\hat\g,\hbar}^{\ell'}$. Moreover, we prove the coassociativity of $\Delta$.
\end{abstract}


\keywords{Quantum vertex algebras, quantum affine vertex algebras, twisted tensor products, coproducts}

\subjclass[2020]{17B69}
%
\maketitle


\section{Introduction}

Let $\g$ be a finite dimensional simple Lie algebra over $\C$, and let $\hat\g=\g\ot\C[t,t\inv]\oplus\C c$ be the affine Kac-Moody Lie algebra.
It is well known that both the universal affine vertex algebra $V_{\hat\g}^\ell$ and the (graded) simple affine vertex algebra $L_{\hat\g}^\ell$ are highest weight modules of $\hat\g$ generated by the vacuum vectors $\vac$.
Moreover, the $\hat\g$-module homomorphisms $\Delta:X_{\hat\g}^{\ell+\ell'}\to X_{\hat\g}^\ell\ot X_{\hat\g}^{\ell'}$ ($X=V,L$, $\ell,\ell'\in\C$) induced by $\vac\to \vac\ot\vac$ are also vertex algebra homomorphisms.

In \cite{EK-qva}, Etingof and Kazhdan developed a theory of \emph{quantum vertex operator algebras} in the sense of formal deformations of vertex algebras. In particular, they constructed the \emph{quantum affine vertex algebras} as formal deformations of universal vertex algebras of type $A$, by using the $R$-matrix type realizations given in \cite{RS-RTT}.
Based on the $R$-matrix type realizations of Yangian doubles \cite{JYL-R-mat-DY},
Butorac, Jing and Ko\v{z}i\'{c} (\cite{BJK-qva-BCD}) extended Etingof-Kazhdan's construction
to type $B$, $C$ and $D$ rational $R$-matrices.
The modules of these quantum vertex operator algebras are in one-to-one
correspondence with restricted modules for the corresponding Yangian doubles (see \cite{K-qva-phi-mod-BCD}).
Recently, Ko\v{z}i\'{c} utilized the $R$-matrix presentation of quantum affine algebras
(see \cite{DF-qaff-RTT-Dr,JLM-qaff-RTT-Dr-BD,JLM-qaff-RTT-Dr-C}), to constructed the quantum vertex operator algebras associated with trigonometric
$R$-matrices of types $A$, $B$, $C$ and $D$ (\cite{Kozic-qva-tri-A, K-qva-phi-mod-BCD}),
and established a one-to-one correspondence between $\phi$-coordinated modules and
restricted modules for quantum affine algebras.

In our previous work \cite{JKLT-Defom-va} coauthored with N. Jing, H. Li and S. Tan, we developed a method to construct quantum vertex operator algebras by using vertex bialgebras. By utilizing this,
we constructed affine quantum vertex operator algebras $V_{\hat\g,\hbar}^\ell$ ($\ell\in\C$) and $L_{\hat\g,\hbar}^\ell$ ($\ell\in\Z_+$) for all symmetric Kac-Moody Lie algebras $\g$ \cite{K-Quantum-aff-va}, based on the Drinfeld type relations (\cite{Dr-new,J-KM,Naka-quiver}).
We also established a one-to-one correspondence between $\phi$-coordinated modules of $V_{\hat\g,\hbar}^\ell$ and restricted modules for the quantum affinization algebra $\U_\hbar(\hat\g)$ of level $\ell$, and showed that every restricted integrable $\U_\hbar(\hat\g)$-modules of level positive integer $\ell$ are all $\phi$-coordinated modules of $L_{\hat\g,\hbar}^\ell$.
Moreover, when $\g$ is of finite type and $\ell\in\Z_+$, we proved that $L_{\hat\g,\hbar}^\ell/\hbar L_{\hat\g,\hbar}^\ell$ is isomorphic to the simple affine vertex algebra $L_{\hat\g}^\ell$.

Our goal in this paper is to give the quantum analogue of the vertex algebra homomorphism $\Delta$.
Different from the vertex algebra case, the tensor product quantum vertex operator algebra $X_{\hat\g,\hbar}^\ell\wh\ot X_{\hat\g,\hbar}^{\ell'}$ should be a twisted tensor product (\cite{LS-twisted-tensor,S-iter-twisted-tensor}) rather than the usual tensor product.
So in Section \ref{sec:construct-n-qvas}, we extend the method given in \cite{JKLT-Defom-va}
and derive a construction method of twisted tensor products by using vertex bialgebras.
Using this construction, we construct the twisted tensor products of quantum affine vertex algebras in Section \ref{sec:construct-n-qvas-qaffva}.

The paper is organized as follows.
Section \ref{sec:VAs} provides an introduction of vertex algebras and affine vertex algebras.
Section \ref{sec:qvas} presents the basics about $\hbar$-adic quantum vertex algebras.
Section \ref{sec:twisted-tensor-prod} recall the theory of twisted tensor products.
Section \ref{sec:construct-n-qvas} gives a construction method for twisted tensor products by using vertex bialgebras.
Section \ref{sec:qaff-va} presents the construction of quantum affine vertex algebras as described in \cite{K-Quantum-aff-va},
their twisted tensor products are presented in Section \ref{sec:construct-n-qvas-qaffva}.
Finally, in Section \ref{sec:coprod}, we construct the coproducts of the quantum affine vertex algebras.

Throughout this paper, we denote by $\Z_+$ and $\N$ the set of positive and nonnegative integers, respectively.
For a vector space $W$ and $g(z)\in W[[z,z\inv]]$, we denote by $g(z)^+$ (resp. $g(z)^-$) the regular (singular) part of $g(z)$.

\section{Vertex algebras}\label{sec:VAs}

A \emph{vertex algebra} (VA) is a vector space $V$ together with a \emph{vacuum vector} $\vac\in V$ and a vertex operator map
\begin{align}
    Y(\cdot,z):&V\longrightarrow \E(V):=\Hom(V,V((z)));\quad
    v\mapsto Y(v,z)=\sum_{n\in\Z}v_nz^{-n-1},
\end{align}
such that
\begin{align}\label{eq:vacuum-property}
    Y(\vac,z)v=v,\quad Y(v,z)\vac\in V[[z]],\quad \lim_{z\to 0}Y(v,z)\vac=v,
    \quad\te{for }v\in V,
\end{align}
and that
\begin{align}\label{eq:Jacobi}
    &z_0\inv\delta\(\frac{z_1-z_2}{z_0}\)Y(u,z_1)Y(v,z_2)
    -z_0\inv\delta\(\frac{z_2-z_1}{-z_0}\)Y(v,z_2)Y(u,z_1)\\
    &\quad=z_1\inv\delta\(\frac{z_2+z_0}{z_1}\)Y(Y(u,z_0)v,z_2)
    \quad\quad\te{for }u,v\in V.\nonumber
\end{align}

Let $\mathcal B$ be a set, and let $N:\mathcal B\times \mathcal B\to \N$ be a symmetric function called \emph{locality function} (\cite{R-free-conformal-free-va}).
Define $\mathcal V(\mathcal B,N)$ to be the category consisting of VAs $V$, containing $\mathcal B$ as a subset and
\begin{align}
    (z_1-z_2)^{N(a,b)}[Y(a,z_1),Y(b,z_2)]=0\quad \te{for }a,b\in\mathcal B.
\end{align}
The following result was given in \cite{R-free-conformal-free-va}.
\begin{prop}
There exists a unique VA $V(\mathcal B,N)$ such that for any $V\in\mathcal V(\mathcal B,N)$, there exists a unique VA homomorphism $f:V(\mathcal B,N)\to V$ such that
\begin{align}
    f(a)=a\quad\te{for }a\in \mathcal B.
\end{align}
\end{prop}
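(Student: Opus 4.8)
The plan is to reduce the statement to the existence of \emph{free vertex algebras} together with the elementary fact that a quotient of a vertex algebra by an ideal is again a vertex algebra. Uniqueness is then purely formal: if $V$ and $V'$ both have the stated property, then, since $V'\in\mathcal V(\mathcal B,N)$, there is a VA homomorphism $f\colon V\to V'$ with $f(a)=a$ for $a\in\mathcal B$, and likewise $f'\colon V'\to V$; because $V\in\mathcal V(\mathcal B,N)$ itself, the uniqueness clause applied to $V$ forces $f'f=\id_V$, and symmetrically $ff'=\id_{V'}$, so $V\cong V'$ (the uniqueness of $f$ for a fixed target is part of the universal property and is checked in the construction below).

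For existence, let $F=F(\mathcal B)$ be the free vertex algebra on the set $\mathcal B$, carrying an inclusion $\mathcal B\hookrightarrow F$ through which every map of sets $\mathcal B\to V$ into a VA factors uniquely as a VA homomorphism $F\to V$; this is the main construction of \cite{R-free-conformal-free-va}. Let $I\subseteq F$ be the vertex-algebra ideal generated by all coefficients of the series $(z_1-z_2)^{N(a,b)}[Y(a,z_1),Y(b,z_2)]v$ with $a,b\in\mathcal B$ and $v\in F$, and put $V(\mathcal B,N):=F/I$, which is a vertex algebra in which the images of $a,b\in\mathcal B$ satisfy $(z_1-z_2)^{N(a,b)}[Y(a,z_1),Y(b,z_2)]=0$ by construction. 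To see that $\mathcal B$ embeds into $F/I$, note that the differential polynomial algebra $V_0:=\C[a^{(i)}:a\in\mathcal B,\ i\in\N]$ carries a commutative VA structure, all of whose vertex operators commute, and hence lies in $\mathcal V(\mathcal B,N)$ and contains $\{a^{(0)}:a\in\mathcal B\}\cong\mathcal B$ as a subset of distinct elements; the set map $\mathcal B\to V_0$ extends to a VA homomorphism $F\to V_0$, which annihilates the generators of $I$ and so factors through $F/I$, and since the composite $\mathcal B\hookrightarrow F\twoheadrightarrow F/I\to V_0$ is the original injection, $\mathcal B\hookrightarrow F/I$ is injective; thus $V(\mathcal B,N)\in\mathcal V(\mathcal B,N)$. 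For the universal property, given $V\in\mathcal V(\mathcal B,N)$, extend $\mathcal B\to V$ to $\tilde f\colon F\to V$; since $V$ obeys the locality relations, $\Ker\tilde f$ is an ideal containing the generators of $I$, hence $I\subseteq\Ker\tilde f$, so $\tilde f$ descends to $f\colon V(\mathcal B,N)\to V$ with $f(a)=a$. Any other such $g$, composed with the surjection $F\twoheadrightarrow V(\mathcal B,N)$, gives a VA homomorphism $F\to V$ extending $\mathcal B\to V$, which equals $\tilde f$ by freeness of $F$; surjectivity then yields $g=f$.

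The genuine content is concentrated entirely in the imported ingredient --- the existence of $F(\mathcal B)$ --- and I expect this to be the main obstacle for a self-contained treatment. One must either build $F(\mathcal B)$ by hand as the linear span of iterated $n$-th products of the generators and then verify the Jacobi identity \eqref{eq:Jacobi} on that model (a spanning-set argument followed by a sizeable identity check), or invoke an adjoint-functor-theorem/monadicity argument with the attendant solution-set bookkeeping. Granting \cite{R-free-conformal-free-va}, everything above is a routine diagram chase with vertex-algebra ideals and quotients.
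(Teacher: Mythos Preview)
The paper does not prove this proposition; it simply attributes the result to \cite{R-free-conformal-free-va} and moves on. So the comparison is between your argument and Roitman's actual construction.

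Your reduction has a genuine gap: the object $F(\mathcal B)$ you invoke --- a free vertex algebra on a bare set, universal for \emph{all} set maps $\mathcal B\to V$ into arbitrary VAs --- does not exist, and it is not what \cite{R-free-conformal-free-va} constructs. The obstruction is intrinsic to the Jacobi identity: in any VA, each pair $a,b$ is local of some finite order $N_{a,b}$, and a VA homomorphism can only lower this order. Hence if $F(\mathcal B)$ existed, the generators $a,b$ would be local of some fixed order $N_{a,b}$ in $F(\mathcal B)$, and $F(\mathcal B)$ could then only map to VAs in which the images of $a,b$ are local of order at most $N_{a,b}$ --- but there are VAs with arbitrarily large locality order between prescribed elements, so no such universal $F(\mathcal B)$ can exist. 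Put differently, the category of VAs is not monadic over sets in the naive way your argument assumes; the locality bound is not a relation one can impose \emph{after} the fact but is needed from the outset to make iterated $n$-th products well defined.

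What Roitman actually builds is precisely $V(\mathcal B,N)$, with the locality function $N$ fixed in advance as part of the input data: one first forms a free \emph{conformal} algebra with the prescribed locality and then passes to its universal enveloping vertex algebra. So your two-step plan ``free VA, then quotient by locality'' is inverted relative to the real construction; the locality bound is what makes the free object exist at all. Your uniqueness paragraph and the descent-through-the-ideal argument are fine as formal manipulations, but they rest on a nonexistent $F(\mathcal B)$.
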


Let $A=(a_{ij})_{i,j\in I}$ be a symmetrizable generalized Cartan matrix.
Then there are unique relatively prime positive integers $r_i$ ($i\in I$) such that $DA$ is symmetric with $D=\diag\{r_i\}_{i\in I}$.
Recall the following Lie algebra given in \cite[Definition 2.4]{K-Quantum-aff-va}.

\begin{de}
The Lie algebra $\hat\g$ is isomorphic to the Lie algebra generated by
\begin{align*}
    \set{h_i(m),\,x_i^\pm(m)}{i\in I}
\end{align*}
and a central element $c$, subject to the relations written in terms of generating functions in $z$:
\begin{align*}
    h_i(z)=\sum_{m\in\Z}h_i(m)z^{-m-1},\quad x_i^\pm(z)=\sum_{m\in\Z}x_i^\pm(m)z^{-m-1},\quad i\in I.
\end{align*}
The relations are $(i,j\in I)$:
\begin{align*}
    \te{(L1)}\quad &[h_i(z_1),h_j(z_2)]=r_i a_{ij}rc\pd{z_2}z_1\inv\delta\(\frac{z_2}{z_1}\),\\
    \te{(L2)}\quad &[h_i(z_1),x_j^\pm(z_2)]=\pm r_i a_{ij}x_j^\pm(z_2)z_1\inv\delta\(\frac{z_2}{z_1}\),\\
    \te{(L3)}\quad &\left[x_i^+(z_1),x_j^-(z_2)\right]=\frac{\delta_{ij}}{r_i}\(h_i(z_2)z_1\inv\delta\(\frac{z_2}{z_1}\)+rc\pd{z_2}z_1\inv\delta\(\frac{z_2}{z_1}\)  \),\\
    \te{(L4)}\quad&(z_1-z_2)^{n_{ij}}\left[x_i^\pm(z_1),x_j^\pm(z_2)\right]=0,\\
    \te{(S)}\quad &\left[ x_i^\pm(z_1),\left[ x_i^\pm(z_2),\dots,\left[ x_i^\pm(z_{m_{ij}}),x_j^\pm(z_0) \right]\cdots \right] \right]=0,\quad \te{if }a_{ij}\le 0,
\end{align*}
where $n_{ij}=1-\delta_{ij}$ for $i,j\in I$ and $m_{ij}=1-a_{ij}$ for $i,j\in I$ with $a_{ij}\le 0$.
\end{de}

Introduce a set $\mathcal B=\set{h_i,\,x_i^\pm}{i\in I}$, and defined a function $N:\mathcal B\times\mathcal B\to\N$ by
\begin{align*}
    N(h_i,h_j)=2,\quad N(h_i,x_j^\pm)=1,\quad N(x_i^\pm,x_j^\pm)=n_{ij},\quad N(x_i^+,x_j^-)=\delta_{ij}2.
\end{align*}

\begin{de}\label{de:affVAs}
For $\ell\in \C$, we let $F_{\hat\g}^\ell$ be the quotient VA of $V(\mathcal B,N)$ modulo the ideal generated by
\begin{align*}
    (h_i)_0(h_j),\quad (h_i)_1(h_j)-r_i{a_{ij}}r\ell\vac,\quad (h_i)_0(x_j^\pm)\mp r_i a_{ij} x_j^\pm \quad \te{for }i,j\in I.
\end{align*}
Furthermore, let $V_{\hat\g}^\ell$ be the quotient VA of $F_{\hat\g}^\ell$ modulo the ideal generated by
\begin{align*}
    &(x_i^+)_0(x_j^-)-\frac{\delta_{ij}}{r_i}h_i,\quad (x_i^+)_1(x_j^-)-\frac{\delta_{ij}}{r_i}r\ell \vac\quad\te{for }i,j\in I,\\
    &\(x_i^\pm\)_0^{m_{ij}}(x_j^\pm)\quad\te{for }i,j\in I\,\,\te{with }a_{ij}\le 0.
\end{align*}
If $\ell\in\Z_+$, we define $L_{\hat\g}^\ell$ to be the quotient VA $V_{\hat\g}^\ell$ modulo the ideal generated by
\begin{align*}
    \(x_i^\pm\)_{-1}^{r\ell/r_i}(x_i^\pm)\quad\te{for }i\in I.
\end{align*}
\end{de}


\section{$\hbar$-adic quantum vertex algebras}\label{sec:qvas}

In this paper, we let $\hbar$ be a formal variable, and let $\C[[\hbar]]$ be the ring of formal power series in $\hbar$.
A $\C[[\hbar]]$-module $V$
is said to be \emph{torsion-free} if $\hbar v\ne 0$ for every $0\ne v\in V$.
And a $\C[[\hbar]]$-module $V$ is \emph{topologically free} if $V=V_0[[\hbar]]$
for some vector space $V_0$ over $\C$.
It is known that a $\C[[\hbar]]$-module is topologically free if and only if it is torsion-free, and Hausdorff complete under the $\hbar$-adic topology (see \cite{Kassel-topologically-free}).
For a topologically free $\C[[\hbar]]$-module $V$ and a submodule $U\subset V$, we denote by $\bar U$ the closure of $U$ and set
\begin{align}
  [U]=\set{u\in V}{\hbar^n u\in U\,\,\te{for some }n\in\Z_+}.
\end{align}
Then $\overline{[U]}$ is the minimal closed submodule that is invariant under the operation $[\cdot]$.
For another topologically free $\C[[\hbar]]$-module $U=U_0[[\hbar]]$, we recall the complete tensor
\begin{align*}
    U\wh\ot V=(U_0\ot V_0)[[\hbar]].
\end{align*}

We view a vector space as a $\C[[\hbar]]$-module by letting $\hbar=0$.
Fix a $\C[[\hbar]]$-module $W$. For $k\in\Z_+$, and some formal variables $z_1,\dots,z_k$, we define
\begin{align}
    \E^{(k)}(W;z_1,\dots,z_k)=\Hom_{\C[[\hbar]]}\(W,W((z_1,\dots,z_k))\)
\end{align}
We will denote $\E^{(k)}(W;z_1,\dots,z_k)$ by $\E^{(k)}(W)$ if there is no ambiguity,
and will denote $\E^{(1)}(W)$ by $\E(W)$.
An ordered sequence $(a_1(z),\dots,a_k(z))$ in $\E(W)$ is said to be \emph{compatible} (\cite{Li-nonlocal})
if there exists an $m\in\Z_+$, such that
\begin{align*}
    \(\prod_{1\le i<j\le k}(z_i-z_j)^m\)a_1(z_1)\cdots a_k(z_k)\in\E^{(k)}(W).
\end{align*}

Now, we assume that $W=W_0[[\hbar]]$ is topologically free.
Define
\begin{align}
    \E_\hbar^{(k)}(W;z_1,\dots,z_k)=\Hom_{\C[[\hbar]]}\(W,W_0((z_1,\dots,z_k))[[\hbar]]\).
\end{align}
Similarly, we denote $\E_\hbar^{(k)}(W;z_1,\dots,z_k)$ by $\E_\hbar^{(k)}(W)$ if there is no ambiguity,
and denote $\E_\hbar^{(1)}(W)$ by $\E_\hbar(W)$ for short.
We note that $\E_\hbar^{(k)}(W)=\E^{(k)}(W_0)[[\hbar]]$ is topologically free.
For $n,k\in\Z_+$, the quotient map from $W$ to $W/\hbar^nW$ induces the following $\C[[\hbar]]$-module map
\begin{align*}
    \wt\pi_n^{(k)}:\End_{\C[[\hbar]]}(W)[[z_1^{\pm 1},\dots,z_k^{\pm 1}]]
    \to \End_{\C[[\hbar]]}(W/\hbar^nW)[[z_1^{\pm 1},\dots,z_k^{\pm 1}]].
\end{align*}
For $A(z_1,z_2),B(z_1,z_2)\in\Hom_{\C[[\hbar]]}(W,W_0((z_1))((z_2))[[\hbar]])$, we write $A(z_1,z_2)\sim B(z_2,z_1)$
if for each $n\in\Z_+$ there exists $k\in\N$, such that
\begin{align*}
    (z_1-z_2)^k\wt\pi_n^{(2)}(A(z_1,z_2))=(z_1-z_2)^k\wt\pi_n^{(2)}(B(z_2,z_1)).
\end{align*}

For each $k\in\Z_+$, the inverse system
\begin{align*}
    \xymatrix{
    0&W/\hbar W\ar[l]&W/\hbar^2W\ar[l]&W/\hbar^3W\ar[l]&\cdots\ar[l]
    }
\end{align*}
induces the following inverse system
\begin{align}\label{eq:E-h-inv-sys}
    \xymatrix{
    0&\E^{(k)}(W/\hbar W)\ar[l]&\E^{(k)}(W/\hbar^2W)\ar[l]&\cdots\ar[l]
    }
\end{align}
Then $\E_\hbar^{(k)}(W)$ is isomorphic to the inverse limit of \eqref{eq:E-h-inv-sys}.
The map $\wt \pi_n^{(k)}$ induces a $\C[[\hbar]]$-module $\pi_n^{(k)}:\E_\hbar^{(k)}(W)\to \E^{(k)}(W/\hbar^nW)$.
It is easy to verify that $\ker \pi_n^{(k)}=\hbar^n\E_\hbar^{(k)}(W)$.
We will denote $\pi_n^{(1)}$ by $\pi_n$ for short.
An ordered sequence $(a_1(z),\dots,a_r(z))$ in $\E_\hbar(W)$ is called \emph{$\hbar$-adically compatible} if for every $n\in\Z_+$,
the sequence $$(\pi_n(a_1(z)),\dots,\pi_n(a_r(z)))$$ in $\E(W/\hbar^nW)$ is compatible.
A subset $U$ of $\E_\hbar(W)$ is called \emph{$\hbar$-adically compatible} if every finite sequence in $U$ is $\hbar$-adically compatible.
Let $(a(z),b(z))$ in $\E_\hbar(W)$ be $\hbar$-adically compatible. That is, for any $n\in\Z_+$, we have
\begin{align*}
    (z_1-z_2)^{k_n}\pi_n(a(z_1))\pi_n(b(z_2))\in\E^{(2)}(W/\hbar^nW)\quad\te{for some }k_n\in\N.
\end{align*}
We recall the following vertex operator map (\cite{Li-h-adic}):
\begin{align*}
    &Y_\E(a(z),z_0)b(z)=\sum_{n\in\Z}a(z)_nb(z)z_0^{-n-1}\\
    =&\varinjlim_{n>0}z_0^{-k_n}\left.\((z_1-z)^{k_n}\pi_n(a(z_1))\pi_n(b(z))\)\right|_{z_1=z+z_0}.
\end{align*}

An \emph{$\hbar$-adic nonlocal VA} (\cite{Li-h-adic}) is a topologically free $\C[[\hbar]]$-module $V$ equipped with a vacuum vector $\vac$ such that the vacuum property \eqref{eq:vacuum-property} hold, and a vertex operator map $Y(\cdot,z):V\to \E_\hbar(V)$
such that
\begin{align*}
    \set{Y(u,z)}{u\in V}\subset\E_\hbar(V)\quad\te{is $\hbar$-adically compatible},
\end{align*}
and that
\begin{align*}
    Y_\E(Y(u,z),z_0)Y(v,z)=Y(Y(u,z_0)v,z)\quad\te{for }u,v\in V.
\end{align*}
We denote by $\partial$ the canonical derivation of $V$:
\begin{align}
    u\to\partial u=\lim_{z\to 0}\frac{d}{dz}Y(u,z)\vac.
\end{align}

An \emph{$\hbar$-adic quantum VA} is an $\hbar$-adic nonlocal VA $V$ equipped with a $\C[[\hbar]]$-module map (called a \emph{quantum Yang-Baxter operator})
\begin{align}
  S(z):V\wh\ot V\to V\wh\ot V\wh\ot \C((z))[[\hbar]],
\end{align}
which satisfies the \emph{shift condition}:
\begin{align}\label{eq:qyb-shift}
  [\partial\ot 1,S(z)]=-\frac{d}{dz}S(z),
\end{align}
the \emph{quantum Yang-Baxter equation}:
\begin{align}\label{eq:qyb}
  S^{12}(z_1)S^{13}(z_1+z_2)S^{23}(z_2)=S^{23}(z_2)S^{13}(z_1+z_2)S^{12}(z_1),
\end{align}
and the \emph{unitarity condition}:
\begin{align}\label{eq:qyb-unitary}
  S^{21}(z)S(-z)=1,
\end{align}
satisfying the following conditions:

  (1) The \emph{vacuum property}:
  \begin{align}\label{eq:qyb-vac-id}
    S(z)(\vac\ot v)=\vac\ot v,\quad \te{for }v\in V.
  \end{align}

 (2) The \emph{$S$-locality}:
  For any $u,v\in V$, one has
  \begin{align}\label{eq:qyb-locality}
  Y(u,z_1)Y(v,z_2)\sim Y(z_2)(1\ot Y(z_1))S(z_2-z_1)(v\ot u).
  \end{align}

  (3) The \emph{hexagon identity}:
  \begin{align}\label{eq:qyb-hex1}
    S(z_1)Y^{12}(z_2)=Y^{12}(z_2)S^{23}(z_1)S^{13}(z_1+z_2).
  \end{align}

\begin{rem}\label{rem:Jacobi-S}
\emph{
For $u,v\in V$, we have that
\begin{align*}
    &Y(u,z_1)Y(v,z_2)-Y(z_2)(1\ot Y(z_1))S(z_2-z_1)(v\ot u)\\
    &\quad=Y(Y(u,z_1-z_2)^-v-Y(u,-z_2+z_1)^-v,z_2).
\end{align*}
}
\end{rem}

\begin{rem}{\em
Let $(V,S(z))$ be a quantum VA.
Then
\begin{align}
  &S(z)(v\ot\vac)=v\ot\vac\quad \te{for }v\in V,\label{eq:qyb-vac-id-alt}\\
  &[1\ot\partial,S(z)]=\frac{d}{dz}S(z),\label{eq:qyb-shift-alt}\\
  &S(z_1)(1\ot Y(z_2))=(1\ot Y(z_2))S^{12}(z_1-z_2)S^{13}(z_1)\label{eq:qyb-hex2},\\
  &S(z)f(\partial\ot 1)=f\(\partial\ot 1+\pd{z}\)S(z)\quad \te{for }f(z)\in\C[z][[\hbar]],\label{eq:qyb-shift-total1}\\
  &S(z)f(1\ot \partial)=f\(1\ot \partial-\pd{z}\)S(z)\quad \te{for }f(z)\in\C[z][[\hbar]].\label{eq:qyb-shift-total2}
\end{align}
}
\end{rem}

\section{Twisted tensor products}\label{sec:twisted-tensor-prod}
In this section, we first present the direct $\hbar$-adic analogue of the \emph{twisted tensor products} given in \cite{LS-twisted-tensor, S-iter-twisted-tensor}.

\begin{de}
Let $V_1$ and $V_2$ be two $\hbar$-adic nonlocal VAs.
A \emph{twisting operator} for the ordered pair $(V_1,V_2)$ is a $\C[[\hbar]]$-module map
\begin{align*}
  S(z):V_1\wh\ot V_2\to V_1\wh\ot V_2\wh\ot\C((z))[[\hbar]],
\end{align*}
satisfying the following conditions:
\begin{align*}
  &S(z)(\vac\ot v)=\vac\ot v,\quad S(z)(u\ot \vac)=u\ot \vac\quad\te{for }u\in V_1,\,\,v\in V_2,\quad\te{and}\\
  &S(z_1)Y_1^{12}(z_2)=Y_1^{12}(z_2) S^{23}(z_1)S^{13}(z_1+z_2),\quad S(z_1)Y_2^{23}(z_2)=Y_2^{23}(z_2)S^{12}(z_1-z_2)S^{13}(z_1),
\end{align*}
where $Y_1$ and $Y_2$ are the vertex operators of $V_1$ and $V_2$, respectively.
\end{de}

\begin{rem}
    \emph{
    Given a twisting operator $S(z)$ for the ordered pair $(V_1,V_2)$, $S(z)\sigma$ is a twisting operator defined in \cite[Definition 2.2]{LS-twisted-tensor}, where $\sigma:V_1\wh\ot V_2\to V_2\wh\ot V_1$ is the flip map.
    Conversely, let $R(z)$ be a twisting operator for the ordered pair $(V_1,V_2)$ in sense of \cite[Definition 2.2]{LS-twisted-tensor}.
    Then $R(z)\sigma$ is a twisting operator for $(V_1,V_2)$.
    }
\end{rem}

$S(x)$ is said to be \emph{invertible}, if it is invertible
as a $\C((z))[[\hbar]]$-linear transformation of $V_1\wh\ot V_2\wh\ot\C((z))[[\hbar]]$.
Then $S^{21}(-z)\inv$ is an invertible twisting operator for the ordered pair $(V_2,V_1)$
(\cite[Lemma 2.3]{LS-twisted-tensor}).

\begin{thm}\label{thm:twist-tensor}
Define $Y_S(z):(V_1\wh\ot V_2)\ot(V_1\wh\ot V_2)\to (V_1\wh\ot V_2)[[z,z\inv]]$ by
\begin{align*}
  Y_S(z)=(Y_1(z)\ot Y_2(z))S^{23}(-z)\sigma,
\end{align*}
where $Y_1$ and $Y_2$ are the vertex operators of $V_1$ and $V_2$,
respectively.
Then $(V_1\wh\ot V_2,Y_S,\vac\ot\vac)$ carries the structure of a nonlocal VA.
Moreover, we denote this nonlocal VA by $V_1\wh\ot_S V_2$.
\end{thm}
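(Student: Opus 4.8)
\textbf{Proof plan for Theorem \ref{thm:twist-tensor}.}
The plan is to verify the axioms of an $\hbar$-adic nonlocal VA for $(V_1\wh\ot V_2,Y_S,\vac\ot\vac)$ directly, reducing each to known identities for $Y_1$, $Y_2$ and the twisting operator $S(z)$. First I would check the vacuum property: since $S(z)(\vac\ot v)=\vac\ot v$ and $S(z)(u\ot\vac)=u\ot\vac$, a short computation shows $Y_S(z)(\vac\ot\vac)=\mathrm{id}$ and that $Y_S(z)(u\ot v)(\vac\ot\vac)\in(V_1\wh\ot V_2)[[z]]$ with limit $u\ot v$ as $z\to 0$; here the creation property for each $Y_i$ plus the second normalization of $S$ (evaluated on $\vac$ in one slot) does the job. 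Next I would establish $\hbar$-adic compatibility of $\{Y_S(u\ot v,z)\}$: given finitely many elements, compatibility of the $Y_i$ supplies exponents $m_i$ such that $\prod(z_a-z_b)^{m_i}$ times products of $Y_i$-operators land in $\E^{(k)}$, and since $S^{23}(-z)\sigma$ only reshuffles tensor factors and introduces a Laurent series in $\C((z))[[\hbar]]$, a common exponent still works after passing to $W/\hbar^n W$ for each $n$.

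The heart of the matter is the weak associativity / Jacobi-type identity $Y_\E(Y_S(a,z),z_0)Y_S(b,z)=Y_S(Y_S(a,z_0)b,z)$. The natural route is to expand both sides using the definition $Y_S=(Y_1\ot Y_2)S^{23}(-z)\sigma$, and then push all occurrences of $S$ to one side using the two hexagon-type intertwining relations in the definition of a twisting operator, namely $S(z_1)Y_1^{12}(z_2)=Y_1^{12}(z_2)S^{23}(z_1)S^{13}(z_1+z_2)$ and $S(z_1)Y_2^{23}(z_2)=Y_2^{23}(z_2)S^{12}(z_1-z_2)S^{13}(z_1)$, together with the quantum Yang--Baxter equation to rearrange the triple products of $S$'s that appear on a three-fold tensor power $V_1\wh\ot V_2\wh\ot(V_1\wh\ot V_2)$ — equivalently a suitable four-fold product after splitting each $V_1\wh\ot V_2$. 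I expect this to be the main obstacle: it requires carefully tracking the tensor-leg indices through the braiding $\sigma$ on the two copies of $V_1\wh\ot V_2$, choosing the right substitutions of spectral parameters ($z_1+z_2$ versus $z_1-z_2$, and the sign flips coming from $S^{23}(-z)$), and confirming the bookkeeping matches the associativity of each $Y_i$ on its own factor.

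To control this I would work modulo $\hbar^n$ for each $n$ — so that all expressions are honest operator-valued Laurent series and the $\sim$ relation becomes an equality after multiplying by a large power of $(z_1-z_2)$ — prove the identity there, and then take the inverse limit. The shift condition $[\partial\ot 1+1\ot\partial,S(z)]=-\tfrac{d}{dz}S(z)$ on $V_1\wh\ot V_2$ (which follows by combining \eqref{eq:qyb-shift} and \eqref{eq:qyb-shift-alt} for the two factors, or is assumed as part of the twisting-operator data) is what makes the spectral-parameter shifts in the hexagon relations consistent with the $\partial$-bracket relations, so I would record that first as a lemma and use it freely. Finally, the canonical derivation of $V_1\wh\ot V_2$ is $\partial\ot 1+1\ot\partial$, which one checks against $Y_S$ directly. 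Assembling these verifications gives the nonlocal VA structure and hence the theorem; I would note in passing that this construction is the $\hbar$-adic counterpart of \cite[Theorem 2.5]{LS-twisted-tensor}, so the structure of the argument parallels that reference with the modifications forced by the $\hbar$-adic topology.
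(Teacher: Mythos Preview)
The paper does not actually give a proof of this theorem: it is stated in Section~\ref{sec:twisted-tensor-prod} as the ``direct $\hbar$-adic analogue'' of the twisted tensor product construction of \cite{LS-twisted-tensor,S-iter-twisted-tensor}, and the argument is implicitly deferred to those references. Your plan --- verify vacuum, creation, $\hbar$-adic compatibility, and weak associativity by unwinding $Y_S=(Y_1\ot Y_2)S^{23}(-z)\sigma$ and pushing the $S$'s through the vertex operators via the two hexagon identities --- is exactly the strategy of \cite[Theorem~2.5]{LS-twisted-tensor}, so in substance you are reproducing the cited proof with the $\hbar$-adic bookkeeping layered on top.

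One correction: you do \emph{not} need the quantum Yang--Baxter equation here. For the two-fold twisted tensor product, the associativity computation only involves a single twisting operator $S$ acting between the $V_1$- and $V_2$-slots, and the two hexagon relations in the definition of a twisting operator are already enough to commute $S$ past $Y_1$ and $Y_2$ and match the two sides. The QYB equation \eqref{eq:twisting-op-qybeq} enters only for iterated (three-fold and higher) twisted tensor products, as in Theorem~\ref{thm:3-twisted-tensor}. Similarly, the shift condition you mention is not part of the twisting-operator axioms and is not needed for the proof; the derivation $\partial$ on $V_1\wh\ot V_2$ is determined a posteriori from the creation property, and its compatibility with $S$ is a consequence rather than an input.
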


\begin{thm}\label{thm:3-twisted-tensor}
Let $V_1,V_2$ and $V_3$ be nonlocal VAs, $S_1(z),S_2(z)$ and $S_3(z)$ twisting operators for ordered pairs $(V_1,V_2)$, $(V_2,V_3)$ and $(V_1,V_3)$, respectively.
Suppose that $S_1(z),S_2(z)$ and $S_3(z)$ satisfy the following hexagon equation
\begin{align}\label{eq:twisting-op-qybeq}
  S_1^{12}(z_1)S_3^{13}(z_1+z_2)S_2^{23}(z_2)=S_2^{23}(z_2)S_3^{13}(z_1+z_2)S_1^{12}(z_1).
\end{align}
Then we have that
\begin{align}
  T_1(z)=S_2^{23}(z)S_3^{13}(z)\quad\te{and}\quad
  T_2(z)=S_1^{12}(z)S_3^{13}(z)
\end{align}
are twisting operators for ordered pairs $(V_1\wh\ot_{S_1} V_2, V_3)$
and $(V_1, V_2\wh\ot_{S_2}V_3)$, respectively.
Moreover, the two twisted tensor products $(V_1\wh\ot_{S_1}V_2)\wh\ot_{T_1}V_3$
and $V_1\wh\ot_{T_2}(V_2\wh\ot_{S_2}V_3)$ are equivalent.
\end{thm}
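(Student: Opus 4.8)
\textbf{Proof proposal for Theorem \ref{thm:3-twisted-tensor}.}

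The plan is to verify the claims in two stages: first that $T_1(z)$ and $T_2(z)$ are genuine twisting operators, and then that the two iterated twisted tensor products agree. For the first stage, I would check the defining axioms of a twisting operator directly. The vacuum conditions for $T_1(z)=S_2^{23}(z)S_3^{13}(z)$ on $(V_1\wh\ot_{S_1}V_2)\wh\ot V_3$ follow immediately: applied to $(\vac\ot\vac)\ot v$ both $S_3^{13}(z)$ and $S_2^{23}(z)$ act as the identity by the vacuum property of $S_3$ and $S_2$; applied to $(u_1\ot u_2)\ot\vac$, again $S_3^{13}$ fixes the $V_3$-slot vacuum and then $S_2^{23}$ does too. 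The substantive part is the two hexagon-type intertwining identities for $T_1(z)$ with respect to the vertex operator $Y_{S_1}$ of $V_1\wh\ot_{S_1}V_2$ (given by Theorem \ref{thm:twist-tensor} as $(Y_1(z)\ot Y_2(z))S_1^{23}(-z)\sigma$) and the vertex operator $Y_3$ of $V_3$. Here I would expand $Y_{S_1}$ in terms of $Y_1,Y_2$ and push $T_1(z)=S_2^{23}S_3^{13}$ through using the hexagon identities that $S_2,S_3$ individually satisfy (with $Y_2$ and $Y_3$ respectively from the twisting-operator axioms for the pairs $(V_2,V_3)$ and $(V_1,V_3)$), together with \eqref{eq:twisting-op-qybeq} to reorder the $S_i$'s when the indices collide. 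A parallel computation handles $T_2(z)=S_1^{12}(z)S_3^{13}(z)$ on $V_1\wh\ot(V_2\wh\ot_{S_2}V_3)$, using the second form of the twisting-operator axiom (the one with $Y_2^{23}$, matching the right factor) and again \eqref{eq:twisting-op-qybeq}. This bookkeeping with superscripts on a triple tensor product — keeping track of which leg each $S_i$ acts on after the flips $\sigma$ built into $Y_{S_1}$ and $Y_{S_2}$ — is the main obstacle; the identities are forced, but the index juggling is delicate and must be done on the nose over $\C((z_1,z_2))[[\hbar]]$.

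For the second stage, I would write out both $Y$-maps explicitly and exhibit the identity map $V_1\wh\ot V_2\wh\ot V_3\to V_1\wh\ot V_2\wh\ot V_3$ (suitably reassociated) as the equivalence of nonlocal VAs. Using Theorem \ref{thm:twist-tensor} twice, the vertex operator of $(V_1\wh\ot_{S_1}V_2)\wh\ot_{T_1}V_3$ is $\big(Y_{S_1}(z)\ot Y_3(z)\big)T_1^{23}(-z)\sigma = \big((Y_1(z)\ot Y_2(z))S_1^{**}(-z)\sigma\ot Y_3(z)\big)\big(S_2(-z)S_3(-z)\big)^{\text{on legs }23\text{ and }13}\sigma$, while the vertex operator of $V_1\wh\ot_{T_2}(V_2\wh\ot_{S_2}V_3)$ is $\big(Y_1(z)\ot Y_{S_2}(z)\big)T_2^{23}(-z)\sigma$ with $Y_{S_2}(z)=(Y_2(z)\ot Y_3(z))S_2^{**}(-z)\sigma$ and $T_2=S_1^{12}S_3^{13}$. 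After normalizing all the flips and re-sorting the $S$-factors onto their correct pairs of tensor legs, both expressions should reduce to the same operator on $V_1\wh\ot V_2\wh\ot V_3$, namely something of the shape $(Y_1\ot Y_2\ot Y_3)$ composed with a product of the three $S_i(-z)$'s on their respective legs and a single overall permutation; the hexagon equation \eqref{eq:twisting-op-qybeq} is exactly what is needed to match the orderings. I expect that the vacuum vectors $(\vac\ot\vac)\ot\vac$ and $\vac\ot(\vac\ot\vac)$ correspond under the obvious associativity isomorphism, and that this isomorphism intertwines the two vertex operator maps, which is the definition of equivalence; no further compatibility needs checking beyond what Theorem \ref{thm:twist-tensor} already guarantees for each side separately. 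The only real work, again, is the careful leg-by-leg comparison, and I would isolate it as a lemma about products of operators $S_i^{ab}(z)$ on a triple complete tensor product so the final matching is a one-line application of \eqref{eq:twisting-op-qybeq}.
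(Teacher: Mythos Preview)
Your approach is sound and is the standard route to this result, but there is no proof in the paper to compare against: Theorem~\ref{thm:3-twisted-tensor} is stated without proof as the direct $\hbar$-adic analogue of the results in \cite{LS-twisted-tensor,S-iter-twisted-tensor} (see the opening sentence of Section~\ref{sec:twisted-tensor-prod}). The argument you outline---checking the twisting-operator axioms for $T_1,T_2$ by pushing through the hexagon identities of the $S_i$'s and invoking \eqref{eq:twisting-op-qybeq}, then unwinding both iterated $Y$-maps to a common form $(Y_1\ot Y_2\ot Y_3)\circ(\text{product of }S_i(-z))\circ(\text{permutation})$---is exactly what those references do in the classical setting, and the $\hbar$-adic version goes through verbatim since all maps are $\C[[\hbar]]$-linear and the expansions live in $\C((z))[[\hbar]]$.
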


\begin{thm}\label{thm:coherence}
Let $V_1,\dots,V_n$ be $\hbar$-adic nonlocal VAs, and let
$$S_{ij}(z):V_i\wh\ot V_j\to V_i\wh\ot V_j\wh\ot\C((z))[[\hbar]]$$ be twisting operators for every
$1\le i<j\le n$, such that for any $i<j<k$ the twisting operators $S_{ij}(z)$,
$S_{jk}(z)$ and $S_{ik}(z)$ satisfy the hexagon equation \eqref{eq:twisting-op-qybeq}.
Let
\begin{align*}
S_{i,\{n-1,n\}}(z)=S_{i,n-1}^{12}(z)S_{in}^{13}(z)
\end{align*}
be twisting operators
for ordered pairs $(V_i, V_{n-1}\wh\ot_{S_{n-1,n}}V_n)$, ($i=1,2,\dots,n-2$).
Then for every $1<i<j\le n-2$, the twisting operators $S_{ij}(z)$,
$S_{j,\{n-1,n\}}(z)$ and $S_{i,\{n-1,n\}}(z)$ satisfy the hexagon equation \eqref{eq:twisting-op-qybeq}.
Moreover, for disjoint subsets $K_1,K_2,K_3\subset\{1,2,\dots,n\}$,
such that $k_1<k_2<k_3$ for all $k_i\in K_i$,
one has the (inductively defined) twisting operator
\begin{align*}
  &S_{K_1\cup K_2,K_3}(z)=S_{K_2,K_3}^{23}(z)S_{K_1,K_3}^{13}(z),\quad S_{K_1,K_2\cup K_3}(z)=S_{K_1,K_2}^{12}(z)S_{K_1,K_3}^{13}(z),
\end{align*}
and the (inductively defined) $\hbar$-adic nonlocal VA
\begin{align*}
  V_{K_1\cup K_2\cup K_3}:=
  V_{K_1}\wh\ot_{S_{K_1,K_2\cup K_3}}
    \(V_{K_2}\wh\ot_{S_{K_2,K_3}}V_{K_3}\)
  =\(V_{K_1}\wh\ot_{S_{K_1,K_2}}V_{K_2}\)\wh\ot_{S_{K_1\cup K_2,K_3}}V_{K_3}.
\end{align*}
Furthermore, we also denote the $\hbar$-adic nonlocal VA $V_{\{1,2,\dots,n\}}$
by
\begin{align*}
  V_1\wh\ot_{S_{12}}V_2\wh\ot_{S_{23}}\cdots\wh\ot_{S_{n-1,n}}V_n.
\end{align*}
\end{thm}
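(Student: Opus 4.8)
\textbf{Proof proposal for Theorem \ref{thm:coherence}.}

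The plan is to reduce everything to Theorem \ref{thm:3-twisted-tensor} by a careful induction on $n$, with the inductively defined twisting operators $S_{K_1,K_2}(z)$ playing the role of the building blocks. First I would record the base case: for $n=3$ there is nothing new, since $S_{\{1\},\{2,3\}}(z)=S_{12}^{12}(z)S_{13}^{13}(z)=T_2(z)$ and $S_{\{1,2\},\{3\}}(z)=S_{13}^{13}(z)S_{23}^{23}(z)$ — wait, the stated definition gives $S_{K_1\cup K_2,K_3}(z)=S_{K_2,K_3}^{23}(z)S_{K_1,K_3}^{13}(z)$, i.e. $S_{\{1,2\},\{3\}}(z)=S_{23}^{23}(z)S_{13}^{13}(z)=T_1(z)$ — so Theorem \ref{thm:3-twisted-tensor} already furnishes that $T_1(z)$ is a twisting operator for $(V_1\wh\ot_{S_{12}}V_2,V_3)$, that $T_2(z)$ is a twisting operator for $(V_1,V_2\wh\ot_{S_{23}}V_3)$, and that the two triple products agree. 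The substance of the theorem is that this structure is stable: combining subsets preserves both the "is a twisting operator" property and the hexagon equation, so that $V_{\{1,\dots,n\}}$ is well defined independently of the bracketing.

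The key steps, in order. \emph{Step 1: hexagon persistence under merging.} Given $i<j<k$ and the hexagon equation for $S_{ij},S_{jk},S_{ik}$, and likewise for all triples inside $\{i\}\cup K\cup K'$ where $K,K'$ are the blocks we are about to merge, I would verify directly from the componentwise definition $S_{i,K\cup K'}(z)=S_{i,K}^{12}(z)S_{i,K'}^{13}(z)$ (and the dual merging on the first argument) that the hexagon equation \eqref{eq:twisting-op-qybeq} holds with the merged operators. This is the computation hinted at in the statement for $S_{j,\{n-1,n\}}$ and $S_{i,\{n-1,n\}}$: one expands $S_{j,\{n-1,n\}}^{23}=S_{j,n-1}^{23}S_{jn}^{24}$ etc. on the appropriate four-fold tensor product, and uses the pairwise hexagon equations together with the obvious commutativity of operators acting on disjoint tensor factors to slide the factors past each other into the required order. \emph{Step 2: "twisting operator" persistence.} Using Step 1, the hypotheses of Theorem \ref{thm:3-twisted-tensor} are met by the triple $(V_{K_1},V_{K_2},V_{K_3})$ once we know each $V_{K_i}$ is an $\hbar$-adic nonlocal VA and each $S_{K_a,K_b}$ is a twisting operator for $(V_{K_a},V_{K_b})$; that theorem then yields that $S_{K_1,K_2\cup K_3}$ (resp. $S_{K_1\cup K_2,K_3}$) is a twisting operator for $(V_{K_1},V_{K_2}\wh\ot_{S_{K_2,K_3}}V_{K_3})$ (resp. for $(V_{K_1}\wh\ot_{S_{K_1,K_2}}V_{K_2},V_{K_3})$), and simultaneously gives the associativity isomorphism $V_{K_1}\wh\ot_{S_{K_1,K_2\cup K_3}}(V_{K_2}\wh\ot_{S_{K_2,K_3}}V_{K_3})\cong(V_{K_1}\wh\ot_{S_{K_1,K_2}}V_{K_2})\wh\ot_{S_{K_1\cup K_2,K_3}}V_{K_3}$, which is exactly the two-way equality displayed for $V_{K_1\cup K_2\cup K_3}$. \emph{Step 3: induction and coherence.} Running Steps 1--2 as an induction on $|K_1\cup K_2\cup K_3|$ (equivalently on $n$), I would conclude that every way of iteratively bracketing $V_1,\dots,V_n$ with the $S_{ij}$ produces the same $\hbar$-adic nonlocal VA up to the canonical equivalences; the final notation $V_1\wh\ot_{S_{12}}\cdots\wh\ot_{S_{n-1,n}}V_n$ is then unambiguous. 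Strictly this last point is a Mac Lane-type coherence argument — one checks that all associativity isomorphisms are built from the single pentagon-compatible one of Theorem \ref{thm:3-twisted-tensor} — but for the purposes of this paper it suffices to exhibit the two explicit bracketings in the statement and note they agree, which is immediate from Theorem \ref{thm:3-twisted-tensor}.

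The main obstacle I anticipate is \emph{bookkeeping in Step 1}: the hexagon equation for merged blocks is a statement on a tensor product with as many factors as $|K_1|+|K_2|+|K_3|$, and one must track which superscripts $S_{K_a,K_b}^{\,pq}$ acquires after the block substitutions, making sure that the "disjoint factors commute" moves are legitimate and that each application of a pairwise hexagon equation is on the correct three consecutive-in-the-relevant-sense factors. A clean way to manage this is to prove the two-block merge lemmas ($S_{i,K\cup K'}$ from $S_{i,K},S_{i,K'}$ on the right, and the mirror statement on the left) as standalone lemmas with explicit index conventions, then feed them to Theorem \ref{thm:3-twisted-tensor}; the associativity and the "is a twisting operator" parts are then essentially formal. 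No genuinely new analytic input beyond Theorems \ref{thm:twist-tensor} and \ref{thm:3-twisted-tensor} is required — the $\hbar$-adic topology enters only through those, and all manipulations here are algebraic identities among $\C((z))[[\hbar]]$-linear maps on complete tensor products.
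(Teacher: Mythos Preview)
The paper does not actually supply a proof of this theorem: Section~\ref{sec:twisted-tensor-prod} opens by saying it presents ``the direct $\hbar$-adic analogue of the twisted tensor products given in \cite{LS-twisted-tensor, S-iter-twisted-tensor}'', and Theorems~\ref{thm:twist-tensor}, \ref{thm:3-twisted-tensor}, \ref{thm:coherence} are all stated without proof, as results imported from those references. So there is no ``paper's own proof'' to compare against; the theorem is being quoted, not proved.

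Your proposal is the standard and correct route: induct on the number of factors, verify that merging two adjacent blocks preserves the hexagon equation among the remaining blocks (your Step~1), then invoke Theorem~\ref{thm:3-twisted-tensor} at each stage to obtain both the twisting-operator property for the merged operators and the associativity equivalence (your Step~2), and conclude coherence. This is exactly how the result is established in \cite{S-iter-twisted-tensor} in the non-$\hbar$-adic setting, and as you note the passage to the $\hbar$-adic case is purely formal since all manipulations are $\C((z))[[\hbar]]$-linear. Your identification of the bookkeeping in Step~1 as the only real work is accurate.
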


We generalize the notion of $\hbar$-adic quantum VAs as follows.

\begin{de}\label{de:qyb-mult}
Let $(V_i,Y_i,\vac)$ ($1\le i\le n$) be $\hbar$-adic nonlocal VAs.
We call a family of $\C[[\hbar]]$-module maps $S_{ij}(z):V_i\wh\ot V_j\to V_i\wh\ot V_j\wh\ot \C((z))[[\hbar]]$ ($1\le i,j\le n$)
\emph{quantum Yang-Baxter operators} of the $n$-tuple $(V_1,\dots, V_n)$, if
\begin{align}
  &S_{ij}(z)(v\ot \vac)=v\ot \vac\quad
   S_{ij}(z)(\vac\ot u)=\vac\ot u\quad\te{for }u\in V_j,\,\,v\in V_i,\label{eq:multqyb-vac}\\
  &[\partial\ot 1,S_{ij}(z)]=-\frac{d}{dz}S_{ij}(z),
  \quad [1\ot\partial, S_{ij}(z)]=\frac{d}{dz}S_{ij}(z),\quad S_{ji}^{21}(z)S_{ij}(-z)=1,\label{eq:multqyb-der-shift}\\
  &Y_i(u,z_1)Y_i(v,z_2)\sim Y_i(z_2)(1\ot Y_i(z_1))S_{ii}(z_2-z_1)(v\ot u),\quad\te{for }u,v\in V_i,\label{eq:qyb-locality}\\
  &S_{ij}(z_1)Y_i^{12}(z_2)=Y_i^{12}(z_2)S_{ij}^{23}(z_1)S_{ij}^{13}(z_1+z_2),\label{eq:multqyb-hex1}\\
  &S_{ij}(z_1)Y_j^{23}(z_2)=Y_j^{23}(z_2)S_{ij}^{12}(z_1-z_2)S_{ij}^{13}(z_1),\label{eq:multqyb-hex2}\\
  &S_{ij}^{12}(z_1)S_{ik}^{13}(z_1+z_2)S_{jk}^{23}(z_2)
  =S_{jk}^{23}(z_2)S_{ik}^{13}(z_1+z_2)S_{ij}^{12}(z_1)\quad\te{for } 1\le i,j,k\le n.\label{eq:multqyb-qybeq}
\end{align}
We call the pair $((V_i)_{i=1}^n,(S_{ij}(z))_{i,j=1}^n)$ an \emph{$\hbar$-adic $n$-quantum VA}.
Moreover, let $$((V'_i)_{i=1}^n,(S'_{ij}(z))_{i,j=1}^n)$$ another $\hbar$-adic $n$-quantum VA, and let $f_i:V_i\to V'_i$ ($1\le i\le n$) be $\hbar$-adic nonlocal VA homomorphisms.
We call $(f_1,\dots,f_n)$ an \emph{$\hbar$-adic $n$-quantum VA homomorphism} if
\begin{align}
  (f_i\ot f_j)\circ S_{ij}(z)=S'_{ij}(z)\circ(f_i\ot f_j)\quad\te{for }1\le i,j\le n.
\end{align}
\end{de}

\begin{rem}
{\em
The notion of $\hbar$-adic $1$-quantum VAs is identical to the notion of $\hbar$-adic quantum VAs.
}
\end{rem}

\begin{rem}
{\em
Let $(V_i,Y_i,\vac)$ ($1\le i\le n$) be $\hbar$-adic nonlocal VAs, and let $(S_{ij}(z))_{i,j=1}^n$ be quantum Yang-Baxter operators of the $n$-tuple $(V_1,\dots,V_n)$.
Then
for any subset $\{i_1,i_2,\dots,i_k\}$ of $\{1,2,\dots,n\}$, $(S_{i_a,i_b}(z))_{a,b=1}^k$ are quantum Yang-Baxter operators of the $k$-tuple $(V_{i_1},\dots,V_{i_k})$.
}
\end{rem}

\begin{lem}\label{lem:S-quotient-alg}
For each $1\le i\le n$, let $U_i\subset V_i$ be a generating subset of $V_i$,
and let $M_i\subset V_i$ be a closed ideal of $V_i$ such that $[M_i]=M_i$ and
\begin{align*}
  &S_{ij}(z)(M_i\ot U_j),\,S_{ij}(x)(U_i\ot M_j)
  \subset M_i\wh\ot V_j\wh\ot \C((z))[[\hbar]]+V_i\wh\ot M_j\wh\ot \C((z))[[\hbar]]
\end{align*}
for $1\le i,j\le n$.
Then $S_{ij}(z)$ induces a $\C[[\hbar]]$-module map on $$V_i/M_i\wh\ot V_j/M_j\wh\ot \C((z))[[\hbar]],$$
which is still denoted by $S_{ij}(z)$.
And $((V_i/M_i)_{i=1}^n,(S_{ij}(z))_{i,j=1}^n)$ is an $\hbar$-adic $n$-quantum VA.
\end{lem}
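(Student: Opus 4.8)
The plan is to verify that the quotient data $((V_i/M_i)_{i=1}^n,(S_{ij}(z))_{i,j=1}^n)$ satisfies every clause of Definition~\ref{de:qyb-mult}. First I would check that $S_{ij}(z)$ is well defined on the quotient. The map $S_{ij}(z)$ a priori lives on $V_i\wh\ot V_j\wh\ot\C((z))[[\hbar]]$, and to descend to $(V_i/M_i)\wh\ot(V_j/M_j)\wh\ot\C((z))[[\hbar]]$ it suffices to show $S_{ij}(z)$ carries the closed submodule $\overline{M_i\wh\ot V_j+V_i\wh\ot M_j}\wh\ot\C((z))[[\hbar]]$ into itself. The hypothesis gives this on the generating set $M_i\ot U_j$ and $U_i\ot M_j$; to extend from generators $U_j$ to all of $V_j$ (and likewise for $U_i$) I would use the hexagon identities \eqref{eq:multqyb-hex1}--\eqref{eq:multqyb-hex2}, which express the action of $S_{ij}(z)$ after applying a vertex operator $Y_i$ or $Y_j$ in terms of $S_{ij}$ before it: since $M_i$ and $M_j$ are \emph{ideals}, $Y_i(M_i,z_0)V_i\subset\overline{M_i}((z_0))$ and $Y_i(V_i,z_0)M_i\subset\overline{M_i}((z_0))$, so an induction on the length of a product of modes of elements of $U_i$ (resp.\ $U_j$) acting on the vacuum propagates the invariance; the $\hbar$-adic closure and the operation $[\cdot]=M_i$ assumption guarantee the limits stay inside. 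The continuity of $S_{ij}(z)$ (it is $\C[[\hbar]]$-linear into a topologically free module) then extends the invariance to the closure, so the induced map on the quotient exists.

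Next I would confirm each $V_i/M_i$ is an $\hbar$-adic nonlocal VA: this is the standard fact that the quotient of a topologically free $\hbar$-adic nonlocal VA by a closed ideal $M_i$ with $[M_i]=M_i$ is again topologically free (torsion-freeness is exactly $[M_i]=M_i$) and inherits $Y_i$, $\vac$, $\partial$; I would cite the analogous construction from \cite{Li-h-adic} or \cite{JKLT-Defom-va}. With that in hand, the remaining axioms are checked by pushing the identities \eqref{eq:multqyb-vac}--\eqref{eq:multqyb-qybeq} through the quotient map $q_i:V_i\to V_i/M_i$. Concretely: the vacuum identities \eqref{eq:multqyb-vac}, the derivation/shift and unitarity relations \eqref{eq:multqyb-der-shift}, the hexagon identities \eqref{eq:multqyb-hex1}--\eqref{eq:multqyb-hex2}, and the Yang--Baxter equation \eqref{eq:multqyb-qybeq} are all polynomial (operadic) identities among $S_{ij}$, $Y_i$, $\partial$, $\vac$; applying the surjection $q_i\ot q_j$ (and $q_i\ot q_j\ot q_k$ for \eqref{eq:multqyb-qybeq}) and using that these surjections intertwine $S_{ij}$ with the induced $S_{ij}$ by construction, each identity descends verbatim. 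The only genuinely analytic point among these is $S$-locality \eqref{eq:qyb-locality}: there one must check that the relation "$\sim$" — equality after multiplying by $(z_1-z_2)^k$ and reducing mod $\hbar^n$ — is preserved under $q_i\ot q_i$. This holds because $\pi_n^{(2)}$ for the quotient factors through $\pi_n^{(2)}$ for $V_i$ composed with the reduction induced by $M_i$, so a witnessing power $(z_1-z_2)^{k_n}$ for $V_i$ works for $V_i/M_i$.

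The main obstacle I expect is precisely the well-definedness step: upgrading the invariance of $\overline{M_i\wh\ot V_j+V_i\wh\ot M_j}$ under $S_{ij}(z)$ from the generating sets $U_i,U_j$ to all of $V_i,V_j$. The subtlety is that $S_{ij}(z)$ need not be "local" in either slot in any naive sense, so one really must exploit the hexagon relations \eqref{eq:multqyb-hex1}--\eqref{eq:multqyb-hex2} as the mechanism that converts "apply a mode, then $S$" into "apply $S$, then a mode", combined with the fact that the modes of $U_i$ acting on $\vac$ topologically span $V_i$; the $[\cdot]$-stability of $M_i$ is exactly what is needed to keep the $\hbar$-adic limits inside $M_i$ rather than merely inside $[M_i]$. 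Once that invariance is established the rest is bookkeeping: every axiom is a diagram that commutes downstairs because it commutes upstairs and the vertical maps are the (surjective, continuous) quotient maps that intertwine all the structure by fiat. $\qed$
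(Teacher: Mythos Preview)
Your approach is correct, and in fact more detailed than the paper itself, which states this lemma without proof (treating it as routine). Your key observation---that the hexagon identities \eqref{eq:multqyb-hex1}--\eqref{eq:multqyb-hex2} convert ``$S_{ij}$ after a vertex operator'' into ``vertex operator after two copies of $S_{ij}$'', so that invariance of $M_i\wh\ot V_j+V_i\wh\ot M_j$ propagates from generators by induction on mode-length, using that $M_i$, $M_j$ are ideals---is exactly the right mechanism, and your case analysis (whether the ``bad'' factor lands in $M_i$ or $M_j$ at each step) goes through. The topological bookkeeping you flag (closure, $[M_i]=M_i$ for torsion-freeness of the quotient, continuity of $S_{ij}$) is indeed what makes the limits stay inside. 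Once well-definedness is secured, the remaining axioms descend exactly as you say.
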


\begin{lem}\label{lem:vertex-op-K}
For $K=\{i_1<i_2<\cdots<i_k\}\subset\{1,2,\dots,n\}$, we denote by $Y_K$ the vertex operator of $V_K$. Then we have that
\begin{align*}
  &Y_K(z)=Y_{i_1}^{12}(z) Y_{i_2}^{34}(z)\cdots Y_{i_k}^{2k-1,2k}(z)\\
  \times&\prod_{\substack{a-b=-1\\ 1\le a,b\le k}}S_{i_a,i_b}^{2a,2b-1}(-z)
  \prod_{\substack{a-b=-2\\ 1\le a,b\le k}}S_{i_a,i_b}^{2a,2b-1}(-z)\cdots
  \prod_{\substack{a-b=1-k\\ 1\le a,b\le k}}S_{i_a,i_b}^{2a,2b-1}(-z)
  \sigma^{23}\sigma^{345}\cdots  \sigma^{k,k+1,\dots,2k-1},
\end{align*}
where $\sigma^{a,a+1,\dots,b}=\sigma^{a,a+1}\sigma^{a+1,a+2}\cdots\sigma^{b-1,b}$.
\end{lem}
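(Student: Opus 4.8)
The plan is to induct on $k=|K|$, using the iterated construction of $V_K$ from Theorem~\ref{thm:coherence} together with the single-step formula $Y_S(z)=(Y_1(z)\ot Y_2(z))S^{23}(-z)\sigma$ of Theorem~\ref{thm:twist-tensor}. The base cases are immediate: for $k=1$ all the displayed products are empty and $Y_K(z)=Y_{i_1}(z)=Y_{i_1}^{12}(z)$, and for $k=2$ the statement is exactly Theorem~\ref{thm:twist-tensor} for $V_K=V_{i_1}\wh\ot_{S_{i_1i_2}}V_{i_2}$, once the flip $\sigma$ of that theorem is read as the transposition $\sigma^{23}$ of the two inner factors of $(V_{i_1}\wh\ot V_{i_2})\ot(V_{i_1}\wh\ot V_{i_2})$ and one observes that the only surviving twisting-operator factor is $S_{i_1i_2}^{23}(-z)$.

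For the inductive step put $K'=\{i_2<\cdots<i_k\}$. By Theorem~\ref{thm:coherence} we have $V_K=V_{i_1}\wh\ot_{S_{\{i_1\},K'}}V_{K'}$ with, after writing $V_{i_1}\wh\ot V_{K'}=V_{i_1}\wh\ot V_{i_2}\wh\ot\cdots\wh\ot V_{i_k}$,
\begin{align*}
  S_{\{i_1\},K'}(z)=S_{i_1,i_2}^{1,2}(z)\,S_{i_1,i_3}^{1,3}(z)\cdots S_{i_1,i_k}^{1,k}(z),
\end{align*}
so Theorem~\ref{thm:twist-tensor} gives $Y_K(z)=(Y_{i_1}(z)\ot Y_{K'}(z))\,S_{\{i_1\},K'}^{23}(-z)\,\sigma$, with $\sigma$ the block flip $(u\ot v)\ot(u'\ot v')\mapsto(u\ot u')\ot(v\ot v')$ on $(V_{i_1}\wh\ot V_{K'})\ot(V_{i_1}\wh\ot V_{K'})$. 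The plan is to substitute the inductive expression for $Y_{K'}(z)$, with its tensor slots relabeled so that the two copies of $V_{K'}$ occupy slots $3,\dots,k+1$ and $k+2,\dots,2k$ of $V_K\ot V_K$, and then reduce to the claimed normal form in four moves. First, $Y_{i_1}^{12}(z)$ and the factors $Y_{i_2}^{34}(z),\dots,Y_{i_k}^{2k-1,2k}(z)$ of $Y_{K'}(z)$ act on pairwise disjoint slots, so they commute into the head $Y_{i_1}^{12}(z)\cdots Y_{i_k}^{2k-1,2k}(z)$; moreover, since $S_{\{i_1\},K'}^{23}(-z)$ already sits to the right of all vertex operators, no twisting operator ever has to be commuted past a $Y$. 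Second, pushing $S_{\{i_1\},K'}^{23}(-z)=S_{i_1,i_2}^{2,3}(-z)\cdots S_{i_1,i_k}^{2,k+1}(-z)$ to the left past the flip part of $Y_{K'}(z)$, via the relabeling rule $\sigma^{pq}S^{rs}=S^{r's'}\sigma^{pq}$ (with $\{r',s'\}$ obtained from $\{r,s\}$ by interchanging $p$ and $q$), turns the second index $b+1$ of $S_{i_1,i_b}$ into $2b-1$, depositing $S_{i_1,i_{1+m}}^{2,2m+1}(-z)$ onto the $m$-th anti-diagonal for $1\le m\le k-1$. Third, the resulting family of twisting operators is rearranged into $\prod_{m=1}^{k-1}\prod_{a-b=-m}S_{i_a,i_b}^{2a,2b-1}(-z)$ using only commutativity on disjoint slots, since an $S_{i_1,i_b}^{2,2b-1}$ overlaps an $S_{i_a,i_b}^{2a,2b-1}$ ($a\ge2$) solely in the slot $2b-1$ and the two already occur in the correct relative order; the Yang--Baxter equation~\eqref{eq:multqyb-qybeq} and the hexagon identities~\eqref{eq:multqyb-hex1}--\eqref{eq:multqyb-hex2} remain available should a different reordering be preferred. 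Fourth, one checks that the composite of the relabeled flip part of $Y_{K'}(z)$ with the block flip $\sigma$ equals $\sigma^{23}\sigma^{345}\cdots\sigma^{k,k+1,\dots,2k-1}$.

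The main obstacle is exactly this last, purely combinatorial, step, together with the index bookkeeping across the two nested block/component relabelings: one must verify that the block flip on $(V_{i_1}\wh\ot V_{K'})\ot(V_{i_1}\wh\ot V_{K'})$, composed with the flip string of $Y_{K'}(z)$ transported onto slots $3,\dots,2k$, reproduces exactly the flip string attached to $K$, and that the index shifts described above land each new twisting-operator factor on the correct anti-diagonal with the argument $-z$. Apart from this, the induction uses nothing beyond Theorems~\ref{thm:twist-tensor} and~\ref{thm:coherence} and the defining relations of Definition~\ref{de:qyb-mult}.
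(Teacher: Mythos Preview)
Your proof is correct and follows the same inductive strategy on $k$ that the paper sketches. The paper's one-line proof points to the hexagon identities \eqref{eq:multqyb-hex1}--\eqref{eq:multqyb-hex2}, whereas with your splitting $V_K=V_{i_1}\wh\ot_{S_{\{i_1\},K'}}V_{K'}$ the new $S_{i_1,*}$ factors never need to cross a vertex operator and the reordering only involves disjoint-slot commutativity---a slightly sharper observation, but the overall approach is the same.
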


\begin{proof}
By utilizing \eqref{eq:multqyb-hex1} and \eqref{eq:multqyb-hex2}, one can straightforwardly prove the lemma using induction on $k$.
\end{proof}

For two subsets $K=\{i_1<i_2<\cdots<i_k\}$ and $L=\{j_1<j_2<\cdots<j_l\}$ of $\{1,2,\dots,n\}$, we set
\begin{align}
  S_{K,L}(z)=\prod_{\substack{a-b=k-1\\ 1\le a\le k\\ 1\le b\le l}}S_{i_a,j_b}^{a,k+b}(z)
    \prod_{\substack{a-b=k-2\\ 1\le a\le k\\ 1\le b\le l}}S_{i_a,j_b}^{a,k+b}(z)\cdots
    \prod_{\substack{a-b=1-l\\ 1\le a\le k\\ 1\le b\le l}}S_{i_a,j_b}^{a,k+b}(z).
\end{align}
The following result can be straightforwardly verified.

\begin{lem}\label{lem:3-qva-to-2-qva}
Let $(V_1,V_2,V_3)$ be an $\hbar$-adic $3$-quantum VA with quantum Yang-Baxter operators $(S_{ij}(z))_{i,j=1}^3$.
Set $K_1=\{1,2\}$ and $K_2=\{3\}$.
Then $(V_{K_1},V_{K_2})$ is an $\hbar$-adic $2$-quantum VA with quantum Yang-Baxter operators $(S_{K_a,K_b}(z))_{a,b=1}^2$.
\end{lem}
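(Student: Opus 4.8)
The plan is to check the defining conditions \eqref{eq:multqyb-vac}--\eqref{eq:multqyb-qybeq} of Definition \ref{de:qyb-mult} for the $2$-tuple $(V_{K_1},V_{K_2})$ one by one, after first recording the relevant data. By Theorem \ref{thm:twist-tensor}, $V_{K_1}=V_1\wh\ot_{S_{12}}V_2$ is an $\hbar$-adic nonlocal VA with vacuum $\vac\ot\vac$; applying $\frac{d}{dz}$ at $z=0$ to $Y_{K_1}(u\ot v,z)(\vac\ot\vac)=Y_1(u,z)\vac\ot Y_2(v,z)\vac$ shows its canonical derivation is $\partial_{K_1}=\partial\ot 1+1\ot\partial$, and by Lemma \ref{lem:vertex-op-K} its vertex operator is $Y_{K_1}(z)=Y_1^{12}(z)Y_2^{34}(z)S_{12}^{23}(-z)\sigma^{23}$, while $V_{K_2}=V_3$ and $Y_{K_2}=Y_3$. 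Unwinding the definition of $S_{K,L}$ gives
\[
  S_{K_1,K_1}(z)=S_{21}^{23}(z)S_{11}^{13}(z)S_{22}^{24}(z)S_{12}^{14}(z),\qquad S_{K_1,K_2}(z)=S_{23}^{23}(z)S_{13}^{13}(z),
\]
\[
  S_{K_2,K_1}(z)=S_{31}^{12}(z)S_{32}^{13}(z),\qquad S_{K_2,K_2}(z)=S_{33}(z).
\]
The vacuum conditions \eqref{eq:multqyb-vac} for these four operators follow at once from the vacuum conditions \eqref{eq:multqyb-vac} for the $S_{ij}$; the commutator identities in \eqref{eq:multqyb-der-shift} follow from the Leibniz rule and the corresponding identities for the $S_{ij}$; and the unitarities in \eqref{eq:multqyb-der-shift} follow by telescoping, using that operators supported on disjoint tensor legs commute and then the unitarities of the $S_{ij}$.

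For the cross-factor hexagons I would invoke Theorem \ref{thm:3-twisted-tensor}. Since $(V_1,V_2,V_3)$ is an $\hbar$-adic $3$-quantum VA, each $S_{ij}$ with $i<j$ is a twisting operator for $(V_i,V_j)$ by \eqref{eq:multqyb-vac}, \eqref{eq:multqyb-hex1} and \eqref{eq:multqyb-hex2}, and \eqref{eq:multqyb-qybeq} with $(i,j,k)=(1,2,3)$ is exactly the hexagon \eqref{eq:twisting-op-qybeq} for $(S_1,S_2,S_3)=(S_{12},S_{23},S_{13})$. Hence Theorem \ref{thm:3-twisted-tensor} applies and gives that $S_{K_1,K_2}(z)=S_{23}^{23}(z)S_{13}^{13}(z)$ is a twisting operator for $(V_{K_1},V_{K_2})=(V_1\wh\ot_{S_{12}}V_2,V_3)$; unpacking the definition of twisting operator, this is precisely \eqref{eq:multqyb-vac}, \eqref{eq:multqyb-hex1} and \eqref{eq:multqyb-hex2} for the pair $(K_1,K_2)$. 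The unitarities of the $S_{ij}$ give $S_{ij}(z)\inv=S_{ji}^{21}(-z)$, so each $S_{ij}(z)$, hence $S_{K_1,K_2}(z)$, is invertible; combining this with the unitarity $S_{K_2,K_1}^{21}(z)S_{K_1,K_2}(-z)=1$ obtained in the previous paragraph identifies $S_{K_2,K_1}(z)$ with $S_{K_1,K_2}^{21}(-z)\inv$, so $S_{K_2,K_1}(z)$ is a twisting operator for $(V_{K_2},V_{K_1})$ by \cite[Lemma~2.3]{LS-twisted-tensor}, which yields \eqref{eq:multqyb-hex1} and \eqref{eq:multqyb-hex2} for $(K_2,K_1)$.

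What remain are the conditions internal to each factor together with the mixed Yang--Baxter equations \eqref{eq:multqyb-qybeq}. For $V_{K_2}=V_3$ with $S_{K_2,K_2}=S_{33}$ there is nothing new: restricting the $3$-tuple to $\{3\}$ already makes $(V_3,S_{33})$ an $\hbar$-adic quantum VA, which supplies the $S$-locality \eqref{eq:qyb-locality} and the $(K_2,K_2)$ cases of \eqref{eq:multqyb-hex1}--\eqref{eq:multqyb-hex2}. For $V_{K_1}$, I would prove the $S$-locality \eqref{eq:qyb-locality} and the hexagons \eqref{eq:multqyb-hex1}--\eqref{eq:multqyb-hex2} for $S_{K_1,K_1}$ by substituting the formulas above for $Y_{K_1}$ and $S_{K_1,K_1}$ and bringing the resulting products of $Y_i$'s, $S_{ij}$'s and flips into normal order: one collects the $Y_1(\,\cdot\,,z_1)Y_1(\,\cdot\,,z_2)$ and $Y_2(\,\cdot\,,z_1)Y_2(\,\cdot\,,z_2)$ blocks together (moving the intervening $S_{ij}$'s past them with \eqref{eq:multqyb-hex1}--\eqref{eq:multqyb-hex2}), applies the $S_{ii}$-localities \eqref{eq:qyb-locality} for $V_1$ and $V_2$ (and Remark \ref{rem:Jacobi-S} to handle the $\sim$), and then reshuffles the accumulated $S_{ij}$'s using the $3$-tuple equations \eqref{eq:multqyb-qybeq}. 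The mixed equations \eqref{eq:multqyb-qybeq} for indices $(a,b,c)\in\{1,2\}^3$ are treated in the same way, expanding each $S_{K_a,K_b}$ into $S_{ij}$'s and reducing to \eqref{eq:multqyb-qybeq} for the triple.

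The main obstacle is the bookkeeping in this last step: $Y_{K_1}$ already carries an internal flip $\sigma^{23}$ and an $S_{12}$-twist, and $S_{K_1,K_1}$ is a four-fold product of $S_{ij}$'s, so the $S_{K_1,K_1}$-locality and the mixed Yang--Baxter equations unfold into long products acting on five or six tensor legs, and the real work is choosing the sequence of \eqref{eq:multqyb-hex1}--\eqref{eq:multqyb-hex2} and \eqref{eq:multqyb-qybeq} moves that collapses them. A convenient way to organize it, and to reuse rather than reprove the nonlocal-VA and twisting-operator bookkeeping, is to view the statement as the $n=3$, ``merge the first two factors'' instance of the coherence construction of Theorem \ref{thm:coherence}, so that only the locality and Yang--Baxter identities above remain to be verified by hand.
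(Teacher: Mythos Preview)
The paper does not actually give a proof of this lemma; it simply records that ``the following result can be straightforwardly verified'' and states the lemma without further argument. Your proposal is a correct and considerably more detailed outline of precisely that direct verification, and the strategy you describe---reducing the cross-factor hexagons to Theorem~\ref{thm:3-twisted-tensor}, the $K_2$-internal conditions to the $(V_3,S_{33})$ quantum VA structure, and the remaining $K_1$-internal and mixed Yang--Baxter identities to repeated use of \eqref{eq:multqyb-hex1}--\eqref{eq:multqyb-qybeq}---is exactly the intended ``straightforward'' check.
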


\begin{prop}\label{prop:n-qva-to-k-qva}
Let $((V_i)_{i=1}^n,(S_{ij}(z))_{i,j=1}^n)$ be an $\hbar$-adic $n$-quantum VA, and let $(K_1,\dots,K_k)$ be a partition of the set $\{1,2,\dots,n\}$.
Then $$((V_{K_i})_{i=1}^k,(S_{K_i,K_j}(z))_{i,j=1}^k)$$ is an $\hbar$-adic $k$-quantum VA.
\end{prop}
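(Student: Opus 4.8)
The plan is to reduce the general statement to the case of a partition into two blocks and then iterate. First I would observe that it suffices to prove the claim when $k=2$: a partition $(K_1,\dots,K_k)$ of $\{1,\dots,n\}$ can be refined through the coarser partition $(K_1\cup\cdots\cup K_{k-1},K_k)$, and the twisting operators $S_{K_i,K_j}(z)$ are, by their inductive definition, built by exactly the same product formula out of the $S_{K_i,K_j}(z)$ for the sub-$n$-tuples. So, after establishing the two-block case, one applies it repeatedly: at each stage one lumps together the first several blocks, uses the two-block result to see that $((V_{K_1\cup\cdots\cup K_{k-1}},V_{K_k}),(S_{\cdot,\cdot}(z)))$ is a $2$-quantum VA, and then, using the already-known description of $V_{K_1\cup\cdots\cup K_{k-1}}$ as an iterated twisted tensor product together with the associativity from Theorem \ref{thm:coherence}, peels off one block at a time. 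The bookkeeping here is exactly the compatibility of the inductive definitions of $S_{K_i,K_j}(z)$ and $V_{K_i}$ with coarsening, which is part of Theorem \ref{thm:coherence}; I would cite that rather than re-derive it.

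For the two-block case, write $K=K_1=\{i_1<\cdots<i_k\}$, $L=K_2=\{j_1<\cdots<j_l\}$, so that after relabeling this is Lemma \ref{lem:3-qva-to-2-qva} with more indices; indeed the case $k=2$, $l=1$ is precisely that lemma, and the general two-block case is proved by the same argument with the displayed products $S_{K,L}(z)$ in place of $S_{K_1,K_2}(z)$. Concretely, I must check that $S_{K,L}(z)$ satisfies the six axioms \eqref{eq:multqyb-vac}--\eqref{eq:multqyb-qybeq} of Definition \ref{de:qyb-mult} for the pair $(V_K,V_L)$, where the vertex operators $Y_K$, $Y_L$ are the ones given explicitly in Lemma \ref{lem:vertex-op-K}. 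The vacuum axiom \eqref{eq:multqyb-vac} is immediate since each factor $S_{i_a,j_b}(z)$ fixes vectors of the form $\vac\ot(-)$ and $(-)\ot\vac$ by \eqref{eq:multqyb-vac} for the original $n$-tuple, and the vacuum of $V_K$ is $\vac\ot\cdots\ot\vac$. The derivation-shift and unitarity axioms \eqref{eq:multqyb-der-shift} follow because the canonical derivation $\partial_K$ of the twisted tensor product acts as $\sum_a 1\ot\cdots\ot\partial_{i_a}\ot\cdots\ot 1$ (a standard fact for twisted tensor products, and derivable from Theorem \ref{thm:twist-tensor} and Lemma \ref{lem:vertex-op-K}), so the shift commutator distributes over the product $S_{K,L}(z)$ factor by factor using \eqref{eq:multqyb-der-shift} for each $S_{i_a,j_b}$; unitarity $S_{L,K}^{21}(-z)S_{K,L}(z)=1$ is a repeated application of $S_{j_b,i_a}^{21}(-z)S_{i_a,j_b}(z)=1$ together with the quantum Yang-Baxter equation \eqref{eq:multqyb-qybeq} to move factors past each other into cancelling position.

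The two remaining axioms are the real content. The $S$-locality \eqref{eq:qyb-locality} for $(V_K,Y_K)$ with operator $S_{KK}(z):=S_{K,K}(z)$ is inherited: one expands $Y_K(u,z_1)Y_K(v,z_2)$ using Lemma \ref{lem:vertex-op-K}, repeatedly applies the locality \eqref{eq:qyb-locality} for the individual $Y_{i_a}$ together with the two hexagon identities \eqref{eq:multqyb-hex1}, \eqref{eq:multqyb-hex2} to slide all the $S$-factors into the prescribed order, and reads off $S_{KK}(z)$. The hexagon identities \eqref{eq:multqyb-hex1}, \eqref{eq:multqyb-hex2} for the pair $(V_K,V_L)$ are where I expect the main difficulty: one must show $S_{K,L}(z_1)Y_K^{12}(z_2)=Y_K^{12}(z_2)S_{K,L}^{23}(z_1)S_{K,L}^{13}(z_1+z_2)$ and the analogous relation with $Y_L^{23}$, where $Y_K$ itself is the complicated iterated product of Lemma \ref{lem:vertex-op-K}. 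The proof is an induction on $|K|+|L|$: strip off the outermost tensor factor of $V_K$ (respectively $V_L$), so that $V_K=V_{i_1}\wh\ot_{\cdots}V_{K\setminus\{i_1\}}$, use the inductive hypothesis for the smaller pair, and then one is left with a finite sequence of moves, each of which is one application of \eqref{eq:multqyb-hex1}, \eqref{eq:multqyb-hex2}, or the Yang-Baxter equation \eqref{eq:multqyb-qybeq}, to rearrange the $S_{i_a,j_b}$-factors across a single $Y_{i_1}$ or $Y_{j_1}$; the argument is essentially the verification already carried out in Theorem \ref{thm:3-twisted-tensor} and Theorem \ref{thm:coherence}, now done uniformly. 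Finally the Yang-Baxter equation \eqref{eq:multqyb-qybeq} among the three block operators $S_{K_i,K_j}(z)$ is, again, a consequence of the componentwise Yang-Baxter equations \eqref{eq:multqyb-qybeq} and the hexagon relations, proved by the same induction on total size; this is exactly the hexagon-equation assertion inside Theorem \ref{thm:coherence}, so I would invoke it. In short: the whole proposition follows by combining Theorem \ref{thm:coherence}, Lemma \ref{lem:vertex-op-K}, Lemma \ref{lem:3-qva-to-2-qva} and Lemma \ref{lem:S-quotient-alg}, with the only genuine work being the bookkeeping of $S$-factor permutations, whose every individual step is one of the defining identities \eqref{eq:multqyb-hex1}--\eqref{eq:multqyb-qybeq}.
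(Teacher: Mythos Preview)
Your proposal is correct in substance but considerably more elaborate than the paper's argument. The paper dispatches the proposition in one line: induct on the size of each block $K_i$, invoking Lemma~\ref{lem:3-qva-to-2-qva} at each step. The point is that every axiom in Definition~\ref{de:qyb-mult} involves at most three of the indices, so to verify them for the coarsened family one may always restrict to the relevant three-or-fewer blocks, and the single merge step (combining two factors of a $3$-quantum VA into one factor of a $2$-quantum VA) is exactly Lemma~\ref{lem:3-qva-to-2-qva}. Iterating this merge inside each $K_i$ walks from the original $n$-quantum VA down to the desired $k$-quantum VA.

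By contrast, you first reduce to $k=2$ and then verify the two-block axioms by hand via Lemma~\ref{lem:vertex-op-K} and induction on $|K|+|L|$. That works, but note that your ``reduction to $k=2$'' is not a genuine reduction: the QYBE for three distinct blocks is not a consequence of any two-block statement, and you correctly end up treating it separately by appeal to Theorem~\ref{thm:coherence}. So your proof really has three pieces (single-block quantum VA structure, two-block compatibilities, three-block QYBE) rather than one induction. This is fine, and your explicit axiom-by-axiom verification is more transparent about where each identity \eqref{eq:multqyb-hex1}--\eqref{eq:multqyb-qybeq} gets used; the paper's approach buys brevity by packaging all of that into the single merge lemma. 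One small point: your citation of Lemma~\ref{lem:S-quotient-alg} is irrelevant here---that lemma concerns passing to quotient algebras and plays no role in this proposition.
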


\begin{proof}
This proposition is immediate from induction on the size of each subset $K_i$ and Lemma \ref{lem:3-qva-to-2-qva}.
\end{proof}

Finally, we list some formulas for later use.
Let $((V_i)_{i=1}^n,(S_{ij}(z))_{i,j=1}^n$ be an $\hbar$-adic $n$-quantum VA.
The lemma below follows immediately from \eqref{eq:multqyb-der-shift}.

\begin{lem}\label{lem:multqyb-shift-total}
For any $f(z)\in\C[z][[\hbar]]$,
we have that
\begin{align*}
  &S_{ij}(z)f(\partial\ot 1)=f\(\partial\ot 1+\pd{z}\)S_{ij}(z),\quad
  S_{ij}(z)f(1\ot \partial)=f\(1\ot \partial-\pd{z}\)S_{ij}(z).
\end{align*}
\end{lem}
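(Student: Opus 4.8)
The plan is to prove Lemma~\ref{lem:multqyb-shift-total} as a direct consequence of the derivation shift conditions in \eqref{eq:multqyb-der-shift}, by reducing the statement about an arbitrary polynomial $f(z)\in\C[z][[\hbar]]$ to the monomial case and then arguing by linearity and $\hbar$-adic continuity. First I would observe that both identities to be proved are $\C[[\hbar]]$-linear and continuous in $f$ with respect to the $\hbar$-adic topology, so it suffices to establish them when $f(z)=z^m$ for each $m\in\N$; the general case follows by taking $\C[[\hbar]]$-linear combinations and passing to the limit, since $S_{ij}(z)$ is a $\C[[\hbar]]$-module map and everything in sight is $\hbar$-adically complete.

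For the monomial case I would induct on $m$. The base case $m=0$ is trivial, as $f(\partial\ot 1)=\id$ and $f(\partial\ot 1+\pd{z})=\id$. For the inductive step, write $(\partial\ot 1)^{m+1}=(\partial\ot 1)(\partial\ot 1)^m$ and use the first relation in \eqref{eq:multqyb-der-shift}, namely $[\partial\ot 1,S_{ij}(z)]=-\frac{d}{dz}S_{ij}(z)$, which rearranges to $(\partial\ot 1)S_{ij}(z)=S_{ij}(z)(\partial\ot 1)-\frac{d}{dz}S_{ij}(z)$, i.e. $S_{ij}(z)(\partial\ot 1)=\bigl(\partial\ot 1+\pd{z}\bigr)S_{ij}(z)$ as operators. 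Applying the inductive hypothesis to move $(\partial\ot 1)^m$ across $S_{ij}(z)$ first, then moving one more $(\partial\ot 1)$ across, gives $S_{ij}(z)(\partial\ot 1)^{m+1}=\bigl(\partial\ot 1+\pd{z}\bigr)^{m+1}S_{ij}(z)$, where one must be slightly careful that $\partial\ot 1$ and $\pd{z}$ commute as operators on $V_i\wh\ot V_j\wh\ot\C((z))[[\hbar]]$ so the binomial manipulations are legitimate. The second identity is handled identically using the second relation $[1\ot\partial,S_{ij}(z)]=\frac{d}{dz}S_{ij}(z)$, which gives $S_{ij}(z)(1\ot\partial)=\bigl(1\ot\partial-\pd{z}\bigr)S_{ij}(z)$, and the sign difference propagates through the induction.

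The only mild subtlety, and the step I would be most careful about, is the passage from monomials to arbitrary $f(z)\in\C[z][[\hbar]]$: one needs that both sides of each claimed identity, viewed as functions of $f$, are continuous for the $\hbar$-adic topology, so that an identity valid on the dense subset $\C[z]\ot 1$ (or rather on each $\C[z]\ot(\hbar^k/\hbar^{k+1})$) extends to all of $\C[z][[\hbar]]$. Since $S_{ij}(z)$ lands in $V_i\wh\ot V_j\wh\ot\C((z))[[\hbar]]$ and $\partial$ acts continuously, while $\pd{z}$ is continuous on $\C((z))[[\hbar]]$, this is routine, and the lemma follows. In fact, since the excerpt already remarks that the lemma ``follows immediately from \eqref{eq:multqyb-der-shift}'', I expect the intended proof to be essentially this short induction, and one could even just cite the analogous computation \eqref{eq:qyb-shift-total1}--\eqref{eq:qyb-shift-total2} in the $1$-quantum VA case, of which the present statement is the componentwise generalization.
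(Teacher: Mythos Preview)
Your proof is correct and matches the paper's approach exactly: the paper states that the lemma ``follows immediately from \eqref{eq:multqyb-der-shift}'', and your induction on monomials together with $\hbar$-adic linearity is precisely the routine unpacking of that remark. The sign checks and the observation that $\partial\ot 1$ commutes with $\pd{z}$ are all correct.
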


\begin{lem}\label{lem:S-special-tech-gen2}
Let $v\in V_i,u\in V_j$, $f(z)\in\C((z))[[\hbar]]$.
\begin{itemize}
  \item[(1)] If $S_{ij}(z)(v\ot u)=v\ot u+\vac\ot\vac\ot f(z)$,
then
\begin{align*}
  &S_{ij}(z)((v_{-1})^n\vac\ot u)
  =v_{-1}^n\vac\ot u+nv_{-1}^{n-1}\vac\ot \vac\ot f(z),\\
  &S_{ij}(z)(v\ot (u_{-1})^n\vac)
  =v\ot u_{-1}^n\vac+n\vac\ot u_{-1}^{n-1}\vac\ot f(z),\\
  &S_{ij}(z)\(\exp ( v_{-1})\vac\ot u\)
  =\exp ( v_{-1})\vac\ot u+\exp (v_{-1})\vac\ot \vac\ot  f(z),\\
  &S_{ij}(z)\(v\ot \exp( u_{-1})\vac\)
  =v\ot \exp( u_{-1})\vac+\vac\ot \exp( u_{-1})\vac\ot  f(z).
\end{align*}

\item[(2)] If $S_{ij}(z)(v\ot u)=v\ot u+\vac\ot u\ot f(z)$, then
\begin{align*}
  &S_{ij}(z)(v_{-1}^n\vac\ot u)=\sum_{i=0}^n\binom{n}{i}v_{-1}^i\vac\ot u\ot f(z)^{n-i},\\
  &S_{ij}(z)\(\exp ( v_{-1})\vac\ot u\)=\exp ( v_{-1})\vac\ot u\ot\exp\( f(z)\).
\end{align*}

\item[(3)] If $S_{ij}(z)(v\ot u)=v\ot u+v\ot \vac\ot f(z)$, then
\begin{align*}
  &S_{ij}(z)(v\ot u_{-1}^n\vac)=\sum_{i=0}^n\binom{n}{i}v\ot u_{-1}^i\vac\ot f(z)^{n-i},\\
  &S_{ij}(z)\(v\ot \exp( u_{-1})\vac\)=v\ot \exp\( u_{-1}\)\vac\ot \exp( f(z).
\end{align*}
\end{itemize}
\end{lem}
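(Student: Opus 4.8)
The plan is to prove Lemma \ref{lem:S-special-tech-gen2} by reducing each identity to a repeated application of the hexagon identities \eqref{eq:multqyb-hex1} and \eqref{eq:multqyb-hex2} together with the shift formulas in Lemma \ref{lem:multqyb-shift-total}, and then passing to generating functions to get the exponential versions from the power-of-$v_{-1}$ versions. First I would observe that $v_{-1}^n\vac = n!\,(\partial^{(n)}$-type expression$)$; more precisely, since $Y(v,z)\vac\in V[[z]]$ and $\partial v = v_{-2}\vac$, one has the standard fact that $v_{-1}^{n}\vac$ can be built up by successively applying the operator $\frac{1}{n}\,\mathrm{ad}$-type steps. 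However, the cleanest route is to use \eqref{eq:multqyb-hex1}: taking the residue/constant-term in $z_2$ of $S_{ij}(z_1)Y_i^{12}(z_2) = Y_i^{12}(z_2)S_{ij}^{23}(z_1)S_{ij}^{13}(z_1+z_2)$ and applying both sides to $\vac\ot\vac\ot u$, the component $Y_i(v,z_2)\vac$ on the left singles out $v_{-1}\vac\ot u$ after taking $z_2\to 0$, while on the right the factors $S_{ij}^{23}(z_1)$ and $S_{ij}^{13}(z_1+z_2)$ act on $\vac$-slots and, by the vacuum property \eqref{eq:multqyb-vac}, degenerate. This gives $S_{ij}(z)(v_{-1}\vac\ot u)$ in terms of $S_{ij}(z)(v\ot u)$ after one iteration; iterating $n$ times produces the stated formula for $v_{-1}^n\vac\ot u$, and symmetrically \eqref{eq:multqyb-hex2} handles the $v\ot u_{-1}^n\vac$ case.

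Next I would carry out the induction on $n$ in each of the three cases separately, since the structure of the ``error term'' differs. In case (1), the error term $\vac\ot\vac\ot f(z)$ sits in the double-vacuum slot, so when one applies the hexagon step the $S_{ij}$ factors acting on it are trivial by \eqref{eq:multqyb-vac}, and one simply accumulates a factor of $n$ (from the $n$ ways the $Y_i^{12}(z_2)$ can hit one of the $n$ copies of $v_{-1}$) giving $n v_{-1}^{n-1}\vac\ot\vac\ot f(z)$; a parallel computation handles the second slot. In case (2), the error term $\vac\ot u\ot f(z)$ has a nontrivial $u$ in the second slot, so each hexagon step both peels off a $v_{-1}$ and can either leave $f(z)$ alone or generate a new factor of $f(z)$, producing the binomial coefficients $\binom{n}{i}$ and the powers $f(z)^{n-i}$; the exponential identity then follows by forming $\sum_n \frac{1}{n!}(\cdot)$ and recognizing the product $\exp(v_{-1})\vac\ot u\ot\exp(f(z))$. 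Case (3) is the mirror image of case (2) under the left-right symmetry, using \eqref{eq:multqyb-hex2} in place of \eqref{eq:multqyb-hex1}.

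For the exponential formulas specifically, I would note that $\exp(v_{-1})\vac := \sum_{n\ge 0}\frac{1}{n!}v_{-1}^n\vac$ converges in the $\hbar$-adic topology only after one knows the relevant sums make sense; here this is automatic because the formulas are to be read inside $V_i\wh\ot V_j\wh\ot\C((z))[[\hbar]]$ and the $n=0$ terms are honest elements, so I would simply apply $S_{ij}(z)$ termwise (it is $\C[[\hbar]]$-linear and continuous) to the defining series, substitute the already-established finite-$n$ identity, and resum. In case (1) the resummation is immediate since the coefficient is linear in $n$; in case (2) it is the Cauchy product $\big(\sum_n \frac{x^n}{n!}\big)\big(\sum_m \frac{y^m}{m!}\big) = \sum_k \frac{1}{k!}\sum_{i}\binom{k}{i}x^i y^{k-i}$ with $x=v_{-1}$ (acting), $y=f(z)$.

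The main obstacle I anticipate is the bookkeeping in the single hexagon step that reduces $S_{ij}(z)(v_{-1}^n\vac\ot u)$ to an expression in $S_{ij}(z)(v_{-1}^{n-1}\vac\ot u)$: one must carefully extract the right component of $Y_i^{12}(z_2)$ (namely the coefficient of $z_2^0$ after clearing the pole, i.e. the operator $v\mapsto v_{-1}$ on the vacuum-generated part), verify that the spurious $S_{ij}^{23}(z_1)S_{ij}^{13}(z_1+z_2)$ factors genuinely collapse via the vacuum property when they land on vacuum tensor-slots, and confirm that no $z_1$-dependence leaks in from the $z_1+z_2$ argument once $z_2\to 0$. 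Making this rigorous rather than heuristic — in particular justifying the $z_2\to 0$ limit inside the $\hbar$-adic and $\C((z_1,z_2))$-type completions — is where the real care is needed; everything after that is elementary combinatorics of binomial coefficients and Cauchy products.
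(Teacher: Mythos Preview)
The paper states this lemma without proof, treating it as a list of routine identities that follow from the hexagon relations \eqref{eq:multqyb-hex1}--\eqref{eq:multqyb-hex2} together with the vacuum property \eqref{eq:multqyb-vac}. Your approach via these hexagon identities and induction on $n$ is correct and is exactly the natural one.

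One small correction to your bookkeeping: in the first paragraph you write ``applying both sides to $\vac\ot\vac\ot u$,'' but the vector to feed into \eqref{eq:multqyb-hex1} is $v\ot v_{-1}^{n-1}\vac\ot u$ (or $v\ot\vac\ot u$ for the base step). Taking $\Res_{z_2}z_2^{-1}$ of both sides then gives
\[
S_{ij}(z_1)\bigl(v_{-1}^{n}\vac\ot u\bigr)
=\Res_{z_2}z_2^{-1}\,Y_i^{12}(z_2)\,S_{ij}^{23}(z_1)\,S_{ij}^{13}(z_1+z_2)\,
\bigl(v\ot v_{-1}^{n-1}\vac\ot u\bigr),
\]
and the induction hypothesis on $S_{ij}^{23}(z_1)(v_{-1}^{n-1}\vac\ot u)$ together with the vacuum property on the remaining slots finishes each case. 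The concern you flag about the argument $z_1+z_2$ is benign: with the standard expansion in nonnegative powers of $z_2$, the operation $\Res_{z_2}z_2^{-1}$ simply returns the constant term $f(z_1)$, so no extra $z_1$-dependence leaks in and the Pascal recursion $\binom{n-1}{j-1}+\binom{n-1}{j}=\binom{n}{j}$ emerges exactly as you describe in case~(2).
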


\begin{lem}\label{lem:S-special-tech-gen3}
Let $v\in V_i$, $u,u'\in V_j$, $f_1(z),f_2(z)\in\C((z))[[\hbar]]$.
\begin{itemize}
  \item[(1)] If
  $S_{ij}(z)(v\ot u)=v\ot u+\vac\ot \vac\ot f_1(z)$,
    $S_{ij}(z)(v\ot u')=v\ot u'+\vac\ot u'\ot f_2(z)$, then
  \begin{align*}
    &S_{ij}(z)(v\ot u_{-1}^nu')=v\ot u_{-1}^nu'
    +n\vac\ot u_{-1}^{n-1}u'\ot f_1(z)
    +\vac\ot u_{-1}^nu'\ot f_2(z),\\
    &S_{ij}(z)(v\ot \exp\(u_{-1}\)u')=v\ot \exp\(u_{-1}\)u'
    +\vac\ot \exp\(u_{-1}\)u'\ot (f_1(z)+f_2(z)).
  \end{align*}

  \item[(2)] If
  $S_{ij}(z)(v\ot u)=v\ot u+v\ot \vac\ot f_1(z)$,
    $S_{ij}(z)(v\ot u')=v\ot u'\ot f_2(z)$,
  then
    \begin{align*}
      &S_{ij}(z)(v\ot u_{-1}^n u')
      =\sum_{i=0}^n\binom{n}{i}v\ot u_{-1}^iu'\ot f_1(z)^{n-i}f_2(z),\\
      &S_{ij}(z)(v\ot \exp\(u_{-1}\)u')
      =v\ot\exp\(u_{-1}\)u'\ot \exp\(f_1(z)\)f_2(z).
    \end{align*}
\end{itemize}
\end{lem}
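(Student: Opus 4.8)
The plan is to reduce each assertion to repeated applications of the hexagon identities \eqref{eq:multqyb-hex1} and \eqref{eq:multqyb-hex2}, together with the already-established computations in Lemma \ref{lem:S-special-tech-gen2}. The key observation is that each of $u_{-1}^n u'$ and $\exp(u_{-1})u'$ is built from $\vac$, $u$ and $u'$ inside $V_j$ by iterated vertex operations, so the action of $S_{ij}(z)$ on $v\ot(\text{such an element})$ is controlled by how $S_{ij}$ interacts with $Y_j$ through \eqref{eq:multqyb-hex2}. Concretely, I would first record the consequence of \eqref{eq:multqyb-hex2} that, for $w,w'\in V_j$,
\begin{align*}
  S_{ij}(z)(v\ot w_{-1}w')=(1\ot Y_j(w,z_2))^-\text{-type expansions of }S_{ij}^{12}(z-z_2)S_{ij}^{13}(z)(v\ot w\ot w'),
\end{align*}
and more usefully its specialization: if $S_{ij}(z)(v\ot w)$ and $S_{ij}(z)(v\ot w')$ are both of the ``triangular'' form appearing in the hypotheses (i.e.\ $v\ot(\cdot)$ plus a term with $\vac$ in the first or second tensor slot times a scalar series), then $S_{ij}(z)(v\ot w_{-1}w')$ is again of that form with the scalar series combining additively (for the $f_1$-type, $\vac\ot\vac\ot f_1$ contributions) because $Y_j(\vac,z)=\id$ and $(\vac)_{-1}=\id$. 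This is exactly the bookkeeping already done in Lemma \ref{lem:S-special-tech-gen2}(1)--(3); here we simply run it with two different ``defect'' terms simultaneously.

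For part (1), I would induct on $n$. The base case $n=0$ is the hypothesis $S_{ij}(z)(v\ot u')=v\ot u'+\vac\ot u'\ot f_2(z)$. For the inductive step, write $u_{-1}^{n}u' = u_{-1}(u_{-1}^{n-1}u')$ and apply \eqref{eq:multqyb-hex2} in the form that expresses $S_{ij}(z)(v\ot u_{-1}w)$ in terms of $S_{ij}^{12}(z-z_2)S_{ij}^{13}(z)(v\ot u\ot w)$ evaluated appropriately (this is the same mechanism used to derive the first displayed formula of Lemma \ref{lem:S-special-tech-gen2}(1)). Feeding in $S_{ij}(z)(v\ot u)=v\ot u+\vac\ot\vac\ot f_1(z)$ on the ``$13$'' slot and the inductive description of $S_{ij}(z)(v\ot u_{-1}^{n-1}u')$ on the ``$12$-then-contract'' slot, and using $Y_j(\vac,z_2)=\id$ to collapse the $\vac$-terms, the coefficient of $\vac\ot u_{-1}^{n-1}u'$ picks up one extra $f_1(z)$ per unit of $n$ (hence the factor $n$) while the $\vac\ot u_{-1}^n u'\ot f_2(z)$ term is carried along unchanged from the $f_2$-defect. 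Summing over $n$ with the factor $1/n!$ from $\exp(u_{-1})=\sum u_{-1}^n/n!$ gives $\vac\ot\exp(u_{-1})u'\ot(f_1(z)+f_2(z))$, which is the second formula. Part (2) is handled the same way: induct on $n$ using $u_{-1}^n u'=u_{-1}(u_{-1}^{n-1}u')$, but now the $f_1$-defect lives in the \emph{second} tensor slot ($v\ot\vac\ot f_1$) so it behaves multiplicatively under $Y_j$ (each application of $Y_j(u,z_2)$ on the $\vac$ in that slot contributes another factor of $f_1(z)$), exactly as in Lemma \ref{lem:S-special-tech-gen2}(3); the $f_2$-defect is the overall scalar $S_{ij}(z)(v\ot u')=v\ot u'\ot f_2(z)$, which simply rides along as an overall factor $f_2(z)$, yielding the binomial sum $\sum_i\binom{n}{i}v\ot u_{-1}^i u'\ot f_1(z)^{n-i}f_2(z)$ and, after the exponential resummation, $v\ot\exp(u_{-1})u'\ot\exp(f_1(z))f_2(z)$.

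The main obstacle is purely organizational rather than conceptual: one must be careful about the formal-variable $\hbar$-adic subtleties when invoking \eqref{eq:multqyb-hex2} --- specifically, that $S_{ij}(z)$ applied to $v\ot Y_j(u,z_2)w'$ is literally $Y_j^{23}(z_2)S_{ij}^{12}(z_1-z_2)S_{ij}^{13}(z_1)$ only after the appropriate $\hbar$-adic limits/expansions are taken, and that the singular-versus-regular decomposition of the $\delta$-function pieces does not disturb the ``triangular'' shape of the defect terms. Since in every case at hand the defect terms have $\vac$ in one of the first two slots and $Y_j(\vac,z_2)=\id$, these singular terms vanish identically and the expansions are unambiguous; this is the same simplification exploited in Lemma \ref{lem:S-special-tech-gen2}. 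Once that is noted, the two inductions are routine. I would therefore present part (1) in full with the inductive computation written out, and state that part (2) follows \emph{mutatis mutandis} by the argument of Lemma \ref{lem:S-special-tech-gen2}(3) combined with the overall scalar factor $f_2(z)$.
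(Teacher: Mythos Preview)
The paper does not supply a proof of this lemma; it is listed at the end of Section~\ref{sec:twisted-tensor-prod} among ``some formulas for later use'' and left to the reader. Your proposed argument---induction on $n$ via the hexagon identity \eqref{eq:multqyb-hex2}, feeding in the hypotheses on $S_{ij}(z)(v\ot u)$ and $S_{ij}(z)(v\ot u')$ and using $Y_j(\vac,z_2)=\id$ together with the vacuum property \eqref{eq:multqyb-vac} to collapse the defect terms---is exactly the intended verification and is correct. One small presentational point: in the inductive step you should make explicit that extracting the $z_2^0$ coefficient from $f_1(z_1-z_2)$ (expanded $\hbar$-adically in nonnegative powers of $z_2$) returns $f_1(z_1)$; this is what makes the additive accumulation of $f_1$ in part (1) and the multiplicative accumulation in part (2) come out cleanly.
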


\begin{lem}\label{lem:S-special-tech-gen4}
Let $v,v'\in V_i$, $u\in V_j$, $f_1(z),f_2(z)\in\C((z))[[\hbar]]$.

\begin{itemize}
  \item[(1)] If
  $S_{ij}(z)(v\ot u)=v\ot u+\vac\ot\vac\ot f_1(z)$,
    $S_{ij}(z)(v'\ot u)=v'\ot u+v'\ot \vac\ot f_2(z)$,
  then
  \begin{align*}
    &S_{ij}(z)(v_{-1}^nv'\ot u)=v_{-1}^nv'\ot u
    +nv_{-1}^{n-1}v'\ot \vac\ot f_1(z)+v_{-1}^nv'\ot\vac\ot f_2(z),\\
    &S_{ij}(z)(\exp\(v_{-1}\)v'\ot u)=\exp\(v_{-1}\)v'\ot u
    +\exp\(v_{-1}\)v'\ot \vac\ot (f_1(z)+f_2(z)).
  \end{align*}

    \item[(2)]
    If $S_{ij}(z)(v\ot u)=v\ot u+v\ot \vac\ot f_1(z)$,
     $S_{ij}(z)(v'\ot u)=v'\ot u\ot f_2(z)$, then
      \begin{align*}
        &S_{ij}(z)(v_{-1}^n v'\ot u)
        =\sum_{i=0}^n\binom{n}{i}v_{-1}^iv'\ot u\ot f_1(z)^{n-i}f_2(z),\\
        &S_{ij}(z)(\exp\(v_{-1}\)v'\ot u)
        =\exp\(v_{-1}\)v'\ot u\ot \exp\(f_1(z)\)f_2(z).
      \end{align*}
\end{itemize}
\end{lem}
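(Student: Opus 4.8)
The plan is to prove the four identities in Lemma \ref{lem:S-special-tech-gen4} by reducing them to the already-available hexagon-type formulas \eqref{eq:multqyb-hex1} and \eqref{eq:multqyb-hex2} together with Lemmas \ref{lem:S-special-tech-gen2} and \ref{lem:S-special-tech-gen3}. The crucial point is that the element $v_{-1}^n v'\in V_i$ can be built from $v$ and $v'$ by iterated application of the creation operator $v_{-1}=\Res_z z^{-1}Y_i(v,z)$, and that, via \eqref{eq:multqyb-hex1}, applying $S_{ij}(z)$ to such an element is controlled by the values of $S_{ij}$ on the two ``building blocks'' $v\ot u$ and $v'\ot u$. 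So first I would record the specialization of \eqref{eq:multqyb-hex1} that expresses $S_{ij}(z_1)\bigl(Y_i(v,z_2)a\ot u\bigr)$ in terms of $S_{ij}(z_1)$ applied to $v\ot(\te{something})$ and $a\ot u$; taking $\Res_{z_2}z_2^{-1}$ converts $Y_i(v,z_2)$ into the operator $v_{-1}\ot 1$ acting on the first tensor factor, at the cost of a shift in the $z$-variable governed by Lemma \ref{lem:multqyb-shift-total}.

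Next I would set up an induction on $n$. For part (1), the hypotheses say $S_{ij}(z)$ acts on $v\ot u$ by adding a scalar-valued correction $\vac\ot\vac\ot f_1(z)$ and on $v'\ot u$ by adding $v'\ot\vac\ot f_2(z)$; both corrections have $\vac$ in the second slot, which is exactly the situation in which Lemma \ref{lem:S-special-tech-gen2}(1) and (3) and Lemma \ref{lem:S-special-tech-gen3} were designed to propagate. Concretely, write $v_{-1}^n v' = v_{-1}(v_{-1}^{n-1}v')$, apply the specialized hexagon formula, use the inductive hypothesis for $v_{-1}^{n-1}v'\ot u$, and then use the base hypotheses on $v\ot(\cdot)$ together with the fact that $S_{ij}(z)(\vac\ot\cdot)=\vac\ot\cdot$ and $S_{ij}(z)(v\ot\vac)=v\ot\vac$ from \eqref{eq:multqyb-vac} to collapse all the terms. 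Since $f_1(z)$ is scalar-valued, the term it produces only multiplies the bookkeeping, and the binomial coefficient in the statement of part (1) does not actually appear there (the correction is linear); for part (2) the correction $f_1(z)$ sits in the first slot as $v\ot\vac\ot f_1(z)$ while $S_{ij}(z)(v'\ot u)=v'\ot u\ot f_2(z)$ is purely multiplicative, so the binomial expansion $\sum_i\binom{n}{i}f_1(z)^{n-i}f_2(z)$ emerges exactly as in Lemma \ref{lem:S-special-tech-gen3}(2), with the roles of the two tensor factors swapped. The exponential identities then follow formally by summing the polynomial ones against $1/n!$, using that $\exp(v_{-1})\vac=\sum_n v_{-1}^n\vac/n!$ converges in the $\hbar$-adic topology.

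The main obstacle I anticipate is purely organizational rather than conceptual: keeping track of which slot each correction term lands in after each application of the hexagon identity, and making sure the variable shifts from Lemma \ref{lem:multqyb-shift-total} do not alter $f_1(z),f_2(z)$ in an unwanted way. In particular one must check that when $v_{-1}$ acts, the correction $f_i(z)$ — being a function of the single ``twisting variable'' $z$ and not of the internal vertex-operator variable $z_2$ — is genuinely unaffected by the residue in $z_2$, so that the shift in Lemma \ref{lem:multqyb-shift-total} acts as the identity on it. Given that, each of the four displayed identities is a short induction, and the exponential versions are immediate corollaries. I would therefore present part (1) in detail (as the representative case where corrections are additive), indicate that part (2) is the analogue of Lemma \ref{lem:S-special-tech-gen3}(2) with the tensor factors interchanged, and leave the exponential identities as a one-line consequence of summing the polynomial identities.
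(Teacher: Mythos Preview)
Your approach is correct and is the natural one. The paper itself does not give a proof of Lemma~\ref{lem:S-special-tech-gen4}: it is stated, together with Lemmas~\ref{lem:S-special-tech-gen2} and~\ref{lem:S-special-tech-gen3}, under the heading ``we list some formulas for later use,'' and no argument is supplied. Your plan to unwind $v_{-1}^n v'$ via the hexagon identity \eqref{eq:multqyb-hex1} and induct on $n$ is exactly the expected route; in fact Lemma~\ref{lem:S-special-tech-gen4} is the mirror image of Lemma~\ref{lem:S-special-tech-gen3} with the roles of the two tensor factors swapped, and one proves it by replacing \eqref{eq:multqyb-hex2} with \eqref{eq:multqyb-hex1} throughout.

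One small clarification on the point you flagged as an obstacle: the shift does act nontrivially. When you apply \eqref{eq:multqyb-hex1} and then take $\Res_{z_2}z_2^{-1}$, the term coming from $S_{ij}^{13}(z_1+z_2)(v\ot a\ot u)$ produces $a\ot\vac\ot f_1(z_1+z_2)$, and after the residue this becomes $a\ot\vac\ot f_1(z_1)$ because $\Res_{z_2}z_2^{-1}f_1(z_1+z_2)=f_1(z_1)$ (constant term in the Taylor expansion in $z_2$). So the shift is not ``the identity'' on $f_1$ before the residue, but after the residue it returns $f_1(z_1)$ unchanged, which is what makes the induction close. Also, for the exponential identities you should write $\exp(v_{-1})v'=\sum_n v_{-1}^n v'/n!$ rather than $\exp(v_{-1})\vac$, and note that convergence in the applications is guaranteed because the relevant $v$'s carry an overall factor of $\hbar$.
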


\section{Construction of $\hbar$-adic $n$-quantum vertex algebras}\label{sec:construct-n-qvas}
This section extends the construction of $\hbar$-adic quantum VAs introduced in \cite{JKLT-Defom-va} to the construction of $\hbar$-adic $n$-quantum VAs.

We first recall the smash product introduced in \cite{Li-smash}.
An \emph{$\hbar$-adic (nonlocal) vertex bialgebra} \cite{Li-smash}
is an $\hbar$-adic (nonlocal) VA $H$
equipped with a classical coalgebra structure $(\Delta,\varepsilon)$
such that (the coproduct) $\Delta:H\to H\wh\ot H$
and (the counit) $\varepsilon:H\to\C[[\hbar]]$ are homomorphism of $\hbar$-adic nonlocal VAs.
An \emph{$H$-module (nonlocal) VA} \cite{Li-smash} if an $\hbar$-adic nonlocal VA $V$ equipped with a module structure $\tau$ on $V$ for $H$ viewed as an $\hbar$-adic nonlocal VA such that
\begin{align}
  &\tau(h,z)v\in V\ot \C((z)),\qquad
  \tau(h,z)\vac_V=\varepsilon(h)\vac_V,\label{eq:mod-va-for-vertex-bialg1-2}\\
  &\tau(h,z_1)Y(u,z_2)v=\sum Y(\tau(h_{(1)},z_1-z_2)u,z_2)\tau(h_{(2)},z_1)v
  \label{eq:mod-va-for-vertex-bialg3}
\end{align}
for $h\in H$, $u,v\in V$, where $\vac_V$ denotes the vacuum vector of $V$
and $\Delta(h)=\sum h_{(1)}\ot h_{(2)}$ is the coproduct in the Sweedler notation.
The following result was given in \cite{Li-smash}.
\begin{thm}\label{thm:smash-prod}
Let $H$ be an $\hbar$-adic nonlocal vertex bialgebra and let $(V,\eta)$ be an $H$-module nonlocal VA.
Set $V\sharp H=V\wh\ot H$. For $u,v\in V$, $h,k\in H$, define
\begin{align}
    Y^\sharp(u\ot h,z)(v\ot k)=\sum Y(u,z)\tau(h_{(1)},z)v\ot Y(h_{(2)},z)k.
\end{align}
Then $V\sharp H$ is an $\hbar$-adic nonlocal VA.

Moreover, let $W$ be a $V$-module and an $H$-module such that
\begin{align}
    &Y_W(h,z)w\in W\wh\ot \C((z))[[\hbar]],\\
    &Y_W(h,z_1)Y_W(v,z_2)w=\sum Y_W(\tau(h_{(1)},z_1-z_2)v,z_2)Y_W(h_{(2)},z_1)w
\end{align}
for $h\in S$, $v\in V$, $w\in W$, where $S$ is a generating subset of $H$ as an $\hbar$-adic nonlocal VA.
Then $W$ is a module for $V\sharp H$ with
\begin{align}
    Y_W^\sharp(v\ot h,z)w=Y_W(v,z)Y_W(h,z)w\quad \te{for }h\in H,v\in V,w\in W.
\end{align}
\end{thm}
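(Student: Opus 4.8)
The plan is to realize $V\sharp H:=V\wh\ot H$ as a nonlocal vertex algebra generated inside $\E_\hbar(V\sharp H)$ by two families of fields, invoking the $\hbar$-adic analogue of Li's construction theorem (\cite{Li-nonlocal,Li-h-adic}), which builds a nonlocal VA out of any $\hbar$-adically compatible subset of $\E_\hbar(-)$ and makes the ambient $\C[[\hbar]]$-module a faithful module over it. For $v\in V$ set $\bar v(z)(a\ot k)=Y(v,z)a\ot k$, and for $h\in H$ set $\bar h(z)(a\ot k)=\sum\tau(h_{(1)},z)a\ot Y_H(h_{(2)},z)k$, where $Y_H$ is the vertex operator of $H$; axiom \eqref{eq:mod-va-for-vertex-bialg1-2} together with $Y_H\in\E_\hbar(H)$ shows these lie in $\E_\hbar(V\sharp H)$. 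Observe that $\bar v(z)=Y^\sharp(v\ot\vac,z)$ and $\bar h(z)=Y^\sharp(\vac\ot h,z)$, via $\Delta(\vac)=\vac\ot\vac$, $\tau(\vac,z)=\id$, $Y_H(\vac,z)=\id$.

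First I would check that $U_0:=\{\bar v(z):v\in V\}\cup\{\bar h(z):h\in H\}$ is $\hbar$-adically compatible: tuples of $\bar v$'s inherit the compatibility bounds of $Y$ on $V$; tuples of $\bar h$'s those of $Y_H$ on $H$ (using that $\Delta$ is an $\hbar$-adic nonlocal VA homomorphism, hence intertwines the relevant products); and mixed tuples are handled by \eqref{eq:mod-va-for-vertex-bialg3}. The construction theorem then yields an $\hbar$-adic nonlocal VA $U=\langle U_0\rangle\subset\E_\hbar(V\sharp H)$ with vacuum $\id$, of which $V\sharp H$ is a faithful module (each field acting as itself). Using \eqref{eq:mod-va-for-vertex-bialg3} to control $\bar h(z_1)\bar v(z_2)$ and $Y(v,z_1)\tau(h_{(1)},z_2)$, one computes $Y_\E(\bar v(z),z_0)\bar h(z)$ and reads off that $\bar v(z)_{-1}\bar h(z)$ is exactly the field $(a\ot k)\mapsto\sum Y(v,z)\tau(h_{(1)},z)a\ot Y_H(h_{(2)},z)k=Y^\sharp(v\ot h,z)$. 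Hence the $\C[[\hbar]]$-module map $\Phi:V\sharp H\to U$, $v\ot h\mapsto\bar v(z)_{-1}\bar h(z)$, satisfies $e\circ\Phi=\id$, where $e(\psi)=\lim_{z\to0}\psi(z)(\vac\ot\vac)$ (using $\tau(h_{(1)},z)\vac=\varepsilon(h_{(1)})\vac$, $\sum\varepsilon(h_{(1)})h_{(2)}=h$ and the vacuum property of $Y_H$), so $\Phi$ is injective; it is surjective because $U$ is topologically spanned by iterated products of $U_0$, and these collapse — products among the $\bar h$'s by the $H$-module structure of $V\sharp H$, products among the $\bar v$'s by its $V$-module structure, an $\bar h$ moved past a $\bar v$ by \eqref{eq:mod-va-for-vertex-bialg3} and coassociativity of $\Delta$ — to elements of $\Phi(V\sharp H)$. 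Transporting the nonlocal VA structure of $U$ along $\Phi$, and using that $e$ carries the regular $U$-action to the $U$-action on $V\sharp H$ by fields, returns precisely $Y^\sharp$; thus $(V\sharp H,Y^\sharp,\vac\ot\vac)$ is an $\hbar$-adic nonlocal VA.

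For the "moreover" part, let $W$ be the given $V$-module and $H$-module. Because $S$ generates $H$ as an $\hbar$-adic nonlocal VA and the hypothesized relation between $Y_W(h,z)$ ($h\in S$) and $Y_W(v,z)$ is stable under the $n$-th products on $H$ — an induction on the length of iterates, each step inserting an identity of the shape \eqref{eq:mod-va-for-vertex-bialg3} and using coassociativity of $\Delta$ — the twisting relation $Y_W(h,z_1)Y_W(v,z_2)w=\sum Y_W(\tau(h_{(1)},z_1-z_2)v,z_2)Y_W(h_{(2)},z_1)w$ holds for all $h\in H$. One then verifies the remaining module axiom (weak associativity) for the candidate $Y_W^\sharp(v\ot h,z):=Y_W(v,z)Y_W(h,z)$ relative to $Y^\sharp$ on $V\sharp H$: expand both sides of weak associativity via weak associativity of the $V$-action, weak associativity of the $H$-action, and the twisting relation, and match the Sweedler components.

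The step I expect to be the main obstacle is the identification inside $U$ — that $\bar v(z)_{-1}\bar h(z)$ really equals $Y^\sharp(v\ot h,z)$ and that $\Phi$ is an isomorphism compatible with vertex operators. This forces one to track Sweedler components through nested $Y_\E$-iterates and to apply coassociativity of $\Delta$ at exactly the right place, so it is genuinely the \emph{vertex bialgebra} hypothesis (not merely a coalgebra compatible with one of the two VA structures) that makes it go through, and the $\hbar$-adic bookkeeping — the truncations $\pi_n$ and the exponents entering the definition of $Y_\E$ — must be carried out honestly rather than formally. By comparison, the compatibility verification and the "moreover" induction are routine once this is in place.
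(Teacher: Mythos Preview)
The paper does not prove this theorem at all: immediately before the statement it writes ``The following result was given in \cite{Li-smash}'' and moves on. So there is no in-paper proof to compare against; the result is quoted from Li's smash-product paper, of which the present paper merely records the $\hbar$-adic analogue.

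Your outline is a correct reconstruction, and it is in fact close in spirit to how Li establishes the smash product in \cite{Li-smash}: one realizes $V$ and $H$ acting on $V\wh\ot H$ via the fields you call $\bar v(z)$ and $\bar h(z)$, checks that the intertwining relation \eqref{eq:mod-va-for-vertex-bialg3} forces these families to be mutually ($\hbar$-adically) compatible, and then invokes the general construction theorem from \cite{Li-nonlocal,Li-h-adic} to produce a nonlocal VA structure for which $V\wh\ot H$ is a faithful module. The identification $\bar v(z)_{-1}\bar h(z)=Y^\sharp(v\ot h,z)$ and the creation argument $e\circ\Phi=\id$ are exactly the right steps. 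The one place where your sketch is thinner than the cited proof is surjectivity of $\Phi$: rather than arguing that arbitrary iterated $Y_\E$-products of the generators ``collapse'' into $\Phi(V\sharp H)$, it is cleaner (and this is how Li does it) to show directly that $Y_\E(\bar v(z)_{-1}\bar h(z),z_0)(\bar v'(z)_{-1}\bar h'(z))$ equals $\overline{Y^\sharp(v\ot h,z_0)(v'\ot h')}(z)$, which simultaneously gives surjectivity and the compatibility of $\Phi$ with vertex operators. Your ``moreover'' argument---propagating the twisting relation from the generating set $S$ to all of $H$ by induction on $n$-th products, then checking weak associativity of $Y_W^\sharp$---is the standard route and is correct.
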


A \emph{(right) $H$-comodule nonlocal VA} (\cite[Definition 2.22]{JKLT-Defom-va}) is a nonlocal VA $V$ equipped with a homomorphism
$\rho:V\to V\wh\ot H$ of $\hbar$-adic nonlocal VAs such that
\begin{align}
  (\rho\ot 1)\rho=(1\ot \Delta)\rho,\quad (1\ot \varepsilon)\rho=\te{Id}_V.
\end{align}
$\rho$ and $\tau$ are \emph{compatible} (\cite[Definition 2.24]{JKLT-Defom-va}) if
\begin{align}
  \rho(\tau(h,z)v)=(\tau(h,z)\ot 1)\rho(v)\quad \te{for }h\in H,\,v\in V.
\end{align}

\begin{de}
Let $V$ be an $\hbar$-adic nonlocal VA.
A \emph{deforming triple} is a triple $(H,\rho,\tau)$, where $H$ is a cocommutative $\hbar$-adic nonlocal vertex bialgebra, $(V,\rho)$ is an $H$-nonlocal VA and $(V,\tau)$ is an $H$-module nonlocal VA, such that $\rho$ and $\tau$ are compatible.
\end{de}

The following represents the direct $\hbar$-adic analogue of \cite[Theorem 2.25 and Proposition 2.26]{JKLT-Defom-va}.

\begin{thm}\label{thm:deform-va}
Let $V$ be an $\hbar$-adic nonlocal VA, and let $(H,\rho,\tau)$ be a deforming triple.
Define
\begin{align}
\mathfrak D_\tau^\rho (Y)(a,z)=\sum Y(a_{(1)},z)\tau(a_{(2)},z)\quad\te{for }a\in V,
\end{align}
where $\rho(a)=\sum a_{(1)}\ot a_{(2)}\in V\otimes H$.
Then $(V,\mathfrak D_\tau^\rho (Y),\vac)$ forms an $\hbar$-adic nonlocal VA,
which is denoted by $\mathfrak D_\tau^\rho (V)$.
Moreover, $\rho$ is an $\hbar$-adic nonlocal VA homomorphism from
$\mathfrak D_\tau^\rho (V)$ to $V\sharp H$.
Furthermore, $(\mathfrak D_\tau^\rho(V),\rho)$ is also a right $H$-comodule nonlocal VA.
\end{thm}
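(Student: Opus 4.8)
The plan is to realize $(V,\mathfrak D_\tau^\rho(Y),\vac)$, via the comodule map $\rho$, as an isomorphic copy of a closed vertex-operator-closed submodule of the smash product $V\sharp H$ from Theorem \ref{thm:smash-prod}, and then transport all the $\hbar$-adic nonlocal VA axioms back along $\rho$. The identity to establish first is the intertwining property
\begin{align*}
  \rho\big(\mathfrak D_\tau^\rho(Y)(a,z)b\big)=Y^\sharp(\rho(a),z)\rho(b)\qquad\te{for }a,b\in V,
\end{align*}
where $Y^\sharp$ denotes the vertex operator of $V\sharp H$. To prove it I would expand the left-hand side: since $\rho:V\to V\wh\ot H$ is a homomorphism of $\hbar$-adic nonlocal VAs into the tensor product, one has $\rho(Y(u,z)w)=\sum Y(u_{(1)},z)w_{(1)}\ot Y_H(u_{(2)},z)w_{(2)}$; applying this with $u=a_{(1)}$, $w=\tau(a_{(2)},z)b$ and using the compatibility $\rho(\tau(h,z)v)=(\tau(h,z)\ot 1)\rho(v)$ rewrites the left-hand side as $\sum Y(a_{(1)(1)},z)\tau(a_{(2)},z)b_{(1)}\ot Y_H(a_{(1)(2)},z)b_{(2)}$, while the definition of $Y^\sharp$ expands the right-hand side to $\sum Y(a_{(1)},z)\tau(a_{(2)(1)},z)b_{(1)}\ot Y_H(a_{(2)(2)},z)b_{(2)}$; the two are then matched term by term using the comodule coassociativity $(\rho\ot 1)\rho=(1\ot\Delta)\rho$ and the cocommutativity of $H$ (to transpose the two $H$-tensorands of $\Delta(a_{(2)})$). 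Prior to this, one checks that the defining sum $\sum Y(a_{(1)},z)\tau(a_{(2)},z)b$ and the composite of $Y(a_{(1)},z)$ with $\tau(a_{(2)},z)$ converge in the $\hbar$-adic topology, which is immediate from $\rho(a)\in V\wh\ot H$ and the continuity and $\C[[\hbar]]$-linearity of $Y$ and $\tau$.

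With the intertwining property in hand I would note that $\rho$ is injective with the continuous $\C[[\hbar]]$-linear retraction $p:=1\ot\varepsilon$, so $\rho p$ is a continuous idempotent on $V\sharp H$ and $\rho(V)=\Ker(1-\rho p)$ is a closed submodule. The intertwining property shows that $\rho(V)$ is invariant under every $Y^\sharp(\rho(a),z)$ and contains the vacuum $\vac\ot\vac=\rho(\vac)$; combined with weak associativity in $V\sharp H$, this makes $\rho(V)$ a closed nonlocal vertex subalgebra, so that $\rho$ carries its induced structure bijectively onto $\mathfrak D_\tau^\rho(V)$. Concretely, each axiom pulls back through $p$ and the injectivity of $\rho$: the truncation property of $\mathfrak D_\tau^\rho(Y)(a,z)b$ follows by applying $p$ to that of $Y^\sharp(\rho(a),z)\rho(b)$; the vacuum properties follow from $Y(\vac,z)=\id$, $\tau(\vac,z)=\id$ and the counit axiom; $\hbar$-adic compatibility of $\{\mathfrak D_\tau^\rho(Y)(a,z):a\in V\}$ follows from that of $\{Y^\sharp(\rho(a),z)\}$ together with the continuity of $\rho$ and closedness of $\rho(V)$; and weak associativity $Y_\E(\mathfrak D_\tau^\rho(Y)(u,z),z_0)\mathfrak D_\tau^\rho(Y)(v,z)=\mathfrak D_\tau^\rho(Y)(\mathfrak D_\tau^\rho(Y)(u,z_0)v,z)$ follows from the corresponding identity in $V\sharp H$ by applying $\rho$ and cancelling it. The second assertion is then immediate, since the intertwining property says precisely that $\rho$ is an $\hbar$-adic nonlocal VA homomorphism $\mathfrak D_\tau^\rho(V)\to V\sharp H$.

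For the third assertion, the relations $(\rho\ot 1)\rho=(1\ot\Delta)\rho$ and $(1\ot\varepsilon)\rho=\id$ are properties of the $\C[[\hbar]]$-linear map $\rho$ alone and are unchanged, so it remains only to check that $\rho$ is a homomorphism of $\hbar$-adic nonlocal VAs from $\mathfrak D_\tau^\rho(V)$ to the tensor product $\mathfrak D_\tau^\rho(V)\wh\ot H$, i.e.
\begin{align*}
  \rho\big(\mathfrak D_\tau^\rho(Y)(a,z)b\big)=\sum \mathfrak D_\tau^\rho(Y)(a_{(1)},z)b_{(1)}\ot Y_H(a_{(2)},z)b_{(2)}.
\end{align*}
I would prove this with the same bookkeeping: the right-hand side expands to $\sum Y(a_{(1)(1)},z)\tau(a_{(1)(2)},z)b_{(1)}\ot Y_H(a_{(2)},z)b_{(2)}$, the left-hand side (as above) to $\sum Y(a_{(1)(1)},z)\tau(a_{(2)},z)b_{(1)}\ot Y_H(a_{(1)(2)},z)b_{(2)}$, and these agree by coassociativity of $\rho$ and cocommutativity of $\Delta$ on the $H$-slots.

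The main obstacle I expect is two-fold. First is keeping the Sweedler indices aligned through the repeated uses of comodule coassociativity and cocommutativity of $H$ in the two intertwining computations — purely formal but error-prone. Second, and more delicate, is justifying the transfer of the genuinely $\hbar$-adic axioms: one must confirm that $\mathfrak D_\tau^\rho(Y)(a,z)$ really lies in $\E_\hbar(V)$ and not in a larger completion, that the $Y_\E$-construction and weak associativity survive being pulled back through a continuous injection onto a closed submodule, and that the infinite Sweedler sums converge $\hbar$-adically; this is where the topological freeness of $V$ and $H$ and the continuity of $\rho$ and $\tau$ enter.
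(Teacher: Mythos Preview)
Your proposal is correct and is essentially the standard argument: establish the intertwining identity $\rho\circ\mathfrak D_\tau^\rho(Y)(a,z)=Y^\sharp(\rho(a),z)\circ\rho$ via comodule coassociativity and cocommutativity of $H$, then use that $\rho$ is a split injection (with retraction $1\ot\varepsilon$) onto a closed submodule of $V\sharp H$ to pull back all the $\hbar$-adic nonlocal VA axioms. Your Sweedler bookkeeping is right in both intertwining computations, and your identification $\rho(V)=\Ker(1-\rho(1\ot\varepsilon))$ correctly shows the image is closed.

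The paper itself does not give a proof of this theorem: it simply records the statement as ``the direct $\hbar$-adic analogue of \cite[Theorem 2.25 and Proposition 2.26]{JKLT-Defom-va}.'' Your argument is exactly the route taken in that reference (adapted to the topologically free setting), so there is nothing to contrast. The only point worth flagging is that the $\hbar$-adic subtleties you list at the end --- that $\mathfrak D_\tau^\rho(Y)(a,z)\in\E_\hbar(V)$, that $\hbar$-adic compatibility and the $Y_\E$-associativity survive pullback along a continuous split injection onto a closed submodule --- are genuine but routine once you observe that $\rho$ and $1\ot\varepsilon$ are $\C[[\hbar]]$-linear and induce, modulo $\hbar^n$, the corresponding maps on $V/\hbar^nV$; so each axiom can be checked level by level.
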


\begin{rem}\label{rem:trivial-deform}
{\em
Let $H$ be a cocommutative $\hbar$-adic nonlocal vertex bialgebra, and let $(V,\rho)$ be an $H$-comodule nonlocal VA.
We view the counit $\varepsilon$ as an $H$-module nonlocal VA structure on $V$ as follows
\begin{align}
  \varepsilon(h,z)v=\varepsilon(h)v\quad \te{for }h\in H,\,v\in V.
\end{align}
Then it is easy to verify that $\varepsilon$ and $\rho$ are compatible,
and $\mathfrak D_\varepsilon^\rho(V)=V$.
}
\end{rem}

Now, we fix a cocommutative $\hbar$-adic nonlocal vertex bialgebra $H$, and an $H$-comodule nonlocal VA $(V,\rho)$.
Note that $$\Hom\(H,\Hom(V,V\wh\ot\C((x))[[\hbar]])\)$$ is a unital associative algebra,
where the multiplication is defined by
\begin{align*}
  (f\ast g)(h,z)=\sum f(h_{(1)},z)g(h_{(2)},z),
\end{align*}
for $f,g\in\Hom\(H,\Hom(V,V\wh\ot\C((z))[[\hbar]])\)$
and the unit $\varepsilon$.
For $$f,g\in\Hom\(H,\Hom(V,V\wh\ot\C((z))[[\hbar]])\),$$ we say that $f$ and $g$ are commute if
\begin{align*}
  [f(h,z_1),g(k,z_2)]=0\quad \te{for }h,k\in H.
\end{align*}
Let $\mathfrak L_H^\rho(V)$ be the set of $H$-module nonlocal VA structures on $V$ that are compatible with $\rho$.
The following is a direct $\hbar$-adic analogue of \cite[Proposition 3.3 and Proposition 3.4]{JKLT-Defom-va}.

\begin{prop}\label{prop:L-H-rho-V-compostition}
Let $\tau$ and $\tau'$ are commuting elements in $\mathfrak L_H^\rho(V)$.
Then $\tau\ast\tau'\in\mathfrak L_H^\rho(V)$
and $\tau\ast\tau'=\tau'\ast\tau$.
Moreover, $\tau\in \mathfrak L_H^\rho\(\mathfrak D_{\tau'}^\rho(V)\)$ and
\begin{align}
  \mathfrak D_\tau^\rho\(\mathfrak D_{\tau'}^\rho(V)\)=\mathfrak D_{\tau\ast\tau'}^\rho(V).
\end{align}
\end{prop}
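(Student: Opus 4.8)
The plan is to verify the three defining conditions of Definition~\ref{de:qyb-mult} (for $n=1$, i.e.\ the $\hbar$-adic quantum VA axioms) for the pair $\bigl(\mathfrak D_\tau^\rho(\mathfrak D_{\tau'}^\rho(V)),\, \mathfrak D_{\tau\ast\tau'}^\rho(V)\bigr)$, but in fact the proposition is purely about the underlying nonlocal VA structure, so there is no $S$-operator to track here. Concretely, I would proceed in three stages. First, I would check that $\tau\ast\tau'\in\mathfrak L_H^\rho(V)$: since $\tau,\tau'$ commute, the convolution product $\tau\ast\tau'$ is again an associative-algebra action of $H$ on $V$ in the sense of \eqref{eq:mod-va-for-vertex-bialg1-2}--\eqref{eq:mod-va-for-vertex-bialg3}; the vacuum normalization $(\tau\ast\tau')(h,z)\vac_V = \sum \varepsilon(h_{(1)})\varepsilon(h_{(2)})\vac_V=\varepsilon(h)\vac_V$ follows from the counit axiom, and the module--vertex compatibility \eqref{eq:mod-va-for-vertex-bialg3} follows by expanding $(\tau\ast\tau')(h,z_1)Y(u,z_2)v$ using cocommutativity of $H$ to interchange the two copies of $\Delta$, together with the fact that $\tau$ and $\tau'$ commute as operators. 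Compatibility of $\tau\ast\tau'$ with $\rho$ is then immediate from compatibility of each factor: $\rho((\tau\ast\tau')(h,z)v)=\sum\rho(\tau(h_{(1)},z)\tau'(h_{(2)},z)v)=((\tau\ast\tau')(h,z)\ot 1)\rho(v)$.

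Second, I would verify $\tau\ast\tau'=\tau'\ast\tau$: this is where cocommutativity of $H$ enters decisively, since $(\tau\ast\tau')(h,z)=\sum\tau(h_{(1)},z)\tau'(h_{(2)},z)=\sum\tau(h_{(2)},z)\tau'(h_{(1)},z)$ by cocommutativity, and the latter equals $(\tau'\ast\tau)(h,z)$ once we use that the operators $\tau(h_{(2)},z)$ and $\tau'(h_{(1)},z)$ commute. Third, I would show $\tau\in\mathfrak L_H^\rho(\mathfrak D_{\tau'}^\rho(V))$, i.e.\ that $\tau$ is still an $H$-module nonlocal VA structure for the \emph{deformed} vertex operator $\mathfrak D_{\tau'}^\rho(Y)$, compatible with $\rho$ (noting that $(\mathfrak D_{\tau'}^\rho(V),\rho)$ is again a right $H$-comodule nonlocal VA by Theorem~\ref{thm:deform-va}). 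The only nontrivial point is \eqref{eq:mod-va-for-vertex-bialg3} with $Y$ replaced by $\mathfrak D_{\tau'}^\rho(Y)$: one expands $\tau(h,z_1)\,\mathfrak D_{\tau'}^\rho(Y)(u,z_2)v = \tau(h,z_1)\sum Y(u_{(1)},z_2)\tau'(u_{(2)},z_2)v$, applies \eqref{eq:mod-va-for-vertex-bialg3} for $\tau$ on the inner $Y$, and then uses compatibility of $\tau$ with $\rho$ to move $\tau'(u_{(2)},z_2)$ past $\tau(h_{(2)},z_1)$ — here $\rho(\tau'(u_{(2)},z_2)v)$ and the comodule/module compatibility let the $u$-coproduct and the $h$-coproduct be reorganized correctly.

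Finally, granting all of the above, the identity $\mathfrak D_\tau^\rho(\mathfrak D_{\tau'}^\rho(V))=\mathfrak D_{\tau\ast\tau'}^\rho(V)$ is a direct computation of vertex operators: for $a\in V$ with $\rho(a)=\sum a_{(1)}\ot a_{(2)}$,
\begin{align*}
  \mathfrak D_\tau^\rho\bigl(\mathfrak D_{\tau'}^\rho(Y)\bigr)(a,z)
  &=\sum \mathfrak D_{\tau'}^\rho(Y)(a_{(1)},z)\,\tau(a_{(2)},z)\\
  &=\sum Y\bigl((a_{(1)})_{(1)},z\bigr)\,\tau'\bigl((a_{(1)})_{(2)},z\bigr)\,\tau(a_{(2)},z),
\end{align*}
and by coassociativity of $\rho$ (the relation $(\rho\ot 1)\rho=(1\ot\Delta)\rho$) together with the definition of the convolution product, this collapses to $\sum Y(a_{(1)},z)\,(\tau\ast\tau')(a_{(2)},z)=\mathfrak D_{\tau\ast\tau'}^\rho(Y)(a,z)$. \textbf{The main obstacle} I anticipate is bookkeeping the various coproducts correctly in step three — keeping straight which Sweedler sum comes from $\Delta_H$, which from $\rho$ applied to $u$, and which from $\rho$ applied to $\tau'(u_{(2)},z_2)v$ — and in particular invoking the $\rho$-$\tau$ compatibility and cocommutativity at exactly the right moments so that the two module structures genuinely decouple; the $\hbar$-adic topology plays no essential role beyond ensuring all the complete tensor products and $\C((z))[[\hbar]]$-coefficients make sense, so the argument is formally the same as the classical case in \cite{JKLT-Defom-va}.
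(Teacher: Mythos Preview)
Your proposal is correct and follows the natural line of argument. The paper itself does not give a proof of this proposition at all: it simply states that the result is ``a direct $\hbar$-adic analogue of [Proposition 3.3 and Proposition 3.4]{JKLT-Defom-va}'' and moves on. Your sketch is essentially what that cited argument must contain --- verifying that $\tau\ast\tau'$ satisfies \eqref{eq:mod-va-for-vertex-bialg1-2}--\eqref{eq:mod-va-for-vertex-bialg3} and is compatible with $\rho$, using cocommutativity of $H$ together with the commuting hypothesis to get $\tau\ast\tau'=\tau'\ast\tau$, checking \eqref{eq:mod-va-for-vertex-bialg3} for $\tau$ against the deformed vertex operator $\mathfrak D_{\tau'}^\rho(Y)$ via compatibility of $\tau$ with $\rho$ and the commuting hypothesis, and finally collapsing the iterated deformation using $(\rho\ot 1)\rho=(1\ot\Delta)\rho$.

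One minor bookkeeping remark: in your final computation you obtain $\sum Y(a_{(1)},z)\,\tau'((a_{(2)})_{(1)},z)\,\tau((a_{(2)})_{(2)},z)$, which is $(\tau'\ast\tau)(a_{(2)},z)$ rather than $(\tau\ast\tau')(a_{(2)},z)$; this is harmless since you have already established the two coincide, but it is worth noting that the equality $\tau\ast\tau'=\tau'\ast\tau$ is genuinely used at this point, not merely recorded earlier. Your anticipated obstacle (Sweedler bookkeeping in step three) is real but routine, and your identification of exactly where cocommutativity, operator-commutativity, and $\rho$-compatibility each enter is accurate.
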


Recall that an element $\tau\in\mathfrak L_H^\rho(V)$ is said to be invertible if there exists $\tau\inv \in \mathfrak L_H^\rho(V)$, such that $\tau$ and $\tau\inv$ are commute and $\tau\ast\tau\inv=\varepsilon$.
We have the following immediate $\hbar$-adic analogue of \cite[Theorem 3.6]{JKLT-Defom-va}.

\begin{thm}\label{thm:qva}
Let $V_0$ be a VA, and $V=V_0[[\hbar]]$ be an $\hbar$-adic VA.
Let $\tau$ be an invertible element in $\mathfrak L_H^\rho(V)$.
Then $\mathfrak D_\tau^\rho(V)$ is an $\hbar$-adic quantum VA with quantum Yang-Baxter operator $S(z)$ defined by
\begin{align*}
  S(z)(v\ot u)=\sum \tau(u_{(2)},-z)v_{(1)}\ot \tau\inv(v_{(2)},z)u_{(1)}\quad \te{for }u,v\in V,
\end{align*}
where $\rho(u)=\sum u_{(1)}\ot u_{(2)}$ and $\rho(v)=\sum v_{(1)}\ot v_{(2)}$.
\end{thm}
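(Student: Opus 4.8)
The plan is to reduce everything to the already-established machinery of Theorem~\ref{thm:deform-va} and Proposition~\ref{prop:L-H-rho-V-compostition}, so that the only genuinely new content is the verification of the defining axioms of an $\hbar$-adic quantum VA for the explicit operator $S(z)$. First I would observe that since $\tau$ is an $H$-module nonlocal VA structure on $V=V_0[[\hbar]]$ compatible with $\rho$, Theorem~\ref{thm:deform-va} already gives that $\mathfrak D_\tau^\rho(V)$ is an $\hbar$-adic nonlocal VA, that $\rho$ is a homomorphism $\mathfrak D_\tau^\rho(V)\to V\sharp H$, and that $(\mathfrak D_\tau^\rho(V),\rho)$ is a right $H$-comodule nonlocal VA. So the bulk of the work is to check that $S(z)$ as defined satisfies the vacuum property~\eqref{eq:qyb-vac-id}, the shift condition~\eqref{eq:qyb-shift}, the quantum Yang--Baxter equation~\eqref{eq:qyb}, the unitarity condition~\eqref{eq:qyb-unitary}, $S$-locality~\eqref{eq:qyb-locality}, and the hexagon identity~\eqref{eq:qyb-hex1}; the fact that $V$ is $\hbar$-adically free (not a general nonlocal VA) is what makes $\tau$ honestly invertible and hence makes $S(z)$ land in $V\wh\ot V\wh\ot\C((z))[[\hbar]]$ rather than something larger.

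Next I would dispatch the easy axioms. The vacuum property~\eqref{eq:qyb-vac-id} and its companion~\eqref{eq:qyb-vac-id-alt} follow by plugging $v=\vac$ (resp.\ $u=\vac$) into the formula for $S(z)$, using $\rho(\vac)=\vac\ot\vac$, $\Delta(\vac)=\vac\ot\vac$, $\varepsilon(\vac)=1$, and the normalization $\tau(\vac,z)=\id$, $\tau\inv(\vac,z)=\id$. The shift conditions~\eqref{eq:qyb-shift} and~\eqref{eq:qyb-shift-alt} come from differentiating the formula and using that $\rho$ intertwines $\partial$ (i.e.\ $\rho\partial=(\partial\ot 1)\rho$, which holds since $\rho$ is a nonlocal VA homomorphism) together with the derivation property of $\tau$ and $\tau\inv$ as module vertex operators, namely $[\partial,\tau(h,z)]=\tau(\partial h,z)=-\frac{d}{dz}\tau(h,z)$ in the appropriate sense. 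For unitarity~\eqref{eq:qyb-unitary} I would compute $S^{21}(z)S(-z)$ directly from the formula, using coassociativity and cocommutativity of $\Delta$ to regroup the Sweedler legs, and the convolution identity $\tau\ast\tau\inv=\varepsilon$ together with compatibility of $\rho$ and $\tau$ (Remark~\ref{rem:trivial-deform} and the compatibility relation $\rho(\tau(h,z)v)=(\tau(h,z)\ot 1)\rho(v)$) to collapse the composite to the identity.

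The heart of the matter is $S$-locality, the quantum Yang--Baxter equation, and the two hexagon identities, and here I would lean on Proposition~\ref{prop:L-H-rho-V-compostition} rather than brute force. The key structural point is that $\mathfrak D_\tau^\rho(Y)(a,z)=\sum Y(a_{(1)},z)\tau(a_{(2)},z)$ while the ``opposite'' composition built from $\tau\inv$ undoes the deformation, so that the original undeformed $Y$ sits inside $\mathfrak D_\tau^\rho(V)$ in a controlled way via $\mathfrak D_{\tau\inv}^\rho(\mathfrak D_\tau^\rho(V))=\mathfrak D_\varepsilon^\rho(V)=V$. Concretely: the $S$-locality~\eqref{eq:qyb-locality} should follow by taking the weak commutativity (locality) of the undeformed vertex operators $Y(u,z_1)Y(v,z_2)$ on $V_0$, which holds because $V_0$ is an honest VA, and then transporting it through the deformation, with every appearance of a ``flip'' of the two arguments producing exactly one factor of $\tau$ and one of $\tau\inv$ — which is precisely the formula for $S(z)$. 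For the QYB equation~\eqref{eq:qyb} and the hexagons~\eqref{eq:qyb-hex1}, \eqref{eq:qyb-hex2} I would expand both sides using the formula for $S$ and the compatibility relation, reducing each identity to a consequence of: (i) coassociativity/cocommutativity of $\Delta$ on $H$, (ii) the comodule axioms $(\rho\ot 1)\rho=(1\ot\Delta)\rho$ and $(1\ot\varepsilon)\rho=\id$, (iii) the associativity of the convolution product on $\Hom(H,\Hom(V,V\wh\ot\C((z))[[\hbar]]))$, and (iv) the module-VA axiom~\eqref{eq:mod-va-for-vertex-bialg3} for $\tau$ relating $\tau(h,z_1)Y(u,z_2)$ to $\sum Y(\tau(h_{(1)},z_1-z_2)u,z_2)\tau(h_{(2)},z_1)$, plus the analogous relation for $\tau\inv$. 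I expect the main obstacle to be bookkeeping: keeping the tensor-leg superscripts $S^{12},S^{13},S^{23}$ aligned with the correct Sweedler components of $\rho(u),\rho(v)$ and of $\Delta$ on $H$ through a three-fold iterated application, and verifying that all the $\hbar$-adic convergence (membership in $V\wh\ot V\wh\ot\C((z))[[\hbar]]$, $\hbar$-adic compatibility of the resulting families) is preserved — but each of these is a routine, if lengthy, manipulation once the dictionary ``flip $\leftrightarrow$ $\tau\otimes\tau\inv$'' is set up, and none requires an idea beyond those already in \cite{JKLT-Defom-va}.
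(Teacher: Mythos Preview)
Your proposal is correct in spirit, but you should be aware that the paper does not actually prove this theorem: it is stated as ``the following immediate $\hbar$-adic analogue of \cite[Theorem 3.6]{JKLT-Defom-va}'' and no argument is given beyond that citation. What you have written is essentially a sketch of what the proof in \cite{JKLT-Defom-va} contains (transported to the $\hbar$-adic setting), and indeed you acknowledge this yourself in your final sentence. So there is no discrepancy in approach---you are simply filling in what the paper leaves to the reference---and your outline of the verification (vacuum and shift from the normalizations and derivation property, unitarity from $\tau\ast\tau\inv=\varepsilon$ and cocommutativity, $S$-locality by transporting genuine locality of the undeformed $Y$ through the deformation, and QYB/hexagon from the comodule and module-VA axioms) matches the structure of the argument in \cite{JKLT-Defom-va}.
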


Let $V_1,\dots,V_n$ be $\hbar$-adic VAs and let $H$ be a commutative and cocommutative $\hbar$-adic vertex bialgebra.
Assign an $H$-comodule VA structure $\rho_i$ to $V_i$ for each $1\le i\le n$.
Choose an invertible element $\lambda_{ij}\in\mathfrak L_H^{\rho_i}(V_i)$ for $1\le i,j\le n$.
Suppose further that
\begin{align*}
  [\lambda_{ij}(h,z_1),\lambda_{ik}(h',z_2)]=0\quad\te{for }1\le i,j,k\le n,\,h,h'\in H.
\end{align*}
Define
\begin{align}\label{eq:S-twisted-def}
  S_{ij}(z):&V_i\wh\ot V_j\to V_i\wh\ot V_j\wh\ot \C((z))[[\hbar]];
  \quad v\ot u\mapsto \sum \lambda_{ij}(u_{(2)},-z)v_{(1)}\ot \lambda_{ji}\inv(v_{(2)},z)u_{(1)},\nonumber
\end{align}
where $\rho_i(v)=\sum v_{(1)}\ot v_{(2)}$ and $\rho_j(u)=\sum u_{(1)}\ot u_{(2)}$ are the Sweedler notations.
The following result can be easily verified.

\begin{lem}\label{lem:Sij-Sji}
For $1\le i,j\le n$, we have that
\begin{align*}
  &\sigma S_{ij}(-z)\inv\sigma=S_{ji}(z).
\end{align*}
\end{lem}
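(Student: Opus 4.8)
The plan is to verify the identity $\sigma S_{ij}(-z)^{-1}\sigma = S_{ji}(z)$ by direct computation from the definition \eqref{eq:S-twisted-def}, exploiting cocommutativity of $H$ and the fact that each $\lambda_{ij}$ is invertible in $\mathfrak L_H^{\rho_i}(V_i)$ with inverse computed under the convolution product $\ast$. First I would record that, since $H$ is cocommutative and $\rho_i$ is an $H$-comodule structure, the Sweedler notations $\rho_i(v)=\sum v_{(1)}\ot v_{(2)}$ are symmetric in the sense needed, and that the relation $\lambda_{ji}\ast\lambda_{ji}^{-1}=\varepsilon$ together with the counit axiom $(1\ot\varepsilon)\rho_i=\mathrm{Id}$ lets one "cancel" a $\lambda$ against its inverse when both act through the same comodule leg. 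The key computational step is then to apply $\sigma S_{ij}(-z)^{-1}\sigma$ to a vector $u\ot v$ with $u\in V_j$, $v\in V_i$, unravel $S_{ij}(-z)$ on the swapped vector, invert the resulting $\C((z))[[\hbar]]$-linear map, and swap back, checking term by term that the output agrees with $S_{ji}(z)(u\ot v)=\sum \lambda_{ji}(v_{(2)},-z)u_{(1)}\ot\lambda_{ij}^{-1}(u_{(2)},z)v_{(1)}$.

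The natural route is not to invert $S_{ij}(-z)$ head-on but to guess the inverse and verify. Define the candidate $T_{ij}(z)\colon V_i\wh\ot V_j\to V_i\wh\ot V_j\wh\ot\C((z))[[\hbar]]$ by $T_{ij}(z)(v\ot u)=\sum \lambda_{ij}^{-1}(u_{(2)},-z)v_{(1)}\ot\lambda_{ji}(v_{(2)},z)u_{(1)}$ and show $T_{ij}(z)S_{ij}(z)=1=S_{ij}(z)T_{ij}(z)$ on $V_i\wh\ot V_j$. Here one composes the two maps, collects the $H$-labels on each tensor leg, uses the commuting hypothesis $[\lambda_{ij}(h,z_1),\lambda_{ik}(h',z_2)]=0$ to reorder the operators acting on a single $V_i$ (resp. $V_j$) factor into a convolution product $\lambda_{ij}^{-1}\ast\lambda_{ij}$ (resp. $\lambda_{ji}\ast\lambda_{ji}^{-1}$) evaluated at the appropriate comodule coordinates, and then the coassociativity $(\rho_i\ot 1)\rho_i=(1\ot\Delta)\rho_i$ of the comodule structure to re-sum the iterated Sweedler legs so that $\lambda\ast\lambda^{-1}=\varepsilon$ applies. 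What survives after applying $\varepsilon$ and the counit axiom is exactly the identity map; the sign bookkeeping $\lambda_{ij}^{-1}(\,\cdot\,,-z)\,\lambda_{ij}(\,\cdot\,,z)$ is where one must be careful, since the two variables $z$ and $-z$ enter the two halves of the composition. Having identified $T_{ij}(z)=S_{ij}(z)^{-1}$, one then checks directly that $\sigma T_{ij}(-z)\sigma$, read off on $u\ot v$, reproduces the formula for $S_{ji}(z)(u\ot v)$, which is immediate once the flips have relabelled the comodule legs.

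The main obstacle I anticipate is the bookkeeping of the iterated Sweedler sums when composing $S_{ij}$ with its candidate inverse: each composite application of an $H$-module map $\lambda$ splits a comodule leg again via $\rho_i$, so one accumulates nested $\sum v_{(1)}\ot v_{(2)}\ot v_{(3)}\ot\cdots$, and it takes a careful application of comodule coassociativity plus cocommutativity of $H$ to see that the inner $\lambda\ast\lambda^{-1}$ really does collapse to $\varepsilon$ acting on the correct leg rather than leaving a spurious residual term. A secondary point worth stating explicitly is that all manipulations take place after passing to $\C((z))[[\hbar]]$-linear maps of $V_i\wh\ot V_j\wh\ot\C((z))[[\hbar]]$, so "invertible" is meant in that localized sense and the composite $T_{ij}(z)S_{ij}(z)$ lands in the right space of $\C((z))[[\hbar]]$-valued endomorphisms; granting that, the verification is the purely formal Hopf-algebraic computation sketched above, entirely parallel to \cite[Lemma 2.3]{LS-twisted-tensor} and to the $n=1$ case underlying Theorem \ref{thm:qva}.
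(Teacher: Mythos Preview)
Your proposal is correct and follows what is essentially the only natural route: write down the candidate inverse $T_{ij}(z)$, verify $T_{ij}(z)S_{ij}(z)=1$ using compatibility of $\rho_i$ with the $\lambda$'s, coassociativity of the comodule maps, cocommutativity of $H$, and the defining property $\lambda_{ij}^{-1}\ast\lambda_{ij}=\varepsilon$, then read off that $\sigma T_{ij}(-z)\sigma$ literally matches the formula for $S_{ji}(z)$. The paper itself gives no proof for this lemma---it is stated as a fact to be directly verified---so your write-up is precisely the computation the paper suppresses, and it is in the same spirit as the analogous \cite[Lemma 2.3]{LS-twisted-tensor} you cite.

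One small correction to your commentary: in the composition $T_{ij}(z)\circ S_{ij}(z)$ the two operators acting on the $V_i$-leg are $\lambda_{ij}^{-1}(\,\cdot\,,-z)$ and $\lambda_{ij}(\,\cdot\,,-z)$, both at $-z$ (and likewise both at $z$ on the $V_j$-leg), so there is no $z$/$-z$ mismatch to worry about at that step; the sign flip only enters later when you substitute $-z$ into $T_{ij}$ to form $\sigma T_{ij}(-z)\sigma$. Also note that the commutation you need between $\lambda_{ij}$ and $\lambda_{ij}^{-1}$ is built into the very definition of invertibility in $\mathfrak L_H^{\rho_i}(V_i)$ (see just above Theorem \ref{thm:qva}), so you need not invoke the separate hypothesis $[\lambda_{ij}(h,z_1),\lambda_{ik}(h',z_2)]=0$ for that piece.
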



Similar to the proof of \cite[Theorem 3.6]{JKLT-Defom-va}, we have the following result.
\begin{prop}\label{prop:qva-twisted-tensor-deform}
$((\mathfrak D_{\lambda_{ii}}^{\rho_i}(V_i))_{i=1}^n,(S_{ij}(z))_{i,j=1}^n)$ is an $\hbar$-adic $n$-quantum VA.
\end{prop}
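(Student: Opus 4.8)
The plan is to verify the axioms of Definition~\ref{de:qyb-mult} for the family $(\mathfrak D_{\lambda_{ii}}^{\rho_i}(V_i))_{i=1}^n$ together with the operators $S_{ij}(z)$ defined in \eqref{eq:S-twisted-def}. By Theorem~\ref{thm:qva} applied with $H$, $V_i$, $\rho_i$, and the invertible element $\lambda_{ii}\in\mathfrak L_H^{\rho_i}(V_i)$, each $\mathfrak D_{\lambda_{ii}}^{\rho_i}(V_i)$ is already an $\hbar$-adic quantum VA with quantum Yang-Baxter operator $S_{ii}(z)$; this immediately gives \eqref{eq:qyb-locality} (the $S$-locality with $i=j$) and the $i=j$ cases of \eqref{eq:multqyb-vac}, \eqref{eq:multqyb-der-shift}, \eqref{eq:multqyb-hex1}, \eqref{eq:multqyb-hex2}. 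So the work is entirely in the ``off-diagonal'' relations, where $i$, $j$, $k$ need not coincide, and here one reuses the computations from the proof of \cite[Theorem 3.6]{JKLT-Defom-va} essentially verbatim, since those computations never used $i=j$ in an essential way — they only used that $H$ is commutative and cocommutative, that the $\rho_i$ are comodule-algebra maps, and the commuting hypothesis $[\lambda_{ij}(h,z_1),\lambda_{ik}(h',z_2)]=0$.

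First I would record the vacuum relations \eqref{eq:multqyb-vac}: using $\rho_i(\vac)=\vac\ot\vac$, $\varepsilon(\vac)=1$, and $\lambda_{ij}(\vac,z)=\varepsilon(\vac,z)=\mathrm{id}$ together with $\tau(h,z)\vac=\varepsilon(h)\vac$ from \eqref{eq:mod-va-for-vertex-bialg1-2}, the defining formula for $S_{ij}(z)$ collapses to the identity on $v\ot\vac$ and on $\vac\ot u$. Next, the shift conditions in \eqref{eq:multqyb-der-shift}: these follow from the shift condition \eqref{eq:qyb-shift} built into each $\lambda_{ij}$ as an element of $\mathfrak L_H^{\rho_i}$ — more precisely, from the analogue of Lemma~\ref{lem:multqyb-shift-total} at the level of $\tau$-operators, combined with the fact that $\partial$ acts on $\mathfrak D_{\lambda_{ii}}^{\rho_i}(V_i)$ the same way it acts on $V_i$ (the canonical derivation is unchanged by the deformation, as in \cite{JKLT-Defom-va}). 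The unitarity $S_{ji}^{21}(z)S_{ij}(-z)=1$ is exactly Lemma~\ref{lem:Sij-Sji} rewritten: $\sigma S_{ij}(-z)^{-1}\sigma=S_{ji}(z)$ is equivalent to $S_{ji}^{21}(z)=S_{ij}(-z)^{-1}$.

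The substantial steps are the two hexagon identities \eqref{eq:multqyb-hex1}, \eqref{eq:multqyb-hex2} and the Yang-Baxter equation \eqref{eq:multqyb-qybeq}. For \eqref{eq:multqyb-hex1}, I would expand both sides on a vector $v\ot u\ot w$ with $v\in V_i$, $u\in V_j$ (acting via $Y_i^{12}$ we instead need $v,v'\in V_i$, $u\in V_j$; I mean $Y_i(z_2)$ acts on the first two tensor factors, both copies of $V_i$), using the deformed vertex operator $\mathfrak D_{\lambda_{ii}}^{\rho_i}(Y_i)(a,z)=\sum Y_i(a_{(1)},z)\lambda_{ii}(a_{(2)},z)$, the comodule property $(\rho_i\ot 1)\rho_i=(1\ot\Delta)\rho_i$, the compatibility $\rho_i(\lambda_{ij}(h,z)v)=(\lambda_{ij}(h,z)\ot 1)\rho_i(v)$, and the module-VA identity \eqref{eq:mod-va-for-vertex-bialg3} for $\lambda_{ij}$; then cocommutativity of $H$ rearranges the Sweedler legs so the two sides match. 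Relation \eqref{eq:multqyb-hex2} is the mirror-image computation using \eqref{eq:qyb-hex2}-type identities for $\lambda_{ji}^{-1}$, or alternatively it follows from \eqref{eq:multqyb-hex1} for the pair $(j,i)$ together with the unitarity already established. For \eqref{eq:multqyb-qybeq}, I would expand all three $S$'s on $v_i\ot v_j\ot v_k$, and watch the $\lambda_{ij}$, $\lambda_{ik}$, $\lambda_{jk}$ operators and their inverses get shuffled past one another; the only genuinely new input beyond the $n=1$ proof is the hypothesis $[\lambda_{ij}(h,z_1),\lambda_{ik}(h',z_2)]=0$, which is exactly what lets the factors landing on the same tensor slot commute into the required order, after which cocommutativity of $\Delta$ closes the identity. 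The main obstacle — though it is bookkeeping rather than conceptual — is keeping the superscript labels $12,13,23$ and the Sweedler indices aligned correctly through the triple-tensor expansion of \eqref{eq:multqyb-qybeq}; once one organizes it so that, say, the $V_i$-leg carries $\lambda_{ij}$ and $\lambda_{ik}$ factors and the $V_j$-leg carries $\lambda_{ji}^{-1}$ and $\lambda_{jk}$ factors, the commuting hypothesis and cocommutativity do the rest, mirroring the proof of \cite[Theorem 3.6]{JKLT-Defom-va}.
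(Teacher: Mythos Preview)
Your proposal is correct and follows essentially the same approach as the paper, which simply states that the result is proved ``similar to the proof of \cite[Theorem 3.6]{JKLT-Defom-va}.'' Your write-up is a faithful and more detailed unpacking of exactly that strategy: invoke Theorem~\ref{thm:qva} for the diagonal cases, then verify the off-diagonal axioms by rerunning the computations of \cite[Theorem 3.6]{JKLT-Defom-va} using commutativity/cocommutativity of $H$, the comodule and compatibility properties of the $\rho_i$, and the commuting hypothesis on the $\lambda_{ij}$.
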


Combining Lemma \ref{lem:Sij-Sji} and Propositions \ref{prop:n-qva-to-k-qva}, \ref{prop:qva-twisted-tensor-deform}, we get the following corollary.
\begin{coro}
The twisting tensor product
$$\mathfrak D_{\lambda_{11}}^{\rho_1}(V_1)\wh\ot_{S_{12}}\cdots \wh\ot_{S_{n-1,n}}\mathfrak D_{\lambda_{nn}}^{\rho_n}(V_n)$$ is an $\hbar$-adic quantum VA with the quantum Yang-Baxter operator
\begin{align}
  S(z)=\prod_{i-j=n-1}S_{ij}^{i,n+j}(z)
    \prod_{i-j=n-2}S_{ij}^{i,n+j}(z)
    \cdots
    \prod_{i-j=1-n}S_{ij}^{i,n+j}(z).
\end{align}
\end{coro}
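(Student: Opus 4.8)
The plan is to assemble the statement from the three results already available. By Proposition \ref{prop:qva-twisted-tensor-deform}, the $n$-tuple $((\mathfrak D_{\lambda_{ii}}^{\rho_i}(V_i))_{i=1}^n,(S_{ij}(z))_{i,j=1}^n)$ is an $\hbar$-adic $n$-quantum VA; in particular the maps $S_{ij}(z)$ satisfy \eqref{eq:multqyb-vac}--\eqref{eq:multqyb-qybeq}. Now apply Proposition \ref{prop:n-qva-to-k-qva} to the partition of $\{1,2,\dots,n\}$ into singletons $K_i=\{i\}$ taken in the reverse nesting needed to realize the fully left-nested (or right-nested) iterated twisted tensor product appearing in the statement. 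This produces the $\hbar$-adic nonlocal VA
$$\mathfrak D_{\lambda_{11}}^{\rho_1}(V_1)\wh\ot_{S_{12}}\cdots \wh\ot_{S_{n-1,n}}\mathfrak D_{\lambda_{nn}}^{\rho_n}(V_n),$$
which by the coherence statement of Theorem \ref{thm:coherence} is independent of the bracketing, together with its vertex operator; and Proposition \ref{prop:n-qva-to-k-qva} simultaneously identifies the quantum Yang-Baxter operator of this $1$-quantum VA as $S_{\{1\},\{2,\dots,n\}}(z)$ built from the inductive rule $S_{K_1,K_2\cup K_3}(z)=S_{K_1,K_2}^{12}(z)S_{K_1,K_3}^{13}(z)$.

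The remaining point is purely bookkeeping: to check that the iterated $S_{K_i,K_j}(z)$ coming out of Proposition \ref{prop:n-qva-to-k-qva}, when all $K_i$ are singletons, equals the explicit product
$$S(z)=\prod_{i-j=n-1}S_{ij}^{i,n+j}(z)\prod_{i-j=n-2}S_{ij}^{i,n+j}(z)\cdots\prod_{i-j=1-n}S_{ij}^{i,n+j}(z).$$
This follows by unwinding the definition of $S_{K,L}(z)$ given before Lemma \ref{lem:3-qva-to-2-qva}: for $K=\{i_1<\cdots<i_k\}$, $L=\{j_1<\cdots<j_l\}$ the factors of $S_{K,L}(z)$ are grouped by the value of $a-b$ running from $k-1$ down to $1-l$, and in the singleton case the tensor slots $a,k+b$ collapse exactly to the slots $i,n+j$ appearing above. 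One verifies the match by a straightforward induction on $n$, using the associativity relation $S_{K_1\cup K_2,K_3}(z)=S_{K_2,K_3}^{23}(z)S_{K_1,K_3}^{13}(z)$ together with the hexagon equation \eqref{eq:twisting-op-qybeq} (available here from \eqref{eq:multqyb-qybeq}) to reorder the factors into the displayed diagonal-product form.

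I expect the main obstacle to be precisely this combinatorial reconciliation of the two descriptions of $S(z)$: one must be careful that the superscript conventions $S_{ij}^{i,n+j}$ (acting on the $i$-th and $(n+j)$-th tensor factors of $(V_1\wh\ot\cdots\wh\ot V_n)^{\wh\ot 2}$) are consistent with the slot-shifting inherent in the inductive definition of $S_{K,L}(z)$, and that the factors that are reordered all commute by virtue of the hexagon equation and the fact that distinct slots commute. Once the slot indexing is set up correctly, the rest is a mechanical induction, and the $\hbar$-adic quantum VA axioms are inherited verbatim from Proposition \ref{prop:n-qva-to-k-qva} with $k=1$, since an $\hbar$-adic $1$-quantum VA is by definition an $\hbar$-adic quantum VA.
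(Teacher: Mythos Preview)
Your application of Proposition \ref{prop:n-qva-to-k-qva} is misdirected. Partitioning $\{1,\dots,n\}$ into singletons $K_i=\{i\}$ returns the same $\hbar$-adic $n$-quantum VA you started with; it does not produce a $1$-quantum VA. The correct move is the \emph{trivial} partition $k=1$, $K_1=\{1,\dots,n\}$: then $V_{K_1}$ is precisely the iterated twisted tensor product (whose well-definedness and bracketing-independence come from Theorem \ref{thm:coherence}), and Proposition \ref{prop:n-qva-to-k-qva} declares its quantum Yang-Baxter operator to be $S_{K_1,K_1}(z)=S_{\{1,\dots,n\},\{1,\dots,n\}}(z)$.

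Relatedly, your identification of the QYB operator as $S_{\{1\},\{2,\dots,n\}}(z)$ is a type error: that object is a \emph{twisting operator} for the ordered pair $\big(V_1,\,V_2\wh\ot\cdots\wh\ot V_n\big)$, a map on $V_1\wh\ot(V_2\wh\ot\cdots\wh\ot V_n)$, not a QYB operator on $(V_1\wh\ot\cdots\wh\ot V_n)^{\wh\ot 2}$. Once you use the correct partition, all the ``combinatorial bookkeeping'' you anticipate evaporates: specializing the displayed formula for $S_{K,L}(z)$ (just before Lemma \ref{lem:3-qva-to-2-qva}) to $K=L=\{1,\dots,n\}$, so $k=l=n$, $i_a=a$, $j_b=b$, gives verbatim the product $\prod_{i-j=n-1}S_{ij}^{i,n+j}(z)\cdots\prod_{i-j=1-n}S_{ij}^{i,n+j}(z)$. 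No induction and no hexagon reordering are needed. This is why the paper simply cites Lemma \ref{lem:Sij-Sji} and Propositions \ref{prop:n-qva-to-k-qva}, \ref{prop:qva-twisted-tensor-deform} and moves on.
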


\section{Quantum affine vertex algebras}\label{sec:qaff-va}

Let us refer to the notations provided in Section \ref{sec:VAs}.
In this subsection, we recall the construction of quantum affine VAs introduced in \cite{K-Quantum-aff-va}.
Let $\mathfrak T$ be the set of tuples
\begin{align*}
\tau=\big(\tau_{ij}(z),\tau_{ij}^{1,\pm}(z),\tau_{ij}^{2,\pm}(z),\tau_{ij}^{\epsilon_1,\epsilon_2}(z)\big)_{i,j\in I}^{\epsilon_1,\epsilon_2=\pm},
\end{align*}
where $\tau_{ij}(z),\,\tau_{ij}^{1,\pm}(z),\tau_{ij}^{2,\pm}(z),\tau_{ij}^{\epsilon_1,\epsilon_2}(z)\in \C((z))[[\hbar]]$, such that
\begin{align*}
  &\lim_{\hbar\to 0}\tau_{ij}(z)=\lim_{\hbar\to 0}\tau_{ji}(-z),\quad \lim_{\hbar\to 0}\tau_{ij}^{1,\pm}(z)=-\lim_{\hbar\to 0}\tau_{ji}^{2,\pm}(-z),\\
  &\lim_{\hbar\to 0}\tau_{ij}^{\epsilon_1,\epsilon_2}(z)=\lim_{\hbar\to 0}\tau_{ji}^{\epsilon_2,\epsilon_1}(-z)\in\C[[z]]^\times.
\end{align*}
Then $\mathfrak T$ is a commutative group, where the multiplication $\tau\ast\sigma$ of $\sigma,\tau\in\mathcal T$ is defined by
\begin{align*}
  &(\tau\ast\sigma)_{ij}(z)=\tau_{ij}(z)+\sigma_{ij}(z),\quad (\tau\ast\sigma)_{ij}^{k,\pm}(z)=\tau_{ij}^{k,\pm}(z)+\sigma_{ij}^{k,\pm}(z),\\
  &(\tau\ast\sigma)_{ij}^{\epsilon_1,\epsilon_2}(z)=\tau_{ij}^{\epsilon_1,\epsilon_2}(z)\sigma_{ij}^{\epsilon_1,\epsilon_2}(z),
  \quad\te{for }i,j\in I,\,k=1,2,\,\epsilon_1,\epsilon_2=\pm,
\end{align*}
the inverse of $\tau$ is defined by
\begin{align*}
  \tau\inv=(-\tau_{ij}(z),-\tau_{ij}^{1,\pm}(z),-\tau_{ij}^{2,\pm}(z),\tau_{ij}^{\epsilon_1,\epsilon_2}(z)\inv)_{i,j\in I}^{\epsilon_1,\epsilon_2=\pm},
\end{align*}
and the identity $\varepsilon$ is defined by
\begin{align*}
  \varepsilon_{ij}(z)=0=\varepsilon_{ij}^{1,\pm}(z)=\varepsilon_{ij}^{2,\pm}(z),\quad \varepsilon_{ij}^{\epsilon_1,\epsilon_2}(z)=1\quad\te{for }i,j\in I, \epsilon_1,\epsilon_2=\pm.
\end{align*}

Define $\mathcal M_\tau(\g)$ to be the category consisting of topologically free $\C[[\hbar]]$-modules $W$, equipped with fields $h_{i,\hbar}(z),x_{i,\hbar}^\pm(z)\in\E_\hbar(W)$ ($i\in I$) satisfying the following conditions
\begin{align}
  &[h_{i,\hbar}(z_1),h_{j,\hbar}(z_2)]\label{eq:fqva-rel1}
  =r_i a_{ij} r\ell\pd{z_2}z_1\inv\delta\(\frac{z_2}{z_1}\)+\tau_{ij}(z_1-z_2)-\tau_{ji}(z_2-z_1),
  \\
  &[h_{i,\hbar}(z_1),x_{j,\hbar}^\pm(z_2)]\label{eq:fqva-rel2}
  =\pm x_{j,\hbar}^\pm(z_2)\(r_ia_{ij} z_1\inv\delta\(\frac{z_2}{z_1}\)+ \tau_{ij}^{1,\pm}(z_1-z_2)+\tau_{ji}^{2,\pm}(z_2-z_1)\),\\
  &\tau_{ij}^{\pm,\pm}(z_1-z_2)(z_1-z_2)^{n_{ij}}x_{i,\hbar}^\pm(z_1)x_{j,\hbar}^\pm(z_2)\label{eq:fqva-rel3}
  =\tau_{ji}^{\pm,\pm}(z_2-z_1)(z_1-z_2)^{n_{ij}}x_{j,\hbar}^\pm(z_2)x_{i,\hbar}^\pm(z_1),\\
  &\tau_{ij}^{\pm,\mp}(z_1-z_2)(z_1-z_2)^{2\delta_{ij}}x_{i,\hbar}^\pm(z_1)x_{j,\hbar}^\mp(z_2)\label{eq:fqva-rel4}
  =\tau_{ji}^{\mp,\pm}(z_2-z_1)(z_1-z_2)^{2\delta_{ij}}x_{j,\hbar}^\mp(z_2)x_{i,\hbar}^\pm(z_1).
\end{align}

\begin{prop}\label{prop:universal-M-tau}
There is a unique $\hbar$-adic nonlocal VA $(F_\tau(\g,\ell),Y_\tau,\vac)$, such that
\begin{itemize}
  \item[(a)] $F_\tau(\g,\ell)$ is generated by $\set{h_i,\,x_i^\pm}{i\in I}$,
  and $$(F_\tau(\g,\ell),\{Y_\tau(h_i,z)\}_{i\in I},\{Y_\tau(x_i^\pm,z)\}_{i\in I})$$ is an object of $\mathcal M_\tau(\g)$.

  \item[(b)] For any $\hbar$-adic nonlocal VA $(V,Y,\vac)$ containing $\bar h_i, \bar x_i^\pm$ ($i\in I$),
    such that $$(V,\{Y(\bar h_i,z)\}_{i\in I},\{Y(\bar x_i^\pm,z)\}_{i\in I})$$
    is an object of $\mathcal M_\tau(\g)$,
    then there exists a unique $\hbar$-adic nonlocal VA homomorphism $\varphi:F_\tau(\g,\ell)\to V$,
    such that $\varphi(h_i)=\bar h_i$ and $\varphi(x_i^\pm)=\bar x_i^\pm$ for $i\in I$.
\end{itemize}
\end{prop}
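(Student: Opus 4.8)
The idea is to obtain $F_\tau(\g,\ell)$ as a deformation of the classical affine vertex algebra $F_{\hat\g}^\ell$ of Definition~\ref{de:affVAs}, via the deforming-triple construction of Theorem~\ref{thm:deform-va}, and then to deduce its universal property from that of $F_{\hat\g}^\ell$. Observe first that when $\tau=\varepsilon$ the relations \eqref{eq:fqva-rel1}--\eqref{eq:fqva-rel4} become the $\hbar$-adic analogues of the relations defining $F_{\hat\g}^\ell$ (with the locality function $N$ read off from \eqref{eq:fqva-rel1}--\eqref{eq:fqva-rel4}); hence $F_\varepsilon(\g,\ell)=F_{\hat\g}^\ell[[\hbar]]$ exists and is universal in $\mathcal M_\varepsilon(\g)$ by Definition~\ref{de:affVAs} and the universal construction of \cite{R-free-conformal-free-va}, and it is spanned by ordered products of the modes of $h_i,x_i^\pm$ applied to $\vac$.

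For part~(a) I would introduce a commutative and cocommutative $\hbar$-adic nonlocal vertex bialgebra $H$ carrying the scalar data of $\tau$ — concretely a completion of a vertex bialgebra built from Heisenberg-type currents (encoding the additive functions $\tau_{ij}(z),\tau_{ij}^{k,\pm}(z)$) together with group-like elements (encoding the multiplicative functions $\tau_{ij}^{\epsilon_1,\epsilon_2}(z)$), with its canonical cocommutative coproduct. One equips $F_{\hat\g}^\ell[[\hbar]]$ with (i) an $H$-comodule nonlocal VA structure $\rho$, determined by $\rho(h_i)=h_i\ot\vac+\vac\ot p_i$ for a primitive $p_i\in H$ and $\rho(x_i^\pm)=x_i^\pm\ot g_i^\pm$ for group-like $g_i^\pm\in H$, and (ii) a compatible $H$-module nonlocal VA structure $\lambda$ whose modes reproduce the functions in $\tau$. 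The symmetry constraints defining $\mathfrak T$ (such as $\lim_{\hbar\to0}\tau_{ij}(z)=\lim_{\hbar\to0}\tau_{ji}(-z)$, $\lim_{\hbar\to0}\tau_{ij}^{1,\pm}(z)=-\lim_{\hbar\to0}\tau_{ji}^{2,\pm}(-z)$ and $\tau_{ij}^{\epsilon_1,\epsilon_2}(z)\equiv\tau_{ji}^{\epsilon_2,\epsilon_1}(-z)\bmod\hbar$) are precisely what makes $\rho$ and $\lambda$ into well-defined $\hbar$-adic nonlocal VA (homo)morphisms and makes them compatible, so that $(H,\rho,\lambda)$ is a deforming triple. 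Then I set $F_\tau(\g,\ell):=\mathfrak D_\lambda^\rho\big(F_{\hat\g}^\ell[[\hbar]]\big)$, an $\hbar$-adic nonlocal VA by Theorem~\ref{thm:deform-va}, with underlying module and vacuum inherited from $F_{\hat\g}^\ell[[\hbar]]$; computing $Y_\tau(h_i,z)$ and $Y_\tau(x_i^\pm,z)$ from $\mathfrak D_\lambda^\rho(Y)(a,z)=\sum Y(a_{(1)},z)\lambda(a_{(2)},z)$ and feeding in the defining relations of $F_{\hat\g}^\ell$, one checks that \eqref{eq:fqva-rel1}--\eqref{eq:fqva-rel4} hold. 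Since $F_{\hat\g}^\ell$ is generated by $\{h_i,x_i^\pm\}$, so is $F_\tau(\g,\ell)$, establishing~(a).

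For part~(b), let $(V,Y,\vac)$ be an object of $\mathcal M_\tau(\g)$; replacing $V$ by the $\hbar$-adic nonlocal subVA generated by its distinguished elements $\bar h_i,\bar x_i^\pm$, we may assume $V$ is generated by these. I would equip $V$ with compatible $H$-comodule and $H$-module structures $\rho_V$ and $\lambda_V$, defined on generators in the same way as $\rho$ and $\lambda$, and form $V^{\mathrm{untw}}:=\mathfrak D_{\lambda_V^{-1}}^{\rho_V}(V)$ — this makes sense as $\tau$ is invertible in $\mathfrak T$, so $\lambda_V^{-1}$ is an invertible element of $\mathfrak L_H^{\rho_V}(V)$ in the sense of Proposition~\ref{prop:L-H-rho-V-compostition}. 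Running the computation of part~(a) in reverse, $V^{\mathrm{untw}}$ is an object of $\mathcal M_\varepsilon(\g)$, so by the universality of $F_{\hat\g}^\ell$ there is a unique $\hbar$-adic nonlocal VA homomorphism $\psi:F_{\hat\g}^\ell[[\hbar]]\to V^{\mathrm{untw}}$ with $\psi(h_i)=\bar h_i$ and $\psi(x_i^\pm)=\bar x_i^\pm$. Checking on generators, $\psi$ intertwines the pair $(\rho,\lambda)$ with $(\rho_V,\lambda_V)$; hence by Proposition~\ref{prop:L-H-rho-V-compostition} (with $\tau'=\lambda_V$, using $\lambda_V^{-1}\ast\lambda_V=\varepsilon$, so $\mathfrak D_{\lambda_V}^{\rho_V}\big(V^{\mathrm{untw}}\big)=V$) the same underlying map is a homomorphism $\varphi:F_\tau(\g,\ell)\to V$ with $\varphi(h_i)=\bar h_i$ and $\varphi(x_i^\pm)=\bar x_i^\pm$. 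Uniqueness of $\varphi$ is immediate since $\{h_i,x_i^\pm\}$ generates $F_\tau(\g,\ell)$.

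The step I expect to be the main obstacle is the construction in part~(a): choosing $H$, $\rho$ and $\lambda$ so that $(H,\rho,\lambda)$ is genuinely a deforming triple — i.e.\ so that $\rho$ and $\lambda$ respect all of the nonlocal VA axioms (not merely the commutator relations) and are mutually compatible — and so that the deformed fields $Y_\tau(h_i,z),Y_\tau(x_i^\pm,z)$ reproduce \emph{exactly} the functions prescribed in \eqref{eq:fqva-rel1}--\eqref{eq:fqva-rel4}; this is the point at which the defining constraints of $\mathfrak T$ must be matched against the vertex-bialgebra axioms. A closely related technical point, needed in part~(b), is to check that every generated object of $\mathcal M_\tau(\g)$ carries compatible $H$-comodule and $H$-module structures untwisting it into $\mathcal M_\varepsilon(\g)$, and that the resulting $\psi$ intertwines them. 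The remaining work — the reverse computation in the untwisting argument and the Sweedler-notation bookkeeping — is routine given Theorems~\ref{thm:smash-prod} and \ref{thm:deform-va} and Proposition~\ref{prop:L-H-rho-V-compostition}.
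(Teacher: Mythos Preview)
The paper does not give its own proof of this proposition: Section~\ref{sec:qaff-va} explicitly recalls it from \cite{K-Quantum-aff-va}. There, $F_\tau(\g,\ell)$ is built directly as a quotient of a free $\hbar$-adic nonlocal vertex algebra on the generating set (via the $\hbar$-adic compatibility machinery of \cite{Li-h-adic}), so that the universal property is immediate from freeness-plus-quotient. In the present paper the deforming-triple description (Propositions~\ref{prop:deform-datum} and \ref{prop:classical-limit}) is developed \emph{after} $F_\tau(\g,\ell)$ already exists, as a structural consequence rather than as the definition.

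Your part~(a) is sound and is exactly the content of Remark~\ref{rem:F-varepsilon=F} together with the $\tau=\varepsilon$ case of Proposition~\ref{prop:deform-datum} and the case $(\tau,\sigma)=(\varepsilon,\tau)$ of Proposition~\ref{prop:classical-limit}: one may take $F_\tau(\g,\ell):=\mathfrak D_\tau^\rho\bigl(F_{\hat\g}^\ell[[\hbar]]\bigr)$ and verify \eqref{eq:fqva-rel1}--\eqref{eq:fqva-rel4} by direct computation.

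Your part~(b), however, has a genuine gap, and it is precisely the ``closely related technical point'' you flag at the end. The untwisting $\mathfrak D_{\lambda_V^{-1}}^{\rho_V}(V)$ requires an $H$-\emph{comodule} nonlocal VA structure $\rho_V:V\to V\wh\ot H$, which by definition is an $\hbar$-adic nonlocal VA homomorphism \emph{out of} $V$. To know that the assignment $\bar h_i\mapsto \bar h_i\ot 1+\vac\ot\al_i^\vee$, $\bar x_i^\pm\mapsto \bar x_i^\pm\ot e_i^\pm$ extends to such a homomorphism, you must verify that the images satisfy \emph{every} relation holding among $\bar h_i,\bar x_i^\pm$ in $V$, not just the relations \eqref{eq:fqva-rel1}--\eqref{eq:fqva-rel4} defining membership in $\mathcal M_\tau(\g)$. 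An arbitrary object of $\mathcal M_\tau(\g)$ can satisfy additional relations you have no control over, and checking that $\rho_V$ respects them amounts to knowing that $V$ is already a quotient of $F_\tau(\g,\ell)$ --- which is the universal property you are trying to prove. (The $H$-\emph{module} structure $\lambda_V$ is less problematic, since it is specified by operators on $V$ rather than by a map out of $V$; but $\mathfrak D^{\rho_V}_{\lambda_V^{-1}}$ needs both.) This circularity is why the reference builds $F_\tau(\g,\ell)$ from a free object first and derives the deformation picture afterwards; your argument would go through cleanly if you replaced the untwisting of $V$ by that direct construction.
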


\begin{rem}\label{rem:F-varepsilon=F}
\emph{
Recall the VA $F_{\hat\g}^\ell$ in Definition \ref{de:affVAs}.
View $F_{\hat\g}^\ell[[\hbar]]$ as the natural $\hbar$-adic VA.
Then $F_{\hat\g}^\ell[[\hbar]]\cong F_{\varepsilon}(\g,\ell)$.
}
\end{rem}

Let $H_0$ be the symmetric algebra of the following vector space, and let $H=H_0[[\hbar]]$:
\begin{align*}
  \bigoplus_{i\in I}\(\C[\partial] \(\C \al^\vee_i\oplus\C e_i^+\oplus\C e_i^-\)\).
\end{align*}
Then $(H,\Delta,\varepsilon)$ carries a commutative and cocommutative bialgebra structure, such that ($i\in I$, $n\in\N$):
\begin{align*}
  &\Delta(\partial^n \al^\vee_i)=\partial^n \al^\vee_i\ot 1+1\ot \partial^n \al^\vee_i,\quad
  \varepsilon(\partial^n \al^\vee_i)=0,
  \\
  &\Delta(\partial^n e_i^\pm)=\sum_{k=0}^n\binom{n}{k} \partial^k e_i^\pm\ot \partial^{n-k} e_i^\pm,
  \quad\varepsilon(\partial^n e_i^\pm)=\delta_{n,0}.
\end{align*}
Let $\partial$ be the derivation on $H$ such that
\begin{align*}
  \partial (\partial^n \al^\vee_i)=\partial^{n+1} \al^\vee_i,\quad \partial(\partial^n e_i^\pm)=\partial^{n+1}e_i^\pm,\quad i\in I,\,0\le n\in\Z.
\end{align*}
Define a vertex operator map $Y(\cdot,z)$ on $H$ by $Y(u,z)v=(e^{z\partial}u)v$.
Then $(H,Y,1,\Delta,\varepsilon)$ forms a cocommutative and commutative $\hbar$-adic vertex bialgebra.

\begin{prop}\label{prop:deform-datum}
For $\sigma,\tau\in \mathfrak T$, there exists a deforming triple $(H,\rho,\sigma)$,
where $(F_\tau(\g,\ell),\rho)$ is the $H$-comodule nonlocal VA uniquely determined by
\begin{align}
  \rho(h_i)=h_i\ot 1+\vac\ot \al^\vee_i,\quad \rho(x_i^\pm)=x_i^\pm\ot e_i^\pm,
\end{align}
and $(F_\tau(\g,\ell),\sigma)$ is the $H$-module nonlocal VA uniquely determined by
\begin{align*}
  &\sigma(\al^\vee_i,z)h_j=\vac\ot \sigma_{ij}(z),\quad
  \sigma(\al^\vee_i,z)x_j^\pm=\pm x_j^\pm\ot \sigma_{ij}^{1,\pm}(z),\\
  &\sigma(e_i^\pm,z)h_j=h_j\ot 1\mp\vac\ot \sigma_{ij}^{2,\pm}(z),\quad
  \sigma(e_i^\pm,z)x_j^\epsilon=x_j^\epsilon\ot \sigma_{ij}^{\pm,\epsilon}(z)\inv.
\end{align*}
Moreover, for $\tau,\sigma_1,\sigma_2\in\mathfrak T$, we have that
\begin{align}\label{eq:F-deform-modVA-struct-com}
  [\sigma_1(h_1,z_1),\sigma_2(h_2,z_2)]=0\,\,\te{on }F_\tau(\g,\ell)\quad\te{for }h_1,h_2\in H.
\end{align}
\end{prop}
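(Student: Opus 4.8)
\textbf{Proof strategy for Proposition \ref{prop:deform-datum}.}
The plan is to build the comodule structure $\rho$ and the module structure $\sigma$ by invoking the universal property in Proposition \ref{prop:universal-M-tau}, and then to verify compatibility and the commutativity \eqref{eq:F-deform-modVA-struct-com} at the level of generators. First I would check that $\rho$ is a well-defined homomorphism of $\hbar$-adic nonlocal VAs: the target $F_\tau(\g,\ell)\wh\ot H$ carries a natural $\hbar$-adic nonlocal VA structure (the smash-type/tensor structure), and one needs to see that the assignments $h_i\mapsto h_i\ot 1+\vac\ot\al_i^\vee$, $x_i^\pm\mapsto x_i^\pm\ot e_i^\pm$ respect the defining relations \eqref{eq:fqva-rel1}--\eqref{eq:fqva-rel4} of $\mathcal M_\tau(\g)$. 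This is where the constraints defining $\mathfrak T$ enter: the $\al_i^\vee$ and $e_i^\pm$ are \emph{primitive-like} ($\al_i^\vee$ genuinely primitive, $e_i^\pm$ grouplike in the vertex-bialgebra sense), so the extra terms produced in the tensor factor $H$ cancel exactly because $H$ is commutative and cocommutative and because $Y(u,z)v=(e^{z\partial}u)v$ is purely ``regular''; there are no new singular contributions, so the relations in $\mathcal M_\tau(\g)$ pull back to themselves. The coassociativity and counit axioms $(\rho\ot 1)\rho=(1\ot\Delta)\rho$, $(1\ot\varepsilon)\rho=\mathrm{Id}$ are then checked on the generators $h_i,x_i^\pm$ and extended to all of $F_\tau(\g,\ell)$ by the fact that both sides are nonlocal VA homomorphisms agreeing on a generating set.

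Next I would construct $\sigma$ as an $H$-module nonlocal VA structure on $F_\tau(\g,\ell)$. Because $H$ is a free commutative (vertex) bialgebra generated by $\C[\partial](\C\al_i^\vee\oplus\C e_i^+\oplus\C e_i^-)$, giving a module structure amounts to specifying the action of the generators $\al_i^\vee,e_i^\pm$ compatibly with \eqref{eq:mod-va-for-vertex-bialg1-2}--\eqref{eq:mod-va-for-vertex-bialg3}; the formulas in the statement do exactly this. The content to verify is: (i) the vacuum conditions $\sigma(h,z)\vac=\varepsilon(h)\vac$, which hold because $\sigma(\al_i^\vee,z)$ and the ``$-\sigma_{ij}^{2,\pm}$'' piece annihilate $\vac$ while $\sigma(e_i^\pm,z)$ acts as the counit $1$ on $\vac$; (ii) the key intertwining identity \eqref{eq:mod-va-for-vertex-bialg3}, $\sigma(h,z_1)Y_\tau(u,z_2)v=\sum Y_\tau(\sigma(h_{(1)},z_1-z_2)u,z_2)\sigma(h_{(2)},z_1)v$; by the usual generating-set argument it is enough to take $h\in\{\al_i^\vee,e_i^\pm\}$ and $u\in\{h_j,x_j^\pm\}$, which reduces to a finite list of $\delta$-function identities matching the series $\tau_{ij}(z),\tau_{ij}^{k,\pm}(z),\tau_{ij}^{\epsilon_1,\epsilon_2}(z)$ against the commutation relations \eqref{eq:fqva-rel1}--\eqref{eq:fqva-rel4}. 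The choice of signs and of inverse $\tau_{ij}^{\pm,\epsilon}(z)\inv$ in the definition of $\sigma$ is dictated precisely so that these identities close up.

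Then the compatibility of $\rho$ and $\sigma$, namely $\rho(\sigma(h,z)v)=(\sigma(h,z)\ot 1)\rho(v)$, is again checked on generators $h\in\{\al_i^\vee,e_i^\pm\}$, $v\in\{h_j,x_j^\pm\}$: each side is a short explicit expression in $F_\tau(\g,\ell)\wh\ot H$ and they coincide term by term because $\rho$ lands in the first tensor factor plus a scalar/grouplike correction that commutes through $\sigma(h,z)\ot 1$. Finally, for \eqref{eq:F-deform-modVA-struct-com} one observes that $\sigma_1(h_1,z_1)$ and $\sigma_2(h_2,z_2)$ act on $F_\tau(\g,\ell)$ through operators that are ``diagonal'' on the generators $h_j,x_j^\pm$ (each generator is rescaled or shifted by a scalar series, with no mixing between distinct generators), and such diagonal operators commute; one then propagates commutation from the generating set to all of $F_\tau(\g,\ell)$ using \eqref{eq:mod-va-for-vertex-bialg3} together with the fact that $H$ is cocommutative. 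I expect the main obstacle to be bookkeeping in step (ii): matching the $\delta$-function/residue terms on both sides of \eqref{eq:mod-va-for-vertex-bialg3} when $h=e_i^\pm$ and $u=x_j^\mp$ or $u=h_j$, where the grouplike coproduct of $e_i^\pm$ produces a sum $\sum_k\binom{n}{k}\partial^k e_i^\pm\ot\partial^{n-k}e_i^\pm$ and one must check that the generating-function identities \eqref{eq:fqva-rel3}--\eqref{eq:fqva-rel4}, which are multiplicative in the $\tau^{\epsilon_1,\epsilon_2}$-factors, are exactly reproduced; everything else is a routine, if lengthy, verification on generators.
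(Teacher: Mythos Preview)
The paper does not actually prove this proposition: Section \ref{sec:qaff-va} explicitly recalls the construction from \cite{K-Quantum-aff-va}, and Proposition \ref{prop:deform-datum} is stated there without argument. So there is no in-paper proof to compare against.

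That said, your outline is the natural one and is essentially how such results are established. A couple of points where you should be more precise. First, for $\rho$: the target is the ordinary tensor product $F_\tau(\g,\ell)\wh\ot H$ (not a smash product), which is an $\hbar$-adic nonlocal VA because $H$ is commutative; the verification that $\{\rho(h_i),\rho(x_i^\pm)\}$ satisfy \eqref{eq:fqva-rel1}--\eqref{eq:fqva-rel4} is immediate once you note that all fields in the $H$-factor commute with everything, so Proposition \ref{prop:universal-M-tau} applies cleanly. Second, for $\sigma$: before you can ``specify the action of the generators'', you need $F_\tau(\g,\ell)$ to be an $H$-module in the VA sense. The clean way is to observe that the prescribed operators $\sigma(\al_i^\vee,z),\sigma(e_i^\pm,z)\in\E_\hbar(F_\tau(\g,\ell))$ are pairwise commuting (this is exactly your diagonal observation), hence generate a commutative $\hbar$-adic nonlocal VA inside $\E_\hbar(F_\tau(\g,\ell))$, and then invoke the universal property of $H$ as a free commutative vertex algebra on $\{\al_i^\vee,e_i^\pm\}$ to get the module map. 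Only after that does checking \eqref{eq:mod-va-for-vertex-bialg3} on generators make sense; your reduction to $h\in\{\al_i^\vee,e_i^\pm\}$, $u\in\{h_j,x_j^\pm\}$ is then justified by Theorem \ref{thm:smash-prod} (the generating-subset clause for $S$). The commutativity \eqref{eq:F-deform-modVA-struct-com} follows, as you say, from the diagonal form of the action on generators together with \eqref{eq:mod-va-for-vertex-bialg3} and cocommutativity of $H$.
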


\begin{prop}\label{prop:classical-limit}
Let $\tau,\sigma\in\mathfrak T$.
Then
\begin{align*}
  \mathfrak D_\sigma^\rho(F_\tau(\g,\ell))=F_{\tau\ast\sigma}(\g,\ell).
\end{align*}
Moreover, $F_\tau(\g,\ell)/\hbar F_\tau(\g,\ell)\cong F_\g^\ell$.
\end{prop}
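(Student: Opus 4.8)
The plan is to prove the two assertions of Proposition~\ref{prop:classical-limit} separately, with the first being an identity of $\hbar$-adic nonlocal VAs and the second a statement about the classical limit.

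\medskip

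\textbf{Step 1: The identity $\mathfrak D_\sigma^\rho(F_\tau(\g,\ell))=F_{\tau\ast\sigma}(\g,\ell)$.} By Proposition~\ref{prop:universal-M-tau}, the $\hbar$-adic nonlocal VA $F_{\tau\ast\sigma}(\g,\ell)$ is characterized by a universal property relative to the category $\mathcal M_{\tau\ast\sigma}(\g)$. So it suffices to show that $\mathfrak D_\sigma^\rho(F_\tau(\g,\ell))$, with its generators $h_i,x_i^\pm$ and the deformed vertex operators $\mathfrak D_\sigma^\rho(Y_\tau)(h_i,z)$, $\mathfrak D_\sigma^\rho(Y_\tau)(x_i^\pm,z)$, is an object of $\mathcal M_{\tau\ast\sigma}(\g)$ enjoying the same universal property. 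First I would compute the deformed vertex operators explicitly using the formula $\mathfrak D_\sigma^\rho(Y)(a,z)=\sum Y(a_{(1)},z)\sigma(a_{(2)},z)$ together with the coaction $\rho(h_i)=h_i\ot 1+\vac\ot\al^\vee_i$ and $\rho(x_i^\pm)=x_i^\pm\ot e_i^\pm$ from Proposition~\ref{prop:deform-datum}; thus
\begin{align*}
  \mathfrak D_\sigma^\rho(Y_\tau)(h_i,z)=Y_\tau(h_i,z)+\sigma(\al^\vee_i,z),\qquad
  \mathfrak D_\sigma^\rho(Y_\tau)(x_i^\pm,z)=Y_\tau(x_i^\pm,z)\sigma(e_i^\pm,z).
\end{align*}
Then I would substitute these into the defining relations \eqref{eq:fqva-rel1}--\eqref{eq:fqva-rel4} of $\mathcal M_{\tau\ast\sigma}(\g)$: the commutator $[h_{i,\hbar}(z_1),h_{j,\hbar}(z_2)]$ acquires, beyond the original $\tau$-terms, the contribution of $[\sigma(\al^\vee_i,z_1),Y_\tau(h_j,z_2)]+[Y_\tau(h_i,z_1),\sigma(\al^\vee_j,z_2)]$, which by the module-VA compatibility \eqref{eq:mod-va-for-vertex-bialg3} and the explicit action $\sigma(\al^\vee_i,z)h_j=\vac\ot\sigma_{ij}(z)$ yields exactly $\sigma_{ij}(z_1-z_2)-\sigma_{ji}(z_2-z_1)$; adding to the $\tau$-part gives the $(\tau\ast\sigma)$-relation since $(\tau\ast\sigma)_{ij}(z)=\tau_{ij}(z)+\sigma_{ij}(z)$. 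The relations \eqref{eq:fqva-rel2}, \eqref{eq:fqva-rel3}, \eqref{eq:fqva-rel4} are handled the same way, the multiplicative structure of the $\tau_{ij}^{\epsilon_1,\epsilon_2}$-components matching the product law $(\tau\ast\sigma)_{ij}^{\epsilon_1,\epsilon_2}(z)=\tau_{ij}^{\epsilon_1,\epsilon_2}(z)\sigma_{ij}^{\epsilon_1,\epsilon_2}(z)$. Finally, to transfer the universal property, I would use the functoriality already built into Theorem~\ref{thm:deform-va} and Proposition~\ref{prop:L-H-rho-V-compostition}: given any $V\in\mathcal M_{\tau\ast\sigma}(\g)$, twisting it by $\sigma\inv$ produces an object of $\mathcal M_\tau(\g)$, to which the universal property of $F_\tau(\g,\ell)$ applies, and then untwisting yields the desired homomorphism out of $\mathfrak D_\sigma^\rho(F_\tau(\g,\ell))$; uniqueness follows since the generators are preserved.

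\medskip

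\textbf{Step 2: The classical limit $F_\tau(\g,\ell)/\hbar F_\tau(\g,\ell)\cong F_\g^\ell$.} Setting $\hbar=0$ in relations \eqref{eq:fqva-rel1}--\eqref{eq:fqva-rel4}, the conditions on $\tau$ (namely $\lim_{\hbar\to 0}\tau_{ij}(z)=\lim_{\hbar\to 0}\tau_{ji}(-z)$, the analogous antisymmetry of the $\tau_{ij}^{k,\pm}$, and $\lim_{\hbar\to 0}\tau_{ij}^{\epsilon_1,\epsilon_2}(z)\in\C[[z]]^\times$ with the corresponding symmetry) force the extra $\tau$-terms to cancel in \eqref{eq:fqva-rel1} and \eqref{eq:fqva-rel2}, while in \eqref{eq:fqva-rel3} and \eqref{eq:fqva-rel4} the common invertible scalar factor $\lim_{\hbar\to0}\tau_{ij}^{\pm,\pm}(z)$ (being a unit of $\C[[z]]$) can be divided out, leaving the locality relations $(z_1-z_2)^{n_{ij}}[x_{i}^\pm(z_1),x_j^\pm(z_2)]=0$ and $(z_1-z_2)^{2\delta_{ij}}[x_i^\pm(z_1),x_j^\mp(z_2)]=0$. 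Comparing with the definition of the category $\mathcal V(\mathcal B,N)$ and of $F_{\hat\g}^\ell$ in Definition~\ref{de:affVAs}, I see that $F_\tau(\g,\ell)/\hbar F_\tau(\g,\ell)$ satisfies exactly the defining relations of $F_{\hat\g}^\ell$, giving a surjective VA homomorphism $F_{\hat\g}^\ell\to F_\tau(\g,\ell)/\hbar F_\tau(\g,\ell)$. For injectivity, I would invoke Remark~\ref{rem:F-varepsilon=F} ($F_{\hat\g}^\ell[[\hbar]]\cong F_\varepsilon(\g,\ell)$) together with Step~1 in the form $F_\tau(\g,\ell)=\mathfrak D_{\tau\ast\varepsilon\inv}^\rho(F_\varepsilon(\g,\ell))$: since the deformation functor $\mathfrak D$ only modifies the vertex operator by a module action that reduces to $\varepsilon$ modulo $\hbar$ (Remark~\ref{rem:trivial-deform}), the underlying $\C[[\hbar]]$-module of $F_\tau(\g,\ell)$ is the same topologically free module as that of $F_\varepsilon(\g,\ell)=F_{\hat\g}^\ell[[\hbar]]$, whence the quotient by $\hbar$ is $F_{\hat\g}^\ell$ as a vector space; matching dimensions in each graded piece upgrades the surjection to an isomorphism of VAs.

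\medskip

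\textbf{Main obstacle.} The routine part is the substitution of the deformed vertex operators into the relations; the delicate point is the careful bookkeeping needed to see that $\mathfrak D_\sigma^\rho(Y_\tau)(x_i^\pm,z)=Y_\tau(x_i^\pm,z)\sigma(e_i^\pm,z)$ interacts correctly with the mixed relation \eqref{eq:fqva-rel4} — one must track how $\sigma(e_i^\pm,z)$ acts on products $x_j^\epsilon$ via $\sigma(e_i^\pm,z)x_j^\epsilon=x_j^\epsilon\ot\sigma_{ij}^{\pm,\epsilon}(z)\inv$ and how $\sigma(e_i^\pm,z)h_j=h_j\ot1\mp\vac\ot\sigma_{ij}^{2,\pm}(z)$ feeds back into \eqref{eq:fqva-rel2} and \eqref{eq:fqva-rel3}, using the compatibility \eqref{eq:mod-va-for-vertex-bialg3} and the commutativity \eqref{eq:F-deform-modVA-struct-com} to rearrange the order of module actions and vertex operators. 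Equally, for injectivity in Step~2 the crux is arguing that $\mathfrak D$ does not change the underlying $\C[[\hbar]]$-module; this is implicit in Theorem~\ref{thm:deform-va} but should be stated carefully, perhaps by noting that $\mathfrak D_\sigma^\rho(V)$ and $V$ have literally the same underlying module and that $\rho$ is simultaneously a comodule map for both structures.
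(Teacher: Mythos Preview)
The paper itself gives no proof of this proposition: Section~\ref{sec:qaff-va} explicitly recalls it from \cite{K-Quantum-aff-va}, so there is nothing in the present paper to compare your argument against line by line. That said, your outline matches the standard strategy behind the cited result, and the computations of the deformed generators
\[
\mathfrak D_\sigma^\rho(Y_\tau)(h_i,z)=Y_\tau(h_i,z)+\sigma(\al^\vee_i,z),\qquad
\mathfrak D_\sigma^\rho(Y_\tau)(x_i^\pm,z)=Y_\tau(x_i^\pm,z)\,\sigma(e_i^\pm,z)
\]
together with the verification of \eqref{eq:fqva-rel1}--\eqref{eq:fqva-rel4} for the parameters $\tau\ast\sigma$ are exactly right. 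Step~2 is also sound: the crucial observation is that $\mathfrak D_\sigma^\rho$ only alters the vertex operator map, not the underlying topologically free module, so once Step~1 gives $F_\tau(\g,\ell)=\mathfrak D_\tau^\rho(F_\varepsilon(\g,\ell))$ the quotient by $\hbar$ is literally $F_{\hat\g}^\ell$.

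There is one genuine gap in your universal-property argument. You write that given an arbitrary $V$ appearing in Proposition~\ref{prop:universal-M-tau}(b) for $\tau\ast\sigma$, you will ``twist it by $\sigma^{-1}$'' to land in $\mathcal M_\tau(\g)$. But the deformation $\mathfrak D_{\sigma^{-1}}^\rho$ is only defined when $V$ carries a compatible $H$-module nonlocal VA structure \emph{and} an $H$-comodule structure $\rho$; a generic $V$ in the universal property has neither. The repair is to run the argument only on the universal objects themselves, which \emph{do} carry these structures by Proposition~\ref{prop:deform-datum}: your relation check yields a VA homomorphism $\phi:F_{\tau\ast\sigma}(\g,\ell)\to\mathfrak D_\sigma^\rho(F_\tau(\g,\ell))$ sending generators to generators, and the same check with $(\tau,\sigma)$ replaced by $(\tau\ast\sigma,\sigma^{-1})$ yields $\psi:F_\tau(\g,\ell)\to\mathfrak D_{\sigma^{-1}}^\rho(F_{\tau\ast\sigma}(\g,\ell))$. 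Both $\phi$ and $\psi$ preserve generators, hence are surjective; and since each intertwines the $H$-comodule maps $\rho$ (both sides have $\rho$ defined identically on generators, and $\rho$ is a VA homomorphism by Proposition~\ref{prop:deform-datum}) as well as the $H$-module actions, Proposition~\ref{prop:L-H-rho-V-compostition} lets you deform $\phi$ by $\sigma^{-1}$ to obtain a VA homomorphism $\mathfrak D_{\sigma^{-1}}^\rho(F_{\tau\ast\sigma})\to F_\tau$ still preserving generators. The composite with $\psi$ is then an endomorphism of $F_\tau(\g,\ell)$ fixing generators, hence the identity by the uniqueness clause of Proposition~\ref{prop:universal-M-tau}(b); symmetrically for the other composite. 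You should make this explicit rather than invoking a twist on a general $V$.
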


\begin{thm}\label{thm:S-tau}
For any $\tau\in\mathfrak T$, $F_\tau(\g,\ell)$ is an $\hbar$-adic quantum VA with the quantum Yang-Baxter operator $S_\tau(z)$ defined by
\begin{align*}
  S_\tau(z)(v\ot u)=\sum \tau(u_{(2)},-z)v_{(1)}\ot \tau\inv(v_{(2)},z)u_{(1)}\quad \te{for }u,v\in F_\tau(\g,\ell).
\end{align*}
Moreover, for any $i,j\in I$ and $\epsilon_1,\epsilon_2=\pm$, we have that
\begin{align}
  &S_\tau(z)(h_j\ot h_i)=h_j\ot h_i+\vac\ot\vac\ot \(\tau_{ij}(-z)-\tau_{ji}(z)\),\label{eq:S-tau-1}\\
  &S_\tau(z)(x_j^\pm\ot h_i)=x_j^\pm\ot h_i\pm x_j^\pm\ot \vac \ot \(\tau_{ij}^{1,\pm}(-z)+\tau_{ji}^{2,\pm}(z)\),\label{eq:S-tau-2}\\
  &S_\tau(z)(h_j\ot x_i^\pm)=h_j\ot x_i^\pm\mp\vac\ot x_i^\pm
  \ot\(\tau_{ij}^{2,\pm}(-z)+\tau_{ji}^{1,\pm}(z)\),\label{eq:S-tau-3}\\
  &S_\tau(z)(x_j^{\epsilon_1}\ot x_i^{\epsilon_2})=x_j^{\epsilon_1}\ot x_i^{\epsilon_2}\ot\tau_{ji}^{\epsilon_1,\epsilon_2}(z)  \tau_{ij}^{\epsilon_2,\epsilon_1}(-z)\inv.\label{eq:S-tau-4}
\end{align}
\end{thm}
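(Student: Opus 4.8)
The plan is to derive Theorem \ref{thm:S-tau} as a direct application of the general machinery assembled in Section \ref{sec:construct-n-qvas}, specialized to the vertex bialgebra $H$ and the comodule/module structures from Proposition \ref{prop:deform-datum}. First I would observe that by Remark \ref{rem:F-varepsilon=F} and Proposition \ref{prop:classical-limit}, $F_\tau(\g,\ell)=\mathfrak D_\tau^\rho\(F_\varepsilon(\g,\ell)\)=\mathfrak D_\tau^\rho\(F_{\hat\g}^\ell[[\hbar]]\)$, so that $F_\tau(\g,\ell)$ is a deformation of an honest $\hbar$-adic VA $V_0[[\hbar]]$ with $V_0=F_{\hat\g}^\ell$. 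To invoke Theorem \ref{thm:qva} I must exhibit $\tau$ (the $H$-module structure on $F_\varepsilon(\g,\ell)$ given in Proposition \ref{prop:deform-datum}) as an \emph{invertible} element of $\mathfrak L_H^\rho(V)$: its inverse is the $H$-module structure associated with $\tau\inv\in\mathfrak T$, using that $\mathfrak T$ is a group with the stated multiplication and that $\mathfrak D_{\tau\inv}^\rho\mathfrak D_\tau^\rho=\mathfrak D_{\tau\inv\ast\tau}^\rho=\mathfrak D_\varepsilon^\rho=\mathrm{Id}$ by Proposition \ref{prop:L-H-rho-V-compostition} together with Remark \ref{rem:trivial-deform}; the commutativity of $\tau$ and $\tau\inv$ is exactly \eqref{eq:F-deform-modVA-struct-com}. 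Theorem \ref{thm:qva} then immediately produces the $\hbar$-adic quantum VA structure on $\mathfrak D_\tau^\rho(V)=F_\tau(\g,\ell)$ with quantum Yang-Baxter operator $S(z)(v\ot u)=\sum\tau(u_{(2)},-z)v_{(1)}\ot\tau\inv(v_{(2)},z)u_{(1)}$, which is precisely the displayed $S_\tau(z)$.

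It remains to verify the four explicit formulas \eqref{eq:S-tau-1}--\eqref{eq:S-tau-4}. These are pure computations with the Sweedler decompositions $\rho(h_i)=h_i\ot 1+\vac\ot\al^\vee_i$ and $\rho(x_i^\pm)=x_i^\pm\ot e_i^\pm$. For instance, for \eqref{eq:S-tau-4} one has $\rho(x_j^{\epsilon_1})=x_j^{\epsilon_1}\ot e_j^{\epsilon_1}$ and $\rho(x_i^{\epsilon_2})=x_i^{\epsilon_2}\ot e_i^{\epsilon_2}$, so $S_\tau(z)(x_j^{\epsilon_1}\ot x_i^{\epsilon_2})=\tau(e_i^{\epsilon_2},-z)x_j^{\epsilon_1}\ot\tau\inv(e_j^{\epsilon_1},z)x_i^{\epsilon_2}$, and plugging in $\tau(e_i^{\epsilon_2},z)x_j^{\epsilon_1}=x_j^{\epsilon_1}\ot\tau_{ij}^{\epsilon_2,\epsilon_1}(z)\inv$ (with the convention $\tau\inv$ replaces $\tau_{ij}^{\epsilon_1,\epsilon_2}$ by its reciprocal) gives the stated product $\tau_{ji}^{\epsilon_1,\epsilon_2}(z)\,\tau_{ij}^{\epsilon_2,\epsilon_1}(-z)\inv$. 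The remaining three follow the same template, using $\varepsilon(\al^\vee_i)=0$, $\varepsilon(e_i^\pm)=1$ to collapse the terms where a factor acts through the counit, and keeping careful track of the signs coming from $\sigma(e_i^\pm,z)h_j=h_j\ot 1\mp\vac\ot\sigma_{ij}^{2,\pm}(z)$ and $\sigma(\al^\vee_i,z)x_j^\pm=\pm x_j^\pm\ot\sigma_{ij}^{1,\pm}(z)$; the $\tau\inv$ factors contribute the sign-flipped copies $-\tau_{ij}^{k,\pm}$, which is how \eqref{eq:S-tau-1} acquires the difference $\tau_{ij}(-z)-\tau_{ji}(z)$ and \eqref{eq:S-tau-2}--\eqref{eq:S-tau-3} acquire their sums.

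The only genuine subtlety — the part I expect to require the most care — is confirming that $\tau$ really does lie in $\mathfrak L_H^\rho(V)$ for $V=F_\varepsilon(\g,\ell)=F_{\hat\g}^\ell[[\hbar]]$, i.e. that the module structure of Proposition \ref{prop:deform-datum} is compatible with $\rho$ in the sense $\rho(\tau(h,z)v)=(\tau(h,z)\ot 1)\rho(v)$, and that it is a legitimate $H$-module nonlocal VA structure (conditions \eqref{eq:mod-va-for-vertex-bialg1-2}--\eqref{eq:mod-va-for-vertex-bialg3}); but this is handled by Proposition \ref{prop:deform-datum} itself, so at the level of this proof it is an invocation rather than new work. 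The one thing to double-check is the bookkeeping convention in the group $\mathfrak T$: one must be consistent about whether $\tau\inv$ negates the additive components and reciprocates the multiplicative components, and that this matches the formula $\sigma(e_i^\pm,z)x_j^\epsilon=x_j^\epsilon\ot\sigma_{ij}^{\pm,\epsilon}(z)\inv$, so that the composite inverse indeed returns $F_\varepsilon(\g,\ell)$. Once that is pinned down, everything else is a mechanical substitution into the formula of Theorem \ref{thm:qva}, and I would present \eqref{eq:S-tau-1}--\eqref{eq:S-tau-4} as worked instances of that substitution, citing Proposition \ref{prop:deform-datum} for the values of $\tau$ and $\tau\inv$ on the generators of $H$.
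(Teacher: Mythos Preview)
Your proposal is correct and follows exactly the approach the paper intends: Theorem \ref{thm:S-tau} is recalled from \cite{K-Quantum-aff-va} without an explicit proof here, and the argument you outline --- realizing $F_\tau(\g,\ell)$ as $\mathfrak D_\tau^\rho(F_{\hat\g}^\ell[[\hbar]])$ via Proposition \ref{prop:classical-limit} and Remark \ref{rem:F-varepsilon=F}, checking invertibility of $\tau$ in $\mathfrak L_H^\rho(V)$ through the group structure of $\mathfrak T$ and \eqref{eq:F-deform-modVA-struct-com}, invoking Theorem \ref{thm:qva}, and then reading off \eqref{eq:S-tau-1}--\eqref{eq:S-tau-4} from the Sweedler decompositions in Proposition \ref{prop:deform-datum} --- is precisely that reference's strategy as recapitulated by the machinery of Sections \ref{sec:construct-n-qvas} and \ref{sec:qaff-va}. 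Your sample computation for \eqref{eq:S-tau-4} and the sign bookkeeping for the additive components are accurate.
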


For $0\ne P(z)\in\C(z)[[\hbar]]$ and $g(q)=\sum_{k=1}^na_kq^{m_k}\in\Z[q]$, we let
\begin{align}
  P(z)^{g(q)}=\prod_{k=1}^n \(q^{m_k\pd{z}}P(z)^{a_k}\) \in \C(z)[[\hbar]].
\end{align}
It is straightforward to check that
\begin{align*}
  P(z)^{g_1(q)}P(z)^{g_2(q)}=P(z)^{g_1(q)+g_2(q)},\quad \(P(z)^{g_1(q)}\)^{g_2(q)}=P(z)^{g_1(q)g_2(q)}
\end{align*}
for any $g_1(q),g_2(q)\in\Z[q]$, and
\begin{align*}
  \(P(z_1)^{g(q)}\)|_{z_1=-z}=\(Q(z)\)^{g(q\inv)},\quad\te{with}\,\,Q(z)=P(-z).
\end{align*}
For an $\ell\in\C$, we define the element $\wh\ell\in\mathfrak T$ as follows:
\begin{align}
  &\wh\ell_{ij}(z)=-[r_ia_{ij}]_{q^{\pd{z}}}[r\ell]_{q^{\pd{z}}}q^{r\ell\pd{z}}\pdiff{z}{2}\log f(z)
    -r_ia_{ij}r\ell z^{-2},\label{eq:tau-1}\\
  &\wh\ell_{ij}^{1,\pm}(z)=\wh\ell_{ji}^{2,\pm}(z)=[r_ia_{ij}]_{q^{\pd{z}}}q^{r\ell\pd{z}}\pd{z}\log f(z)
    -r_ia_{ij}z\inv,\label{eq:tau-2}\\
  &\wh\ell_{ij}^{\pm,\pm}(z)=\begin{cases}
  f(z)^{q^{-r_ia_{ii}}-1},&\mbox{if }a_{ij}> 0,\\
  z\inv f(z)^{q^{-r_ia_{ij}}},&\mbox{if }a_{ij}\le 0,
  \end{cases}
  \label{eq:tau-3}\\
  &\wh\ell_{ij}^{+,-}(z)=z^{-\delta_{ij}}(z+2r\ell\hbar)^{\delta_{ij}},\label{eq:tau-4}\\
  &\wh\ell_{ij}^{-,+}(z)=z^{-\delta_{ij}}(z-2r\ell\hbar)^{\delta_{ij}}
    f(z)^{q^{r_ia_{ij}}-q^{-r_ia_{ij}}},\label{eq:tau-5}
\end{align}
where 
\begin{align}
  f(z)=e^{z/2}-e^{-z/2}\in\C[[z]].
\end{align}
In the rest of this paper, we denote $F_{\wh\ell}(\g,\ell)$ by
\begin{align}\label{eq:def-F-qva}
  F_{\hat\g,\hbar}^{\ell}.
\end{align}

\begin{lem}\label{lem:M-wh-ell}
The category $\mathcal M_{\wh\ell}(\g)$ consisting of topologically free $\C[[\hbar]]$-modules $W$ equipped with fields
$h_{i,\hbar}(z),x_{i,\hbar}^\pm(z)\in\E_\hbar(W)$ ($i\in I$) satisfying the following relations
\begin{align}
  &[h_{i,\hbar}(z_1),h_{j,\hbar}(z_2)]
  =[r_ia_{ij}]_{q^{\pd{z_2}}}[r\ell]_{q^{\pd{z_2}}}\label{eq:local-h-1}\\
  \times&
  \(\iota_{z_1,z_2}q^{-r\ell\pd{z_2}}-\iota_{z_2,z_1}q^{r\ell\pd{z_2}}\)
  \pd{z_1}\pd{z_2}\log f(z_1-z_2),\nonumber\\
  &[h_{i,\hbar}(z_1),x_{j,\hbar}^\pm(z_2)]
  =\pm x_{j,\hbar}^\pm(z_2)[r_ia_{ij}]_{q^{r_i\pd{z_2}}}\label{eq:local-h-2}\\
  \times&
  \(\iota_{z_1,z_2}q^{-r\ell\pd{z_2}}-\iota_{z_2,z_1}q^{r\ell\pd{z_2}}\)
  \pd{z_1}\log f(z_1-z_2),\nonumber\\
  &\iota_{z_1,z_2}f(z_1-z_2)^{-\delta_{ij}+q^{-r_ia_{ij}}}
  x_{i,\hbar}^\pm(z_1)x_{j,\hbar}^\pm(z_2)\label{eq:local-h-3}\\
    =&\iota_{z_2,z_1} f(-z_2+z_1)^{-\delta_{ij}+q^{r_ia_{ij}}}
    x_{j,\hbar}^\pm(z_2)x_{i,\hbar}^\pm(z_1),\nonumber\\
  &\iota_{z_1,z_2}f(z_1-z_2)^{\delta_{ij}+\delta_{ij}q^{2r\ell}}
  x_{i,\hbar}^+(z_1)x_{j,\hbar}^-(z_2)\label{eq:local-h-4}\\
    =&\iota_{z_2,z_1}f(z_1-z_2)^{\delta_{ij}+\delta_{ij}q^{2r\ell}+q^{-r_ia_{ij}}-q^{r_ia_{ij}}}
    x_{j,\hbar}^-(z_2)x_{i,\hbar}^+(z_1).\nonumber
\end{align}
Moreover, the topologically free $\C[[\hbar]]$-module $F_{\hat\g,\hbar}^\ell$ equipped fields $Y_{\wh\ell}(h_i,z)$ and $Y_{\wh\ell}(x_i^\pm,z)$ ($i\in I$) is an object of $\mathcal M_{\wh\ell}(\g)$.
\end{lem}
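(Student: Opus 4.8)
The plan is to verify that $\mathcal M_{\wh\ell}(\g)$ coincides with $\mathcal M_\tau(\g)$ for the specific choice $\tau=\wh\ell$, after which the second assertion of the lemma is immediate from Proposition \ref{prop:universal-M-tau} and the definition $F_{\hat\g,\hbar}^\ell=F_{\wh\ell}(\g,\ell)$ in \eqref{eq:def-F-qva}. Concretely, I would take a topologically free $\C[[\hbar]]$-module $W$ with fields $h_{i,\hbar}(z),x_{i,\hbar}^\pm(z)\in\E_\hbar(W)$ and show that the four relations \eqref{eq:local-h-1}--\eqref{eq:local-h-4} are term-by-term equivalent to the four relations \eqref{eq:fqva-rel1}--\eqref{eq:fqva-rel4} with the entries of $\tau$ substituted by \eqref{eq:tau-1}--\eqref{eq:tau-5}. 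The point is that each pair of relations differs only by a formal rewriting: one side is stated in ``symmetrized'' form with the $\delta$-distributions and the $\tau$-corrections separated, while the other side is stated in ``closed'' form using $\iota_{z_1,z_2}$, $\iota_{z_2,z_1}$ and the functions $f(z)$, $[\,\cdot\,]_{q^{\partial}}$.

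First I would handle \eqref{eq:fqva-rel1} $\leftrightarrow$ \eqref{eq:local-h-1}. The key computation is to expand $\partial_{z_1}\partial_{z_2}\log f(z_1-z_2)$: since $f(z)=e^{z/2}-e^{-z/2}$, one has $\partial_z\log f(z)=\tfrac12\coth(z/2)$, and the difference $\iota_{z_1,z_2}-\iota_{z_2,z_1}$ applied to the resulting rational-in-$e^{z}$ expression produces exactly a $z_1\inv\delta(z_2/z_1)$-type distribution. Tracking the $q^{\pm r\ell\partial_{z_2}}$ shifts and the $[r_ia_{ij}]_{q^{\partial}}[r\ell]_{q^{\partial}}$ factors, the singular part reproduces the term $r_ia_{ij}r\ell\,\partial_{z_2}z_1\inv\delta(z_2/z_1)$ (using that $[r\ell]_{q^{\partial}}$ applied to the relevant expression yields the factor $r\ell$ in the $\hbar\to0$ limit, with the higher $\hbar$-corrections absorbed into $\wh\ell_{ij}(z)$), and the regular part is precisely $\wh\ell_{ij}(z_1-z_2)-\wh\ell_{ji}(z_2-z_1)$ by \eqref{eq:tau-1}; here one also uses the identity $\big(P(z_1)^{g(q)}\big)|_{z_1=-z}=Q(z)^{g(q\inv)}$ recorded just before \eqref{eq:def-F-qva} to match the two summands. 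The relation \eqref{eq:fqva-rel2} $\leftrightarrow$ \eqref{eq:local-h-2} is entirely analogous but one order lower in derivatives, using $\partial_{z_1}\log f(z_1-z_2)$, and \eqref{eq:tau-2} together with the fact that $\wh\ell_{ij}^{1,\pm}=\wh\ell_{ji}^{2,\pm}$ makes the two $\tau$-terms on the right of \eqref{eq:fqva-rel2} collapse correctly.

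For the Serre-type locality relations, I would note that \eqref{eq:fqva-rel3} with $\tau=\wh\ell$ reads $\wh\ell_{ij}^{\pm,\pm}(z_1-z_2)(z_1-z_2)^{n_{ij}}\,x_{i,\hbar}^\pm(z_1)x_{j,\hbar}^\pm(z_2)=\wh\ell_{ji}^{\pm,\pm}(z_2-z_1)(z_1-z_2)^{n_{ij}}\,x_{j,\hbar}^\pm(z_2)x_{i,\hbar}^\pm(z_1)$, and by \eqref{eq:tau-3} the product $\wh\ell_{ij}^{\pm,\pm}(z)(z)^{n_{ij}}$ equals $f(z)^{q^{-r_ia_{ii}}-1}\cdot z$ when $a_{ij}>0$ (so $i=j$, $n_{ij}=0$, and in fact $z^{n_{ij}}=1$) and equals $f(z)^{q^{-r_ia_{ij}}}$ when $a_{ij}\le0$ (using $n_{ij}=1-\delta_{ij}=1$ and $z\inv f(z)^{q^{-r_ia_{ij}}}\cdot z=f(z)^{q^{-r_ia_{ij}}}$); matching this against $f(z_1-z_2)^{-\delta_{ij}+q^{-r_ia_{ij}}}$ in \eqref{eq:local-h-3} is then a short case check once one observes $f(z)=z+O(z^3)$ so that $f(z)^{-\delta_{ij}}z^{\delta_{ij}}$ is a unit. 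Similarly \eqref{eq:fqva-rel4} $\leftrightarrow$ \eqref{eq:local-h-4} follows by multiplying \eqref{eq:tau-4} and \eqref{eq:tau-5} against $(z_1-z_2)^{2\delta_{ij}}$ and collecting the exponents $\delta_{ij}+\delta_{ij}q^{2r\ell}$ and $\delta_{ij}+\delta_{ij}q^{2r\ell}+q^{-r_ia_{ij}}-q^{r_ia_{ij}}$, again using the unit factor $f(z)^{-\delta_{ij}}z^{\delta_{ij}}$ to pass between $z$'s and $f$'s and the reflection identity $Q(z)=P(-z)$ to match $\wh\ell_{ij}^{+,-}(z_1-z_2)$ with $\wh\ell_{ji}^{-,+}(z_2-z_1)$.

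\textbf{Main obstacle.} The routine bookkeeping is the case analysis in the Serre relations, but the genuinely delicate step is the first one: correctly expanding the iterated logarithmic derivatives of $f$, separating the distributional (singular) part from the holomorphic part, and checking that after applying the $q$-shift operators $q^{\pm r\ell\partial}$ and the $q$-integer multipliers $[r_ia_{ij}]_{q^{\partial}}$, $[r\ell]_{q^{\partial}}$, the singular part matches the classical affine cocycle term in \eqref{eq:fqva-rel1}--\eqref{eq:fqva-rel2} exactly, with all $\hbar$-dependence pushed into the prescribed $\wh\ell$-entries \eqref{eq:tau-1}--\eqref{eq:tau-2}. I expect this verification to rely essentially on the formal-calculus identity $\big(\iota_{z_1,z_2}-\iota_{z_2,z_1}\big)\tfrac{1}{z_1-z_2}=z_1\inv\delta(z_2/z_1)$ together with its shifted versions obtained by applying $q^{c\partial_{z_2}}$, and on the fact that $\wh\ell\in\mathfrak T$ (so that the required $\hbar\to0$ compatibilities hold), which is presumably already established in \cite{K-Quantum-aff-va}; granting that, the remaining work is a bounded computation.
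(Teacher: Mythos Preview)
The paper states this lemma without proof, treating it as a direct unpacking of the definition of $\mathcal M_\tau(\g)$ at $\tau=\wh\ell$ (the underlying computations are implicit in \cite{K-Quantum-aff-va}). Your plan---to check that \eqref{eq:local-h-1}--\eqref{eq:local-h-4} are exactly \eqref{eq:fqva-rel1}--\eqref{eq:fqva-rel4} with the entries \eqref{eq:tau-1}--\eqref{eq:tau-5} substituted---is precisely the intended verification, and the second assertion then follows from Proposition~\ref{prop:universal-M-tau}(a) and the definition \eqref{eq:def-F-qva}.

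A couple of small corrections to your sketch. In the $i=j$ branch of the \eqref{eq:fqva-rel3}$\leftrightarrow$\eqref{eq:local-h-3} check you wrote that $\wh\ell_{ii}^{\pm,\pm}(z)\,z^{n_{ii}}=f(z)^{q^{-r_ia_{ii}}-1}\cdot z$, but since $n_{ii}=0$ there is no extra $z$; in fact $\wh\ell_{ii}^{\pm,\pm}(z)=f(z)^{q^{-r_ia_{ii}}-1}=f(z)^{-\delta_{ii}+q^{-r_ia_{ii}}}$ on the nose, and similarly for $i\neq j$ one gets $\wh\ell_{ij}^{\pm,\pm}(z)\cdot z=f(z)^{q^{-r_ia_{ij}}}=f(z)^{-\delta_{ij}+q^{-r_ia_{ij}}}$, so no unit correction is needed. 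For the Cartan relations, the cleanest way to organize the ``delicate step'' you flag is to write $\partial_z^2\log f(z)=-z^{-2}+\partial_z^2\log f_0(z)$ with $\partial_z^2\log f_0(z)\in\C[[z]]$; then the $z^{-2}$ piece, under $(\iota_{z_1,z_2}-\iota_{z_2,z_1})$ and the $q$-shift operators, produces the $\delta$-term $r_ia_{ij}r\ell\,\partial_{z_2}z_1^{-1}\delta(z_2/z_1)$ together with exactly the $-r_ia_{ij}r\ell z^{-2}$ correction already built into \eqref{eq:tau-1}, while the $f_0$-piece accounts for the remaining $\hbar$-dependent part of $\wh\ell_{ij}(z_1-z_2)-\wh\ell_{ji}(z_2-z_1)$. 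With these adjustments your outline goes through.
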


Define
\begin{align}
   f_0(z)=\frac{f(z)}{z}=\frac{e^{z/2}-e^{-z/2}}{z}=\sum_{n\ge 0}\frac{z^{2n}}{4^n(2n+1)!}\in 1+z^2\C[[z^2]].
\end{align}

\begin{de}\label{de:V-tau}
For $\ell\in\C$, we let $R_{1}^\ell$ be the minimal closed ideal of
$F_{\hat\g,\hbar}^{\ell}$ such that $[R_{1}^\ell]=R_{1}^\ell$ and contains the following elements
\begin{align}
  &\(x_i^+\)_0x_i^--\frac{1}{q^{r_i}-q^{-r_i}}\(\vac-E_\ell(h_i)\),\quad
  \(x_i^+\)_1x_i^-+\frac{2r\ell\hbar}{q^{r_i}-q^{-r_i}}E_\ell(h_i)\quad\te{for } i\in I,\label{eq:x+1x}\\
  &\(x_i^\pm\)_0^{m_{ij}}x_j^\pm\quad\te{for } i,j\in I\,\,\te{with}\,\,a_{ij}\le 0,\label{eq:serre}
\end{align}
where
\begin{align}\label{eq:def-E-h}
  &E_\ell(h_i)=\(\frac{f_0(2r_i\hbar+2r\ell\hbar)}{f_0(2r_i\hbar-2r\ell\hbar)}\)^\half
  \exp\(\(-q^{-r\ell\partial}2\hbar f_0(2\partial\hbar) h_i\)_{-1}\)\vac.
\end{align}
Define
\begin{align}
  V_{\hat\g,\hbar}^{\ell}=F_{\hat\g,\hbar}^{\ell}/R_{1}^\ell.
\end{align}
\end{de}

\begin{de}\label{de:L-tau}
Let $\ell\in \Z_+$, and
let $R_{2}^\ell$ be the minimal closed ideal of $V_{\hat\g,\hbar}^{\ell}$ such that $[R_{2}^\ell]=R_{2}^\ell$ and contains the following elements
\begin{align}
  & \(x_i^\pm\)_{-1}^{r\ell/r_i}x_i^\pm\quad\te{for } i\in I.\label{eq:integrable}
\end{align}
Define
\begin{align}
  L_{\hat\g,\hbar}^{\ell}=V_{\hat\g,\hbar}^{\ell}/R_{2}^\ell.
\end{align}
\end{de}

\begin{rem}
{\em
For the notation $L_{\hat\g,\hbar}^{\ell}$. we always assume that $\ell\in\Z_+$.
}
\end{rem}


Proposition \ref{prop:universal-M-tau} and Definitions \ref{de:V-tau}, \ref{de:L-tau}
provides the following result.
\begin{prop}\label{prop:universal-qaff}
Let $(V,Y,\vac)$ be an $\hbar$-adic nonlocal VA containing a subset $$\set{\bar h_i,\bar x_i^\pm}{i\in I},$$
such that
\begin{align*}
    (V,\{Y(\bar h_i,z)\}_{i\in I},\{Y(\bar x_i^\pm,z)\}_{i\in I})\in\obj \mathcal M_{\wh\ell}(\g).
\end{align*}
Suppose that
\begin{align*}
    &(\bar x_i^+)_0\bar x_i^-=\frac{1}{q^{r_i}-q^{-r_i}}\(\vac-E_\ell(\bar h_i)\),\quad
    (\bar x_i^+)_1\bar x_i^-=-\frac{2r\ell\hbar}{q^{r_i}-q^{-r_i}}E_\ell(\bar h_i)\quad\te{for }i\in I,\\
    &(\bar x_i^\pm)_0^{m_{ij}}\bar x_j^\pm=0\quad\te{for }i,j\in I\,\,\te{with}\,\,a_{ij}\le 0.
\end{align*}
Then the unique $\hbar$-adic nonlocal VA homomorphism $\varphi:F_{\hat\g,\hbar}^\ell\to V$ provided in Proposition \ref{prop:universal-M-tau} factor through $V_{\hat\g,\hbar}^\ell$.
Suppose further that $\ell\in\Z_+$ and
\begin{align*}
    (\bar x_i^\pm)_{-1}^{r\ell/r_i}\bar x_i^\pm=0\quad\te{for }i\in I.
\end{align*}
Then $\varphi$ also factor through $L_{\hat\g,\hbar}^\ell$.
\end{prop}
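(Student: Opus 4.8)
The plan is to run the whole argument through universal properties, so that no new construction is needed. By Proposition~\ref{prop:universal-M-tau}(b), applied with $\tau=\wh\ell$ and using the presentation of $\mathcal M_{\wh\ell}(\g)$ recorded in Lemma~\ref{lem:M-wh-ell}, the $\hbar$-adic nonlocal VA homomorphism $\varphi\colon F_{\hat\g,\hbar}^\ell\to V$ with $\varphi(h_i)=\bar h_i$ and $\varphi(x_i^\pm)=\bar x_i^\pm$ already exists; the task is only to show that $\Ker\varphi$ contains the closed ideal $R_1^\ell$, and, when $\ell\in\Z_+$, also $R_2^\ell$. The first step is to record that $\Ker\varphi$ is itself a closed ideal of $F_{\hat\g,\hbar}^\ell$ satisfying $[\Ker\varphi]=\Ker\varphi$: since $\varphi$ is a $\C[[\hbar]]$-module map we have $\varphi(\hbar^nF_{\hat\g,\hbar}^\ell)\subseteq\hbar^nV$, so $\varphi$ is continuous and, $V$ being Hausdorff, $\Ker\varphi=\varphi^{-1}(0)$ is closed; torsion-freeness of $V$ gives that $\hbar^nu\in\Ker\varphi$ implies $\varphi(u)=0$, i.e.\ $[\Ker\varphi]=\Ker\varphi$; and $\Ker\varphi$ is an ideal simply as the kernel of a homomorphism.

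With this in place, the minimality clause in Definition~\ref{de:V-tau} reduces everything to checking that $\varphi$ annihilates the generators \eqref{eq:x+1x} and \eqref{eq:serre}. For \eqref{eq:serre} this is automatic, because $\varphi$ commutes with the $n$-th product operations, so $(x_i^\pm)_0^{m_{ij}}x_j^\pm\mapsto(\bar x_i^\pm)_0^{m_{ij}}\bar x_j^\pm=0$. For \eqref{eq:x+1x} the only bookkeeping point to make explicit is that $\varphi\bigl(E_\ell(h_i)\bigr)=E_\ell(\bar h_i)$: inspecting \eqref{eq:def-E-h}, the vector $E_\ell(h_i)$ is assembled from $h_i$ and $\vac$ using only the canonical derivation $\partial$ (with which $\varphi$ intertwines), the $\hbar$-adically convergent operators $q^{-r\ell\partial}$ and $f_0(2\partial\hbar)$, the operation $a\mapsto\exp(a_{-1})\vac$ (through which $\varphi$ passes by continuity, the exponential converging because of the explicit factor $2\hbar$), and multiplication by a scalar in $\C[[\hbar]]$ --- and all of these are preserved by $\varphi$. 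Granting this, applying $\varphi$ to the two vectors in \eqref{eq:x+1x} produces exactly $(\bar x_i^+)_0\bar x_i^--\frac{1}{q^{r_i}-q^{-r_i}}\bigl(\vac-E_\ell(\bar h_i)\bigr)$ and $(\bar x_i^+)_1\bar x_i^-+\frac{2r\ell\hbar}{q^{r_i}-q^{-r_i}}E_\ell(\bar h_i)$, both of which vanish by hypothesis. Hence $R_1^\ell\subseteq\Ker\varphi$, and $\varphi$ descends to an $\hbar$-adic nonlocal VA homomorphism $\bar\varphi\colon V_{\hat\g,\hbar}^\ell\to V$.

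For the last assertion I would repeat the same two steps with $\bar\varphi$ in place of $\varphi$: $\Ker\bar\varphi$ is again a closed, $[\cdot]$-stable ideal of $V_{\hat\g,\hbar}^\ell$, and under the extra hypothesis it contains each generator $(x_i^\pm)_{-1}^{r\ell/r_i}x_i^\pm$ of \eqref{eq:integrable}, since $\bar\varphi$ sends it to $(\bar x_i^\pm)_{-1}^{r\ell/r_i}\bar x_i^\pm=0$; minimality in Definition~\ref{de:L-tau} then yields $R_2^\ell\subseteq\Ker\bar\varphi$, so $\bar\varphi$ factors through $L_{\hat\g,\hbar}^\ell$. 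I do not anticipate any real obstacle: the statement is essentially the universal property of the quotients $V_{\hat\g,\hbar}^\ell$ and $L_{\hat\g,\hbar}^\ell$ unwound. The only points genuinely deserving care are the verification that $E_\ell(h_i)$ is a purely vertex-algebraic expression in $h_i$ and $\vac$ (so that it is transported by $\varphi$), and the standard observation that kernels of homomorphisms of $\hbar$-adic nonlocal VAs into topologically free modules are automatically closed and stable under $[\cdot]$.
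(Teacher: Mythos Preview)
Your proposal is correct and follows exactly the approach the paper intends: the paper's own justification is the single sentence ``Proposition~\ref{prop:universal-M-tau} and Definitions~\ref{de:V-tau}, \ref{de:L-tau} provide the following result,'' and your write-up simply unpacks this, verifying that $\Ker\varphi$ is a closed $[\cdot]$-stable ideal containing the listed generators so that minimality applies. The one point you took care to justify --- that $\varphi(E_\ell(h_i))=E_\ell(\bar h_i)$ --- is indeed the only nontrivial bookkeeping step, and your argument for it is sound.
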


It is straightforward to see that:
\begin{lem}\label{lem:normal-ordering-general}
Let $W$ be a topologically free $\C[[\hbar]]$-module, and let $\zeta_i(z)\in\E_\hbar(W)$ for $i\in I$.
Suppose that
\begin{align}\label{eq:q-local}
  &\iota_{z_1,z_2}f(z_1-z_2)^{-\delta_{ij}+q^{-r_ia_{ij}}}
  \zeta_i(z_1)\zeta_j(z_2)
  =\iota_{z_2,z_1}f(z_1-z_2)^{-\delta_{ij}+q^{r_ia_{ij}}}
    \zeta_j(z_2)\zeta_i(z_1).
\end{align}
For any $n\in\Z_+$ and any $i_1,\dots,i_n\in I$, we define
\begin{align}\label{eq:def-normal-ordering}
  &\:\zeta_{i_1}(z_1)\cdots \zeta_{i_n}(z_n)\;
  =\(\prod_{1\le s<t\le n} f(z_s-z_t)^{-\delta_{i_s,i_t}+q^{-r_{i_s}a_{i_s,i_t}}}
  \)
  \zeta_{i_1}(z_1)\cdots \zeta_{i_n}(z_n).
\end{align}
Then for any $\sigma\in S_n$, we have that
\begin{align*}
  &\:\zeta_{i_{\sigma(1)}}(z_{\sigma(1)})\cdots \zeta_{i_{\sigma(n)}}(z_{\sigma(n)})\;
  =\(\prod_{s<t:\sigma(s)>\sigma(t)}(-1)^{\delta_{i_s,i_t}-1}\)
  \:\zeta_{i_1}(z_1)\cdots \zeta_{i_n}(z_n)\;.
\end{align*}
Moreover, we have that
\begin{align*}
  \:\zeta_{i_1}(z_1)\cdots \zeta_{i_n}(z_n)\;\in\E_\hbar^{(n)}(W).
\end{align*}
\end{lem}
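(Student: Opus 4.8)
The plan is to establish the two assertions together by induction on $n$, with the case of a single adjacent transposition doing all the real work. For the membership $\:\zeta_{i_1}(z_1)\cdots\zeta_{i_n}(z_n)\;\in\E_\hbar^{(n)}(W)$ I would first note that reducing \eqref{eq:q-local} modulo $\hbar^N$ turns it, for every $N\in\Z_+$, into an ordinary locality relation of finite (though $N$-dependent) order among the truncated fields $\pi_N(\zeta_i(z))$, so $\{\zeta_i(z)\mid i\in I\}$ is $\hbar$-adically compatible. By Li's theory of $\hbar$-adically compatible subsets of $\E_\hbar(W)$ (\cite{Li-h-adic}), it is then enough to check that the specific correction $\prod_{s<t}f(z_s-z_t)^{-\delta_{i_s,i_t}+q^{-r_{i_s}a_{i_s,i_t}}}$ prescribed in \eqref{eq:def-normal-ordering} already clears all the singularities of $\zeta_{i_1}(z_1)\cdots\zeta_{i_n}(z_n)$ along the diagonals $z_s=z_t$; since $f(w)=wf_0(w)$ with $f_0(w)$ invertible in $\C[[w]]$, this is a bookkeeping one carries out by induction on $n$, peeling off $\zeta_{i_n}(z_n)$ and moving it to the left past $\zeta_{i_{n-1}}(z_{n-1}),\dots,\zeta_{i_1}(z_1)$ by repeated use of \eqref{eq:q-local}.

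Once membership is in force, every expression below lives in $\E_\hbar^{(n)}(W)=\E^{(n)}(W_0)[[\hbar]]$, where the two iterated-expansion conventions $\iota_{z_a,z_b}$ and $\iota_{z_b,z_a}$ coincide, so \eqref{eq:q-local} may be used as an honest equality. Since $S_n$ is generated by adjacent transpositions and the target sign $\prod_{s<t:\sigma(s)>\sigma(t)}(-1)^{\delta_{i_s,i_t}-1}$ is exactly the product of the local signs $(-1)^{\delta_{i_a,i_b}-1}$ over the inversions of $\sigma$ (each inversion being produced once along a reduced word for $\sigma$, with a sign depending only on the unordered pair of indices interchanged), it suffices to prove the identity when $\sigma$ swaps two adjacent positions $k,k+1$. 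Comparing the two normal-ordered products term by term, every prefactor and every $\zeta_{i_m}(z_m)$ with $m\neq k,k+1$ is unaffected, and only the factor attached to the pair in positions $(k,k+1)$ changes --- from $g(z_k-z_{k+1})$ to $g(z_{k+1}-z_k)$, where $g(w)=f(w)^{-d+q^{-c}}$ with $d=\delta_{i_k,i_{k+1}}$ and $c=r_{i_k}a_{i_k,i_{k+1}}=r_{i_{k+1}}a_{i_{k+1},i_k}$ by the symmetrizability of $A$.

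The computation for this case combines three elementary facts: the symmetrizability relation just used, which makes the exponent of the changed factor agree on both sides; the parity identity $g(-w)=(-1)^{d+1}\tilde g(w)$ with $\tilde g(w)=f(w)^{-d+q^{c}}$, which follows from $f(-w)=-f(w)$ together with the substitution rule $(P(z_1)^{h(q)})|_{z_1=-z}=(P(-z))^{h(q^{-1})}$; and \eqref{eq:q-local} applied with $(i,j)=(i_k,i_{k+1})$, $(z_1,z_2)=(z_k,z_{k+1})$ and multiplied on the left by $\zeta_{i_1}(z_1)\cdots\zeta_{i_{k-1}}(z_{k-1})$, on the right by $\zeta_{i_{k+2}}(z_{k+2})\cdots\zeta_{i_n}(z_n)$, and by the unchanged scalar prefactors. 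Together they rewrite $g(z_{k+1}-z_k)\,\zeta_{i_{k+1}}(z_{k+1})\zeta_{i_k}(z_k)$ as $(-1)^{d+1}\tilde g(z_k-z_{k+1})\,\zeta_{i_{k+1}}(z_{k+1})\zeta_{i_k}(z_k)=(-1)^{d+1}g(z_k-z_{k+1})\,\zeta_{i_k}(z_k)\zeta_{i_{k+1}}(z_{k+1})$, so the permuted normal-ordered product equals $(-1)^{d+1}=(-1)^{\delta_{i_k,i_{k+1}}-1}$ times the original one; composing along a reduced word then gives the statement for arbitrary $\sigma\in S_n$.

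The step I expect to be the main obstacle is the $\hbar$-adic bookkeeping inside the membership claim: unlike in the classical ($\hbar=0$) situation, the factors $f(w)^{-\delta+q^{-ra}}$ really do carry poles in $w$, but with $\hbar$-divisible coefficients, so the pole orders of $\zeta_{i_1}(z_1)\cdots\zeta_{i_n}(z_n)$ modulo $\hbar^N$ grow with $N$; there is no single locality order to work with, and the cancellation must be verified level by level modulo $\hbar^N$ before passing to the inverse limit defining $\E_\hbar^{(n)}(W)$. Everything else --- the reduction to adjacent transpositions, the sign count along a reduced word, and the parity identity for $f$ --- is routine.
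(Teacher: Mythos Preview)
Your strategy is sound and is presumably what the paper means by ``straightforward'': verify the adjacent-transposition case from the parity $f(-w)=-f(w)$, the symmetrizability relation $r_ia_{ij}=r_ja_{ji}$, and \eqref{eq:q-local}, then compose along a reduced word; membership in $\E_\hbar^{(n)}(W)$ can be obtained either by your $\hbar$-adic compatibility argument or, more directly, as a \emph{consequence} of the permutation symmetry just established (the normal-ordered product then lies in every iterated Laurent-series ordering $W_0((z_{\tau(1)}))\cdots((z_{\tau(n)}))[[\hbar]]$, and the intersection of these is $\E_\hbar^{(n)}(W)$).

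The genuine gap is your sign bookkeeping. When you swap adjacent factors at positions $k,k+1$ of the permuted product, the sign $(-1)^{\delta-1}$ you pick up is governed by the \emph{content} indices $i_{\sigma(k)},i_{\sigma(k+1)}$ sitting there, not by the position labels $k,k+1$. Accumulating along a reduced word therefore yields
\[
\prod_{s<t:\sigma(s)>\sigma(t)}(-1)^{\delta_{i_{\sigma(s)},i_{\sigma(t)}}-1},
\]
which in general differs from the displayed $\prod_{s<t:\sigma(s)>\sigma(t)}(-1)^{\delta_{i_s,i_t}-1}$: take $n=3$, $\sigma(1)=3,\ \sigma(2)=1,\ \sigma(3)=2$, and $i_1=i_2\neq i_3$; then the two products are $+1$ and $-1$ respectively. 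So your assertion that ``the target sign \dots\ is exactly the product of the local signs \dots\ over the inversions of $\sigma$'' is not justified as stated --- carrying the count out honestly either exposes a misprint in the displayed formula (it should read $\delta_{i_{\sigma(s)},i_{\sigma(t)}}$, or equivalently the product should range over inversions of $\sigma^{-1}$) or forces you to reconcile the two expressions explicitly. A minor correction on the membership side: since $f(w)^{-\delta+q^{-c}}=w^{-\delta}(w-c\hbar)\cdot(\text{unit in }\C[[w,\hbar]])$, the prefactor has $w$-pole order at most $\delta$ uniformly in $\hbar$; what grows with $N$ is the locality order of the bare product $\zeta_i(z_1)\zeta_j(z_2)$ modulo $\hbar^N$, not the pole order of the correction factor.
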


The following result is a generalization of \cite{K-Quantum-aff-va}.

\begin{prop}\label{prop:normal-ordering-rel-general}
Let $V$ be an $\hbar$-adic nonlocal VA and let $\set{\zeta_i}{i\in I}\subset V$ such that
$\set{Y(\zeta_i,z)}{i\in I}$ satisfies the relations \eqref{eq:q-local}.
Then we have that
\begin{align}
  &Y\(\(\zeta_i\)_0^n\zeta_j,z\)
  =\:Y(\zeta_i,z+r_i((n-1)a_{ii}+a_{ij})\hbar)
    Y(\zeta_i,z+r_i((n-2)a_{ii}+ a_{ij})\hbar)\\
    &\qquad\cdots Y(\zeta_i,z+r_ia_{ij}\hbar)Y(\zeta_j,z)\;,\nonumber\\
  &\Sing_{z_1,\dots,z_n}Y(\zeta_i,z_1)\cdots Y(\zeta_i,z_n)\zeta_j
    =\prod_{a=1}^n\frac{1}{z_a-r_i((n-a)a_{ii}+a_{ij})\hbar}\(\zeta_i\)_0^n\zeta_j,\\
  &Y\(\(\zeta_k\)_{-1}^n\vac,z\)=\prod_{a=1}^{n-1}f_0(2ar_k\hbar)\\
  &\quad\times \:Y(\zeta_k,z+2(n-1)r_k\hbar)
    Y(\zeta_k,z+2(n-2)r_k\hbar)\cdots Y(\zeta_k,z)\;,\nonumber\\
  &\Rat_{z_1\inv,\dots,z_n\inv}Y(\zeta_k,z_1)\cdots Y(\zeta_k,z_n)\vac
    =\prod_{a=1}^n\frac{z_a}{z_a-2(n-a)r_k\hbar}\(\zeta_k\)_{-1}^n\vac,
\end{align}
where $i,j,k\in I$ with $a_{ij}\le 0$ and $n\in\Z_+$.
\end{prop}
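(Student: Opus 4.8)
The plan is to prove all four identities by a single induction on $n$, reducing each to an ``operator product expansion'' at $n=1$; this is the strategy of \cite{K-Quantum-aff-va} (the case $\zeta_i=x_i^\pm$), and since it uses nothing about the $\zeta_i$ beyond the $q$-locality \eqref{eq:q-local} and its consequence Lemma \ref{lem:normal-ordering-general}, it should transfer essentially verbatim. The engine is the iterate axiom $Y(Y(\zeta_i,z_0)\zeta_j,z)=Y_\E(Y(\zeta_i,z),z_0)Y(\zeta_j,z)$, which identifies $Y((\zeta_i)_m\zeta_j,z)$ with the coefficient of $z_0^{-m-1}$ on the right. Since $\:\zeta_i(z_1)\zeta_j(z_2)\;\in\E_\hbar^{(2)}(V)$ is regular, I would rewrite the right-hand side as the $\iota_{z_0}$-expansion of $f(z_0)^{\delta_{ij}-q^{-r_ia_{ij}}}$ times $\:\zeta_i(z+z_0)\zeta_j(z)\;$; because $a_{ij}\le 0$ forces $i\ne j$, this rescaling factor equals $f(z_0-r_ia_{ij}\hbar)^{-1}$ up to an invertible series, hence has a single simple pole, at $z_0=r_ia_{ij}\hbar$. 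Extracting $\Res_{z_0}$ (which contributes $1/f'(0)=1$) gives the $n=1$ case of the first identity, keeping the part negative in $z_0$ gives the $n=1$ case of the second, and the $n=1$ cases of the last two are the tautology $(\zeta_k)_{-1}\vac=\zeta_k$.

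For the inductive step of the first two identities I would write $(\zeta_i)_0^n\zeta_j=(\zeta_i)_0\,w$ with $w=(\zeta_i)_0^{n-1}\zeta_j$, substitute the normal-ordered expression for $Y(w,z)$ supplied by the $(n-1)$-case, and use the iterate axiom together with \eqref{eq:q-local} and Lemma \ref{lem:normal-ordering-general} to express $Y_\E(Y(\zeta_i,z),z_0)Y(w,z)$ as the $\iota_{z_0}$-expansion of a product of factors $f(z_0-(\cdot)\hbar)^{\pm1+q^{(\cdot)}}$ acting on the longer normal-ordered product $\:\zeta_i(z+z_0)\zeta_i(z+r_i((n-2)a_{ii}+a_{ij})\hbar)\cdots\zeta_j(z)\;$. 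The step I most want to get right is the verification that these factors \emph{telescope}: each pole cancels against the numerator of a neighbouring factor except for the one at $z_0=r_i((n-1)a_{ii}+a_{ij})\hbar$, so the product collapses to $f(z_0-r_i((n-1)a_{ii}+a_{ij})\hbar)^{-1}$ up to an invertible series. Then $\Res_{z_0}$ prepends precisely $Y(\zeta_i,z+r_i((n-1)a_{ii}+a_{ij})\hbar)$ inside the normal ordering, which is the first identity for $n$. The second identity is then obtained analogously: pulling all the $q$-locality factors out of $Y(\zeta_i,z_1)\cdots Y(\zeta_i,z_n)\zeta_j$ via Lemma \ref{lem:normal-ordering-general} writes it as a rational prefactor times a normal-ordered product which, by the first identity specialized at $z=0$ and applied to $\vac$, is $(\zeta_i)_0^n\zeta_j$; the fully singular part of the prefactor is then $\prod_{a=1}^n(z_a-r_i((n-a)a_{ii}+a_{ij})\hbar)^{-1}$ by the same cancellation. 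The last two identities run identically starting from $Y((\zeta_k)_{-1}^{n-1}\vac,z)$: the telescoping now collapses the rescaling factor to $f(z_0)/f(z_0-2(n-1)r_k\hbar)$, and one takes the coefficient of $z_0^0$ instead of a residue; writing $f(z_0)/f(z_0-2(n-1)r_k\hbar)=\frac{z_0}{z_0-2(n-1)r_k\hbar}\cdot\frac{f_0(z_0)}{f_0(z_0-2(n-1)r_k\hbar)}$, this coefficient evaluates the longer normal-ordered product at $z_0=2(n-1)r_k\hbar$ and leaves the scalar $f_0(2(n-1)r_k\hbar)$, exactly what turns $\prod_{a=1}^{n-2}f_0(2ar_k\hbar)$ into $\prod_{a=1}^{n-1}f_0(2ar_k\hbar)$, and the $\Rat_{z_1^{-1},\dots,z_n^{-1}}$ statement then follows from the third identity just as the second followed from the first.

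I expect the main obstacle to be bookkeeping rather than anything conceptual: checking the telescoping with all shifts $r_i((n-a)a_{ii}+a_{ij})\hbar$ and $2(n-a)r_k\hbar$ in their correct positions, and verifying that every manipulation is legitimate $\hbar$-adically. In particular I would need to confirm that the rescaling scalars $f(\text{$\hbar$-multiple})^{\pm1+q^{(\cdot)}}$ behave well: away from coincidences they lie in $\C[[\hbar]]$ because numerator and denominator $f$'s are both of order $\hbar$, and at a coincidence a vanishing contraction factor inside a normal-ordered product is cancelled by the matching pole of $\zeta_{i_1}(z_1)\cdots\zeta_{i_m}(z_m)$, the cancellation being exact because $\:\zeta_{i_1}(z_1)\cdots\zeta_{i_m}(z_m)\;\vac$ lies in $V[[z_1,\dots,z_m]][[\hbar]]$ by $\hbar$-adic compatibility; and finally that the infinite sums produced by the $\iota_{z_0}$-expansions converge $\hbar$-adically.
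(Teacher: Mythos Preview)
Your proposal is correct and matches the paper's approach: the paper does not give an explicit proof here but simply states the result as ``a generalization of \cite{K-Quantum-aff-va}'', and your outline is precisely that generalization---the induction via the iterate axiom $Y_\E(Y(\zeta_i,z),z_0)Y(w,z)=Y(Y(\zeta_i,z_0)w,z)$ combined with the $q$-locality \eqref{eq:q-local} and the telescoping of the $f$-factors, exactly as in the special case $\zeta_i=x_i^\pm$ treated in \cite{K-Quantum-aff-va}. Your identification of the single surviving pole at each step and the resulting shifts $r_i((n-a)a_{ii}+a_{ij})\hbar$ and $2(n-a)r_k\hbar$ is what makes the argument go through, and you are right that nothing beyond \eqref{eq:q-local} and Lemma~\ref{lem:normal-ordering-general} is used.
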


\begin{coro}\label{coro:normal-ordering-rel-general}
Let $V$ and $\set{\zeta_i}{i\in I}\subset V$ be as in Proposition \ref{prop:normal-ordering-rel-general}.
Let $i,j\in I$ with $a_{ij}<0$. Then $\(\zeta_i\)_0^{m_{ij}}\zeta_j=0$ if and only if
\begin{align*}
  \:Y(\zeta_i,z-r_ia_{ij}\hbar)
    Y(\zeta_i,z-r_i(a_{ij}-2)\hbar)\cdots Y(\zeta_i,z+r_ia_{ij}\hbar)Y(\zeta_j,z)\;\vac=0.
\end{align*}
Suppose that $\ell\in\Z_+$. Then $\(\zeta_i\)_{-1}^{r\ell/r_i}\zeta_i=0$ if
and only if
\begin{align*}
  \:Y(\zeta_i,z+2r\ell\hbar)
    Y(\zeta_i,z+2(r\ell-r_i)\hbar)\cdots Y(\zeta_i,z)\;\vac=0.
\end{align*}
\end{coro}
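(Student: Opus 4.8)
The plan is to derive both equivalences directly from Proposition~\ref{prop:normal-ordering-rel-general}, using only the elementary fact that in an $\hbar$-adic nonlocal VA the assignment $a\mapsto Y(a,z)$ is injective, since $\lim_{z\to 0}Y(a,z)\vac=a$. Thus a field of the form $Y(a,z)$ vanishes if and only if it annihilates $\vac$, and this is the only structural input needed beyond Proposition~\ref{prop:normal-ordering-rel-general}; all of the substantive content is the passage, carried out there, from the generating-function relation \eqref{eq:q-local} to the normal-ordering identities.

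For the first equivalence I would apply the first formula of Proposition~\ref{prop:normal-ordering-rel-general} with $n=m_{ij}=1-a_{ij}$. Since $a_{ii}=2$, the $\hbar$-shifts $r_i((m_{ij}-k)a_{ii}+a_{ij})\hbar$ for $k=1,\dots,m_{ij}$ run through $-r_ia_{ij}\hbar,\,-r_i(a_{ij}+2)\hbar,\,\dots,\,r_ia_{ij}\hbar$, so that
\begin{align*}
  Y\(\(\zeta_i\)_0^{m_{ij}}\zeta_j,z\)=\:Y(\zeta_i,z-r_ia_{ij}\hbar)\cdots Y(\zeta_i,z+r_ia_{ij}\hbar)Y(\zeta_j,z)\;,
\end{align*}
the order of the $\zeta_i$-arguments being irrelevant by the symmetry of the normal-ordered product recorded in Lemma~\ref{lem:normal-ordering-general}. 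If $(\zeta_i)_0^{m_{ij}}\zeta_j=0$, the right-hand side is the zero field and in particular annihilates $\vac$. Conversely, if the right-hand side annihilates $\vac$, then $Y\((\zeta_i)_0^{m_{ij}}\zeta_j,z\)\vac=0$, and letting $z\to 0$ forces $(\zeta_i)_0^{m_{ij}}\zeta_j=0$.

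For the second equivalence, assuming $\ell\in\Z_+$, I would first use $(\zeta_i)_{-1}\vac=\zeta_i$ to rewrite $(\zeta_i)_{-1}^{r\ell/r_i}\zeta_i=(\zeta_i)_{-1}^{n}\vac$ with $n=r\ell/r_i+1$, and then invoke the third formula of Proposition~\ref{prop:normal-ordering-rel-general}; with this $n$ the shifts $2(n-a)r_i\hbar$ become $2r\ell\hbar,\,2(r\ell-r_i)\hbar,\,\dots,\,0$, so
\begin{align*}
  Y\(\(\zeta_i\)_{-1}^{r\ell/r_i}\zeta_i,z\)=\(\prod_{a=1}^{r\ell/r_i}f_0(2ar_i\hbar)\)\:Y(\zeta_i,z+2r\ell\hbar)Y(\zeta_i,z+2(r\ell-r_i)\hbar)\cdots Y(\zeta_i,z)\;.
\end{align*}
Because $f_0\in 1+z^2\C[[z^2]]$, the scalar $\prod_{a=1}^{r\ell/r_i}f_0(2ar_i\hbar)$ lies in $1+\hbar^2\C[[\hbar^2]]$ and is therefore a unit in $\C[[\hbar]]$; as $\E_\hbar(V)$ is torsion-free over $\C[[\hbar]]$, one may multiply or divide by this unit freely, and the argument of the previous paragraph then yields the stated equivalence.

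The only points that really need care are bookkeeping: matching the general-$n$ $\hbar$-shifts of Proposition~\ref{prop:normal-ordering-rel-general} with those displayed in the corollary (via $a_{ii}=2$ and $m_{ij}=1-a_{ij}$), and observing that the scalar prefactor appearing in the integrable case is invertible in $\C[[\hbar]]$. There is no genuine obstacle here; the work lies entirely in Proposition~\ref{prop:normal-ordering-rel-general}.
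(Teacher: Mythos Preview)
Your argument is correct and is exactly the approach the paper has in mind: the corollary is stated without proof precisely because it follows immediately from Proposition~\ref{prop:normal-ordering-rel-general} together with the creation property $\lim_{z\to 0}Y(a,z)\vac=a$, and your handling of the bookkeeping (the shift-matching via $a_{ii}=2$, the symmetry from Lemma~\ref{lem:normal-ordering-general}, and the invertibility of $\prod_a f_0(2ar_i\hbar)$ in $\C[[\hbar]]$) is accurate.
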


\begin{prop}\label{prop:ideal-def-alt}
Define
\begin{align*}
  A_{i}(z)=&Y_{\wh\ell}(x_i^+,z)^-x_i^-
  -\frac{1}{q^{r_i}-q^{-r_i}}\(\frac{\vac}{z}-\frac{E_\ell(h_i)}{z+2r\ell\hbar}\)&&\te{for }i\in I,\\
  Q_{ij}^\pm(z_1,&\dots,z_{m_{ij}})=
    Y_{\wh\ell}(x_i^\pm,z_1)^-
  \cdots Y_{\wh\ell}(x_i^\pm,z_{m_{ij}})^-x_j^\pm&&\te{for }i,j\in I\,\,\te{with}\,\,a_{ij}<0.
\end{align*}
Then $R_{1}^\ell$ is the minimal closed ideal of $F_{\hat\g,\hbar}^{\ell}$ such that $[R_{1}^\ell]=R_{1}^\ell$,
and contains all coefficients of $A_{i}(z)$ for $i\in I$ and all coefficients of $Q_{ij}^\pm(z_1,\dots,z_{m_{ij}})$ for $(i,j)\in\mathbb I$.
Moreover, suppose $\ell\in\Z_+$.
For $i\in I$, set
\begin{align*}
  &M_{i}^\pm(z_1,\dots,z_{r\ell/r_i})=\Sing_{z_1,\dots,z_{r\ell/r_i}}z_1\inv\cdots z_{r\ell/r_i}\inv
   Y_{\wh\ell}(x_i^\pm,z_1)\cdots Y_{\wh\ell}(x_i^\pm,z_{r\ell/r_i})x_i^\pm.
\end{align*}
Then $R_{2}^\ell$ is the minimal closed ideal of $V_{\hat\g,\hbar}^{\ell}$
such that $[R_{2}^\ell]=R_{2}^\ell$ and contains all coefficients of $M_{i}^\pm(z_1,\dots,z_{r\ell/r_i})$ for $i\in I$.
\end{prop}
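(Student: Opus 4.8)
The plan is to work out explicit formulas for the generators of $R_1^\ell$ and $R_2^\ell$ using the structure of $Y_{\wh\ell}$ on $F_{\hat\g,\hbar}^\ell$, and then argue that passing from ``coefficients of certain generating functions'' to ``the original generators together with $\partial$-closure'' does not change the minimal closed $[\cdot]$-invariant ideal. First I would recall that, by the general theory, a closed ideal is automatically $\partial$-invariant, and that for a collection $S$ of vectors, the minimal closed ideal $I$ with $[I]=I$ containing $S$ coincides with the one containing the span of the coefficients of $Y_{\wh\ell}(s,z)\vac = e^{z\partial}s$ for $s\in S$; indeed $e^{z\partial}s$ has coefficients $\tfrac{1}{n!}\partial^n s$, and $\partial^n s$ lies in any $\partial$-invariant submodule containing $s$, while conversely $s = (e^{z\partial}s)|_{z=0}$. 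So it suffices to match up the coefficient spaces of $A_i(z)$, $Q_{ij}^\pm$, $M_i^\pm$ with the $\partial$-spans of the defining generators \eqref{eq:x+1x}, \eqref{eq:serre}, \eqref{eq:integrable}.

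For the Serre-type generators this is essentially Proposition \ref{prop:normal-ordering-rel-general} and Corollary \ref{coro:normal-ordering-rel-general}: the fields $Y_{\wh\ell}(x_i^\pm,z)$ satisfy the $q$-locality \eqref{eq:q-local} by \eqref{eq:local-h-3} of Lemma \ref{lem:M-wh-ell}, so the second displayed identity of Proposition \ref{prop:normal-ordering-rel-general} expresses the singular part of $Y_{\wh\ell}(x_i^\pm,z_1)\cdots Y_{\wh\ell}(x_i^\pm,z_{m_{ij}})x_j^\pm$ as a product of simple poles times $(x_i^\pm)_0^{m_{ij}}x_j^\pm$. Hence the coefficients of $Q_{ij}^\pm$ span the same closed $[\cdot]$-invariant submodule as $(x_i^\pm)_0^{m_{ij}}x_j^\pm$ and its $\partial$-derivatives (the invertible prefactor $\prod f_0(2ar_i\hbar)\in 1+\hbar\C[[\hbar]]$ is harmless after applying $[\cdot]$). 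The same argument with the fourth identity of Proposition \ref{prop:normal-ordering-rel-general} handles $M_i^\pm$ versus $(x_i^\pm)_{-1}^{r\ell/r_i}x_i^\pm$, giving the statement about $R_2^\ell$. For $A_i(z)$, I would expand $Y_{\wh\ell}(x_i^+,z)^-x_i^- = \sum_{n\ge 0}(x_i^+)_n x_i^-\, z^{-n-1}$ and use the $q$-locality \eqref{eq:local-h-4} together with the already-known normal-ordering computation (as in \cite{K-Quantum-aff-va}) to show that $(x_i^+)_n x_i^-$ for $n\ge 2$ is determined by $(x_i^+)_0 x_i^-$, $(x_i^+)_1 x_i^-$ and $\partial$; concretely one checks that $Y_{\wh\ell}(x_i^+,z)^-x_i^-$ has, modulo the ideal generated by \eqref{eq:x+1x}, exactly the prescribed two-pole form $\tfrac{1}{q^{r_i}-q^{-r_i}}\big(\tfrac{\vac}{z}-\tfrac{E_\ell(h_i)}{z+2r\ell\hbar}\big)$, so that the coefficients of $A_i(z)$ and the vectors \eqref{eq:x+1x} (with their $\partial$-closures) generate the same closed $[\cdot]$-invariant ideal.

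The main obstacle I anticipate is the $A_i(z)$ computation: one must show that after imposing only $(x_i^+)_0 x_i^-$ and $(x_i^+)_1 x_i^-$ as in \eqref{eq:x+1x}, \emph{all} higher coefficients $(x_i^+)_n x_i^-$ ($n\ge 2$) automatically take the values forced by the rational function $\tfrac{1}{q^{r_i}-q^{-r_i}}\big(\tfrac{\vac}{z}-\tfrac{E_\ell(h_i)}{z+2r\ell\hbar}\big)$. This requires exploiting the $S_{\wh\ell}$-locality relation \eqref{eq:qyb-locality} / \eqref{eq:local-h-4} between $Y_{\wh\ell}(x_i^+,z_1)$ and $Y_{\wh\ell}(x_i^-,z_2)$ to get a recursion on $n$, and verifying that the specific element $E_\ell(h_i)$ in \eqref{eq:def-E-h} — with its exponential of $h_i$ and its scalar prefactor $(f_0(2r_i\hbar+2r\ell\hbar)/f_0(2r_i\hbar-2r\ell\hbar))^{1/2}$ — is precisely the solution of that recursion; this is the place where the explicit shape of $\wh\ell^{\,+,-}_{ij}(z)$ and $\wh\ell^{\,-,+}_{ij}(z)$ in \eqref{eq:tau-4}–\eqref{eq:tau-5}, hence the $2r\ell\hbar$-shifts, gets used, and it is a direct but somewhat delicate deformation of the corresponding classical computation in Definition \ref{de:affVAs}. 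Once that recursion is in hand, the remaining bookkeeping — that $[\cdot]$-invariance absorbs the unit-leading-coefficient prefactors, and that passing to the quotient $V_{\hat\g,\hbar}^\ell$ is legitimate for the $M_i^\pm$ part — is routine.
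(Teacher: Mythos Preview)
The paper does not prove Proposition~\ref{prop:ideal-def-alt}: Section~\ref{sec:qaff-va} is explicitly a recollection of the construction from \cite{K-Quantum-aff-va}, and this proposition is stated there without argument. So there is no in-paper proof to compare against, and your proposal stands on its own.

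Your plan is sound and is essentially the argument one would expect. For $Q_{ij}^\pm$ and $M_i^\pm$, invoking Proposition~\ref{prop:normal-ordering-rel-general} is correct; one minor point to make explicit is that the iterated singular parts $Y_{\wh\ell}(x_i^\pm,z_1)^-\cdots Y_{\wh\ell}(x_i^\pm,z_{m_{ij}})^-x_j^\pm$ agree with the simultaneous $\Sing_{z_1,\dots,z_{m_{ij}}}$ used in that proposition, since both simply extract the part with strictly negative powers in every variable. For $A_i(z)$, you correctly note that the $z^{-1}$ and $z^{-2}$ coefficients are exactly the two vectors in \eqref{eq:x+1x}, giving one inclusion for free.

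For the reverse inclusion on $A_i(z)$, your recursion idea is right but can be phrased more structurally, which also sidesteps the worry you raise about $E_\ell(h_i)$. From \eqref{eq:local-h-4} with $i=j$, the product $f(z)^{1+q^{2r\ell}}Y_{\wh\ell}(x_i^+,z)x_i^-$ is regular at $z=0$; since $f(z)^{1+q^{2r\ell}}=z(z+2r\ell\hbar)f_0(z)f_0(z+2r\ell\hbar)$ with the $f_0$-factor a unit in $\C[[z,\hbar]]$, it follows that $z(z+2r\ell\hbar)\,Y_{\wh\ell}(x_i^+,z)^-x_i^-$ is regular. Writing $Y_{\wh\ell}(x_i^+,z)^-x_i^-=\sum_{n\ge 0}a_n z^{-n-1}$, regularity forces the recursion $a_n+2r\ell\hbar\,a_{n-1}=0$ for $n\ge 2$. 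The subtracted rational function $\tfrac{1}{q^{r_i}-q^{-r_i}}(\vac/z-E_\ell(h_i)/(z+2r\ell\hbar))$ satisfies the same recursion on its coefficients, so the coefficients $b_n$ of $A_i(z)$ obey $b_n=-2r\ell\hbar\,b_{n-1}$ for $n\ge 2$, i.e.\ every $b_n$ with $n\ge 1$ is a scalar multiple of $b_1$. Thus all coefficients of $A_i(z)$ lie in the ideal generated by $b_0,b_1$, which are precisely \eqref{eq:x+1x}. No separate verification that $E_\ell(h_i)$ ``solves a recursion'' is needed: its role is already absorbed into the definition of the generators \eqref{eq:x+1x}.
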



\begin{thm}\label{thm:quotient-algs}
Let $\ell\in\C$. Then $V_{\hat\g,\hbar}^{\ell}$ is an $\hbar$-adic quantum VA.
Moreover, if $\ell\in \Z_+$, then $L_{\hat\g,\hbar}^{\ell}$ is also an $\hbar$-adic quantum VA.
Furthermore, the quantum Yang-Baxter operators $S_{\ell,\ell}(z)$ of both $V_{\hat\g,\hbar}^{\ell}$ and $L_{\hat\g,\hbar}^{\ell}$ satisfy the following relations
\begin{align*}
  &S_{\ell,\ell}(z)(h_j\ot h_i)=h_j\ot h_i
  +\vac\ot\vac\ot
  \pdiff{z}{2}\log f(z)^{ [r_ia_{ij}]_{q}[r\ell]_{q}[r\ell]_q(q-q\inv) },\\
  &S_{\ell,\ell}(z)(x_j^\pm\ot h_i)=x_j^\pm\ot h_i
  \pm x_j^\pm\ot \vac \ot
  \pd{z}\log f(z)^{ [r_ia_{ij}]_{q}[r\ell]_q(q-q\inv) }  ,\\
  &S_{\ell,\ell}(z)(h_j\ot x_i^\pm)=h_j\ot x_i^\pm
  \mp\vac\ot x_i^\pm
  \ot \pd{z}\log f(z)^{ [r_ja_{ji}]_{q}[r\ell]_q(q-q\inv) },\\
  &S_{\ell,\ell}(z)(x_j^{\epsilon_1}\ot x_i^{\epsilon_2})=x_j^{\epsilon_1}\ot x_i^{\epsilon_2}\ot
  f(z)^{q^{-\epsilon_1\epsilon_2r_ia_{ij}}
    -q^{\epsilon_1\epsilon_2r_ia_{ij}}}.
\end{align*}
\end{thm}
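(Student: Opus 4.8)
The plan is to obtain $V_{\hat\g,\hbar}^\ell$ and $L_{\hat\g,\hbar}^\ell$ as quotients of the $\hbar$-adic quantum VA $F_{\hat\g,\hbar}^\ell = F_{\wh\ell}(\g,\ell)$ from Theorem~\ref{thm:S-tau}, by verifying that the closed ideals $R_1^\ell$ and $R_2^\ell$ are stable under the quantum Yang--Baxter operator in the sense required by Lemma~\ref{lem:S-quotient-alg} (taken with $n=1$). So the first step is to recall from Proposition~\ref{prop:ideal-def-alt} that $R_1^\ell$ is the minimal closed $[\cdot]$-invariant ideal containing the coefficients of $A_i(z)$ and of $Q_{ij}^\pm(z_1,\dots,z_{m_{ij}})$, and that $R_2^\ell$ is likewise generated by the coefficients of $M_i^\pm(z_1,\dots,z_{r\ell/r_i})$. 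Since $S_{\wh\ell}(z)$ is a $\C[[\hbar]]$-module map commuting with $\partial$ (the shift condition), it maps $[\cdot]$-closures into $[\cdot]$-closures, so it suffices to check the containment
\[
  S_{\wh\ell}(z)\bigl(R_1^\ell\wh\ot U + U\wh\ot R_1^\ell\bigr)\subset R_1^\ell\wh\ot F_{\hat\g,\hbar}^\ell\wh\ot\C((z))[[\hbar]] + F_{\hat\g,\hbar}^\ell\wh\ot R_1^\ell\wh\ot\C((z))[[\hbar]]
\]
on a generating set $U=\{h_i,x_i^\pm\}$, and similarly for $R_2^\ell$ inside $V_{\hat\g,\hbar}^\ell$. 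For this I would use the hexagon identities \eqref{eq:qyb-hex1} and \eqref{eq:qyb-hex2} (equivalently Lemma~\ref{lem:multqyb-shift-total} together with the explicit action \eqref{eq:S-tau-1}--\eqref{eq:S-tau-4}) to compute $S_{\wh\ell}(z)$ on $A_i(z)\ot h_j$, $A_i(z)\ot x_j^\pm$, $h_j\ot A_i(z)$, $x_j^\pm\ot A_i(z)$ and on the corresponding pairs involving $Q_{ij}^\pm$ and $M_i^\pm$. The point is that each generator $A_i(z)$, $Q_{ij}^\pm$, $M_i^\pm$ is built (after applying a vertex operator and extracting singular or rational parts) from normal-ordered products of the $x_i^\pm$ together with the exponential $E_\ell(h_i)$, so the computation reduces, via the hexagon relations, to the scalar actions recorded in Theorem~\ref{thm:S-tau} and the combinatorial identities in Lemmas~\ref{lem:S-special-tech-gen2}--\ref{lem:S-special-tech-gen4}.

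The heart of the argument is the first displayed formula of the theorem and its companions, which I would derive as follows. By Theorem~\ref{thm:S-tau}, $S_{\wh\ell}(z)(h_j\ot h_i)=h_j\ot h_i+\vac\ot\vac\ot(\wh\ell_{ij}(-z)-\wh\ell_{ji}(z))$; substituting the explicit value \eqref{eq:tau-1} of $\wh\ell_{ij}(z)$ and using $[r_ia_{ij}]_q = [r_ja_{ji}]_q$ (symmetry of $DA$), the polar terms $-r_ia_{ij}r\ell z^{-2}$ cancel against their reflections, and what survives is
\[
  -[r_ia_{ij}]_{q^{\pd z}}[r\ell]_{q^{\pd z}}\bigl(q^{-r\ell\pd z}+q^{r\ell\pd z}\bigr)\pdiff{z}{2}\log f(z),
\]
which one rewrites as $\pdiff{z}{2}\log f(z)^{[r_ia_{ij}]_q[r\ell]_q[r\ell]_q(q-q^{-1})}$ using the definition of $P(z)^{g(q)}$ and the identity $[r\ell]_q(q^{r\ell}+q^{-r\ell})\cdot(\text{stuff})$ folding into $[r\ell]_q[r\ell]_q(q-q^{-1})$ — here the fact that $f(z)=e^{z/2}-e^{-z/2}$ makes $\log f$ behave well under the shift operators $q^{\pd z}=e^{\hbar\pd z}$ is what produces the clean $q$-bracket exponents. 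The formulas for $S_{\ell,\ell}(z)(x_j^\pm\ot h_i)$ and $S_{\ell,\ell}(z)(h_j\ot x_i^\pm)$ come the same way from \eqref{eq:S-tau-2}, \eqref{eq:S-tau-3} and \eqref{eq:tau-2}, where now $\wh\ell_{ij}^{1,\pm}(-z)+\wh\ell_{ji}^{2,\pm}(z)$ has its $z^{-1}$ pole cancel and leaves $\pd z\log f(z)$ to the appropriate power; and $S_{\ell,\ell}(z)(x_j^{\epsilon_1}\ot x_i^{\epsilon_2})$ is immediate from \eqref{eq:S-tau-4} together with \eqref{eq:tau-3}, \eqref{eq:tau-4}, \eqref{eq:tau-5}, where the $z^{\pm\delta_{ij}}$ and $z^{\mp\delta_{ij}}$ factors and the $(z\pm2r\ell\hbar)^{\delta_{ij}}$ factors cancel in the ratio $\wh\ell_{ji}^{\epsilon_1,\epsilon_2}(z)\wh\ell_{ij}^{\epsilon_2,\epsilon_1}(-z)^{-1}$, leaving exactly $f(z)^{q^{-\epsilon_1\epsilon_2 r_ia_{ij}}-q^{\epsilon_1\epsilon_2 r_ia_{ij}}}$.

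With these explicit scalar actions in hand, the ideal-stability check becomes mechanical: applying $S_{\wh\ell}(z)$ to $A_i(z_0)\ot h_j$ (say) and expanding via the hexagon identity expresses the result as $A_i(z_0)\ot h_j$ plus terms lying in $\vac\ot\vac\ot(\cdots)$ or $A_i(z_0)\ot\vac\ot(\cdots)$ or, crucially, terms of the form $A_i'(z_0)\ot(\cdots)$ where $A_i'$ is again a coefficient of the same generating series — using that $E_\ell(h_i)=\exp((\cdots h_i)_{-1})\vac\cdot(\text{scalar})$ transforms under $S$ by the exponential rules in Lemma~\ref{lem:S-special-tech-gen2}(1) and that $Y(x_i^+,z)^-x_i^-$ transforms through \eqref{eq:S-tau-4}. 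All such terms lie in $R_1^\ell\wh\ot F_{\hat\g,\hbar}^\ell\wh\ot\C((z))[[\hbar]]+F_{\hat\g,\hbar}^\ell\wh\ot R_1^\ell\wh\ot\C((z))[[\hbar]]$, and by symmetry the same holds with the factors swapped; the argument for $Q_{ij}^\pm$ is the same (and here one uses that the Serre-type expression is a normal-ordered product of $x_i^\pm$'s, on which $S$ acts diagonally by \eqref{eq:S-tau-4} and Lemma~\ref{lem:normal-ordering-general}, so the whole product transforms by a scalar times itself, plus lower terms already in the ideal), and for $M_i^\pm$ inside $V_{\hat\g,\hbar}^\ell$ it is again a pure product of $x_i^\pm$'s. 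Invoking Lemma~\ref{lem:S-quotient-alg} twice — once to pass from $F_{\hat\g,\hbar}^\ell$ to $V_{\hat\g,\hbar}^\ell=F_{\hat\g,\hbar}^\ell/R_1^\ell$, then from $V_{\hat\g,\hbar}^\ell$ to $L_{\hat\g,\hbar}^\ell=V_{\hat\g,\hbar}^\ell/R_2^\ell$ when $\ell\in\Z_+$ — yields that both are $\hbar$-adic quantum VAs with the induced operator $S_{\ell,\ell}(z)$, and the explicit formulas are exactly those computed above now read off on the quotient. The main obstacle I anticipate is precisely the bookkeeping in the ideal-stability verification for the Serre generators $Q_{ij}^\pm$: one must confirm that after hitting the whole normal-ordered string with $S_{\wh\ell}(z)$ and commuting it past the other tensor factor via repeated use of \eqref{eq:multqyb-hex1}--\eqref{eq:multqyb-hex2}, the $\hbar$-shifted arguments line up so that the result is a scalar multiple of $Q_{ij}^\pm$ evaluated at shifted points (hence still in $\overline{[R_1^\ell]}=R_1^\ell$) rather than something genuinely new; this is where the specific choice of exponents in \eqref{eq:tau-3} and the normal-ordering conventions of Lemma~\ref{lem:normal-ordering-general} have to be used carefully, and it is essentially the $n=1$ case of the compatibility computations that will reappear in the multi-factor setting of later sections.
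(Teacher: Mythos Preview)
Your proposal is correct and follows essentially the same approach as the paper (which here recalls the result from \cite{K-Quantum-aff-va} rather than proving it in full). The strategy you outline---computing $S_{\wh\ell}(z)$ on the ideal generators $A_i(z)$, $Q_{ij}^\pm$, $M_i^\pm$ via the hexagon identities and the combinatorial Lemmas~\ref{lem:S-special-tech-gen2}--\ref{lem:S-special-tech-gen4}, then invoking Lemma~\ref{lem:S-quotient-alg} with $n=1$---is exactly the method used, and the explicit formulas for $S_{\ell,\ell}(z)$ on generators that you derive appear (in the more general $\ell\neq\ell'$ form) as Lemma~\ref{lem:S-twisted} and Lemma~\ref{lem:S-ell-ell-prime}, with Remark~\ref{rem:ell=ell-prime-case} confirming the specialization.
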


\section{Twisted tensor product of quantum affine vertex algebras}\label{sec:construct-n-qvas-qaffva}

Let $\ell'$ be another complex number. We have $\hbar$-adic quantum VAs $F_{\hat\g,\hbar}^{\ell'}$, $V_{\hat\g,\hbar}^{\ell'}$ and $L_{\hat\g,\hbar}^{\ell'}$.
%
Define the element $\wh{\ell,\ell'}\in\mathfrak T$ as follows:
\begin{align}
  &\wh{\ell,\ell'}_{ij}(z)=
  \pdiff{z}{2}\log f(z)^{ [r_ia_{ij}]_{q}[r\ell]_{q}[r\ell']_q(q\inv-q) }  ,\\
  &\wh{\ell,\ell'}_{ij}^{1,\pm}(z)=
  \pd{z}\log f(z)^{ [r_ia_{ij}]_{q}[r\ell']_q(q-q\inv) },\\
  &\wh{\ell,\ell'}_{ij}^{2,\pm}(z)=
  \pd{z}\log f(z)^{ [r_ja_{ji}]_{q}[r\ell]_q(q-q\inv) },\\
  &\wh{\ell,\ell'}_{ij}^{\epsilon_1,\epsilon_2}(z)=
  f(z)^{q^{-\epsilon_1\epsilon_2r_ia_{ij}}
    -q^{\epsilon_1\epsilon_2r_ia_{ij}}}.
\end{align}

It is straightforward to check the following result.
\begin{lem}\label{lem:S-twisted}
Define 
\begin{align}\label{eq:def-S-twisted}
  S_{\ell,\ell'}(z):F_{\hat\g,\hbar}^{\ell}\wh\ot F_{\hat\g,\hbar}^{\ell'}&\to
    F_{\hat\g,\hbar}^{\ell}\wh\ot F_{\hat\g,\hbar}^{\ell'}\wh\ot\C((z))[[\hbar]]\nonumber\\
  v\ot u&\mapsto \sum \wh{\ell,\ell'}(u_{(2)},-z)v\ot u_{(1)}.
\end{align}
Then for $i,j\in I$ we have that
\begin{align}
  &S_{\ell,\ell'}(z)(h_j\ot h_i)=h_j\ot h_i+\vac\ot\vac\label{eq:S-twisted-1}
  \ot \pdiff{z}{2}\log f(z)^{[r_ia_{ij}]_{q}[r\ell]_{q} [r\ell']_q(q-q\inv) }
  ,\\
  &S_{\ell,\ell'}(z)(x_j^\pm\ot h_i)=x_j^\pm\ot h_i\pm x_j^\pm\ot \vac \label{eq:S-twisted-2}
  \ot
  \pd{z}\log f(z)^{ [r_ia_{ij}]_{q}[r\ell']_q(q-q\inv) },\\
  &S_{\ell,\ell'}(z)(h_j\ot x_i^\pm)=h_j\ot x_i^\pm\mp\vac\ot x_i^\pm\label{eq:S-twisted-3}
  \ot
  \pd{z}\log f(z)^{ [r_ja_{ji}]_{q}[r\ell]_q(q-q\inv) },\\
  &S_{\ell,\ell'}(z)(x_j^{\epsilon_1}\ot x_i^{\epsilon_2})=x_j^{\epsilon_1}\ot x_i^{\epsilon_2}\ot f(z)^{q^{-\epsilon_1\epsilon_2r_ia_{ij}}
    -q^{\epsilon_1\epsilon_2r_ia_{ij}}}.
    \label{eq:S-twisted-4}
\end{align}
\end{lem}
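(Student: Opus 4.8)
The plan is to unwind the definition \eqref{eq:def-S-twisted} using the explicit deforming datum supplied by Proposition \ref{prop:deform-datum}. Specializing that proposition to $\tau=\wh\ell$ and $\sigma=\wh{\ell,\ell'}$ gives, on the one hand, the $H$-comodule structure $\rho$ on $F_{\hat\g,\hbar}^{\ell'}=F_{\wh{\ell'}}(\g,\ell')$ determined by $\rho(h_i)=h_i\ot 1+\vac\ot\al^\vee_i$ and $\rho(x_i^\pm)=x_i^\pm\ot e_i^\pm$, and on the other hand the $H$-module structure $\wh{\ell,\ell'}$ on $F_{\hat\g,\hbar}^{\ell}=F_{\wh\ell}(\g,\ell)$ whose action on the generators $h_j,x_j^\pm$ is read off from the displayed formulas of Proposition \ref{prop:deform-datum} with $\sigma$ replaced by $\wh{\ell,\ell'}$. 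In particular $S_{\ell,\ell'}(z)$ is a well-defined $\C[[\hbar]]$-module map, being built from the $\C[[\hbar]]$-module maps $\rho$ and $\wh{\ell,\ell'}(\cdot,-z)$ on the appropriate tensor factors.

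First I would compute $S_{\ell,\ell'}(z)$ on the four pairs of generators by plugging $\rho(h_i)$ and $\rho(x_i^\pm)$ into \eqref{eq:def-S-twisted}. Since $1\in H$ is the vacuum of $H$ we have $\wh{\ell,\ell'}(1,-z)=\id$, so, e.g., $S_{\ell,\ell'}(z)(h_j\ot h_i)=h_j\ot h_i+\bigl(\wh{\ell,\ell'}(\al^\vee_i,-z)h_j\bigr)\ot\vac$. Using the relations $\wh{\ell,\ell'}(\al^\vee_i,z)h_j=\vac\ot\wh{\ell,\ell'}_{ij}(z)$, $\wh{\ell,\ell'}(\al^\vee_i,z)x_j^\pm=\pm x_j^\pm\ot\wh{\ell,\ell'}_{ij}^{1,\pm}(z)$, $\wh{\ell,\ell'}(e_i^\pm,z)h_j=h_j\ot1\mp\vac\ot\wh{\ell,\ell'}_{ij}^{2,\pm}(z)$ and $\wh{\ell,\ell'}(e_i^{\epsilon_2},z)x_j^{\epsilon_1}=x_j^{\epsilon_1}\ot\wh{\ell,\ell'}_{ij}^{\epsilon_2,\epsilon_1}(z)\inv$, I would obtain the four identities with $\wh{\ell,\ell'}_{ij}(-z)$, $\pm\wh{\ell,\ell'}_{ij}^{1,\pm}(-z)$, $\mp\wh{\ell,\ell'}_{ij}^{2,\pm}(-z)$ and $\wh{\ell,\ell'}_{ij}^{\epsilon_2,\epsilon_1}(-z)\inv$ sitting in the last tensor slot.

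It then remains to rewrite these four scalar functions via the definition of $\wh{\ell,\ell'}$. The relevant facts are: every exponent Laurent polynomial in $q$ occurring in that definition carries a factor $q-q\inv$, hence vanishes at $q=1$ and is anti-invariant under $q\mapsto q\inv$ (using $[n]_{q\inv}=[n]_q$); moreover $f(-z)=-f(z)$, while $\pd{z}\log f(z)$ is an odd and $\pdiff{z}{2}\log f(z)$ an even element of $\C((z))$. Feeding these into the substitution rule $\bigl(P(z_1)^{g(q)}\bigr)\big|_{z_1=-z}=\bigl(P(-z)\bigr)^{g(q\inv)}$ — where the sign coming from $f(-z)=-f(z)$ is harmless precisely because $g(1)=0$ — yields $\wh{\ell,\ell'}_{ij}(-z)=\pdiff{z}{2}\log f(z)^{[r_ia_{ij}]_q[r\ell]_q[r\ell']_q(q-q\inv)}$, $\wh{\ell,\ell'}_{ij}^{1,\pm}(-z)=\wh{\ell,\ell'}_{ij}^{1,\pm}(z)$, $\wh{\ell,\ell'}_{ij}^{2,\pm}(-z)=\wh{\ell,\ell'}_{ij}^{2,\pm}(z)$ and $\wh{\ell,\ell'}_{ij}^{\epsilon_2,\epsilon_1}(-z)\inv=f(z)^{q^{-\epsilon_1\epsilon_2r_ia_{ij}}-q^{\epsilon_1\epsilon_2r_ia_{ij}}}$. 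Substituting these back into the expressions from the previous step gives exactly \eqref{eq:S-twisted-1}--\eqref{eq:S-twisted-4}. The only point needing care is the sign bookkeeping in this last step; once one records that all the exponents vanish at $q=1$ and have the stated parity under $q\mapsto q\inv$, there is no genuine obstacle, which is why the statement is labelled as straightforward.
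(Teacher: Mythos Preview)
Your proof is correct and is precisely the routine verification the paper has in mind when it says ``it is straightforward to check'': you unwind \eqref{eq:def-S-twisted} via the explicit comodule map $\rho$ and $H$-module action $\wh{\ell,\ell'}$ from Proposition \ref{prop:deform-datum}, and then simplify the resulting scalar factors using the parity of $\partial_z\log f$, $\partial_z^2\log f$ together with $g(1)=0$ and $g(q^{-1})=-g(q)$ for the relevant exponents. One cosmetic point: when you say ``specializing that proposition to $\tau=\wh\ell$'', note that the comodule $\rho$ you actually use is on $F_{\hat\g,\hbar}^{\ell'}=F_{\wh{\ell'}}(\g,\ell')$, but since the formulas for $\rho$ in Proposition \ref{prop:deform-datum} do not depend on the parameter $\tau$, this imprecision is harmless.
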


\begin{rem}\label{rem:ell=ell-prime-case}
{\em
If $\ell=\ell'$, then the map $S_{\ell,\ell'}(z)$ defined in Lemma \ref{lem:S-twisted} is equivalent to the map $S_{\ell,\ell}(z)$ given in Theorem \ref{thm:quotient-algs}.
}
\end{rem}

\begin{lem}\label{lem:S-twisted-inverse}
$\sigma S_{\ell,\ell'}(-z)\inv \sigma=S_{\ell',\ell}(z)$.
\end{lem}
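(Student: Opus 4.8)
The plan is to reduce the claimed identity to a computation on the generating set $\set{h_i,\,x_i^\pm}{i\in I}$, which generates $F_{\hat\g,\hbar}^{\ell}$ (resp. $F_{\hat\g,\hbar}^{\ell'}$) as an $\hbar$-adic nonlocal VA, and then invoke the structural properties of the twisting operators (the shift conditions of Lemma \ref{lem:multqyb-shift-total} and the hexagon identities \eqref{eq:multqyb-hex1}, \eqref{eq:multqyb-hex2}) together with the explicit technical formulas in Lemmas \ref{lem:S-special-tech-gen2}, \ref{lem:S-special-tech-gen3} and \ref{lem:S-special-tech-gen4} to propagate the identity from generators to arbitrary elements. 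Concretely, first I would observe that both $\sigma S_{\ell,\ell'}(-z)\inv\sigma$ and $S_{\ell',\ell}(z)$ are $\C[[\hbar]]$-module maps $F_{\hat\g,\hbar}^{\ell'}\wh\ot F_{\hat\g,\hbar}^{\ell}\to F_{\hat\g,\hbar}^{\ell'}\wh\ot F_{\hat\g,\hbar}^{\ell}\wh\ot\C((z))[[\hbar]]$ satisfying the same vacuum normalizations \eqref{eq:multqyb-vac} and the same intertwining relations with $Y_1^{12}$ and $Y_2^{23}$; since $F_{\hat\g,\hbar}^{\ell'}\wh\ot F_{\hat\g,\hbar}^{\ell}$ is topologically free and generated (under the two vertex operators and the operators $a_n$) starting from $\vac\ot\vac$, it suffices to check equality on the pairs $(a\ot b)$ with $a,b$ ranging over the generators $h_i,x_i^\pm$.

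For the generator pairs, I would use the explicit formulas \eqref{eq:S-twisted-1}--\eqref{eq:S-twisted-4} of Lemma \ref{lem:S-twisted} together with the parity/symmetry properties of $\wh{\ell,\ell'}$. The key input is that $f(z)$ is odd, $f(-z)=-f(z)$, so $f(z)^{g(q)}\big|_{z\mapsto -z}$ transforms according to the rule $\(P(z_1)^{g(q)}\)|_{z_1=-z}=Q(z)^{g(q\inv)}$ with $Q(z)=P(-z)$ recalled in Section \ref{sec:qaff-va}, and that $\pd{z}\log f(z)$ is odd while $\pdiff{z}{2}\log f(z)$ is even. Combining these with the symmetry of $[r_ia_{ij}]_q$ under $q\mapsto q\inv$ (it changes sign) and the fact that swapping $\ell\leftrightarrow\ell'$ in $\wh{\ell,\ell'}$ exchanges the roles of the superscripts $1$ and $2$, one checks directly that on each generator pair the map $S_{\ell',\ell}(z)$ produces exactly the same output as $\sigma S_{\ell,\ell'}(-z)\inv\sigma$: for the $(x^{\epsilon_1}\ot x^{\epsilon_2})$ block this is the scalar identity $f(z)^{q^{-\epsilon_1\epsilon_2 r_ia_{ij}}-q^{\epsilon_1\epsilon_2 r_ia_{ij}}}\cdot f(-z)^{q^{\epsilon_1\epsilon_2 r_ia_{ij}}-q^{-\epsilon_1\epsilon_2 r_ia_{ij}}}=1$ (so $S_{\ell,\ell'}(-z)\inv$ on this block is multiplication by the reciprocal scalar, which matches $S_{\ell',\ell}(z)$ after conjugating by $\sigma$), while for the mixed $h$-$x$ and $h$-$h$ blocks the additive cocycle contributions cancel after the sign flip $z\mapsto -z$ and the transposition $\sigma$ because each relevant scalar is an odd function of $z$ in the $h$-$x$ cases and because the two $h$-$h$ contributions $\pdiff{z}{2}\log f(z)^{\cdots}$ are symmetric in $(i,j)$.

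The main obstacle, and the place requiring genuine care rather than bookkeeping, is the induction extending the identity from generators to all of $F_{\hat\g,\hbar}^{\ell'}\wh\ot F_{\hat\g,\hbar}^{\ell}$: one must show that if the identity $\sigma S_{\ell,\ell'}(-z)\inv\sigma = S_{\ell',\ell}(z)$ holds on two pairs built from generators, it continues to hold after applying a mode operator $(x^\pm)_n$ or $(h_i)_n$ to one of the factors. Here the subtlety is that $S_{\ell,\ell'}(-z)\inv$ does not directly satisfy a clean hexagon identity of its own; instead one should argue at the level of $S_{\ell,\ell'}$ itself — verifying that $S_{\ell,\ell'}(z)$ is itself invertible and that $\sigma S_{\ell,\ell'}(-z)\inv\sigma$ satisfies the defining relations of a twisting operator for $(F_{\hat\g,\hbar}^{\ell'},F_{\hat\g,\hbar}^{\ell})$ — which is precisely the $\hbar$-adic analogue of \cite[Lemma 2.3]{LS-twisted-tensor} recalled in Section \ref{sec:twisted-tensor-prod}, and then invoke the uniqueness part of Proposition \ref{prop:universal-M-tau} applied to the comodule/module description in Proposition \ref{prop:deform-datum}. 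In fact the cleanest route is to bypass the mode-by-mode induction entirely: recall from Proposition \ref{prop:deform-datum} and Theorem \ref{thm:S-tau} that $S_{\ell,\ell'}(z)$ arises from the deforming data $(H,\rho,\wh{\ell,\ell'})$ via the formula $S(z)(v\ot u)=\sum\wh{\ell,\ell'}(u_{(2)},-z)v\ot u_{(1)}$, so by Lemma \ref{lem:Sij-Sji} (with the $S_{ij}$ there specialized to these twisted data) one has $\sigma S_{\ell,\ell'}(-z)\inv\sigma = S_{\ell',\ell}(z)$ automatically, provided one checks that the group-theoretic inverse in $\mathfrak T$ of the data underlying $S_{\ell,\ell'}$ matches the data underlying $S_{\ell',\ell}$ — and that last check is exactly the generator-level computation of the previous paragraph, now reinterpreted as an identity $\wh{\ell,\ell'}\inv$ versus $\wh{\ell',\ell}$ in the group $\mathfrak T$, using $\tau\inv=(-\tau_{ij}, -\tau_{ij}^{1,\pm}, -\tau_{ij}^{2,\pm}, (\tau_{ij}^{\epsilon_1,\epsilon_2})\inv)$ together with the oddness of $f$.
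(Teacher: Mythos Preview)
Your core strategy---verify the identity on the generating pairs and then use that both $\sigma S_{\ell,\ell'}(-z)^{-1}\sigma$ and $S_{\ell',\ell}(z)$ satisfy the twisting-operator hexagon relations (the former by the $\hbar$-adic analogue of \cite[Lemma 2.3]{LS-twisted-tensor}, the latter by construction) so that agreement on generators forces global agreement---is exactly the paper's proof.

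Two small corrections are worth noting. First, in the $h$--$x^\pm$ block the scalar $\pd{z}\log f(z)^{[r_ia_{ij}]_q[r\ell']_q(q-q^{-1})}$ is actually \emph{even} in $z$, not odd: the exponent $g(q)$ satisfies $g(q^{-1})=-g(q)$ while $(\log f)'$ is odd, and the two sign flips combine to give evenness; this evenness (not oddness) is precisely what makes the two sides match after inversion and the flip $\sigma$. Second, your ``cleanest route'' through Lemma~\ref{lem:Sij-Sji} does not genuinely shortcut the argument. With $\lambda_{12}=\wh{\ell,\ell'}$ and $\lambda_{21}=\varepsilon$, that lemma gives $\sigma S_{\ell,\ell'}(-z)^{-1}\sigma(v\ot u)=\sum v_{(1)}\ot\wh{\ell,\ell'}^{-1}(v_{(2)},z)u$, a one-sided action on the \emph{second} tensor factor; identifying this with $S_{\ell',\ell}(z)(v\ot u)=\sum\wh{\ell',\ell}(u_{(2)},-z)v\ot u_{(1)}$, a one-sided action on the \emph{first} factor, is not a group identity in $\mathfrak T$ but again a generator-plus-hexagon check---so you end up right back at the paper's argument.
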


\begin{proof}
It is straightforward to check that
\begin{align*}
  \sigma S_{\ell,\ell'}(-z)\inv \sigma(v\ot u)=S_{\ell',\ell}(z)(v\ot u)\quad\te{for }u,v\in\set{h_i,x_i^\pm}{i\in I}.
\end{align*}
%
Notice that both $\sigma S_{\ell,\ell'}(-z)\inv \sigma$ and $S_{\ell',\ell}(z)$ are quantum Yang-Baxter operators for the ordered pair
$(F_{\hat\g,\hbar}^{\ell'},F_{\hat\g,\hbar}^{\ell})$.
Since both $F_{\hat\g,\hbar}^{\ell}$ and $F_{\hat\g,\hbar}^{\ell'}$ are generated by $\{h_i,\,x_i^\pm\,\mid\,i\in I\}$,
one has $\sigma S_{\ell,\ell'}(-z)\inv \sigma=S_{\ell',\ell}(z)$.
\end{proof}

\begin{lem}\label{lem:n-qva-F}
For $\ell_1,\ell_2,\dots,\ell_n\in\C$, we have that $$((F_{\hat\g,\hbar}^{\ell_i})_{i=1}^n,(S_{\ell_i,\ell_j}(z))_{i,j=1}^n)$$ is an $\hbar$-adic $n$-quantum VA.
\end{lem}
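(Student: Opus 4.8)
The plan is to verify the defining axioms of an $\hbar$-adic $n$-quantum VA (Definition \ref{de:qyb-mult}) for the family $(S_{\ell_i,\ell_j}(z))_{i,j=1}^n$ acting on the tuple $(F_{\hat\g,\hbar}^{\ell_i})_{i=1}^n$, using the explicit formula \eqref{eq:def-S-twisted} together with the general construction machinery of Section \ref{sec:construct-n-qvas}. The key observation is that all the maps $S_{\ell_i,\ell_j}(z)$ are of the twisting form \eqref{eq:S-twisted-def}: indeed, by Proposition \ref{prop:deform-datum}, each $F_{\hat\g,\hbar}^{\ell_i}=F_{\wh{\ell_i}}(\g,\ell_i)$ is an $H$-comodule nonlocal VA via the common comodule map $\rho$ (which is the same map $\rho$ on generators $h_i\mapsto h_i\ot 1+\vac\ot\al_i^\vee$, $x_i^\pm\mapsto x_i^\pm\ot e_i^\pm$, independent of the level), and the element $\wh{\ell,\ell'}\in\mathfrak T$ defines, via Proposition \ref{prop:deform-datum}, an $H$-module nonlocal VA structure $\lambda_{\ell,\ell'}:=\wh{\ell,\ell'}$ on $F_{\hat\g,\hbar}^{\ell'}$ (note the second index governs which comodule algebra it acts on). So first I would set $\lambda_{ij}:=\wh{\ell_j,\ell_i}$ viewed as an element of $\mathfrak L_H^{\rho}(F_{\hat\g,\hbar}^{\ell_i})$ — here one must be slightly careful with the index conventions so that $S_{ij}(z)=S_{\ell_i,\ell_j}(z)$ matches the formula in \eqref{eq:S-twisted-def} — and check that $\lambda_{ij}$ is invertible with $\lambda_{ij}\inv=\wh{-\ell_j,\ell_i}$ (inversion in $\mathfrak T$ negates the additive components and inverts the multiplicative ones, and the components of $\wh{\ell,\ell'}$ are manifestly bilinear in $(\ell,\ell')$ for the additive parts).

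Second, I would verify the commutativity hypothesis needed for Proposition \ref{prop:qva-twisted-tensor-deform}, namely $[\lambda_{ij}(h,z_1),\lambda_{ik}(h',z_2)]=0$ for all $i,j,k$ and $h,h'\in H$. This is exactly \eqref{eq:F-deform-modVA-struct-com} from Proposition \ref{prop:deform-datum}, which says any two of these $\sigma$-type module structures commute on any $F_\tau(\g,\ell)$; it applies verbatim since all the $\lambda_{ij}$ with fixed $i$ are module structures of the form treated there on the single algebra $F_{\hat\g,\hbar}^{\ell_i}$. With this in hand, Proposition \ref{prop:qva-twisted-tensor-deform} directly yields that $((\mathfrak D_{\lambda_{ii}}^{\rho}(F_{\hat\g,\hbar}^{\ell_i}))_{i=1}^n,(S_{ij}(z))_{i,j=1}^n)$ is an $\hbar$-adic $n$-quantum VA. The remaining point is to identify $\mathfrak D_{\lambda_{ii}}^{\rho}(F_{\hat\g,\hbar}^{\ell_i})$ with $F_{\hat\g,\hbar}^{\ell_i}$ itself: by Proposition \ref{prop:classical-limit}, $\mathfrak D_\sigma^\rho(F_\tau(\g,\ell))=F_{\tau\ast\sigma}(\g,\ell)$, so I need $\wh{\ell_i}\ast\lambda_{ii}=\wh{\ell_i}$, i.e. $\lambda_{ii}=\varepsilon$. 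But $\lambda_{ii}=\wh{\ell_i,\ell_i}$, whose components all carry a factor that vanishes/trivializes when we are on the diagonal relative to the base comodule structure — more precisely, one checks componentwise from the formulas defining $\wh{\ell,\ell'}$ and those defining $\wh{\ell}$ in \eqref{eq:tau-1}--\eqref{eq:tau-5} that $\wh{\ell}\ast\wh{\ell,\ell}=\wh{\ell}$, equivalently that the "deformation increment" from $\ell$ to $\ell$ is trivial; this is essentially Remark \ref{rem:ell=ell-prime-case} together with the fact that $S_{\ell,\ell}(z)$ there is the quantum Yang--Baxter operator of the undeformed-relative-to-itself $F_{\hat\g,\hbar}^\ell$.

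Alternatively — and this is probably cleaner as a write-up — I would simply invoke Theorem \ref{thm:S-tau} to note $(F_{\hat\g,\hbar}^{\ell_i},S_{\wh{\ell_i}}(z))$ is already an $\hbar$-adic quantum VA with $S_{ii}(z)=S_{\wh{\ell_i}}(z)$ matching \eqref{eq:S-twisted-4} on generators (so axiom \eqref{eq:qyb-locality} holds for free), then check the genuinely new axioms \eqref{eq:multqyb-vac}, \eqref{eq:multqyb-der-shift}, \eqref{eq:multqyb-hex1}, \eqref{eq:multqyb-hex2} and the mixed Yang--Baxter equation \eqref{eq:multqyb-qybeq} directly. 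The vacuum conditions \eqref{eq:multqyb-vac} are immediate from \eqref{eq:def-S-twisted} and $\varepsilon$-normalization of the comodule map $\rho$. The shift/derivation relations in \eqref{eq:multqyb-der-shift} follow from the shift property of $\lambda$-type maps (Lemma \ref{lem:multqyb-shift-total}-style, or the defining shift condition on $\sigma$ from Proposition \ref{prop:deform-datum}), and the unitarity $S_{ji}^{21}(z)S_{ij}(-z)=1$ is exactly Lemma \ref{lem:S-twisted-inverse}. The hexagon identities \eqref{eq:multqyb-hex1}--\eqref{eq:multqyb-hex2} and the mixed QYBE \eqref{eq:multqyb-qybeq} are the analytic heart: they would be proved by the same argument as in \cite[Theorem 3.6]{JKLT-Defom-va} (to which the text explicitly appeals), reducing each identity — via the fact that both sides are $\hbar$-adic nonlocal VA morphisms in the appropriate slots and $F_{\hat\g,\hbar}^{\ell_i}$ is generated by $\{h_i,x_i^\pm\}$ — to checking on generators, where it becomes a finite computation using the explicit values \eqref{eq:S-twisted-1}--\eqref{eq:S-twisted-4} and the additivity $\wh{\ell,\ell'}\ast\wh{\ell,\ell''}=\wh{\ell,\ell'+\ell''}$ (componentwise: bilinearity of $[r\ell]_q$-type and $q^{\cdots}$-type exponents — careful, $[r\ell]_q$ is \emph{not} linear in $\ell$, so the relevant identity is the one that actually appears in the exponents, namely that the mixed QYBE for the scalar functions reduces to an identity of the form $A(z_1)B(z_1+z_2)C(z_2)=C(z_2)B(z_1+z_2)A(z_1)$ among commuting scalar series, which is automatic since $\C((z))[[\hbar]]$ is commutative). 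I expect the main obstacle to be purely bookkeeping: getting the triple-index conventions in \eqref{eq:multqyb-hex1}, \eqref{eq:multqyb-hex2}, \eqref{eq:multqyb-qybeq} to line up with the Sweedler-notation formula \eqref{eq:def-S-twisted} and with the asymmetry between the two hexagon relations, rather than any substantive analytic difficulty — the latter is entirely absorbed by the cited results and by the fact that all nontrivial outputs of $S_{\ell,\ell'}(z)$ land in the commutative ring $\vac\wh\ot(\text{generators})\wh\ot\C((z))[[\hbar]]$, which trivializes every Yang--Baxter-type consistency check.
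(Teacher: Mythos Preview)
Your first approach has a concrete gap. You set $\lambda_{ij}=\wh{\ell_j,\ell_i}\in\mathfrak L_H^\rho(F_{\hat\g,\hbar}^{\ell_i})$, working on the already-deformed algebras. This forces $\lambda_{ii}=\wh{\ell_i,\ell_i}$, and you then need $\wh{\ell_i}\ast\wh{\ell_i,\ell_i}=\wh{\ell_i}$, i.e.\ $\wh{\ell_i,\ell_i}=\varepsilon$. But this is false: for instance $\wh{\ell,\ell}_{ij}(z)=\pdiff{z}{2}\log f(z)^{[r_ia_{ij}]_q[r\ell]_q^2(q^{-1}-q)}\ne 0$ for generic $\ell$. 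Your appeal to Remark~\ref{rem:ell=ell-prime-case} does not help: that remark says $S_{\ell,\ell}(z)$ coincides with the quantum Yang--Baxter operator of $F_{\hat\g,\hbar}^\ell$, not that the underlying $\wh{\ell,\ell}$ is trivial. A second, related problem: with your symmetric choice $\lambda_{ij}=\wh{\ell_j,\ell_i}$, the formula \eqref{eq:S-twisted-def} produces
\[
S_{ij}(z)(v\ot u)=\sum\wh{\ell_j,\ell_i}(u_{(2)},-z)v_{(1)}\ot\wh{\ell_i,\ell_j}^{-1}(v_{(2)},z)u_{(1)},
\]
which uses \emph{two} nontrivial module actions, whereas the target operator \eqref{eq:def-S-twisted} uses only one. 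So even off the diagonal you do not recover $S_{\ell_i,\ell_j}(z)$.

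The paper repairs both issues with a single move: it takes the \emph{undeformed} algebras $F_\varepsilon(\g,\ell_i)$ as the base for Proposition~\ref{prop:qva-twisted-tensor-deform}, and chooses
\[
\lambda_{ij}=\begin{cases}\wh\ell_i,&i=j,\\ \wh{\ell_i,\ell_j},&i<j,\\ \varepsilon,&i>j.\end{cases}
\]
Then (i) $\mathfrak D_{\lambda_{ii}}^\rho(F_\varepsilon(\g,\ell_i))=F_{\wh\ell_i}(\g,\ell_i)=F_{\hat\g,\hbar}^{\ell_i}$ by Proposition~\ref{prop:classical-limit}, and (ii) the asymmetric off-diagonal choice makes one of the two factors in \eqref{eq:S-twisted-def} collapse via the counit, so that $S_{ij}(z)$ literally becomes \eqref{eq:def-S-twisted} for $i<j$; the case $i>j$ is then handled by Lemma~\ref{lem:S-twisted-inverse}, and $i=j$ by Remark~\ref{rem:ell=ell-prime-case}. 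Your alternative direct-verification route could be made to work, but as written it is too sketchy on the hexagon identities and the mixed QYBE; the cleanest path is to adopt the paper's asymmetric $\lambda_{ij}$ on $F_\varepsilon$.
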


\begin{proof}
For $1\le i,j\le n$, we define $\lambda_{ij}\in\mathfrak T$ as follows
\begin{align*}
  \lambda_{ij}=\begin{cases}
                 \wh\ell_i, & \mbox{if } i=j, \\
                 \wh{\ell_i,\ell_j}, & \mbox{if } i<j \\
                 \varepsilon, & \mbox{if }i>j.
               \end{cases}
\end{align*}
Proposition \ref{prop:deform-datum} provides deforming triples $(H,\rho,\lambda_{ij})$ of $F_\varepsilon(A,\ell_i)$ for any $1\le i,j\le n$.
From \eqref{eq:F-deform-modVA-struct-com}, we see that $\lambda_{ij}$ and $\lambda_{ik}$ are commute for any $1\le i,j,k\le n$.
Then Proposition \ref{prop:qva-twisted-tensor-deform} provides an $\hbar$-adic $n$-quantum VA
\begin{align*}
  &\(\(\mathfrak D_{\lambda_{ii}}^\rho(F_\varepsilon(\g,\ell_i))\)_{i\in I},\(S_{ij}(z)\)_{i,j\in I}\).
\end{align*}
Comparing \eqref{eq:S-twisted-def} with Lemmas \ref{lem:S-twisted}, \ref{lem:S-twisted-inverse} and Remark \ref{rem:ell=ell-prime-case}, we have that
\begin{align*}
  S_{ij}(z)=S_{\ell_i,\ell_j}(z)\quad\te{for }1\le i,j\le n.
\end{align*}
It follows from Proposition \ref{prop:classical-limit} that
\begin{align*}
  \mathfrak D_{\lambda_{ii}}^\rho(F_\varepsilon(\g,\ell_i))
  =\mathfrak D_{\wh\ell_i}^\rho(F_\varepsilon(\g,\ell_i))
  =F_{\wh\ell_i}(\g,\ell_i)=F_{\wh\g,\hbar}^{\ell_i}.
\end{align*}
Therefore, we complete the proof of lemma.
\end{proof}

\begin{lem}\label{lem:special-tau-tech1}
For any $i,j\in I$, we have that
\begin{align}
  &2\hbar f_0\(2\hbar\pd{z}\)\wh{\ell,\ell'}_{ij}(z)
  =\( q^{-r\ell\pd{z}}-q^{r\ell\pd{z}} \)\wh{\ell,\ell'}_{ij}^{1,+}(z),\label{eq:special-tau-tech1-3}\\
  &2\hbar f_0\(2\hbar\pd{z}\)\wh{\ell,\ell'}_{ij}(z)
  =\( q^{-r\ell'\pd{z}}-q^{r\ell'\pd{z}} \)\wh{\ell,\ell'}_{ij}^{2,+}(z),\\
  &2\hbar f_0\(2\hbar\pd{z}\)\wh{\ell,\ell'}_{ij}^{1,\pm}(z)\label{eq:special-tau-tech1-4}
  =
  \log f(z)^{ (q^{r_ia_{ij}}-q^{-r_ia_{ij}})(q^{r\ell'}-q^{-r\ell'}) },\\
  &2\hbar f_0\(2\hbar\pd{z}\)\wh{\ell,\ell'}_{ij}^{2,\pm}(z)
  =
  \log f(z)^{ (q^{r_ia_{ij}}-q^{-r_ia_{ij}})(q^{r\ell}-q^{-r\ell}) }.
\end{align}
\end{lem}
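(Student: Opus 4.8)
The plan is to reduce each of the four asserted identities to a trivial equality of Laurent polynomials in the single shift operator $q^{\pd z}$, applied to $\log f(z)$. Write $L(z)=\log f(z)=\log z+\log f_0(z)$, where $\log f_0(z)\in z^2\C[[z^2]]$ since $f_0(z)\in 1+z^2\C[[z^2]]$, and recall that $q^{c\pd z}=e^{c\hbar\pd z}$ acts on $\C((z))[[\hbar]]$ as the shift $z\mapsto z+c\hbar$ (so $q=e^{\hbar}$). Directly from the definition of $P(z)^{g(q)}$ one gets, for $g(q)=\sum_k a_kq^{m_k}\in\Z[q]$,
\begin{align*}
  \log\(f(z)^{g(q)}\)=g\(q^{\pd z}\)L(z)=\sum_k a_k L(z+m_k\hbar),
\end{align*}
which lies in $\C((z))[[\hbar]]$ whenever $g(1)=\sum_k a_k=0$, the only obstruction (the $\log z$ term) being multiplied by $g(1)$. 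With $[n]_q=\frac{q^n-q^{-n}}{q-q\inv}$, every entry of $\wh{\ell,\ell'}$ occurring in the lemma is then of the stated form: $\wh{\ell,\ell'}_{ij}(z)=\pdiff{z}{2}\,\big(g_1(q^{\pd z})L(z)\big)$ with $g_1(q)=[r_ia_{ij}]_q[r\ell]_q[r\ell']_q(q\inv-q)$, while $\wh{\ell,\ell'}_{ij}^{1,\pm}(z)=\pd{z}\,\big(g_2(q^{\pd z})L(z)\big)$ and $\wh{\ell,\ell'}_{ij}^{2,\pm}(z)=\pd{z}\,\big(g_3(q^{\pd z})L(z)\big)$ with $g_2(q)=[r_ia_{ij}]_q[r\ell']_q(q-q\inv)$ and $g_3(q)=[r_ja_{ji}]_q[r\ell]_q(q-q\inv)$, all three of which vanish at $q=1$.

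Next I would record the operator identity
\begin{align*}
  2\hbar f_0\(2\hbar\pd{z}\)=\frac{q^{\pd z}-q^{-\pd z}}{\pd{z}},
\end{align*}
which is immediate from $f(w)=e^{w/2}-e^{-w/2}$, $f_0(w)=f(w)/w$ and $q^{\pd z}=e^{\hbar\pd z}$. This is a genuine operator on $\C((z))[[\hbar]]$ because $q^{\pd z}-q^{-\pd z}$ is divisible by $\pd{z}$ as a power series in $\hbar\pd z$ (indeed $2\hbar f_0(2\hbar\pd z)=2\hbar+O(\hbar^3)$ is an ordinary differential operator); equivalently $2\hbar f_0(2\hbar\pd z)\big(\pd{z}\big)^{k}=\big(q^{\pd z}-q^{-\pd z}\big)\big(\pd{z}\big)^{k-1}$ for every $k\ge 1$. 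All operators appearing in the argument are power series in $\pd{z}$, hence commute and preserve $\C((z))[[\hbar]]$, so no ordering or convergence subtlety arises.

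Combining the two ingredients, applying $2\hbar f_0(2\hbar\pd z)$ to $\wh{\ell,\ell'}_{ij}(z)$ absorbs one of its two $z$-derivatives, and applying it to $\wh{\ell,\ell'}_{ij}^{1,\pm}(z)$ or $\wh{\ell,\ell'}_{ij}^{2,\pm}(z)$ absorbs the single derivative; after stripping off a common factor $\pd{z}\,L(z)$ (resp.\ $L(z)$), each of the four identities becomes an equality of Laurent polynomials in $Q:=q^{\pd z}$, to be checked using only $[n]_Q(Q-Q\inv)=Q^n-Q^{-n}$ and the symmetry $r_ia_{ij}=r_ja_{ji}$ (valid because $DA$ is symmetric). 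Concretely, for \eqref{eq:special-tau-tech1-3} both sides equal $[r_ia_{ij}]_Q[r\ell']_Q\big(Q^{-r\ell}-Q^{r\ell}\big)(Q-Q\inv)$ applied to $\pd{z}L(z)$; for its companion with $\wh{\ell,\ell'}_{ij}^{2,+}$ one first regroups $g_1(q)=[r_ia_{ij}]_q[r\ell]_q(q^{-r\ell'}-q^{r\ell'})$ and then uses $[r_ia_{ij}]_Q=[r_ja_{ji}]_Q$; and for \eqref{eq:special-tau-tech1-4} the left side becomes $(Q-Q\inv)^2[r_ia_{ij}]_Q[r\ell']_Q\,L(z)=\big(Q^{r_ia_{ij}}-Q^{-r_ia_{ij}}\big)\big(Q^{r\ell'}-Q^{-r\ell'}\big)L(z)$, which is exactly the right side, the last identity being obtained the same way with $[r\ell]$ and $[r_ja_{ji}]$ in place of $[r\ell']$ and $[r_ia_{ij}]$. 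There is no real obstacle here; the only points that demand care are the divisibility of $q^{\pd z}-q^{-\pd z}$ by $\pd{z}$ (which makes the operator identity meaningful) and keeping track of which $q$-integer factor pairs with which difference $Q^m-Q^{-m}$, together with the single use of $r_ia_{ij}=r_ja_{ji}$.
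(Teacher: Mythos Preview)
Your proposal is correct. The paper states this lemma without proof, treating it as a direct verification; your argument supplies exactly that verification, and the key step --- the operator identity $2\hbar f_0(2\hbar\pd{z})\pd{z}=q^{\pd{z}}-q^{-\pd{z}}$ together with the interpretation $\log f(z)^{g(q)}=g(q^{\pd{z}})\log f(z)$ --- is precisely what makes the four identities collapse to equalities of Laurent polynomials in $Q=q^{\pd{z}}$, checked via $[n]_Q(Q-Q\inv)=Q^n-Q^{-n}$ and $r_ia_{ij}=r_ja_{ji}$.
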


Similar to the proof of \cite[Lemma 6.19, Lemma 6.20, Lemma 6.21]{K-Quantum-aff-va}, by using Lemma \ref{lem:special-tau-tech1}, we get the following result.

\begin{lem}\label{lem:S-ell-ell-prime}
For $i,j\in I$, we have that
\begin{align*}
  &S_{\ell,\ell'}(z_1)(A_i(z_2)\ot h_j)=A_i(z_2)\ot h_j
  +\Sing_{z_2}\(A_i(z_2)\ot \vac\ot\(e^{z_2\pd{z_1}}-1\){\wh{\ell,\ell'}}_{ij}^{1,+}(-z_1) \),\\
  &S_{\ell,\ell'}(z_1)(h_j\ot A_i(z_2))=h_j\ot A_i(z_2)
  +\Sing_{z_2}\(\vac\ot A_i(z_2)\ot\(1-e^{-z_2\pd{z_1}}\){\wh{\ell,\ell'}}_{ji}^{2,+}(-z_1)\),\\
  &S_{\ell,\ell'}(z_1)(A_i(z_2)\ot x_j^\pm)
  =\Sing_{z_2}\Big(A_i(z_2)\ot x_j^\pm
  \ot
  \exp\(\(1-e^{z_2\pd{z_1}}\) \log{\wh{\ell,\ell'}}_{ji}^{\pm,+}(-z_1) \)\Big),\\
  &S_{\ell,\ell'}(z_1)(x_j^\pm\ot A_i(z_2))
  =\Sing_{z_2}\Big(x_j^\pm\ot A_i(z_2)
  \ot \exp\(\(1-e^{-z_2\pd{z_1}}\) \log{\wh{\ell,\ell'}}_{ij}^{+,\pm}(-z_1) \)\Big).
\end{align*}
For $i,j,k\in I$ such that $a_{ij}\le 0$, we have that
\begin{align*}
  &S_{\ell,\ell'}(z)\(Q_{ij}^\pm(z_1,\dots,z_{m_{ij}})\ot h_k\)
  =Q_{ij}^\pm(z_1,\dots,z_{m_{ij}})\ot h_k\\
  &\quad\pm \Sing_{z_1,\dots,z_{m_{ij}}}\Bigg(Q_{ij}^\pm(z_1,\dots,z_{m_{ij}})\ot\vac
  \ot
    \({\wh{\ell,\ell'}}_{kj}^{1,\pm}(-z)+\sum_{a=1}^{m_{ij}}{\wh{\ell,\ell'}}_{ki}^{1,\pm}(-z-z_a)
  \)\Bigg),\\
  &S_{\ell,\ell'}(z)\(h_k\ot Q_{ij}^\pm(z_1,\dots,z_{m_{ij}})\)
  =h_k\ot Q_{ij}^\pm(z_1,\dots,z_{m_{ij}})\\
  &\quad\mp \Sing_{z_1,\dots,z_{m_{ij}}}\Bigg(\vac\ot Q_{ij}^\pm(z_1,\dots,z_{m_{ij}})
  \ot \({\wh{\ell,\ell'}}_{jk}^{2,\pm}(-z)
  +\sum_{a=1}^{m_{ij}}{\wh{\ell,\ell'}}_{ik}^{2,\pm}(-z+z_a)\)\Bigg),\\
  &S_{\ell,\ell'}(z)\(Q_{ij}^\pm(z_1,\dots,z_{m_{ij}})\ot x_k^\epsilon\)
  =\Sing_{z_1,\dots,z_{m_{ij}}}\\
  &\quad\(Q_{ij}^\pm(z_1,\dots,z_{m_{ij}})\ot x_k^\epsilon
  \ot {\wh{\ell,\ell'}}_{kj}^{\epsilon,\pm}(-z)\inv
  \prod_{a=1}^{m_{ij}}{\wh{\ell,\ell'}}_{ki}^{\epsilon,\pm}(-z-z_a)\inv\),\\
  &S_{\ell,\ell'}(z)\(x_k^\epsilon\ot Q_{ij}^\pm(z_1,\dots,z_{m_{ij}})\)
  =\Sing_{z_1,\dots,z_{m_{ij}}}\\
  &\quad\(x_k^\epsilon\ot Q_{ij}^\pm(z_1,\dots,z_{m_{ij}})\ot
  {\wh{\ell,\ell'}}_{jk}^{\epsilon,\pm}(-z)\inv
    \prod_{a=1}^{m_{ij}}{\wh{\ell,\ell'}}_{ik}^{\epsilon,\pm}(-z+z_a)\inv\).
\end{align*}
For any $\ell\in \Z_+$ and $i,j\in I$, we have that
\begin{align*}
  &S_{\ell,\ell'}(z)\(M_i^\pm(z_1,\dots,z_{r\ell/r_i})\ot h_j\)
  =M_i^\pm(z_1,\dots,z_{r\ell/r_i})\ot h_j\\
  &\quad\pm\Sing_{z_1,\dots,z_{r\ell/r_i}}z_1\inv\cdots z_{r\ell/r_i}\inv
  \(
  M_i^\pm(z_1,\dots,z_{r\ell/r_i})\ot \vac\ot
    \sum_{a=1}^{r\ell/r_i+1}{\wh{\ell,\ell'}}_{ji}^{1,\pm}(-z-z_a)
  \),\\
  &S_{\ell,\ell'}(z)\(h_j\ot M_i^\pm(z_1,\dots,z_{r\ell/r_i})\)
  =h_j\ot M_i^\pm(z_1,\dots,z_{r\ell/r_i})\\
  &\quad\mp\Sing_{z_1,\dots,z_{r\ell/r_i}}z_1\inv \cdots z_{r\ell/r_i}\inv
    \(\vac\ot M_i^\pm(z_1,\dots,z_{r\ell/r_i})\ot
    \sum_{a=1}^{r\ell/r_i+1}{\wh{\ell,\ell'}}_{ij}^{1,\pm}(-z+z_a)
  \),\\
  &S_{\ell,\ell'}(z)\(M_i^\pm(z_1,\dots,z_{r\ell/r_i})\ot x_j^\epsilon\)
  =\Sing_{z_1,\dots,z_{r\ell/r_i}}z_1\inv\cdots z_{r\ell/r_i}\inv\\
  &\quad\(
    M_i^\pm(z_1,\dots,z_{r\ell/r_i})\ot x_j^\epsilon\ot
    \prod_{a=1}^{r\ell/r_i+1}{\wh{\ell,\ell'}}_{ji}^{\epsilon,\pm}(-z-z_a)\inv
  \),\\
  &S_{\ell,\ell'}(z)\(x_j^\epsilon\ot M_i^\pm(z_1,\dots,z_{r\ell/r_i})\)
  =\Sing_{z_1,\dots,z_{r\ell/r_i}}z_1\inv\cdots z_{r\ell/r_i}\inv\\
  &\quad\(
    x_j^\epsilon\ot M_i^\pm(z_1,\dots,z_{r\ell/r_i})\ot
    \prod_{a=1}^{r\ell/r_i+1}{\wh{\ell,\ell'}}_{ij}^{\epsilon,\pm}(-z+z_a)\inv
  \),
\end{align*}
where $z_{r\ell/r_i+1}=0$.
\end{lem}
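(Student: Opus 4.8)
The plan is to reduce each displayed identity to a computation of the $H$-module action $\wh{\ell,\ell'}(\cdot,-z)$ on $F_{\hat\g,\hbar}^{\ell}$ and on $F_{\hat\g,\hbar}^{\ell'}$, and then to evaluate that action with the help of Lemma~\ref{lem:special-tau-tech1}, following \cite[Lemmas~6.19--6.21]{K-Quantum-aff-va} almost line by line, with $\wh\ell$ replaced by $\wh{\ell,\ell'}$ in the slots attached to the second tensor factor.

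First I would unwind the definition \eqref{eq:def-S-twisted}: for $v\ot u$ one has $S_{\ell,\ell'}(z)(v\ot u)=\sum\wh{\ell,\ell'}(u_{(2)},-z)v\ot u_{(1)}$, where $\rho(u)=\sum u_{(1)}\ot u_{(2)}$. Since $\rho$ is an $\hbar$-adic nonlocal VA homomorphism determined on generators by $\rho(h_j)=h_j\ot 1+\vac\ot\al^\vee_j$ and $\rho(x_j^\pm)=x_j^\pm\ot e_j^\pm$ (Proposition~\ref{prop:deform-datum}), applying $S_{\ell,\ell'}(z)$ to $w\ot h_j$ (resp.\ to $w\ot x_j^\pm$) reduces to evaluating $\wh{\ell,\ell'}(\al^\vee_j,-z)w$ (resp.\ $\wh{\ell,\ell'}(e_j^\pm,-z)w$), while the cases with a generator in the left tensor slot expand in the same way once $\rho$ has been computed on $A_i(z_2)$, $Q_{ij}^\pm(z_1,\dots,z_{m_{ij}})$ and $M_i^\pm(z_1,\dots,z_{r\ell/r_i})$ by the homomorphism property. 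So every identity in the statement becomes an evaluation of $\wh{\ell,\ell'}(\al^\vee_j,-z)$ and $\wh{\ell,\ell'}(e_j^\pm,-z)$ on the singular parts $Y_{\wh\ell}(x_i^\pm,z_a)^-$, on their iterated products, and on $E_\ell(h_i)$.

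Second I would carry out these evaluations. Because $\al^\vee_j$ is primitive and $e_j^\pm$ is group-like in $H$, the compatibility axiom \eqref{eq:mod-va-for-vertex-bialg3} propagates $\wh{\ell,\ell'}(\al^\vee_j,z)$ and $\wh{\ell,\ell'}(e_j^\pm,z)$ through a single vertex operator $Y_{\wh\ell}(x_i^\pm,z_a)$, the one-step values being read off from Proposition~\ref{prop:deform-datum}; iterating over the product $Y_{\wh\ell}(x_i^\pm,z_1)^-\cdots Y_{\wh\ell}(x_i^\pm,z_{m_{ij}})^-x_j^\pm$ and then applying the appropriate $\Sing$ (for $M_i^\pm$, the rational-part truncation) produces the stated formulas for $Q_{ij}^\pm$ and $M_i^\pm$, the sums over $a$ of shifted copies of the $\wh{\ell,\ell'}$-components appearing term by term. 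For the summand of $A_i(z_2)$ that contains $E_\ell(h_i)$ --- which by \eqref{eq:def-E-h} is a scalar times $\exp\big(a_{-1}\big)\vac$ for a suitable element $a$ obtained by applying a difference operator to $h_i$ --- I would combine the base formulas \eqref{eq:S-twisted-1}--\eqref{eq:S-twisted-4}, the exponential identities of Lemmas~\ref{lem:S-special-tech-gen2}--\ref{lem:S-special-tech-gen4}, and the identities of Lemma~\ref{lem:special-tau-tech1}, notably $2\hbar f_0\big(2\hbar\pd{z}\big)\wh{\ell,\ell'}_{ij}(z)=\big(q^{-r\ell\pd{z}}-q^{r\ell\pd{z}}\big)\wh{\ell,\ell'}_{ij}^{1,+}(z)$ and $2\hbar f_0\big(2\hbar\pd{z}\big)\wh{\ell,\ell'}_{ij}^{1,\pm}(z)=\log f(z)^{(q^{r_ia_{ij}}-q^{-r_ia_{ij}})(q^{r\ell'}-q^{-r\ell'})}$, which convert the operator $2\hbar f_0(2\partial\hbar)$ and the shift $q^{-r\ell\partial}$ occurring in $E_\ell(h_i)$ into exactly the series that comes from $Y_{\wh\ell}(x_i^+,z_2)^-x_i^-$.

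I expect the main obstacle to be precisely this last coherence: one must verify that the ``leading'' summand $Y_{\wh\ell}(x_i^+,z_2)^-x_i^-$ and the ``correction'' summand carrying $E_\ell(h_i)$ transform in the same way, so that the image of $A_i(z_2)$ retains the shape $A_i(z_2)\ot(\cdots)+\Sing_{z_2}(\cdots)$, and similarly that $Q_{ij}^\pm$ and $M_i^\pm$ reproduce themselves in the first tensor slot; this is the property that makes the formulas usable for descending $S_{\ell,\ell'}(z)$ to the quotients via Lemma~\ref{lem:S-quotient-alg}. Lemma~\ref{lem:special-tau-tech1} is exactly what guarantees it, and the remaining work --- tracking the $\hbar$-shifts in the arguments, the singular-part truncations, and the signs --- is routine and proceeds as in \cite[Lemmas~6.19--6.21]{K-Quantum-aff-va}, so in the write-up I would only record the notational substitution of $\wh{\ell,\ell'}$ for $\wh\ell$.
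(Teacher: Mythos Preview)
Your proposal is correct and matches the paper's approach: the paper simply records that the argument is ``similar to the proof of \cite[Lemmas~6.19--6.21]{K-Quantum-aff-va}, by using Lemma~\ref{lem:special-tau-tech1}'', which is exactly the reduction you describe --- unwind $S_{\ell,\ell'}$ via \eqref{eq:def-S-twisted} and Proposition~\ref{prop:deform-datum}, propagate through the iterated vertex operators using the hexagon-type identities encapsulated in Lemmas~\ref{lem:S-special-tech-gen2}--\ref{lem:S-special-tech-gen4}, and check the $E_\ell(h_i)$ contribution with Lemma~\ref{lem:special-tau-tech1}. Your identification of the coherence between the $Y_{\wh\ell}(x_i^+,z_2)^-x_i^-$ summand and the $E_\ell(h_i)$ summand as the nontrivial point is spot on.
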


From Proposition \ref{prop:ideal-def-alt} and Lemma \ref{lem:S-ell-ell-prime}, we have the following result.
\begin{prop}
$S_{\ell,\ell'}(z)$ induces a $\C[[\hbar]]$-module map on $$V_{\hat\g,\hbar}^{\ell}\wh\ot V_{\hat\g,\hbar}^{\ell'}\wh\ot\C((z))[[\hbar]].$$
If $\ell,\ell'\in\Z_+$, it further induces a $\C[[\hbar]]$-module map on $$L_{\hat\g,\hbar}^{\ell}\wh\ot L_{\hat\g,\hbar}^{\ell'}\wh\ot\C((z))[[\hbar]].$$
\end{prop}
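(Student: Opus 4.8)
The plan is to realize the statement as an instance of Lemma~\ref{lem:S-quotient-alg}. By Lemma~\ref{lem:n-qva-F}, $(F_{\hat\g,\hbar}^{\ell},F_{\hat\g,\hbar}^{\ell'})$ equipped with the operators $S_{\ell,\ell}(z),S_{\ell,\ell'}(z),S_{\ell',\ell}(z),S_{\ell',\ell'}(z)$ is an $\hbar$-adic $2$-quantum VA. I would take as generating subset $U=\{h_i,x_i^\pm\mid i\in I\}$ of each factor, and as closed ideals $M_1=R_1^{\ell}$, $M_2=R_1^{\ell'}$; these satisfy $[M_1]=M_1$, $[M_2]=M_2$ by Definition~\ref{de:V-tau}. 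Then $S_{\ell,\ell'}(z)$ descends to $V_{\hat\g,\hbar}^{\ell}\wh\ot V_{\hat\g,\hbar}^{\ell'}\wh\ot\C((z))[[\hbar]]$ as soon as, for each of the four pairs $(a,b)\in\{1,2\}^2$, the maps $S_{\ell_a,\ell_b}(z)(M_a\ot U)$ and $S_{\ell_a,\ell_b}(z)(U\ot M_b)$ land in $M_a\wh\ot F_{\hat\g,\hbar}^{\ell_b}\wh\ot\C((z))[[\hbar]]+F_{\hat\g,\hbar}^{\ell_a}\wh\ot M_b\wh\ot\C((z))[[\hbar]]$ (writing $\ell_1=\ell$, $\ell_2=\ell'$). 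The diagonal pairs $(1,1)$ and $(2,2)$ are exactly the statement that $V_{\hat\g,\hbar}^{\ell}$ and $V_{\hat\g,\hbar}^{\ell'}$ are $\hbar$-adic quantum VAs, already contained in Theorem~\ref{thm:quotient-algs}; and the pair $(2,1)$ follows from the pair $(1,2)$ by the symmetry $\ell\leftrightarrow\ell'$ (equivalently, via Lemma~\ref{lem:S-twisted-inverse}). So all the content sits in the pair $(1,2)$.

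For that pair one must show $S_{\ell,\ell'}(z)(R_1^{\ell}\ot U)\subset R_1^{\ell}\wh\ot F_{\hat\g,\hbar}^{\ell'}\wh\ot\C((z))[[\hbar]]+F_{\hat\g,\hbar}^{\ell}\wh\ot R_1^{\ell'}\wh\ot\C((z))[[\hbar]]$, and the mirror containment for $U\ot R_1^{\ell'}$. By Proposition~\ref{prop:ideal-def-alt}, $R_1^{\ell}$ is the minimal closed, $[\cdot]$-invariant ideal of $F_{\hat\g,\hbar}^{\ell}$ containing the coefficients of $A_i(z)$ ($i\in I$) and of $Q_{ij}^\pm(z_1,\dots,z_{m_{ij}})$ ($a_{ij}<0$). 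Lemma~\ref{lem:S-ell-ell-prime} evaluates $S_{\ell,\ell'}(z)$ on $A_i(z_2)\ot h_j$, $A_i(z_2)\ot x_j^\epsilon$, $Q_{ij}^\pm(z_1,\dots,z_{m_{ij}})\ot h_k$, $Q_{ij}^\pm(z_1,\dots,z_{m_{ij}})\ot x_k^\epsilon$ and their left--right mirrors, and one reads off directly that each summand on the right-hand side retains the factor $A_i(z_2)$, resp. $Q_{ij}^\pm(z_1,\dots,z_{m_{ij}})$, in the tensor slot it occupied on the left; hence every such value already lies in the required sum of submodules. It then remains to propagate this from the generating coefficients to all of $R_1^{\ell}$: for a generator $a\in U$, an element $c\in R_1^{\ell}$ and $u'\in U\cup\{\vac\}$, the hexagon identity~\eqref{eq:multqyb-hex1} rewrites $S_{\ell,\ell'}(z)(Y_{\wh\ell}(a,w)c\ot u')$ in terms of $S_{\ell,\ell'}$ applied to $a\ot u'$ (known from Lemma~\ref{lem:S-twisted}) and to $c\ot u''$ with $u''$ a component of $\rho(u')$ (the inductive hypothesis), followed by a vertex operator acting on the $F_{\hat\g,\hbar}^{\ell}$-slot; since $R_1^{\ell}$ is an ideal built from its generating coefficients by iterated such applications, induction on the number of generator-fields involved yields the containment for all $c$, and $\C((z))[[\hbar]]$-linearity and continuity of $S_{\ell,\ell'}(z)$ extend it across the $\hbar$-adic and $[\cdot]$-closures. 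The mirror containment $S_{\ell,\ell'}(z)(U\ot R_1^{\ell'})\subset\cdots$ is handled the same way using~\eqref{eq:multqyb-hex2}.

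For the assertion about $L_{\hat\g,\hbar}^{\ell}$ when $\ell,\ell'\in\Z_+$, I would simply iterate: $L_{\hat\g,\hbar}^{\ell}=V_{\hat\g,\hbar}^{\ell}/R_2^{\ell}$, where by Proposition~\ref{prop:ideal-def-alt} $R_2^{\ell}$ is the minimal closed, $[\cdot]$-invariant ideal of $V_{\hat\g,\hbar}^{\ell}$ generated by the coefficients of $M_i^\pm(z_1,\dots,z_{r\ell/r_i})$. Applying Lemma~\ref{lem:S-quotient-alg} once more, now to the $\hbar$-adic $2$-quantum VA on $(V_{\hat\g,\hbar}^{\ell},V_{\hat\g,\hbar}^{\ell'})$ obtained in the previous step, with the ideals $R_2^{\ell}$ and $R_2^{\ell'}$, the diagonal pairs are again covered by Theorem~\ref{thm:quotient-algs}, and the off-diagonal containments follow from the last four displayed formulas of Lemma~\ref{lem:S-ell-ell-prime} (which leave the factor $M_i^\pm(z_1,\dots,z_{r\ell/r_i})$ intact in its slot) together with the same hexagon-based propagation. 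I expect the propagation step — upgrading the explicit evaluations of Lemma~\ref{lem:S-ell-ell-prime} on the generating coefficients to the ideal-level containment demanded by Lemma~\ref{lem:S-quotient-alg}, while respecting the $[\cdot]$- and $\hbar$-adic closures — to be the only genuinely delicate point; the hexagon identities turn it into a routine induction, and the rest is bookkeeping.
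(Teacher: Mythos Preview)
Your proposal is correct and follows the paper's approach: the paper's proof is the single line ``From Proposition~\ref{prop:ideal-def-alt} and Lemma~\ref{lem:S-ell-ell-prime}, we have the following result,'' which packages exactly the argument you spell out (with Lemma~\ref{lem:S-quotient-alg} as the implicit reduction mechanism). Your propagation step via the hexagon identity~\eqref{eq:multqyb-hex1} is the right way to pass from the generating coefficients of $R_1^\ell$ supplied by Proposition~\ref{prop:ideal-def-alt} to the full ideal, and your observation that the second tensor slot after applying $S_{\ell,\ell'}$ to a pair of generators stays in $U\cup\{\vac\}$ (visible either from the explicit formulas of Lemma~\ref{lem:S-twisted} or, as you note, from the comodule-side description~\eqref{eq:def-S-twisted}) is precisely what makes the induction close.
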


Combining this with Lemma \ref{lem:n-qva-F}, we get the main result of this section.

\begin{thm}
Let $\ell_1,\ell_2,\dots,\ell_n\in\C$. Then $$((X_{\hat\g,\hbar}^{\ell_i})_{i=1}^n,(S_{\ell_i,\ell_j}(z))_{i,j=1}^n)$$
is an $\hbar$-adic $n$-quantum VA
for $X=F,V,L$.
\end{thm}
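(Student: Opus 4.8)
The plan is to deduce all three cases from Lemma~\ref{lem:n-qva-F} together with the quotient construction of Lemma~\ref{lem:S-quotient-alg}. The case $X=F$ is nothing but Lemma~\ref{lem:n-qva-F}, and it provides the starting point for the other two.

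For $X=V$, I would apply Lemma~\ref{lem:S-quotient-alg} to the $\hbar$-adic $n$-quantum VA $((F_{\hat\g,\hbar}^{\ell_i})_{i=1}^n,(S_{\ell_i,\ell_j}(z))_{i,j=1}^n)$ of Lemma~\ref{lem:n-qva-F}, taking the generating subsets $U_i=\set{h_k,x_k^\pm}{k\in I}$ (which generate $F_{\hat\g,\hbar}^{\ell_i}$ by Proposition~\ref{prop:universal-M-tau}(a)) and the closed ideals $M_i=R_1^{\ell_i}$. The condition $[M_i]=M_i$ holds by Definition~\ref{de:V-tau}, so the only point to check is that for every ordered pair $(i,j)$
\begin{align*}
  &S_{\ell_i,\ell_j}(z)(R_1^{\ell_i}\ot U_j),\ S_{\ell_i,\ell_j}(z)(U_i\ot R_1^{\ell_j})\\
  &\quad\subset R_1^{\ell_i}\wh\ot F_{\hat\g,\hbar}^{\ell_j}\wh\ot\C((z))[[\hbar]]+F_{\hat\g,\hbar}^{\ell_i}\wh\ot R_1^{\ell_j}\wh\ot\C((z))[[\hbar]].
\end{align*}
This is exactly the Proposition stated immediately before this theorem, applied to the pair $(\ell_i,\ell_j)$; concretely it follows from the description of $R_1^\ell$ by the coefficients of $A_i(z)$ and $Q_{ij}^\pm(z_1,\dots,z_{m_{ij}})$ in Proposition~\ref{prop:ideal-def-alt}, combined with the explicit evaluations of $S_{\ell,\ell'}(z)$ on those elements in Lemma~\ref{lem:S-ell-ell-prime}, each of which visibly lands in the stated subspace. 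Lemma~\ref{lem:S-quotient-alg} then yields that $((V_{\hat\g,\hbar}^{\ell_i})_{i=1}^n,(S_{\ell_i,\ell_j}(z))_{i,j=1}^n)$ is an $\hbar$-adic $n$-quantum VA.

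For $X=L$, where necessarily $\ell_1,\dots,\ell_n\in\Z_+$, I would iterate the same step: apply Lemma~\ref{lem:S-quotient-alg} once more, now to the $n$-quantum VA $((V_{\hat\g,\hbar}^{\ell_i})_{i=1}^n,(S_{\ell_i,\ell_j}(z))_{i,j=1}^n)$ just obtained, with generating subsets the images of $\set{h_k,x_k^\pm}{k\in I}$ and closed ideals $M_i=R_2^{\ell_i}$ (again $[M_i]=M_i$ by Definition~\ref{de:L-tau}). The needed $S$-compatibility is the $\ell,\ell'\in\Z_+$ clause of the same preceding Proposition, which rests on the presentation of $R_2^\ell$ by the coefficients of $M_i^\pm(z_1,\dots,z_{r\ell/r_i})$ in Proposition~\ref{prop:ideal-def-alt} and the matching formulas in Lemma~\ref{lem:S-ell-ell-prime}. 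This gives the $L$-case and completes the argument.

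I expect the genuinely hard part to lie not in this final assembly, which is purely formal once Lemma~\ref{lem:n-qva-F}, Lemma~\ref{lem:S-quotient-alg} and the preceding Proposition are in hand, but in the input Lemma~\ref{lem:S-ell-ell-prime} itself: one must compute the action of the twisting operators $S_{\ell,\ell'}(z)$ on the (infinitely many) coefficients of the generating series $A_i(z)$, $Q_{ij}^\pm$ and $M_i^\pm$, and verify that every such image stays inside $(\text{ideal})\wh\ot(\text{algebra})+(\text{algebra})\wh\ot(\text{ideal})$. That step also requires reducing the condition on the full closed ideals $R_1^\ell$ and $R_2^\ell$ to these generators, which uses the hexagon identities \eqref{eq:multqyb-hex1}--\eqref{eq:multqyb-hex2} together with the minimality of $R_1^\ell$ and $R_2^\ell$ under the operation $[\,\cdot\,]$.
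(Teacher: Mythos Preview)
Your proposal is correct and follows essentially the same approach as the paper: the $F$-case is Lemma~\ref{lem:n-qva-F}, and the $V$- and $L$-cases are obtained by descending to the quotients via Lemma~\ref{lem:S-quotient-alg}, with the required $S$-compatibility supplied by Proposition~\ref{prop:ideal-def-alt} and Lemma~\ref{lem:S-ell-ell-prime} (packaged in the preceding Proposition). If anything, you have made the role of Lemma~\ref{lem:S-quotient-alg} more explicit than the paper's one-line ``Combining this with Lemma~\ref{lem:n-qva-F}.''
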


\begin{rem}
{\em
Let $\ell_1,\dots,\ell_n\in\C$. In this paper, we simply write the twisted tensor product
\begin{align*}
  X_{\hat\g,\hbar}^{\ell_1}\wh\ot_{S_{\ell_1,\ell_2}}\cdots \wh\ot_{S_{\ell_{n-1},\ell_n}}X_{\hat\g,\hbar}^{\ell_n}
\end{align*}
as
\begin{align*}
  X_{\hat\g,\hbar}^{\ell_1}\wh\ot\cdots \wh\ot X_{\hat\g,\hbar}^{\ell_n}
\end{align*}
for $X=F,V,L$.
}
\end{rem}

\section{Corpoduct of quantum affine vertex algebras}\label{sec:coprod}

The purpose of this section is to prove the following result.

\begin{thm}\label{thm:coproduct}
Let $X=F,V,L$. For any $\ell,\ell'\in\C$, there is an $\hbar$-adic quantum VA homomorphism
$\Delta:X_{\hat\g,\hbar}^{\ell+\ell'}\to X_{\hat\g,\hbar}^{\ell}\wh\ot X_{\hat\g,\hbar}^{\ell'}$
defined by
\begin{align}
  &\Delta(h_i)=q^{-r\ell'\partial}h_i\ot\vac+\vac\ot q^{r\ell\partial}h_i,\label{eq:def-Delta-h}\\
  &\Delta(x_i^+)=x_i^+\ot\vac+q^{2r\ell\partial}E_\ell(h_i)\ot q^{2r\ell\partial}x_i^+,\label{eq:def-Delta-x+}\\
  &\Delta(x_i^-)=x_i^-\ot\vac+\vac\ot x_i^-.\label{eq:def-Delta-x-}
\end{align}
Moreover, let $n\in\Z_+$ and $\ell_1,\dots,\ell_n\in\C$. For $1\le i<n$, we have
\begin{equation*}
\begin{tikzcd}[column sep=1em, row sep=1.5em]
    \Big(X_{\hat\g,\hbar}^{\ell_1}\ar[d,equal]&\dots&X_{\hat\g,\hbar}^{\ell_{i-1}}\ar[d,equal]
    &X_{\hat\g,\hbar}^{\ell_i+\ell_{i+1}}\ar[d,"\Delta"]
    &X_{\hat\g,\hbar}^{\ell_{i+2}}\ar[d,equal]
    &\dots&X_{\hat\g,\hbar}^{\ell_n}\Big)\ar[d,equal]\\
    \Big(X_{\hat\g,\hbar}^{\ell_1}&\dots&X_{\hat\g,\hbar}^{\ell_{i-1}}&
    X_{\hat\g,\hbar}^{\ell_i}\wh\ot X_{\hat\g,\hbar}^{\ell_{i+1}}&
    X_{\hat\g,\hbar}^{\ell_{i+2}}&\dots&
    X_{\hat\g,\hbar}^{\ell_n}\Big)
\end{tikzcd}
\end{equation*}
is an $\hbar$-adic $(n-1)$-quantum VA homomorphism.
Furthermore, for $\ell,\ell',\ell''\in\C$, we have that
\begin{align*}
  (\Delta\ot 1)\circ\Delta=(1\ot\Delta)\circ\Delta:X_{\hat\g,\hbar}^{\ell+\ell'+\ell''}\to X_{\hat\g,\hbar}^{\ell}\wh\ot X_{\hat\g,\hbar}^{\ell'}\wh\ot X_{\hat\g,\hbar}^{\ell''}
\end{align*}
\end{thm}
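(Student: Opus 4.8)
The plan is to build $\Delta$ from the universal properties of Section~\ref{sec:qaff-va} and then upgrade it, checking everything on the generators $h_i,x_i^\pm$. \emph{Step 1 (existence as an $\hbar$-adic nonlocal VA homomorphism, $X=F$).} By Proposition~\ref{prop:universal-M-tau} at level $\ell+\ell'$ it suffices to show that $F_{\hat\g,\hbar}^{\ell}\wh\ot F_{\hat\g,\hbar}^{\ell'}$, equipped with the fields $Y_{S_{\ell,\ell'}}(\Delta h_i,z)$ and $Y_{S_{\ell,\ell'}}(\Delta x_i^\pm,z)$ for the elements $\Delta h_i,\Delta x_i^\pm$ defined by \eqref{eq:def-Delta-h}--\eqref{eq:def-Delta-x-}, lies in $\mathcal M_{\wh{\ell+\ell'}}(\g)$; then $\Delta$ is the homomorphism furnished by the universal property. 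I would expand these fields from $Y_{S_{\ell,\ell'}}(z)=(Y_{\wh\ell}(z)\ot Y_{\wh{\ell'}}(z))S_{\ell,\ell'}^{23}(-z)\sigma$ (Theorem~\ref{thm:twist-tensor}) together with the explicit action of $S_{\ell,\ell'}(z)$ on $h_i,x_i^\pm$ from Lemma~\ref{lem:S-twisted}, and then verify relations \eqref{eq:local-h-1}--\eqref{eq:local-h-4} with $\ell$ replaced by $\ell+\ell'$. The shift operators $q^{-r\ell'\partial},q^{r\ell\partial},q^{2r\ell\partial}$ in \eqref{eq:def-Delta-h}--\eqref{eq:def-Delta-x+} are exactly what is needed so that the $\wh\ell$-self-contribution on the first factor, the $\wh{\ell'}$-self-contribution on the second factor, and the cross-contributions produced by $S_{\ell,\ell'}(z)$ add up to the $\wh{\ell+\ell'}$-contribution; this is a bookkeeping computation powered by $q$-integer identities such as $[r(\ell+\ell')]_q=q^{r\ell'}[r\ell]_q+q^{-r\ell}[r\ell']_q$ and the relations of Lemma~\ref{lem:special-tau-tech1}.

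\emph{Step 2 (passing to $V$ and $L$).} For $X=V,L$ I would invoke Proposition~\ref{prop:universal-qaff}: granted the $\mathcal M_{\wh{\ell+\ell'}}(\g)$-structure, it remains to check that $\Delta h_i,\Delta x_i^\pm$ satisfy \eqref{eq:x+1x}--\eqref{eq:serre} (and, when $\ell,\ell'\in\Z_+$, \eqref{eq:integrable}) in the twisted tensor product. The heart is to compute $(\Delta x_i^+)_0(\Delta x_i^-)$ and $(\Delta x_i^+)_1(\Delta x_i^-)$ and, crucially, to identify $E_{\ell+\ell'}(\Delta h_i)$ explicitly: since $\Delta$ is already a $\partial$-equivariant nonlocal VA homomorphism, it sends the exponential vector $E_{\ell+\ell'}(h_i)$ to an explicit product built from $E_\ell(h_i)$ and $E_{\ell'}(h_i)$ on the two factors, with the scalar prefactor matched by a direct computation with $f_0$. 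With that in hand, the two $x^+$--$x^-$ relations fall out of \eqref{eq:def-Delta-x+}, \eqref{eq:def-Delta-x-}, the corresponding relations in each factor, and \eqref{eq:S-twisted-4}; the Serre relations $(\Delta x_i^\pm)_0^{m_{ij}}(\Delta x_j^\pm)=0$ and the integrability relations $(\Delta x_i^\pm)_{-1}^{r(\ell+\ell')/r_i}(\Delta x_i^\pm)=0$ follow from Corollary~\ref{coro:normal-ordering-rel-general} applied to $Y(\Delta x_i^\pm,z)$ together with the vanishing of the corresponding $Q_{ij}^\pm$ and $M_i^\pm$ in each factor (Proposition~\ref{prop:ideal-def-alt}), the relevant $S_{\ell,\ell'}(z)$-cross-terms being exactly those computed in Lemma~\ref{lem:S-ell-ell-prime}.

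\emph{Step 3 (quantum and $n$-quantum homomorphism).} To make $\Delta$ an $\hbar$-adic quantum VA homomorphism I would verify $(\Delta\ot\Delta)\circ S_{\ell+\ell',\ell+\ell'}(z)=S(z)\circ(\Delta\ot\Delta)$, where $S(z)$ is the quantum Yang--Baxter operator of $X_{\hat\g,\hbar}^\ell\wh\ot X_{\hat\g,\hbar}^{\ell'}$ obtained from its $2$-quantum VA structure via Proposition~\ref{prop:n-qva-to-k-qva} applied to the partition $\{1,2\}$. Since both sides are $\C((z))[[\hbar]]$-linear and the hexagon identities \eqref{eq:multqyb-hex1}--\eqref{eq:multqyb-hex2} together with $S$-locality propagate such an intertwining relation from a generating set to all of $X_{\hat\g,\hbar}^{\ell+\ell'}$, it suffices to check it on $\Delta h_i\ot\Delta h_j$, $\Delta x_i^\pm\ot\Delta h_j$, $\Delta h_i\ot\Delta x_j^\pm$ and $\Delta x_i^{\epsilon_1}\ot\Delta x_j^{\epsilon_2}$, which after expansion is handled term by term by Lemmas~\ref{lem:S-special-tech-gen2}--\ref{lem:S-special-tech-gen4} and the scalars of Lemma~\ref{lem:S-twisted}. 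The displayed $(n-1)$-quantum VA homomorphism statement then follows: after grouping $\{i,i+1\}$ via Proposition~\ref{prop:n-qva-to-k-qva}, the only non-identity components of the relevant quantum Yang--Baxter operators are the diagonal one --- just treated --- and the one-sided ones $(\Delta\ot 1)S_{\ell_i+\ell_{i+1},\ell_j}=(S_{\ell_i,\ell_j}^{12}S_{\ell_{i+1},\ell_j}^{13})(\Delta\ot 1)$ for $j>i+1$ and its mirror for $j<i$, which are checked the same way.

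\emph{Step 4 (coassociativity) and main obstacle.} Both $(\Delta\ot 1)\Delta$ and $(1\ot\Delta)\Delta$ are $\hbar$-adic nonlocal VA homomorphisms out of $X_{\hat\g,\hbar}^{\ell+\ell'+\ell''}$, so they coincide once they agree on $h_i$ and $x_i^\pm$: for $x_i^-$ this is immediate, for $h_i$ both composites telescope to $q^{-r(\ell'+\ell'')\partial}h_i\ot\vac\ot\vac+\vac\ot q^{r(\ell-\ell'')\partial}h_i\ot\vac+\vac\ot\vac\ot q^{r(\ell+\ell')\partial}h_i$, and for $x_i^+$ one expands both composites using \eqref{eq:def-Delta-x+}, \eqref{eq:def-Delta-h} and the factorization of $E_\bullet(\Delta h_i)$ established in Step~2, and matches terms. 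I expect the genuine difficulty to be concentrated in Steps~1--2: verifying relations \eqref{eq:local-h-1}--\eqref{eq:local-h-4}, especially the $x^+$--$x^-$ relation \eqref{eq:local-h-4}, inside the twisted tensor product where the $S_{\ell,\ell'}(z)$-cross-terms intervene, and pinning down $E_{\ell+\ell'}(\Delta h_i)$. Once those are in place, everything else is formal propagation along the vertex-algebra axioms plus the scalar identities of Lemma~\ref{lem:special-tau-tech1}.
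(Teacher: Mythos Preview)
Your plan is correct and tracks the paper's own proof closely: the paper likewise reduces everything to four auxiliary results (its Propositions \ref{prop:S-Delta}--\ref{prop:Delta-coasso}), invokes Proposition~\ref{prop:universal-M-tau} and Proposition~\ref{prop:universal-qaff} exactly as you describe, and establishes the key identity $E_{\ell+\ell'}(\Delta(h_i))=q^{-2r\ell'\partial}E_\ell(h_i)\ot E_{\ell'}(h_i)$ (its Lemma~\ref{lem:E-ell+}) as the hinge for the $x^+x^-$ relation. One refinement worth flagging for Step~2: the Serre and integrability checks are not handled by Lemma~\ref{lem:S-ell-ell-prime} (that lemma is used only to push $S_{\ell,\ell'}$ through the ideals); instead the paper expands $\:Y_\Delta(\Delta(x_i^\pm),z_1)\cdots Y_\Delta(\Delta(x_i^\pm),z_n)\;(\vac\ot\vac)$ as a sum over subsets $J\subset\{1,\dots,n\}$ (Propositions~\ref{prop:normal-ordering-J+}--\ref{prop:normal-ordering-J-}), and it is the vanishing of the scalar factors $f^\pm_{i_1,\dots,i_n,J,\hbar}$ at the specialized variables, together with the resulting $q$-binomial coefficients, that forces all surviving terms to carry a factor $(x_i^\pm)_0^{m_{ij}}x_j^\pm$ or $(x_i^\pm)_{-1}^{>r\ell/r_i}\vac$ in one tensorand---this is the combinatorial core you should expect when you carry out the computation.
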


We prove this theorem by proving the following results:

\begin{prop}\label{prop:S-Delta}
Let $\ell,\ell',\ell''\in\C$. Set
\begin{align*}
  S_{\{\ell,\ell'\},\ell''}(z)=S_{\ell',\ell''}^{23}(z)S_{\ell,\ell''}^{13}(z),\quad
  S_{\ell,\{\ell',\ell''\}}(z)=S_{\ell,\ell'}^{12}(z)S_{\ell,\ell''}^{13}(z).
\end{align*}
Then
\begin{align*}
  &S_{\{\ell,\ell'\},\ell''}(z)\circ(\Delta\ot 1)=(\Delta\ot 1)\circ S_{\ell+\ell',\ell''}(z)
  \quad \te{on }F_{\hat\g,\hbar}^{\ell+\ell'}\wh\ot F_{\hat\g,\hbar}^{\ell''},\\
  &S_{\ell,\{\ell',\ell''\}}(z)\circ(1\ot \Delta)=(1\ot \Delta)\circ S_{\ell,\ell'+\ell''}(z)
  \quad\te{on }F_{\hat\g,\hbar}^{\ell}\wh\ot F_{\hat\g,\hbar}^{\ell'+\ell''}.
\end{align*}
\end{prop}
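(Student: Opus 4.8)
The plan is to reduce each of the two identities to a relation describing how $\Delta$ interacts with the deformation data, and then to verify that relation on generators; the only delicate point is the exponential $E_\ell$ occurring in $\Delta(x_i^+)$. Recall from Lemma~\ref{lem:S-twisted} that $S_{\ell,\ell'}(z)$ deforms only its left tensor factor: writing $\rho_{\ell'}(u)=\sum u_{(1)}\ot u_{(2)}$, one has $S_{\ell,\ell'}(z)(v\ot u)=\sum\wh{\ell,\ell'}(u_{(2)},-z)\,v\ot u_{(1)}$, where $\wh{\ell,\ell'}(\cdot,z)$ denotes the $H$-module action on $F_{\hat\g,\hbar}^{\ell}$ attached to $\wh{\ell,\ell'}\in\mathfrak T$ by Proposition~\ref{prop:deform-datum}. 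Using this, and using the hexagon relation \eqref{eq:multqyb-hex2} (valid for $S_{\ell+\ell',\ell''}$ and $S_{\{\ell,\ell'\},\ell''}$ by Lemma~\ref{lem:n-qva-F} and Proposition~\ref{prop:n-qva-to-k-qva}) to reduce the $F_{\hat\g,\hbar}^{\ell''}$-factor to a generator, and then invoking $\rho_{\ell''}(h_i)=h_i\ot\vac+\vac\ot\al_i^\vee$, $\rho_{\ell''}(x_i^\pm)=x_i^\pm\ot e_i^\pm$ together with the cocommutativity and coassociativity of $H$, the first identity of the proposition becomes equivalent to the $H$-module relation
\[
\Delta\big(\wh{\ell+\ell',\ell''}(h,-z)\,v\big)=\sum\wh{\ell,\ell''}(h_{(1)},-z)\,v_{[1]}\ot\wh{\ell',\ell''}(h_{(2)},-z)\,v_{[2]}
\qquad(h\in H,\ v\in F_{\hat\g,\hbar}^{\ell+\ell'}),
\]
where $\Delta_H(h)=\sum h_{(1)}\ot h_{(2)}$ and $\Delta(v)=\sum v_{[1]}\ot v_{[2]}$. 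Since $S_{\ell,\ell'+\ell''}$ likewise deforms only its left factor while $1\ot\Delta$ splits the right one, the second identity reduces in the same way to the corresponding relation for $\wh{\ell,\ell'+\ell''}$ acting on $F_{\hat\g,\hbar}^{\ell}$, expressed now through the compatibility of $\Delta\colon F_{\hat\g,\hbar}^{\ell'+\ell''}\to F_{\hat\g,\hbar}^{\ell'}\wh\ot F_{\hat\g,\hbar}^{\ell''}$ with the comodule maps $\rho_{\ell'+\ell''},\rho_{\ell'},\rho_{\ell''}$, which is itself checked on generators.

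To establish the displayed relation it suffices to treat $h$ among the generators $\al_i^\vee,e_i^\pm$ of $H$ and $v$ among the generators $h_i,x_i^\pm$ of $F_{\hat\g,\hbar}^{\ell+\ell'}$: the reduction in $v$ uses that $F_{\hat\g,\hbar}^{\ell+\ell'}$ is generated by $\{h_i,x_i^\pm\}$, that $\Delta$ is a homomorphism of $\hbar$-adic nonlocal VAs---which is verified by checking, via the universal property of Proposition~\ref{prop:universal-M-tau}, that the images $\Delta(h_i),\Delta(x_i^\pm)$ make $F_{\hat\g,\hbar}^{\ell}\wh\ot F_{\hat\g,\hbar}^{\ell'}$ an object of $\mathcal M_{\wh{\ell+\ell'}}(\g)$ (Lemma~\ref{lem:M-wh-ell})---and the module/vertex-operator compatibility \eqref{eq:mod-va-for-vertex-bialg3}, while the $\partial$-powers that arise on the $H$-side are handled by translation covariance of the module action. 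For $v=h_j$ and $v=x_j^-$ both sides are computed directly from \eqref{eq:def-Delta-h}, \eqref{eq:def-Delta-x-}, the module-action formulas of Proposition~\ref{prop:deform-datum}, and the elementary identity $[r(\ell+\ell')]_{q}=q^{r\ell'}[r\ell]_{q}+q^{-r\ell}[r\ell']_{q}$---precisely what the shifts $q^{-r\ell'\partial}$ and $q^{r\ell\partial}$ in $\Delta(h_i)$ are designed to absorb---and the two sides agree.

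The decisive case is $v=x_j^+$, where $\Delta(x_j^+)=x_j^+\ot\vac+q^{2r\ell\partial}E_\ell(h_j)\ot q^{2r\ell\partial}x_j^+$ and one must transport the module action $\wh{\ell,\ell''}(h,-z)$, resp.\ $\wh{\ell',\ell''}(h,-z)$, through the exponential $E_\ell(h_j)$ of \eqref{eq:def-E-h} and through the shift $q^{2r\ell\partial}$. This is carried out exactly as in the proof of Lemma~\ref{lem:S-ell-ell-prime} (cf.\ \cite[Lemmas~6.19--6.21]{K-Quantum-aff-va}): the ``special technique'' Lemmas~\ref{lem:S-special-tech-gen2}--\ref{lem:S-special-tech-gen4}, in their module-action form, reduce the action on $E_\ell(h_j)$ to the action on the single vector $h_j$, and then Lemma~\ref{lem:special-tau-tech1}---in particular the relations $2\hbar f_0(2\hbar\pd{z})\wh{\ell,\ell'}_{ij}(z)=(q^{-r\ell\pd{z}}-q^{r\ell\pd{z}})\wh{\ell,\ell'}_{ij}^{1,+}(z)$ and $2\hbar f_0(2\hbar\pd{z})\wh{\ell,\ell'}_{ij}^{1,\pm}(z)=\log f(z)^{(q^{r_ia_{ij}}-q^{-r_ia_{ij}})(q^{r\ell'}-q^{-r\ell'})}$---converts the ``$h$--$h$'' data thus produced into the ``$h$--$x$'' components $\wh{\ell,\ell'}_{ij}^{1,\pm}$ and the exponent components $\wh{\ell,\ell'}_{ij}^{\pm,\pm}$ appearing on the other side. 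This computation is the main obstacle: it requires tracking simultaneously the additive combination of the logarithmic components, the multiplicative combination of the exponent components, and all of the $\partial$-shifts coming from $q^{2r\ell\partial}$ and from the $q^{-r\ell\partial}$ inside $E_\ell$, and confirming that the failure of $\wh{\ell+\ell',\ell''}$ to equal $\wh{\ell,\ell''}\ast\wh{\ell',\ell''}$ on the nose is absorbed exactly by these shifts---which is why the coproduct formulas take the form they do. The second identity is completed by the same computation, with the $H$-module relation replaced by the $\Delta/\rho$-compatibility above and with $[r(\ell'+\ell'')]_{q}=q^{r\ell''}[r\ell']_{q}+q^{-r\ell'}[r\ell'']_{q}$ in place of the identity for $\ell+\ell'$.
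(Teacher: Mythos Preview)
Your proposal is correct and follows essentially the same strategy as the paper: verify the identity on pairs of generators---the delicate case being $v=x_j^+$, where the exponential $E_\ell(h_j)$ is handled via the ``special technique'' lemmas and Lemma~\ref{lem:special-tau-tech1}---and then extend to all of $F_{\hat\g,\hbar}^{\ell+\ell'}\wh\ot F_{\hat\g,\hbar}^{\ell''}$ using the hexagon relations together with $\Delta$ being an $\hbar$-adic nonlocal VA homomorphism. The paper carries out the generator computations directly with the $S$-operators (Lemmas~\ref{lem:S3-1} and~\ref{lem:S3-2}, drawing on Lemma~\ref{lem:S-E} for the $E_\ell$ step) and then concludes Propositions~\ref{prop:S-Delta-1} and~\ref{prop:S-Delta-2}; your reformulation through the $H$-module action $\wh{\ell,\ell'}(h,-z)$ is the same computation in Sweedler notation rather than a genuinely different route.
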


\begin{prop}\label{prop:Y-Delta-cartan}
Denote by $Y_\Delta$ the vertex operator of $X_{\hat\g,\hbar}^{\ell}\wh\ot X_{\hat\g,\hbar}^{\ell'}$ for $X=F,V,L$. Then we have that ($i,j\in I$)
\begin{align}
  &[Y_\Delta(\Delta(h_i),z_1),Y_\Delta(\Delta(h_j),z_2)]
  =[r_ia_{ij}]_{q^{\pd{z_2}}}[r(\ell+\ell')]_{q^{\pd{z_2}}}\label{eq:prop-Y-Delta-1}\\
  &\quad\times\(\iota_{z_1,z_2}q^{-r(\ell+\ell')\pd{z_2}}
  -\iota_{z_2,z_1}q^{r(\ell+\ell')\pd{z_2}}\)
  \pd{z_1}\pd{z_2}\log f(z_1-z_2),\nonumber\\
  &[Y_\Delta(\Delta(h_i),z_1),Y_\Delta(\Delta(x_j^\pm),z_2)]
  =\pm Y_\Delta(\Delta(x_j^\pm),z_2)[r_ia_{ij}]_{q^{\pd{z_2}}}\label{eq:prop-Y-Delta-2}\\
  &\quad\(\iota_{z_1,z_2}q^{-r(\ell+\ell')\pd{z_2}}
  -\iota_{z_2,z_1}q^{r(\ell+\ell')\pd{z_2}}\)
  \pd{z_1}\log f(z_1-z_2),\nonumber\\
  &f(z_1-z_2)^{\delta_{ij}+\delta_{ij}q^{2r\ell}}
  Y_\Delta(\Delta(x_i^+),z_1)Y_\Delta(\Delta(x_j^-),z_2)\label{eq:prop-Y-Delta-3-pre}\\
  &\quad=\iota_{z_2,z_1}f(z_1-z_2)^{\delta_{ij}+\delta_{ij}q^{2r\ell}+q^{-r_ia_{ij}}-q^{r_ia_{ij}}}
   Y_\Delta(\Delta(x_j^-),z_2)
    Y_\Delta(\Delta(x_i^+),z_1)\nonumber
\end{align}
on $F_{\hat\g,\hbar}^{\ell}\wh\ot F_{\hat\g,\hbar}^{\ell'}$,
and
\begin{align}
  &Y_\Delta(\Delta(x_i^+),z_1)Y_\Delta(\Delta(x_j^-),z_2)\nonumber\\
  &\quad-\iota_{z_2,z_1}f(z_1-z_2)^{q^{-r_ia_{ij}}-q^{r_ia_{ij}}}
  Y_\Delta(\Delta(x_j^-),z_2)
    Y_\Delta(\Delta(x_i^+),z_1)\nonumber\\
  =&\frac{\delta_{ij}}{q^{r_i}-q^{-r_i}}\bigg(z_1\inv\delta\(\frac{z_2}{z_1}\)
  -Y_\Delta\(E_{\ell+\ell'}(\Delta(h_i)),z_2\)
  z_1\inv\delta\(\frac{z_2-2r(\ell+\ell')\hbar}{z_1}\)\bigg)\label{eq:prop-Y-Delta-3}
\end{align}
on $V_{\hat\g,\hbar}^{\ell}\wh\ot V_{\hat\g,\hbar}^{\ell'}$
\end{prop}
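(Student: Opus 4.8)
The plan is to compute the fields $Y_\Delta(\Delta(h_i),z)$ and $Y_\Delta(\Delta(x_i^\pm),z)$ on the twisted tensor product $X_{\hat\g,\hbar}^{\ell}\wh\ot X_{\hat\g,\hbar}^{\ell'}$ in closed form, and then to verify each of \eqref{eq:prop-Y-Delta-1}--\eqref{eq:prop-Y-Delta-3} by splitting it into a ``first leg'' contribution (controlled by Lemma~\ref{lem:M-wh-ell} at level $\ell$), a ``second leg'' contribution (Lemma~\ref{lem:M-wh-ell} at level $\ell'$), and cross terms produced by the twisting operator $S_{\ell,\ell'}(z)$; these are then recombined by means of the explicit structure functions $\wh{\ell,\ell'}$ and a few elementary $q$-integer identities.

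First I would record the building blocks. By Theorem~\ref{thm:twist-tensor}, $Y_\Delta(z)=(Y_1(z)\ot Y_2(z))S_{\ell,\ell'}^{23}(-z)\sigma$, where $Y_1,Y_2$ are the vertex operators of $X_{\hat\g,\hbar}^{\ell}$ and $X_{\hat\g,\hbar}^{\ell'}$. Since $S_{\ell,\ell'}(z)$ fixes $\vac\ot v$ and $u\ot\vac$, one gets $Y_\Delta(u\ot\vac,z)=Y_1(u,z)\ot 1$, while from \eqref{eq:def-S-twisted} together with the $H$-comodule maps of Proposition~\ref{prop:deform-datum} one gets $Y_\Delta(\vac\ot u,z)=\sum\wh{\ell,\ell'}(u_{(2)},z)\ot Y_2(u_{(1)},z)$ with $\rho(u)=\sum u_{(1)}\ot u_{(2)}$. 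Combining these with the translation rule $Y_\Delta(q^{c\partial}a,z)=Y_\Delta(a,z+c\hbar)$, with the identity $\partial=\partial\ot 1+1\ot\partial$ on the twisted tensor product, and, where the exponential $E_\ell(h_i)$ enters, with Lemmas~\ref{lem:S-special-tech-gen2}--\ref{lem:S-special-tech-gen4}, a direct computation gives
\begin{align*}
  Y_\Delta(\Delta(h_i),z)&=Y_1(h_i,z-r\ell'\hbar)\ot 1+\wh{\ell,\ell'}(\al_i^\vee,z+r\ell\hbar)\ot 1+1\ot Y_2(h_i,z+r\ell\hbar),\\
  Y_\Delta(\Delta(x_i^-),z)&=Y_1(x_i^-,z)\ot 1+\wh{\ell,\ell'}(e_i^-,z)\ot Y_2(x_i^-,z),\\
  Y_\Delta(\Delta(x_i^+),z)&=Y_1(x_i^+,z)\ot 1+\bigl(Y_1(E_\ell(h_i),z+2r\ell\hbar)\,\wh{\ell,\ell'}(e_i^+,z+2r\ell\hbar)\bigr)\ot Y_2(x_i^+,z+2r\ell\hbar).
\end{align*}
Here $\wh{\ell,\ell'}(\al_i^\vee,w)$ and $\wh{\ell,\ell'}(e_i^\pm,w)$ act on the first leg through the $H$-module structure of Proposition~\ref{prop:deform-datum}; since $\al_i^\vee$ is primitive and $e_i^\pm$ grouplike in $H$, their commutators with the fields $Y_1(h_j,z)$ and $Y_1(x_j^\pm,z)$ are the $\C((z))[[\hbar]]$-valued operators read off from the module action on $h_j$ and $x_j^\pm$, and by \eqref{eq:F-deform-modVA-struct-com} they commute with one another.

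For \eqref{eq:prop-Y-Delta-1}, \eqref{eq:prop-Y-Delta-2} and \eqref{eq:prop-Y-Delta-3-pre} I would substitute the three formulas above and expand the bracket, respectively the $f(z_1-z_2)^{\ast}$-weighted product. The pure first-leg terms should reproduce the relevant relation of Lemma~\ref{lem:M-wh-ell} at level $\ell$ (with all arguments shifted by a common constant, leaving $\log f(z_1-z_2)$ unchanged), the pure second-leg terms should reproduce it at level $\ell'$, the terms pairing a field of one leg with a twisting operator of the \emph{same} leg should contribute $\C((z))[[\hbar]]$-valued delta-function corrections built from $\wh{\ell,\ell'}_{ij}$, $\wh{\ell,\ell'}_{ij}^{1,\pm}$ and $\wh{\ell,\ell'}_{ij}^{\pm,\epsilon}$, and the cross-leg terms vanish (operators on distinct tensor factors commute). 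Matching the resulting sum against the stated right-hand side should come down to Lemma~\ref{lem:special-tau-tech1} and the $q$-integer identities $[a]_qq^{\mp b}+q^{\pm a}[b]_q=[a+b]_q$ and $[a]_q[b]_q(q-q\inv)=[a+b]_q-[a-b]_q$; the shifts $q^{\mp r\ell'\partial}$, $q^{r\ell\partial}$ and $q^{2r\ell\partial}$ built into \eqref{eq:def-Delta-h}--\eqref{eq:def-Delta-x+} are precisely what forces the level-$\ell$ and level-$\ell'$ pieces to add up to the level-$(\ell+\ell')$ structure functions.

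The main obstacle is \eqref{eq:prop-Y-Delta-3} on $V_{\hat\g,\hbar}^{\ell}\wh\ot V_{\hat\g,\hbar}^{\ell'}$. Expanding $Y_\Delta(\Delta(x_i^+),z_1)Y_\Delta(\Delta(x_j^-),z_2)$ produces four terms: the purely first-leg term is treated by the delta-function form of the $x^+$-$x^-$ relation that holds in $V_{\hat\g,\hbar}^{\ell}$, i.e.\ by the vanishing of the generators $A_i(z)$ of $R_1^\ell$ from Proposition~\ref{prop:ideal-def-alt}; the purely second-leg term by the same relation in $V_{\hat\g,\hbar}^{\ell'}$, all of whose arguments are shifted by $2r\ell\hbar$; and the two cross terms by the $S$-locality of Remark~\ref{rem:Jacobi-S} combined with the commutators of $Y_1(x_i^+,z_1)$ and $Y_1(x_j^-,z_2)$ with the first-leg twisting operators $\wh{\ell,\ell'}(e_j^-,z_2)$ and $\wh{\ell,\ell'}(e_i^+,z_1)$, which should supply the scalar $f(z_1-z_2)^{q^{-r_ia_{ij}}-q^{r_ia_{ij}}}$. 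The delicate point is the right-hand side: the two delta-function contributions carry $Y_1(E_\ell(h_i),\cdot)$ and $Y_2(E_{\ell'}(h_i),\cdot)$ at different shifted arguments, dressed by the $\wh{\ell,\ell'}(e_i^\pm,\cdot)$ factors, and one must show that together they equal $Y_\Delta\bigl(E_{\ell+\ell'}(\Delta(h_i)),z_2\bigr)$. I expect this to reduce to an exponential-bookkeeping lemma: since $\Delta$ commutes with $\partial$, it commutes with $f_0(2\partial\hbar)$ and with $q^{-r(\ell+\ell')\partial}$, so the exponent $\bigl(-q^{-r(\ell+\ell')\partial}2\hbar f_0(2\partial\hbar)\Delta(h_i)\bigr)_{-1}$ should split, after the $q^{2r\ell\partial}$-shift, into the first-leg exponent of $E_\ell(h_i)$ and the second-leg exponent of $E_{\ell'}(h_i)$, while the remaining scalar discrepancy---including the prefactor $\bigl(f_0(2r_i\hbar+2r(\ell+\ell')\hbar)/f_0(2r_i\hbar-2r(\ell+\ell')\hbar)\bigr)^{1/2}$ and the contribution of the non-commuting twisting operators---is reorganized by \eqref{eq:special-tau-tech1-3}--\eqref{eq:special-tau-tech1-4}. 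With this identification in hand, collecting the four terms, moving all delta-functions to the common argument via $g(z_1)z_1\inv\delta(z_2/z_1)=g(z_2)z_1\inv\delta(z_2/z_1)$, and using $2r(\ell+\ell')\hbar=2r\ell\hbar+2r\ell'\hbar$ should yield \eqref{eq:prop-Y-Delta-3}.
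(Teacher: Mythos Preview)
Your approach is correct but differs from the paper's. You compute the fields $Y_\Delta(\Delta(h_i),z)$ and $Y_\Delta(\Delta(x_i^\pm),z)$ explicitly as operators on the twisted tensor product and then verify each relation by expanding into first-leg, second-leg, and cross contributions; the explicit formulas you write down are correct, and the cross terms are indeed governed by the scalar commutators of $Y_1$-fields with the twisting operators $\wh{\ell,\ell'}(\al_i^\vee,\cdot)$ and $\wh{\ell,\ell'}(e_i^\pm,\cdot)$ coming from \eqref{eq:mod-va-for-vertex-bialg3}. The paper instead uses Remark~\ref{rem:Jacobi-S} to split every relation into two ingredients computed separately: (i) the value of the quantum Yang-Baxter operator $S_\Delta(z)$ on $\Delta(u)\ot\Delta(v)$, which is obtained essentially for free (Lemma~\ref{lem:S-Delta-3}) from the already-proven Proposition~\ref{prop:S-Delta} via $S_\Delta\circ(\Delta\ot\Delta)=(\Delta\ot\Delta)\circ S_{\ell+\ell',\ell+\ell'}$; and (ii) the singular parts $Y_\Delta(\Delta(u),z)^-\Delta(v)$, computed directly in Lemmas~\ref{lem:Y-Delta-1}, \ref{lem:Y-Delta-h-x} and \ref{lem:Y-Delta-x+x-}. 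Both routes require the identity $E_{\ell+\ell'}(\Delta(h_i))=q^{-2r\ell'\partial}E_\ell(h_i)\ot E_{\ell'}(h_i)$, which the paper proves as Lemma~\ref{lem:E-ell+} by a Baker--Campbell--Hausdorff computation of $[a_{-1},b_{-1}]$ in the twisted tensor product; your ``exponential-bookkeeping lemma'' is exactly this statement, and you should be aware that the two exponents do not simply commute, so the prefactor discrepancy is resolved by that BCH commutator rather than by \eqref{eq:special-tau-tech1-3}--\eqref{eq:special-tau-tech1-4} alone. The paper's decomposition is more economical because the $S_\Delta$-half is inherited from Proposition~\ref{prop:S-Delta} and the singular-part half involves only finitely many terms per pair, whereas your direct expansion must track several commutators between fields and twisting operators at once; on the other hand your explicit operator formulas make the structure of $Y_\Delta\circ\Delta$ transparent and could be reused elsewhere.
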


\begin{prop}\label{prop:Y-Delta-serre}
For $i,j\in I$, we have that
\begin{align}
  &\iota_{z_1,z_2}f(z_1-z_2)^{-\delta_{ij}+q^{-r_ia_{ij}}}
  Y_\Delta(\Delta(x_i^\pm),z_1)
  Y_\Delta(\Delta(x_j^\pm),z_2)\label{eq:prop-Y-Delta-locality}\\
  =&\iota_{z_2,z_1}f(z_1-z_2)^{-\delta_{ij}+q^{r_ia_{ij}}}
  Y_\Delta(\Delta(x_j^\pm),z_2)Y_\Delta(\Delta(x_i^\pm),z_1)\nonumber
\end{align}
on $F_{\wh\g,\hbar}^\ell\wh\ot F_{\wh\g,\hbar}^{\ell'}$.
In addition, if $a_{ij}\le 0$, then we have that
\begin{align}\label{eq:prop-Y-Delta-serre}
  \(\Delta(x_i^\pm)\)_0^{1-a_{ij}}\Delta(x_j^\pm)=0\quad\te{on }V_{\hat\g,\hbar}^\ell\wh\ot V_{\hat\g,\hbar}^{\ell'}.
\end{align}
%
Moreover, if $\ell\in\Z_+$, then we have that
\begin{align}\label{eq:prop-Y-Delta-int}
  \(\Delta(x_i^\pm)\)_0^{r(\ell+\ell')/r_i}\Delta(x_i^\pm)=0\,\,\te{on }L_{\hat\g,\hbar}^\ell\wh\ot L_{\hat\g,\hbar}^{\ell'}\quad \te{for }i\in I.
\end{align}
\end{prop}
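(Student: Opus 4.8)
The plan is to verify the three assertions in turn, namely the $S$-locality \eqref{eq:prop-Y-Delta-locality}, the Serre relation \eqref{eq:prop-Y-Delta-serre}, and the integrability relation \eqref{eq:prop-Y-Delta-int}. Throughout I write $Y_\Delta$ for the vertex operator of the twisted tensor product, which by Theorem \ref{thm:twist-tensor} equals $(Y_{\wh\ell}(z)\ot Y_{\wh{\ell'}}(z))S_{\ell,\ell'}^{23}(-z)\sigma$. I would first compute the currents $Y_\Delta(\Delta(x_i^\pm),z)$. Using \eqref{eq:def-Delta-x-} and the vacuum property of $S_{\ell,\ell'}(z)$, each current splits as a \emph{left} part, supported on the first factor and equal to $Y_{\wh\ell}(x_i^\pm,z)\ot 1$, plus a \emph{right} part, obtained from $Y_{\wh{\ell'}}(x_i^\pm,z)$ on the second factor by the twist $S_{\ell,\ell'}(-z)$. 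For $x_i^+$ the second summand of \eqref{eq:def-Delta-x+} carries the extra factor $q^{2r\ell\partial}E_\ell(h_i)$, whose vertex operator is the exponential field built from $h_i$; here I would use Lemmas \ref{lem:S-special-tech-gen2}--\ref{lem:S-special-tech-gen4} to evaluate $S_{\ell,\ell'}$ on $E_\ell(h_i)\ot(\cdot)$ and $(\cdot)\ot x_i^+$.

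I then expand $Y_\Delta(\Delta(x_i^\pm),z_1)Y_\Delta(\Delta(x_j^\pm),z_2)$ into four pieces. The two diagonal pieces (both left, or both right) satisfy \eqref{eq:prop-Y-Delta-locality} directly from the single-factor relation \eqref{eq:local-h-3} of Lemma \ref{lem:M-wh-ell}; the crucial point is that the exponents $-\delta_{ij}+q^{\mp r_ia_{ij}}$ there are \emph{independent of the level}, so the very same scalar $f(z_1-z_2)^{-\delta_{ij}+q^{\mp r_ia_{ij}}}$ works on either factor. The two cross pieces are governed by the twisting scalar \eqref{eq:S-twisted-4}: after multiplying by $f(z_1-z_2)^{-\delta_{ij}+q^{-r_ia_{ij}}}$ and swapping $z_1\leftrightarrow z_2$, the hexagon identities \eqref{eq:multqyb-hex1}--\eqref{eq:multqyb-hex2} together with \eqref{eq:S-twisted-4} force the two sides to agree. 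This proves \eqref{eq:prop-Y-Delta-locality}, and in particular shows that the family $\set{\Delta(x_i^\pm)}{i\in I}$ satisfies the hypothesis \eqref{eq:q-local} of Proposition \ref{prop:normal-ordering-rel-general} and Corollary \ref{coro:normal-ordering-rel-general}.

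With \eqref{eq:prop-Y-Delta-locality} available I would attack \eqref{eq:prop-Y-Delta-serre} and \eqref{eq:prop-Y-Delta-int} through Corollary \ref{coro:normal-ordering-rel-general}, applied to the single $\hbar$-adic nonlocal VA $X_{\hat\g,\hbar}^\ell\wh\ot X_{\hat\g,\hbar}^{\ell'}$ with $\zeta_i=\Delta(x_i^\pm)$: each relation becomes the vanishing, on $\vac\ot\vac$, of an explicit normal-ordered product of $\Delta$-currents evaluated at the prescribed $\hbar$-shifts. The engine is a factorization of such a product into factor-wise normal-ordered products: expanding every current into its left and right parts and applying to $\vac\ot\vac$, the vacuum property of $S_{\ell,\ell'}(z)$ trivializes the twist of any right part against an as-yet-untouched first factor, and the level-independence of the normal-ordering exponents makes the left and right factor-wise products normal-ordered with one and the same scalar. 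The integrability relation \eqref{eq:prop-Y-Delta-int} (an $i=j$ relation, which by Proposition \ref{prop:normal-ordering-rel-general} is equivalent to the vanishing of the normal-ordered product of $r(\ell+\ell')/r_i+1=r\ell/r_i+r\ell'/r_i+1$ copies of $\Delta(x_i^\pm)$ on $\vac\ot\vac$) then follows by a clean pigeonhole count: in every distribution one factor receives strictly more than its single-factor bound, so that factor's integrability relation \eqref{eq:integrable}, via Corollary \ref{coro:normal-ordering-rel-general} on that factor and the transport of $M_i^\pm$ in Lemma \ref{lem:S-ell-ell-prime}, annihilates the term.

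The Serre relation \eqref{eq:prop-Y-Delta-serre} is where I expect the real difficulty, since the count does not close. With $m_{ij}=1-a_{ij}$ copies of $x_i^\pm$ and a single $x_j^\pm$, a distribution may place $p<m_{ij}$ copies of $x_i^\pm$ together with $x_j^\pm$ on one factor and the remaining $m_{ij}-p$ copies alone on the other, and neither factor-wise product vanishes on its own; these mixed terms are individually nonzero and must either assemble into full single-factor Serre generators $Q_{ij}^\pm$ or cancel pairwise. I would carry this out by tracking the twisting scalars with Lemma \ref{lem:S-ell-ell-prime}, which records precisely how $S_{\ell,\ell'}(z)$ transports $Q_{ij}^\pm$ and its accompanying $f(z)^{\cdots}$-factors, and by exploiting the symmetry of the shift pattern (the $x_i^\pm$-arguments are symmetric about the $x_j^\pm$-argument). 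The target is that the total sum lies in the closure of $R_1^\ell\wh\ot F_{\hat\g,\hbar}^{\ell'}+F_{\hat\g,\hbar}^\ell\wh\ot R_1^{\ell'}$, hence vanishes in $V_{\hat\g,\hbar}^\ell\wh\ot V_{\hat\g,\hbar}^{\ell'}$; for $x_i^+$ this cancellation hinges on the exact shape of $E_\ell(h_i)$ in \eqref{eq:def-E-h}. As a guide and an independent check, modulo $\hbar$ all these currents reduce to the classical coproduct, which is a vertex algebra homomorphism and so preserves the Serre relations; combined with the saturation $[R_1^\ell]=R_1^\ell$ and torsion-freeness of the twisted tensor product, this organizes the $\hbar$-corrections and confirms the vanishing.
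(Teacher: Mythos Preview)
Your plan for \eqref{eq:prop-Y-Delta-locality} and for \eqref{eq:prop-Y-Delta-int} is sound and close to the paper's. For locality the paper does not expand into four operator pieces; instead it computes directly that
\[
\Sing_z\,z^{-\delta_{ij}}(z-r_ia_{ij}\hbar)\,Y_\Delta(\Delta(x_i^\pm),z)\Delta(x_j^\pm)=0
\]
(Lemma \ref{lem:Delta-x-pm-q-local}), and then combines this with the formula $S_\Delta(z)(\Delta(x_j^{\epsilon})\ot\Delta(x_i^{\epsilon}))=\Delta(x_j^{\epsilon})\ot\Delta(x_i^{\epsilon})\ot f(z)^{q^{-r_ia_{ij}}-q^{r_ia_{ij}}}$ from Lemma \ref{lem:S-Delta-3} via Remark \ref{rem:Jacobi-S}. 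Your four-term expansion should work too, but be careful: $Y_\Delta(x_i^+\ot\vac,z)$ is \emph{not} $Y_{\wh\ell}(x_i^+,z)\ot 1$ as an operator on a general vector, because the twist $S_{\ell,\ell'}^{23}(-z)$ acts nontrivially on $x_i^+\ot(\text{anything})$. For integrability your pigeonhole is exactly right; the paper packages it as an explicit formula (Propositions \ref{prop:delta-int+} and \ref{prop:delta-int-}), and each summand has a tensor factor $(x_i^\pm)_{-1}^t\vac$ or $(x_i^\pm)_{-1}^{k-t}\vac$ with $t>r\ell/r_i$ or $k-t>r\ell'/r_i$.

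The real gap is the Serre relation. Your proposed mechanism---that the mixed terms either assemble into single-factor Serre generators $Q_{ij}^\pm$ or cancel pairwise, to be detected via Lemma \ref{lem:S-ell-ell-prime}---is not what actually happens, and the classical-limit argument cannot close it (vanishing modulo $\hbar$ together with torsion-freeness only gives membership in $\hbar\cdot(F\wh\ot F)$, not vanishing). What the paper does (Proposition \ref{prop:normal-ordering-J+} and Proposition \ref{prop:delta-serre+}) is compute $(\Delta(x_i^+))_0^k\Delta(x_j^+)$ \emph{exactly}. First, in the expansion over subsets $J\subset\{1,\dots,k+1\}$ indexing which currents go to the second factor, the scalar $f_{i,\dots,i,j,J,\hbar}^+$ evaluated at the prescribed $\hbar$-shifts vanishes unless $J=\emptyset$ or $J=\{t+1,\dots,k+1\}$ for some $0\le t\le k$; so most distributions contribute nothing, not by cancellation but because an $f$-factor is literally zero. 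Second, for the surviving $J$ the scalar equals
\[
(-r_i)^t[t]_{q^{r_i}}!\,(q^{r_i}-q^{-r_i})^t\binom{k}{t}_{q^{r_i}}\binom{k-1+a_{ij}}{t}_{q^{r_i}},
\]
and the term itself is of the form $(\text{exp of }h)\cdot(x_i^+)_{-1}^t\vac\ \ot\ (x_i^+)_0^{\,k-t}x_j^+$ up to shifts. At $k=m_{ij}=1-a_{ij}$ the $q$-binomial $\binom{k-1+a_{ij}}{t}_{q^{r_i}}=\binom{0}{t}_{q^{r_i}}$ kills every $t\ge 1$ term outright, while the $t=0$ term and the $J=\emptyset$ term each carry a single-factor Serre generator $(x_i^+)_0^{m_{ij}}x_j^+$ and hence vanish in $V_{\hat\g,\hbar}^\ell\wh\ot V_{\hat\g,\hbar}^{\ell'}$. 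The analogous computation for $x_i^-$ is Proposition \ref{prop:delta-serre-}. So the missing idea is this $q$-binomial vanishing, which comes from evaluating the twisting scalar at the exact Serre shift pattern; it is not a cancellation or an ideal-membership argument.
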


\begin{prop}\label{prop:Delta-coasso}
For $\ell,\ell',\ell''\in\C$, we have that
\begin{align*}
  (\Delta\ot 1)\circ \Delta(u)=(1\ot \Delta)\circ\Delta(u)\in F_{\hat\g,\hbar}^\ell
  \wh\ot F_{\hat\g,\hbar}^{\ell'} \wh\ot
  F_{\hat\g,\hbar}^{\ell''}
\end{align*}
for $u\in F_{\hat\g,\hbar}^{\ell+\ell'+\ell''}$.
\end{prop}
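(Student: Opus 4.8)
The plan is to realize both $(\Delta\ot 1)\circ\Delta$ and $(1\ot\Delta)\circ\Delta$ as $\hbar$-adic nonlocal VA homomorphisms from $F_{\hat\g,\hbar}^{\ell+\ell'+\ell''}$ into the triple twisted tensor product $F_{\hat\g,\hbar}^{\ell}\wh\ot F_{\hat\g,\hbar}^{\ell'}\wh\ot F_{\hat\g,\hbar}^{\ell''}$, and then to compare the two maps on the generators $h_i,x_i^\pm$ ($i\in I$). By Theorem \ref{thm:coherence} --- applied with $K_1=\{1\}$, $K_2=\{2\}$, $K_3=\{3\}$, whose twisting operators satisfy the hexagon equation by Lemma \ref{lem:n-qva-F} --- this target is one and the same $\hbar$-adic nonlocal VA for the two bracketings; its twisting between $F_{\hat\g,\hbar}^{\ell}\wh\ot F_{\hat\g,\hbar}^{\ell'}$ and $F_{\hat\g,\hbar}^{\ell''}$ is $S_{\{\ell,\ell'\},\ell''}(z)$, and that between $F_{\hat\g,\hbar}^{\ell}$ and $F_{\hat\g,\hbar}^{\ell'}\wh\ot F_{\hat\g,\hbar}^{\ell''}$ is $S_{\ell,\{\ell',\ell''\}}(z)$, in the notation of Proposition \ref{prop:S-Delta}. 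Now $\Delta$ is an $\hbar$-adic nonlocal VA homomorphism (Propositions \ref{prop:Y-Delta-cartan}, \ref{prop:Y-Delta-serre} and \ref{prop:universal-M-tau}); combined with Proposition \ref{prop:S-Delta}, a direct computation then shows that $\Delta\ot 1$ and $1\ot\Delta$ are homomorphisms of the corresponding twisted tensor products, so both composites are homomorphisms into the target above. Since $F_{\hat\g,\hbar}^{\ell+\ell'+\ell''}$ is generated by $\set{h_i,x_i^\pm}{i\in I}$ (Proposition \ref{prop:universal-M-tau}) and an $\hbar$-adic nonlocal VA homomorphism is determined by its values on a generating set, it suffices to prove $(\Delta\ot 1)\Delta(g)=(1\ot\Delta)\Delta(g)$ for $g\in\set{h_i,x_i^\pm}{i\in I}$.

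For $g=h_i$, using $\Delta\circ\partial=\partial\circ\Delta$ together with the facts that a shift operator $q^{t\partial}$ acts on a tensor product of $\hbar$-adic VAs as $q^{t\partial}\ot q^{t\partial}$ and fixes the vacuum, a short direct computation gives
\begin{align*}
  (\Delta\ot 1)\Delta(h_i)&=(1\ot\Delta)\Delta(h_i)\\
  &=q^{-r(\ell'+\ell'')\partial}h_i\ot\vac\ot\vac
  +\vac\ot q^{r(\ell-\ell'')\partial}h_i\ot\vac
  +\vac\ot\vac\ot q^{r(\ell+\ell')\partial}h_i.
\end{align*}
For $g=x_i^-$ the assertion is immediate from \eqref{eq:def-Delta-x-}: both composites equal $x_i^-\ot\vac\ot\vac+\vac\ot x_i^-\ot\vac+\vac\ot\vac\ot x_i^-$.

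The remaining, and essential, case is $g=x_i^+$. Expanding both composites by \eqref{eq:def-Delta-x+}, using $\Delta\circ\partial=\partial\circ\Delta$ and that $\Delta$ is a homomorphism, one finds that the first two summands on each side agree termwise, while the equality of the last summands is equivalent to the single identity
\begin{align}\label{eq:E-factorization-plan}
  \Delta\(E_{\ell+\ell'}(h_i)\)=q^{-2r\ell'\partial}E_\ell(h_i)\ot E_{\ell'}(h_i)\quad\te{in }F_{\hat\g,\hbar}^{\ell}\wh\ot F_{\hat\g,\hbar}^{\ell'}.
\end{align}
To establish \eqref{eq:E-factorization-plan}, I would write $E_{\ell+\ell'}(h_i)$ via \eqref{eq:def-E-h} as an explicit scalar times $\exp\((a)_{-1}\)\vac$ with $a=-q^{-r(\ell+\ell')\partial}2\hbar f_0(2\partial\hbar)h_i$ (which lies in $\hbar F_{\hat\g,\hbar}^{\ell+\ell'}$, so the exponential series converges $\hbar$-adically), apply the homomorphism $\Delta$ term by term so that $\Delta\(\exp((a)_{-1})\vac\)=\exp\((\Delta a)_{-1}\)(\vac\ot\vac)$, compute $\Delta a$ from \eqref{eq:def-Delta-h} and $\Delta\circ\partial=\partial\circ\Delta$, and then evaluate the resulting exponential inside the twisted tensor product. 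In that last step the twisting $S_{\ell,\ell'}(z)$ enters only through its value on a pair of Cartan generators, which by \eqref{eq:S-twisted-1} is a central (scalar) contribution, and the square-root prefactor in \eqref{eq:def-E-h} is arranged precisely so that this scalar factor telescopes, yielding \eqref{eq:E-factorization-plan}.

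I expect this last point --- controlling $\Delta\(E_{\ell+\ell'}(h_i)\)$ inside the twisted tensor product and verifying that the $S_{\ell,\ell'}$-correction is absorbed by the prefactors of \eqref{eq:def-E-h} --- to be the main obstacle. Once \eqref{eq:E-factorization-plan} is in hand, assembling the $x_i^+$ case from it and propagating the $q^{t\partial}$ shifts through the triple tensor product are routine manipulations of the same flavor as the $h_i$ computation above.
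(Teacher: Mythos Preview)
Your proposal is correct and follows essentially the same route as the paper: reduce to generators (using that both composites are $\hbar$-adic nonlocal VA homomorphisms into the same triple twisted tensor product), dispatch $h_i$ and $x_i^-$ directly, and for $x_i^+$ reduce to the single identity \eqref{eq:E-factorization-plan}. That identity is exactly Lemma~\ref{lem:E-ell+} of the paper (noting that $\Delta(E_{\ell+\ell'}(h_i))=E_{\ell+\ell'}(\Delta(h_i))$ once $\Delta$ is a homomorphism), and the computation you outline---Baker--Campbell--Hausdorff together with the scalar correction coming from $S_{\ell,\ell'}(z)$ on a pair of Cartan generators via \eqref{eq:S-twisted-1}---is precisely the paper's proof of that lemma.
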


Assume now that the above four Propositions hold.

\vspace{2mm}

\noindent\emph{Proof of Theorem \ref{thm:coproduct}:}
From \eqref{eq:prop-Y-Delta-1}, \eqref{eq:prop-Y-Delta-2}, \eqref{eq:prop-Y-Delta-3-pre}
and \eqref{eq:prop-Y-Delta-locality}, we get that
\begin{align*}
  \(F_{\hat\g,\hbar}^\ell\wh\ot F_{\hat\g,\hbar}^{\ell'},
    \left\{Y_\Delta\(\Delta(h_i),z\)\right\}_{i\in I},
    \left\{Y_\Delta\(\Delta\(x_i^\pm\),z\)\right\}_{i\in I}\)
  \in\obj \mathcal M_{\wh{\ell+\ell'}}(\g).
\end{align*}
So Proposition \ref{prop:universal-M-tau} provides an $\hbar$-adic nonlocal VA homomorphism
$\Delta$ from $F_{\hat\g,\hbar}^{\ell+\ell'}$ to $F_{\hat\g,\hbar}^\ell\ot F_{\hat\g,\hbar}^{\ell'}$.
From Propositions \ref{prop:S-Delta} and \ref{prop:Delta-coasso}, we complete the proof for $X=F$.
For the case $X=V$ or $L$, the theorem follows immediate from \eqref{eq:prop-Y-Delta-3} and Proposition \ref{prop:Y-Delta-serre}.

\subsection{Some formulas}\label{subsec:some-formulas}

In this subsection, we collect some formulas that will be used later on.
Denote by $Y(\cdot,z)$ the vertex operator map of the VA $F_{\hat\g}^\ell$ (see Definition \ref{de:affVAs}).
From Remark \ref{rem:F-varepsilon=F}, we have that
\begin{align*}
  \(F_{\hat\g}^\ell[[\hbar]],\{Y(h_i,z)\}_{i\in I},\{Y(x_i^\pm,z)\}_{i\in I}\)\in\obj\mathcal M_\varepsilon(\g).
\end{align*}
In view of Proposition \ref{prop:classical-limit}, we have that
\begin{align*}
  F_{\wh\g,\hbar}^\ell=\mathfrak D_{\wh\ell}^\rho(F_\varepsilon(\g,\ell))=\mathfrak D_{\wh\ell}^\rho(F_{\hat\g}^\ell).
\end{align*}
It follows that ($i\in I$):
\begin{align*}
  Y_{\wh\ell}(h_i,z)=Y(h_i,z)+\wh\ell(\al_i^\vee,z),\quad Y_{\wh\ell}(x_i^\pm,z)=Y(x_i^\pm,z)\wh\ell(e_i^\pm,z).
\end{align*}

\begin{lem}\label{lem:com-formulas}
For $i\in I$, we define
\begin{align*}
  &h_i^-(z)=Y(h_i,z)^-
  +\wh\ell(\al^\vee_i,z),\quad
   h_i^+(z)=Y(h_i,z)^+.
\end{align*}
Then we have that
\begin{align}
  &[h_i^-(z_1),h_j^+(z_2)]\label{eq:com-formulas-1}
    =\pd{z_1}\pd{z_2}\log f(z_1-z_2)^{ [r_ia_{ij}]_{q}[r\ell]_{q}q^{r\ell} },\\
  &[h_i^-(z_1),Y_{\wh\ell}(x_j^\pm,z_2)]=\pm Y_{\wh\ell}(x_j^\pm,z_2)
  \pd{z_1}\log f(z_1-z_2)^{[r_ia_{ij}]_{q}q^{r\ell}},\label{eq:com-formulas-2}\\
  &[h_i^+(z_1),Y_{\wh\ell}(x_j^\pm,z_2)]=\mp Y_{\wh\ell}(x_j^\pm,z_2)
  \pd{z_1}\log f(-z_2+z_1)^{[r_ia_{ij}]_{q}q^{-r\ell}}.\label{eq:com-formulas-3}
\end{align}
\end{lem}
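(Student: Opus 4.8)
The plan is to work on the concrete realization $F_{\hat\g,\hbar}^{\ell}=\mathfrak D_{\wh\ell}^\rho(F_{\hat\g}^\ell)$ of Proposition~\ref{prop:classical-limit}, on which $Y_{\wh\ell}(h_i,z)=Y(h_i,z)+\wh\ell(\al^\vee_i,z)$ and $Y_{\wh\ell}(x_i^\pm,z)=Y(x_i^\pm,z)\wh\ell(e_i^\pm,z)$ (as recorded in Subsection~\ref{subsec:some-formulas}), so that $h_i^-(z)+h_i^+(z)=Y_{\wh\ell}(h_i,z)$. Each of the three brackets is then split into a \emph{vertex-algebra part} --- a bracket of modes inside the classical VA $F_{\hat\g}^\ell\in\obj\mathcal M_\varepsilon(\g)$ --- and an \emph{$H$-module part} involving the fields $\wh\ell(\al^\vee_i,z)$ and $\wh\ell(e_i^\pm,z)$.

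For the vertex-algebra part, in $F_{\hat\g}^\ell$ the relations of Definition~\ref{de:affVAs} give $(h_i)_1h_j=r_ia_{ij}r\ell\vac$ and $(h_i)_0x_j^\pm=\pm r_ia_{ij}x_j^\pm$ (with all other products of $h_i$ against $h_j,x_j^\pm$ vanishing), whence $[(h_i)_m,(h_j)_n]=m\,r_ia_{ij}r\ell\,\delta_{m+n,0}$ and $[(h_i)_m,Y(x_j^\pm,z_2)]=\pm r_ia_{ij}z_2^mY(x_j^\pm,z_2)$; extracting the relevant halves gives $[Y(h_i,z_1)^-,Y(h_j,z_2)^+]=r_ia_{ij}r\ell\,\iota_{z_1,z_2}(z_1-z_2)^{-2}$, $[Y(h_i,z_1)^-,Y(x_j^\pm,z_2)]=\pm r_ia_{ij}\,\iota_{z_1,z_2}(z_1-z_2)^{-1}Y(x_j^\pm,z_2)$ and $[Y(h_i,z_1)^+,Y(x_j^\pm,z_2)]=\pm r_ia_{ij}\,\iota_{z_2,z_1}(z_2-z_1)^{-1}Y(x_j^\pm,z_2)$. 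For the $H$-module part I would apply the module-vertex-algebra identity \eqref{eq:mod-va-for-vertex-bialg3}, using that $\al^\vee_i$ is primitive and $e_i^\pm$ group-like and the actions of Proposition~\ref{prop:deform-datum}, to obtain
\begin{align*}
  \bigl[\wh\ell(\al^\vee_i,z_1),Y(h_j,z_2)\bigr]&=\iota_{z_1,z_2}\wh\ell_{ij}(z_1-z_2),\\
  \bigl[\wh\ell(\al^\vee_i,z_1),Y(x_j^\pm,z_2)\bigr]&=\pm\iota_{z_1,z_2}\wh\ell_{ij}^{1,\pm}(z_1-z_2)\,Y(x_j^\pm,z_2),\\
  \bigl[Y(h_i,z_1),\wh\ell(e_j^\pm,z_2)\bigr]&=\pm\iota_{z_2,z_1}\wh\ell_{ji}^{2,\pm}(z_2-z_1)\,\wh\ell(e_j^\pm,z_2),
\end{align*}
together with $[\wh\ell(\al^\vee_i,z_1),\wh\ell(e_j^\pm,z_2)]=0$ from \eqref{eq:F-deform-modVA-struct-com}. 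The key structural observation, which I would spell out, is the directionality of these expansions: $\iota_{z_1,z_2}\wh\ell_{ij}(z_1-z_2)$ has no negative powers of $z_2$, so $\wh\ell(\al^\vee_i,z_1)$ commutes with $Y(h_j,z_2)^-$; and $[Y(h_i,z_1),\wh\ell(e_j^\pm,z_2)]$ has no negative powers of $z_1$, so $Y(h_i,z_1)^-$ commutes with $\wh\ell(e_j^\pm,z_2)$.

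The assembly is then short. For \eqref{eq:com-formulas-1}, write $[h_i^-(z_1),h_j^+(z_2)]=[Y(h_i,z_1)^-,Y(h_j,z_2)^+]+[\wh\ell(\al^\vee_i,z_1),Y(h_j,z_2)^+]$; by directionality the second summand equals $[\wh\ell(\al^\vee_i,z_1),Y(h_j,z_2)]=\iota_{z_1,z_2}\wh\ell_{ij}(z_1-z_2)$, and on substituting \eqref{eq:tau-1} the two $(z_1-z_2)^{-2}$-poles cancel, leaving $-\iota_{z_1,z_2}[r_ia_{ij}]_{q^{\partial_w}}[r\ell]_{q^{\partial_w}}q^{r\ell\partial_w}(\log f)''(w)\big|_{w=z_1-z_2}$, which is $\pd{z_1}\pd{z_2}\log f(z_1-z_2)^{[r_ia_{ij}]_q[r\ell]_qq^{r\ell}}$ by $(\log f)''(w)=-\pd{z_1}\pd{z_2}\log f(z_1-z_2)$ and the convention $\log f(z)^{g(q)}=g(q^{\partial_z})\log f(z)$. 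For \eqref{eq:com-formulas-2}, directionality gives $[h_i^-(z_1),Y_{\wh\ell}(x_j^\pm,z_2)]=\bigl([Y(h_i,z_1)^-,Y(x_j^\pm,z_2)]+[\wh\ell(\al^\vee_i,z_1),Y(x_j^\pm,z_2)]\bigr)\wh\ell(e_j^\pm,z_2)$, and the $r_ia_{ij}(z_1-z_2)^{-1}$-pole from the first term cancels the $-r_ia_{ij}z^{-1}$ in $\wh\ell_{ij}^{1,\pm}$ (see \eqref{eq:tau-2}), giving $\pm Y_{\wh\ell}(x_j^\pm,z_2)[r_ia_{ij}]_{q^{\partial_w}}q^{r\ell\partial_w}(\log f)'(w)\big|_{w=z_1-z_2}=\pm Y_{\wh\ell}(x_j^\pm,z_2)\pd{z_1}\log f(z_1-z_2)^{[r_ia_{ij}]_qq^{r\ell}}$. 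For \eqref{eq:com-formulas-3}, $[h_i^+(z_1),Y_{\wh\ell}(x_j^\pm,z_2)]=[Y(h_i,z_1)^+,Y(x_j^\pm,z_2)]\wh\ell(e_j^\pm,z_2)+Y(x_j^\pm,z_2)[Y(h_i,z_1)^+,\wh\ell(e_j^\pm,z_2)]$; the two poles again cancel (using $\wh\ell_{ji}^{2,\pm}=\wh\ell_{ij}^{1,\pm}$), and $\pm Y_{\wh\ell}(x_j^\pm,z_2)[r_ia_{ij}]_{q^{\partial_w}}q^{r\ell\partial_w}(\log f)'(w)\big|_{w=z_2-z_1}=\mp Y_{\wh\ell}(x_j^\pm,z_2)\pd{z_1}\log f(-z_2+z_1)^{[r_ia_{ij}]_qq^{-r\ell}}$ because $(\log f)'$ is odd and the substitution $w=z_2-z_1$ sends $\partial_{z_1}$ to $-\partial_w$ (hence $q^{r\ell\partial_w}$ to $q^{-r\ell\partial_{z_1}}$, with $[r_ia_{ij}]_{q^{\partial_w}}$ unchanged by the $q\mapsto q\inv$ symmetry of $[\,\cdot\,]_q$).

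The only genuinely delicate part of the argument is the bookkeeping of the expansion operators $\iota_{z_1,z_2}$ versus $\iota_{z_2,z_1}$ and of signs --- above all the careful justification of the two directionality statements, which is exactly what allows the half-brackets to be evaluated independently. Once that and the expansion conventions attached to \eqref{eq:mod-va-for-vertex-bialg3} and to the exponent notation $P(z)^{g(q)}$ are fixed, the rest reduces to a direct substitution of \eqref{eq:tau-1} and \eqref{eq:tau-2}.
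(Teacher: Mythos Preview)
Your proposal is correct and follows essentially the same route as the paper's proof: both split each bracket into the classical-VA piece coming from $Y(h_i,z)$, $Y(x_j^\pm,z)$ in $F_{\hat\g}^\ell$ and the $H$-module piece coming from $\wh\ell(\al_i^\vee,z)$, $\wh\ell(e_j^\pm,z)$, use the directionality of the $\iota$-expansions to isolate the relevant half-brackets, and then combine via the explicit formulas \eqref{eq:tau-1}--\eqref{eq:tau-2} so that the rational poles cancel. Your write-up is in fact a bit more explicit than the paper's about the final packaging into the $f(z)^{g(q)}$ notation and about the sign/parity bookkeeping in \eqref{eq:com-formulas-3}.
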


\begin{proof}
From \eqref{eq:fqva-rel1} and \eqref{eq:fqva-rel2}, we have that
\begin{align*}
  &[Y(h_i,z_1),Y(h_j,z_2)]={r_j}{a_{ij}r\ell} \pd{z_2}z_1\inv\delta\(\frac{z_2}{z_1}\),\\
  &[Y(h_i,z_1),Y(x_j^\pm,z_2)]=\pm Y(x_j^\pm,z_2)r_ia_{ij} z\inv\delta\(\frac{z_2}{z_1}\).
\end{align*}
Then we have that
\begin{align}
  &[Y(h_i,z_1)^-,Y(h_j,z_2)^+]={r_j}{a_{ij}r\ell} (z_1-z_2)^{-2},\label{eq:com-formulas-temp1}\\
  &[Y(h_i,z_1)^-,Y(x_j^\pm,z_2)]=\pm Y(x_j^\pm,z_2)r_ia_{ij}(z_1-z_2)\inv,\label{eq:com-formulas-temp2}\\
  &[Y(h_i,z_1)^+,Y(x_j^\pm,z_2)]=\mp Y(x_j^\pm,z_2)r_ia_{ij}(z_2-z_1)\inv.\label{eq:com-formulas-temp3}
\end{align}
From the definition of $\wh\ell$ (see Proposition \ref{prop:deform-datum}), we have that
\begin{align*}
  [\wh\ell(\al^\vee_i,z_1),Y(h_j,z_2)]=&Y(\wh\ell_i(z_1-z_2)h_j,z_2)=\wh\ell_{ij}(z_1-z_2),\\
  [\wh\ell(e_i^\pm,z_1),Y(h_j,z_2)]=&Y(\wh\ell_i^\pm(z_1-z_2)h_j,z_2)\wh\ell_i^\pm(z_1)-Y(h_j,z_2)\wh\ell_i^\pm(z_1)\\
  =&\mp \wh\ell(e_i^\pm,z_1)\wh\ell_{ij}^{2,+}(z_1-z_2).
\end{align*}
It follows that
\begin{align*}
  &[\wh\ell(\al^\vee_i,z_1),Y(h_j,z_2)^-]=0,&&
  [\wh\ell(\al^\vee_i,z_1),Y(h_j,z_2)^+]=\wh\ell_{ij}(z_1-z_2),\\
  &[\wh\ell(e_i^\pm,z_1),Y(h_j,z_2)^-]=0,
  &&[\wh\ell(e_i^\pm,z_1),Y(h_j,z_2)^+]=\mp\wh\ell(e_i^\pm,z_1)\wh\ell_{ij}^{2,+}(z_1-z_2).
\end{align*}
Combining these with equations \eqref{eq:com-formulas-temp1}, \eqref{eq:com-formulas-temp2} and \eqref{eq:com-formulas-temp3},
we complete the proof of lemma.
\end{proof}

\begin{lem}\label{lem:com-formulas2}
For $i\in I$, we define
\begin{align}
  &\wt h_i^\pm(z)=-q^{-r\ell\pd{z}}2\hbar f_0\(2\hbar\pd{z}\)h_i^\pm(z).
\end{align}
Then we have that
\begin{align}
  &[\wt h_i^-(z_1),h_j^+(z_2)]\label{eq:com-formulas-4}
  =\pd{z_1}\log f(z_1-z_2)^{[r_ja_{ji}]_{q}[r\ell]_q \(q-q\inv\)},\\
  &[h_i^-(z_1),\wt h_j^+(z_2)]\label{eq:com-formulas-5}
  =\pd{z_1}\log f(z_1-z_2)^{[r_ia_{ij}]_{q} \(q^{r\ell}-q^{-r\ell}\)q^{ 2r\ell}},\\
  &[\wt h_i^-(z_1),\wt h_j^+(z_2)]\label{eq:com-formulas-6}
  =\log f(z_1-z_2)^{( q^{2r\ell}-1 )( q^{-r_ia_{ij}}-q^{r_ia_{ij}} )},\\
  &[\wt h_i^-(z_1),Y_{\wh\ell}(x_j^\pm,z_2)]\label{eq:com-formulas-7}
  =\pm Y_{\wh\ell}(x_j^\pm,z_2)
  \log f(z_1-z_2)^{q^{-r_ia_{ij}}-q^{r_ia_{ij}}},\\
  &[\wt h_i^+(z_1),Y_{\wh\ell}(x_j^\pm,z_2)]\label{eq:com-formulas-8}
  =\pm Y_{\wh\ell}(x_j^\pm,z_2)\log f(z_2-z_1)^{\( q^{-r_ia_{ij}}-q^{r_ia_{ij}} \)q^{2r\ell}}.
\end{align}
\end{lem}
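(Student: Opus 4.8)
The plan is to deduce \eqref{eq:com-formulas-4}--\eqref{eq:com-formulas-8} from Lemma \ref{lem:com-formulas} by applying, in the appropriate variable(s), the $\C[[\hbar]]$-linear operator
\[
  \Theta_\ell(\partial):=-q^{-r\ell\partial}\,2\hbar f_0\(2\hbar\partial\),
\]
which involves no negative powers of $\partial$ (since $2\hbar f_0(2\hbar\partial)=\frac{q^{\partial}-q^{-\partial}}{\partial}$ is a power series in $\hbar\partial$ starting with $2\hbar$, $q^{\partial}$ being the shift operator already in use in the notation $P(z)^{g(q)}$; in particular the exponents $[r_ia_{ij}]_q$, $[r\ell]_q$, $q^{r\ell}$, $q-q\inv$ appearing in Lemma \ref{lem:com-formulas} and in the statement are read through that notation as operators in $q^{\partial}$). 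By definition $\wt h_i^\pm(z)=\Theta_\ell(\partial_z)h_i^\pm(z)$, and since $\Theta_\ell(\partial_z)$ acts only through the formal variable $z$ it commutes with forming commutators against operators in other variables; hence
\begin{align*}
  [\wt h_i^-(z_1),h_j^+(z_2)]&=\Theta_\ell(\partial_{z_1})[h_i^-(z_1),h_j^+(z_2)],\\
  [h_i^-(z_1),\wt h_j^+(z_2)]&=\Theta_\ell(\partial_{z_2})[h_i^-(z_1),h_j^+(z_2)],\\
  [\wt h_i^-(z_1),\wt h_j^+(z_2)]&=\Theta_\ell(\partial_{z_1})\Theta_\ell(\partial_{z_2})[h_i^-(z_1),h_j^+(z_2)],\\
  [\wt h_i^\pm(z_1),Y_{\wh\ell}(x_j^\pm,z_2)]&=\Theta_\ell(\partial_{z_1})[h_i^\pm(z_1),Y_{\wh\ell}(x_j^\pm,z_2)].
\end{align*}
Thus it remains to substitute the right-hand sides of \eqref{eq:com-formulas-1}--\eqref{eq:com-formulas-3} and simplify.

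The simplification rests on the elementary identity $2\hbar f_0(2\hbar\partial)\,\partial=q^{\partial}-q^{-\partial}$, which follows at once from $f(z)=e^{z/2}-e^{-z/2}$ and $f_0(z)=f(z)/z$, so that $\Theta_\ell(\partial)\,\partial=-q^{-r\ell\partial}(q^{\partial}-q^{-\partial})$. The key point is that the one (resp.\ two) derivatives appearing in \eqref{eq:com-formulas-1}--\eqref{eq:com-formulas-3} are exactly absorbed by this identity, and the residual $\log f$ with operator exponent is an honest element because the leftover shift operators annihilate the $\log(z_1-z_2)$ singular term. Beyond that one only needs: the telescoping identity $(q^{\partial}-q^{-\partial})[m]_{q^{\partial}}=q^{m\partial}-q^{-m\partial}$ for any $m$ (reading $[m]_{q^{\partial}}$ as a power series in $\hbar\partial$), together with $[r\ell]_q(q-q\inv)=q^{r\ell}-q^{-r\ell}$; the relation $\partial_{z_1}\log f(z_1-z_2)=-\partial_{z_2}\log f(z_1-z_2)$ and the fact that $z_1$- and $z_2$-shifts act oppositely on functions of $z_1-z_2$; the cancellation of the shift $q^{-r\ell\partial}$ coming from $\Theta_\ell$ against the $q^{\pm r\ell}$ inside the exponents of \eqref{eq:com-formulas-1}--\eqref{eq:com-formulas-3}; and the symmetrization $r_ia_{ij}=r_ja_{ji}$. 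Feeding these in, each of \eqref{eq:com-formulas-4}--\eqref{eq:com-formulas-8} falls out after a short manipulation; for \eqref{eq:com-formulas-6} in particular, applying $\Theta_\ell$ in both variables makes the two copies of $q^{-r\ell\partial}$ cancel entirely (they shift $z_1$ and $z_2$ oppositely and $\log f(z_1-z_2)$ depends only on $z_1-z_2$), and $(q^{\partial_{z_1}}-q^{-\partial_{z_1}})(q^{\partial_{z_2}}-q^{-\partial_{z_2}})[r_ia_{ij}]_{q^{\partial}}[r\ell]_{q^{\partial}}q^{r\ell\partial}$ collapses to the exponent $(q^{2r\ell}-1)(q^{-r_ia_{ij}}-q^{r_ia_{ij}})$.

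The computation is essentially bookkeeping. I expect the only delicate point to be tracking the shift operators carefully: keeping each $q^{c\partial}$ attached to the right variable (with $q^{c\partial_{z_1}}=q^{-c\partial_{z_2}}$ on $\log f(z_1-z_2)$), the sign introduced by $\partial_{z_1}=-\partial_{z_2}$, and the expansion conventions behind the appearance of $\log f(z_1-z_2)$ in \eqref{eq:com-formulas-2}/\eqref{eq:com-formulas-7} but $\log f(z_2-z_1)$ in \eqref{eq:com-formulas-3}/\eqref{eq:com-formulas-8} (these differ by the constant $\log(-1)$, which is killed by $\partial$ and by $q^{m\partial}-q^{-m\partial}$, so they may be used interchangeably here). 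Once the identity $2\hbar f_0(2\hbar\partial)\partial=q^{\partial}-q^{-\partial}$ and the telescoping identity above are recorded — these are the direct analogues, in the present setting, of the computations behind Lemma \ref{lem:special-tau-tech1} and of \cite[Lemmas 6.19--6.21]{K-Quantum-aff-va} — no further conceptual obstacle remains.
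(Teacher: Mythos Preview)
Your proposal is correct and follows essentially the same approach as the paper's own proof: apply the operator $-q^{-r\ell\partial_z}2\hbar f_0(2\hbar\partial_z)$ to the commutators of Lemma~\ref{lem:com-formulas}, use the identity $2\hbar f_0(2\hbar\partial)\partial=q^{\partial}-q^{-\partial}$ to absorb the derivatives, and simplify the resulting shift operators in the exponent. The paper carries this out explicitly only for \eqref{eq:com-formulas-4} and declares the remaining cases similar; your write-up is in fact more thorough in flagging the bookkeeping issues (the $z_1$/$z_2$ sign, the $\log(z_1-z_2)$ cancellation, the symmetry $r_ia_{ij}=r_ja_{ji}$).
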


\begin{proof}
From \eqref{eq:com-formulas-1}, we have that
\begin{align*}
  &[\wt h_i^-(z_1),h_j^+(z_2)]\\
  =&-q^{-r\ell\pd{z_1}}2\hbar f_0\(2\hbar\pd{z_1}\)
    \pd{z_1}\pd{z_2}\log f(z_1-z_2)^{[r_ja_{ji}]_{q}[r\ell]_{q}q^{r\ell}}\\
  =&-2\hbar f_0\(2\hbar\pd{z_1}\)
    \pd{z_1}\pd{z_2}\log f(z_1-z_2)^{[r_ja_{ji}]_{q}[r\ell]_{q}}\\
  =&-\(q^{\pd{z_1}}-q^{-\pd{z_1}}\)
  \pd{z_2}\log f(z_1-z_2)^{[r_ja_{ji}]_{q}[r\ell]_{q}}\\
  =&\pd{z_1}\log f(z_1-z_2)^{[r_ja_{ji}]_{q}[r\ell]_{q}(q-q\inv)}.
\end{align*}
This proves \eqref{eq:com-formulas-4}. The proofs of the rest equations are similar.
\end{proof}

The following result is an immediate consequence of Lemma \ref{lem:com-formulas2}.

\begin{coro}
For $i,j\in I$, we have that
\begin{align}
  &\left[\exp\(\wt h_i^-(z_1)\),h_j^+(z_2)\right]
  =\exp\(\wt h_i^-(z_1)\) \label{eq:com-formulas-9}
  \pd{z_1}\log f(z_1-z_2)^{[r_ja_{ji}]_{q}[r\ell]_q(q-q\inv)},\\
  &\left[h_i^-(z_1),\exp\(\wt h_j^+(z_2)\)\right]
  =\exp\(\wt h_j^+(z_2)\) \label{eq:com-formulas-10}
  \pd{z_1}\log f(z_1-z_2)^{[r_ia_{ij}]_{q}[r\ell]_qq^{2r\ell} },\\
  &\exp\(\wt h_i^-(z_1)\)\exp\(\wt h_j^+(z_2)\)
  =\exp\(\wt h_j^+(z_2)\)\exp\(\wt h_i^-(z_1)\)
  \label{eq:com-formulas-11}\\
  &\qquad\times f(z_1-z_2)^{(q^{r_ia_{ij}}-q^{-r_ia_{ij}})(1-q^{2r\ell})},\nonumber
    \\
  &\exp\(\wt h_i^-(z_1)\)Y_{\wh\ell}(x_j^\pm,z_2)\label{eq:com-formulas-12}
  =Y_{\wh\ell}(x_j^\pm,z_2)\exp\(\wt h_i^-(z_1)\)
  f(z_1-z_2)^{q^{\mp r_ia_{ij}}-q^{\pm r_ia_{ij}}},
  \\
  &\exp\(\wt h_i^+(z_1)\)Y_{\wh\ell}(x_j^\pm,z_2)\label{eq:com-formulas-13}
  =Y_{\wh\ell}(x_j^\pm,z_2)\exp\(\wt h_i^+(z_1)\)
  f(z_1-z_2)^{q^{2r\ell}(q^{\mp r_ia_{ij}}-q^{\pm r_ia_{ij}})}.
\end{align}
\end{coro}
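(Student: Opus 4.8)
The plan is to deduce all five identities \eqref{eq:com-formulas-9}--\eqref{eq:com-formulas-13} directly from Lemma \ref{lem:com-formulas2}, using three elementary facts about formal exponentials in an associative algebra, each valid as soon as the relevant double commutator vanishes: (i) if $[A,B]$ is central, then $[e^{A},B]=e^{A}[A,B]$ and $[A,e^{B}]=e^{B}[A,B]$; (ii) if $[A,B]=cB$ with $c$ central, then $e^{A}Be^{-A}=e^{c}B$, so $e^{A}B=B\,e^{A}\,e^{c}$; and (iii) if $[A,B]$ is central, then $e^{A}e^{B}=e^{B}e^{A}\,e^{[A,B]}$ (Baker--Campbell--Hausdorff, since $[A,[A,B]]=0$). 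In each instance the double commutator really does vanish: Lemma \ref{lem:com-formulas2} exhibits the bracket in question as a central quantity --- a Laurent series in $z_{1}-z_{2}$ with coefficients in $\C[[\hbar]]$, multiplied in the $x$-cases by $Y_{\wh\ell}(x_{j}^{\pm},z_{2})$, which commutes with that scalar.

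Two remarks ensure the manipulations are legitimate. First, by its definition in Lemma \ref{lem:com-formulas2} the operator $\wt h_{i}^{\pm}(z)$ carries an explicit factor $\hbar$ (from the leading term $2\hbar$ of $2\hbar f_{0}(2\hbar\partial_{z})$), so $\exp(\wt h_{i}^{\pm}(z))$ is well defined, its $\hbar^{n}$-coefficient being a finite sum. Second, every exponent $g(q)$ occurring on the right of \eqref{eq:com-formulas-4}--\eqref{eq:com-formulas-8} vanishes at $q=1$, so the associated central scalars converge $\hbar$-adically, and $\exp\bigl(\log f(z_{1}-z_{2})^{g(q)}\bigr)=f(z_{1}-z_{2})^{g(q)}$ may be used freely.

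It then remains only to match each identity to the right instance. Equation \eqref{eq:com-formulas-9} is (i) with $A=\wt h_{i}^{-}(z_{1})$, $B=h_{j}^{+}(z_{2})$, and $[A,B]$ from \eqref{eq:com-formulas-4}; \eqref{eq:com-formulas-10} is the second half of (i) with $A=h_{i}^{-}(z_{1})$, $B=\wt h_{j}^{+}(z_{2})$, and \eqref{eq:com-formulas-5}; \eqref{eq:com-formulas-11} is (iii) with $A=\wt h_{i}^{-}(z_{1})$, $B=\wt h_{j}^{+}(z_{2})$, and \eqref{eq:com-formulas-6}, after converting $\exp$ of the central term into the displayed power of $f$ and using $q^{r_{i}a_{ij}}-q^{-r_{i}a_{ij}}=-(q^{-r_{i}a_{ij}}-q^{r_{i}a_{ij}})$; and \eqref{eq:com-formulas-12}, \eqref{eq:com-formulas-13} are (ii) with $A=\wt h_{i}^{-}(z_{1})$ (resp.\ $A=\wt h_{i}^{+}(z_{1})$), $B=Y_{\wh\ell}(x_{j}^{\pm},z_{2})$, and $[A,B]$ from \eqref{eq:com-formulas-7} (resp.\ \eqref{eq:com-formulas-8}), the factor $e^{c}$ becoming $f(z_{1}-z_{2})^{q^{\mp r_{i}a_{ij}}-q^{\pm r_{i}a_{ij}}}$ (resp.\ $f(z_{1}-z_{2})^{q^{2r\ell}(q^{\mp r_{i}a_{ij}}-q^{\pm r_{i}a_{ij}})}$). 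Since nothing beyond Lemma \ref{lem:com-formulas2} enters, there is no genuine obstacle here; the only care needed is clerical --- verifying that each bracket supplied by the lemma is indeed central, so that (i)--(iii) apply, and keeping the signs and $q$-exponents straight across the $\pm$ dichotomy.
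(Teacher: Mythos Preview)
Your proposal is correct and matches the paper's approach: the paper states only that the corollary ``is an immediate consequence of Lemma \ref{lem:com-formulas2},'' and you have spelled out precisely the standard exponential--commutator identities (i)--(iii) that make it so. Your added remarks on $\hbar$-adic convergence of $\exp(\wt h_i^\pm(z))$ and of the scalar exponents are appropriate justifications the paper leaves implicit.
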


\begin{lem}\label{lem:exp-cal}
Let $W$ be a topologically free $\C[[\hbar]]$-module and let
\begin{align*}
  &\al(z)\in \Hom(W,W\wh\ot\C[z,z\inv][[\hbar]]),\quad \beta(z)\in\Hom(W,W[[z]]),\quad\xi(z)\in \E_\hbar(W)
\end{align*}
satisfying the condition that
\begin{align*}
  &[\al(z_1),\al(z_2)]=0=[\beta(z_1),\beta(z_2)],\quad
  [\al(z_1),\beta(z_2)]=\iota_{z_1,z_2}\gamma(z_2-z_1),\\
  &[\al(z_1),\xi(z_2)]=\xi(z_2) \iota_{z_1,z_2}\gamma_1(z_1-z_2),
\end{align*}
for some $\gamma(z)\in\C(z)[[\hbar]]$ and $\gamma_1(z)\in \C((z))[[\hbar]]$.
Assume that $$\al(z),\beta(z)\in\hbar(\End W)[[z,z\inv]].$$ Then
\begin{align*}
  &\exp\((\al(z)+\beta(z))_{-1}\)\xi(z)\\
  =&\exp\(\beta(z)\)\xi(z)\exp\(\al(z)\)
   \exp\(\half \Res_zz\inv\gamma(-z)+z\inv\gamma_1(z)\).
\end{align*}
\end{lem}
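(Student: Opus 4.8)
The plan is to prove this as a Baker--Campbell--Hausdorff / Wick ``contraction'' identity. Every exponential occurring is well defined $\hbar$-adically: since $\al(z),\beta(z)\in\hbar(\End W)[[z,z\inv]]$, modulo any fixed power of $\hbar$ only finitely many terms of each exponential series survive, so there are no convergence issues, and one may manipulate the exponentials freely provided the relevant (central) commutators are tracked.

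First I would put the left-hand side into ``ordered'' form. Since $[\al(z_1),\al(z_2)]=[\beta(z_1),\beta(z_2)]=0$ and $[\al(z_1),\beta(z_2)]=\iota_{z_1,z_2}\gamma(z_2-z_1)$ is central, iterating the operator $(\al(z)+\beta(z))_{-1}$ and summing the exponential series expresses $\exp\big((\al(z)+\beta(z))_{-1}\big)\xi(z)$ as a normal-ordered exponential; applying the disentangling formula $e^{A+B}=e^{B}e^{A}e^{\half[A,B]}$, valid whenever $[A,B]$ is central, moves $\exp(\beta(z))$ to the left of $\xi(z)$ and $\exp(\al(z))$ to the right, at the cost of one scalar ``self-contraction'' factor. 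That scalar is computed by specializing the two-point relation for $[\al,\beta]$ to the diagonal $z_1=z_2=z$: the piece that survives the $\iota_{z_1,z_2}$-expansion is the constant term of $\gamma(-z)$, i.e.\ $\Res_z z\inv\gamma(-z)$, and the exponential symmetry factor supplies the $\half$, yielding $\exp\big(\half\Res_z z\inv\gamma(-z)\big)$.

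Next I would move $\exp(\al(z))$ past $\xi(z)$. The hypothesis $[\al(z_1),\xi(z_2)]=\xi(z_2)\,\iota_{z_1,z_2}\gamma_1(z_1-z_2)$ says that $\al$ and $\xi$ commute up to a scalar, so conjugating $\exp(\al(z))$ through $\xi(z)$ costs a scalar factor; passing this two-point relation to coincident points and reading off the surviving term gives $\exp\big(z\inv\gamma_1(z)\big)$. Combining the two steps and multiplying the two scalar factors produces exactly $\exp(\beta(z))\,\xi(z)\,\exp(\al(z))\,\exp\big(\half\Res_z z\inv\gamma(-z)+z\inv\gamma_1(z)\big)$, as claimed.

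The main obstacle will be the two diagonal-specialization computations: one must check that pushing the two-point relations for $[\al,\beta]$ and for $[\al,\xi]$ through the exponentials and restricting to coincident points produces precisely $\half\Res_z z\inv\gamma(-z)$ and $z\inv\gamma_1(z)$. In particular $\al$ contracts against $\beta$ and against $\xi$ but never against itself (because $[\al(z_1),\al(z_2)]=0$, which is why no $\al$-self-contraction appears), the symmetry factor is $\half$ for the single $\al$--$\beta$ contraction inside the exponential but $1$ for the $\al$--$\xi$ contraction, and the $\iota_{z_1,z_2}$-prescriptions must be tracked carefully so that the correct coefficients are extracted. The remaining manipulations are routine exponential algebra.
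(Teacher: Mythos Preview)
Your proposal is correct and takes essentially the same approach as the paper: apply Baker--Campbell--Hausdorff to split $\exp((\al(z)+\beta(z))_{-1})$ (using that $[\al(z)_{-1},\beta(z)_{-1}]$ is central), then pass $\exp(\al(z)_{-1})$ through $\xi(z)$ via the given commutation relation. The paper makes the second step concrete with the inductive identity $\al(z)_{-1}\big(\xi(z)\al(z)^m\big)=\xi(z)\al(z)^{m+1}+\xi(z)\al(z)^m\,\Res_z z^{-1}\gamma_1(z)$, which is exactly your ``diagonal specialization'' computation.
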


\begin{proof}
Set $U=\{\al(z),\beta(z),\xi(z)\}$. Then $U$ generates an $\hbar$-adic nonlocal VA $\<U\>$.
From Baker-Campbell-Hausdorff formula we obtain that
\begin{align*}
  &\exp\((\al(z)+\beta(z))_{-1}\)
  =\exp\(\beta(z)_{-1}\)\exp\(\al(z)_{-1}\)\exp\(\half \Res_zz\inv\gamma(-z)\).
\end{align*}
Note that
\begin{align*}
  \al(z_1)\xi(z_2)\al(z_2)^m=\xi(z_2)\al(z_1)\al(z_2)^m+\xi(z_2)\al(z_2)^m\iota_{z_1,z_2}\gamma_1(z_1-z_2).
\end{align*}
Then
\begin{align*}
  \al(z)_{-1}\xi(z)\al(z)^m=\xi(z)\al(z)^{m+1}+\xi(z)\al(z)^m\Res_zz\inv\gamma_1(z).
\end{align*}
So
\begin{align*}
  \exp\(\al(z)_{-1}\)\xi(z)=\xi(z)\exp\(\al(z)\)\exp\(\Res_zz\inv\gamma_1(z)\).
\end{align*}
Therefore,
\begin{align*}
  &\exp\((\al(z)+\beta(z))_{-1}\)\xi(z)\\
  =&\exp\(\beta(z)\)\xi(z)\exp\(\al(z)\)
  \exp\(\half \Res_zz\inv\gamma(-z)+z\inv\gamma_1(z)\).
\end{align*}
We complete the proof.
\end{proof}

\begin{prop}\label{prop:Y-E}
For each $i\in I$, we have that
\begin{align*}
  Y_{\wh\ell}(E_\ell(h_i),z)=\exp(\wt h_i^+(z))\exp(\wt h_i^-(z)).
\end{align*}
\end{prop}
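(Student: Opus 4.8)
The plan is to reduce the claim, via the iterate formula, to a normal-ordering identity for a single Heisenberg-type generating function, and then to invoke Lemma~\ref{lem:exp-cal}. First rewrite $E_\ell(h_i)=c_i\exp\((a_i)_{-1}\)\vac$, where $c_i=\(f_0(2r_i\hbar+2r\ell\hbar)/f_0(2r_i\hbar-2r\ell\hbar)\)^{\half}\in\C[[\hbar]]$ and $a_i=-q^{-r\ell\partial}2\hbar f_0(2\partial\hbar)h_i\in F_{\hat\g,\hbar}^{\ell}$. Since $2\hbar f_0(2\hbar\partial)=\(q^{\partial}-q^{-\partial}\)/\partial$ has leading term $2\hbar$, we have $a_i\in\hbar F_{\hat\g,\hbar}^{\ell}$, so all exponentials occurring below converge $\hbar$-adically; likewise the constant-coefficient operator $q^{-r\ell\pd{z}}2\hbar f_0(2\hbar\pd{z})$ maps $\C((z))[[\hbar]]$ into $\hbar\C((z))[[\hbar]]$ and preserves the decomposition of a series into its regular and singular parts.

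The first step is the iterate formula: the coefficient of $z_0^{0}$ in $Y_\E(Y_{\wh\ell}(u,z),z_0)Y_{\wh\ell}(v,z)=Y_{\wh\ell}(Y_{\wh\ell}(u,z_0)v,z)$ gives $Y_{\wh\ell}(u_{-1}v,z)=Y_{\wh\ell}(u,z)_{-1}Y_{\wh\ell}(v,z)$, so together with $Y_{\wh\ell}(\vac,z)=\id$ one gets $Y_{\wh\ell}(E_\ell(h_i),z)=c_i\exp\(\mathfrak a_i(z)_{-1}\)\id$ with $\mathfrak a_i(z):=Y_{\wh\ell}(a_i,z)$. Pushing the operator $-q^{-r\ell\partial}2\hbar f_0(2\partial\hbar)$ through $Y_{\wh\ell}$ by means of $Y_{\wh\ell}(\partial u,z)=\pd{z}Y_{\wh\ell}(u,z)$, and using the splitting $Y_{\wh\ell}(h_i,z)=h_i^+(z)+h_i^-(z)$ from Lemma~\ref{lem:com-formulas}, we obtain $\mathfrak a_i(z)=\wt h_i^+(z)+\wt h_i^-(z)$.

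The second step is to apply Lemma~\ref{lem:exp-cal} with $\xi(z)=\id$ (so $\gamma_1=0$), taking for $\beta(z)$ the regular part of $\mathfrak a_i(z)$ and for $\al(z)$ its singular part. The hypotheses $[\al(z_1),\al(z_2)]=0=[\beta(z_1),\beta(z_2)]$ reduce to $[\wt h_i^{\pm}(z_1),\wt h_i^{\pm}(z_2)]=0$, which follows from locality of the Heisenberg field $h_i$ in $F_{\hat\g}^{\ell}$ and from \eqref{eq:F-deform-modVA-struct-com}; the contraction function $\gamma$ with $[\al(z_1),\beta(z_2)]=\iota_{z_1,z_2}\gamma(z_2-z_1)$ is extracted from \eqref{eq:com-formulas-1} and from \eqref{eq:com-formulas-6} specialized to $i=j$, $a_{ii}=2$. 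Lemma~\ref{lem:exp-cal} then rewrites $\exp\(\mathfrak a_i(z)_{-1}\)\id$ as $\exp(\wt h_i^+(z))\exp(\wt h_i^-(z))$ times a scalar, the latter being the product of the self-contraction factor $\exp\(\half\Res_z z\inv\gamma(-z)\)$ and the Baker--Campbell--Hausdorff factor produced when the regular part of $\wt h_i^-(z)$ is commuted back past $\wt h_i^+(z)$ (their commutator is central, while $[\wt h_i^-(z_1),\wt h_i^-(z_2)]=0$, so the two parts of $\wt h_i^-(z)$ recombine into $\exp(\wt h_i^-(z))$). It then remains to check that the product of this scalar with $c_i$ equals $1$; this is a direct computation with $f$ and $f_0$, in which $\Res_z z\inv\gamma(-z)$ yields precisely $\log\(f_0(2r_i\hbar-2r\ell\hbar)/f_0(2r_i\hbar+2r\ell\hbar)\)$.

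I expect the main difficulty to be exactly this final reconciliation of scalar factors. It is delicate because Lemma~\ref{lem:com-formulas} absorbs the entire field $\wh\ell(\al_i^{\vee},z)$ --- which has a nonzero regular part --- into $h_i^-(z)$, so $\wt h_i^-(z)$ is not purely singular; one must therefore split $\mathfrak a_i(z)$ into genuinely regular and singular pieces before applying Lemma~\ref{lem:exp-cal}, keep careful track of the additional central terms this produces, and confirm that they cancel against the normalization constant $c_i$ built into $E_\ell(h_i)$. All remaining manipulations are formal identities of generating functions.
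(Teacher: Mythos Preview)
Your approach is correct and essentially follows the paper's: reduce via the iterate formula to $c_i\exp\bigl((\wt h_i^+(z)+\wt h_i^-(z))_{-1}\bigr)\id$ and invoke Lemma~\ref{lem:exp-cal}. The difference is that the paper applies the lemma directly with $\al(z)=\wt h_i^-(z)$ and $\be(z)=\wt h_i^+(z)$, whereas you propose to first split $\mathfrak a_i(z)$ into its genuine singular and regular parts and then recombine via an additional BCH step. Your caution about $\wt h_i^-(z)$ carrying a regular piece (coming from $\wh\ell(\al_i^\vee,z)$) is technically accurate, but the detour is unnecessary: the proof of Lemma~\ref{lem:exp-cal} uses only that $[\al,\al]=[\be,\be]=0$, that $[\al(z_1),\be(z_2)]$ is a scalar function of $z_1-z_2$, and---when $\xi=\id$ and $\gamma_1=0$---that $\al,\be\in\hbar\,\E_\hbar(W)$ so that $\exp(\al(z))$ and $\exp(\be(z))$ converge $\hbar$-adically. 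All of these hold for $\wt h_i^\pm$ directly, by \eqref{eq:com-formulas-6} and the discussion around it; the Laurent-polynomial hypothesis on $\al$ in the lemma's statement is stronger than what its argument actually requires in this case. Taking $\al=\wt h_i^-$, $\be=\wt h_i^+$ at once avoids the extra commutator bookkeeping you anticipate, and the single scalar $\exp\bigl(\half\Res_z z\inv\gamma(-z)\bigr)$ with $\gamma$ read off from \eqref{eq:com-formulas-6} then cancels $c_i$ exactly as you outline in your final paragraph.
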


\begin{proof}
From \eqref{eq:com-formulas-6}, we have that
\begin{align*}
  [\wt h_i^-(z_1),\wt h_i^+(z_2)]
  =\gamma(z_2-z_1),\quad
  \te{where }
  \gamma(z)=\log f(-z)^{ (q^{-2r_i}-q^{2r_i})(q^{2r\ell}-1) }.
\end{align*}
Notice that
\begin{align*}
  &\Res_zz\inv\gamma(-z)
  =\Res_zz\inv\log f(z)^{ (q^{2r_i}-q^{-2r_i})(q^{-2r\ell}-1) }\\
  =&\Res_zz\inv\log f_0(z)^{ (q^{2r_i}-q^{-2r_i})(q^{-2r\ell}-1) }
  =\log\frac{f_0(2r_i\hbar-2r\ell\hbar)}{f_0(2r_i\hbar+2r\ell\hbar)}.
\end{align*}
Lemma \ref{lem:exp-cal} provides that
\begin{align*}
  &Y_{\wh\ell}(E_\ell(h_i),z)
  =\(\frac{f_0(2r_i\hbar+2r\ell\hbar)}{f_0(2r_i\hbar-2r\ell\hbar)}\)^\half Y_{\wh\ell}\(\exp\(\(-q^{-r\ell\partial}2\hbar f_0(2\partial\hbar)  h_i\)_{-1}\)\vac,z\)\\
  =&\(\frac{f_0(2r_i\hbar+2r\ell\hbar)}{f_0(2r_i\hbar-2r\ell\hbar)}\)^\half  \exp(\wt h_i^+(z))\exp(\wt h_i^-(z))
  \(\frac{f_0(2r_i\hbar-2r\ell\hbar)}{f_0(2r_i\hbar+2r\ell\hbar)}\)^\half\\
  =&\exp(\wt h_i^+(z))\exp(\wt h_i^-(z)),
\end{align*}
as desired.
\end{proof}

\begin{lem}
For $i,j\in I$, we have that
\begin{align}
  &Y_{\wh\ell}(h_i,z)^-E_\ell(h_j)\label{eq:Y-action-1}
  =E_\ell(h_j)\ot[r_ia_{ij}]_{q^{\pd{z}}}
  \(q^{r\ell\pd{z}}-q^{-r\ell\pd{z}}\)q^{2r\ell\pd{z}}  z\inv,\\
  &Y_{\wh\ell}(E_\ell(h_i),z)E_\ell(h_j)\label{eq:Y-action-2}
  =
  \exp\(\wt h_i^+(z)\)E_\ell(h_j)\ot f(z)^{ (q^{r_ia_{ij}}-q^{-r_ia_{ij}})(1-q^{2r\ell}) }
    ,\\
  &Y_{\wh\ell}(E_\ell(h_i),z)x_j^\pm\label{eq:Y-action-3}
  =\exp\(\wt h_i^+(z)\)x_j^\pm\ot f(z)^{\mp (q^{r_ia_{ij}}-q^{-r_ia_{ij}}) }
  ,\\
  &Y_{\wh\ell}(x_i^\pm,z)E_\ell(h_j)\label{eq:Y-action-4}
  =\exp\(\wt h_j(0)\)e^{z\partial} x_i^\pm\ot
  f(z)^{\pm (q^{r_ia_{ij}}-q^{-r_ia_{ij}})q^{2r\ell} }
  .
\end{align}
\end{lem}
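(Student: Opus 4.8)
The plan is to deduce all four identities from a common toolkit: Proposition \ref{prop:Y-E}, which factors $Y_{\wh\ell}(E_\ell(h_i),z)=\exp(\wt h_i^+(z))\exp(\wt h_i^-(z))$; the commutation formulas \eqref{eq:com-formulas-7} and \eqref{eq:com-formulas-10}--\eqref{eq:com-formulas-13}; the presentation $F_{\hat\g,\hbar}^\ell=\mathfrak D_{\wh\ell}^\rho(F_{\hat\g}^\ell)$ recalled above, which in the notation of Lemma \ref{lem:com-formulas} gives $Y_{\wh\ell}(h_i,z)=h_i^-(z)+h_i^+(z)$ and $Y_{\wh\ell}(x_i^\pm,z)=Y(x_i^\pm,z)\wh\ell(e_i^\pm,z)$; and the elementary facts $h_i^-(z)\vac=0$, $\wt h_i^-(z)\vac=0$, $Y_{\wh\ell}(x_i^\pm,z)\vac=Y(x_i^\pm,z)\vac=e^{z\partial}x_i^\pm$. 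A preliminary observation, used throughout, is the normal form $E_\ell(h_j)=\exp(\wt h_j^+(0))\vac$, obtained from Proposition \ref{prop:Y-E} by applying $Y_{\wh\ell}(E_\ell(h_j),z)$ to $\vac$, using $\exp(\wt h_j^-(z))\vac=\vac$, and letting $z\to0$; it is consistent with \eqref{eq:def-E-h}, the scalar prefactors cancelling exactly as in the proof of Proposition \ref{prop:Y-E}.

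For \eqref{eq:Y-action-2} and \eqref{eq:Y-action-3} I would substitute the factorization of Proposition \ref{prop:Y-E} and then push the ``annihilation'' half $\exp(\wt h_i^-(z))$ to the right. Acting on $E_\ell(h_j)=\exp(\wt h_j^+(0))\vac$, formula \eqref{eq:com-formulas-11} together with $\exp(\wt h_i^-(z))\vac=\vac$ gives $\exp(\wt h_i^-(z))E_\ell(h_j)=E_\ell(h_j)\,f(z)^{(q^{r_ia_{ij}}-q^{-r_ia_{ij}})(1-q^{2r\ell})}$; acting on $x_j^\pm$, writing $x_j^\pm=\lim_{w\to0}Y_{\wh\ell}(x_j^\pm,w)\vac$ and applying \eqref{eq:com-formulas-12} gives $\exp(\wt h_i^-(z))x_j^\pm=x_j^\pm\,f(z)^{q^{\mp r_ia_{ij}}-q^{\pm r_ia_{ij}}}$. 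Applying the remaining $\exp(\wt h_i^+(z))$ then reproduces exactly the right-hand sides of \eqref{eq:Y-action-2} and \eqref{eq:Y-action-3}.

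For \eqref{eq:Y-action-1} I would use the identity $Y_{\wh\ell}(h_i,z)^-v=\big(Y_{\wh\ell}(h_i,z)v\big)^-$ valid for every vector $v$, split $Y_{\wh\ell}(h_i,z)=h_i^-(z)+h_i^+(z)$, note that $h_i^+(z)E_\ell(h_j)=Y(h_i,z)^+E_\ell(h_j)$ is regular and hence drops out, and compute $h_i^-(z)E_\ell(h_j)=h_i^-(z)\exp(\wt h_j^+(0))\vac=E_\ell(h_j)\,\pd z\log f(z)^{g(q)}$ using \eqref{eq:com-formulas-10} and $h_i^-(z)\vac=0$; extracting the singular part via $\big(\pd z\log f(z)^{g(q)}\big)^-=g(q^{\pd z})z^{-1}$ yields \eqref{eq:Y-action-1}. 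For \eqref{eq:Y-action-4} I would move $\exp(\wt h_j^+(0))$ to the left of $Y_{\wh\ell}(x_i^\pm,z)$ by the inverse of \eqref{eq:com-formulas-13} (together with \eqref{eq:com-formulas-7}, should $\wt h_j(0)$ be read as $\wt h_j^+(0)+\wt h_j^-(0)$), and then use $Y_{\wh\ell}(x_i^\pm,z)\vac=e^{z\partial}x_i^\pm$; the accumulated scalar becomes the stated power of $f(z)$ after invoking the oddness of $f$ and the substitution rule $\big(P(z_1)^{g(q)}\big)|_{z_1=-z}=Q(z)^{g(q^{-1})}$ with $Q(z)=P(-z)$.

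The routine-but-delicate part, and the only real obstacle, is the bookkeeping: tracking the expansion direction ($\iota_{z_1,z_2}$ versus $\iota_{z_2,z_1}$) attached to each scalar factor, reconciling the shorthands $P(z)^{g(q)}$ and $\log f(z)^{g(q)}$ with the $q^{\pd z}$-operator expressions on the right-hand sides (notably the $q-q^{-1}$ and $q^{\pm2r\ell}$ factors and the conversions between $f(-z)$ and $f(z)$), and justifying the $z\to0$ and $w\to0$ limits appearing in the normal form for $E_\ell(h_j)$ and in the treatment of $x_j^\pm$ within the $\hbar$-adic topology. All of these steps run in exact parallel to the corresponding computations in \cite{K-Quantum-aff-va} and introduce no genuinely new difficulty.
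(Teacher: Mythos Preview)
Your proposal is correct and follows essentially the same route as the paper. The paper only writes out \eqref{eq:Y-action-1} in detail---computing $h_i^-(z)E_\ell(h_j)$ via the commutator \eqref{eq:com-formulas-10} (after expressing $E_\ell(h_j)$ through Proposition \ref{prop:Y-E}) and then extracting the singular part---and declares the remaining three ``similar''; your treatments of \eqref{eq:Y-action-2}--\eqref{eq:Y-action-4} via Proposition \ref{prop:Y-E} and \eqref{eq:com-formulas-11}--\eqref{eq:com-formulas-13} are exactly the intended similar arguments, and your shortcut $E_\ell(h_j)=\exp(\wt h_j^+(0))\vac$ is just the $z_2\to 0$ limit the paper takes.
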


\begin{proof}
From \eqref{eq:com-formulas-9}, we have that
\begin{align*}
  &h_i^-(z_1)Y_{\wh\ell}(E_\ell(h_j),z_2)\vac\\
  =&[h_i^-(z_1),Y_{\wh\ell}(E_\ell(h_j),z_2)]\vac
  =\left[h_i^-(z_1),\exp\(\wt h_i^+(z_2)\)\exp\(\wt h_i^-(z_2)\)\right]\vac\\
  =&\exp\(\wt h_i^+(z_2)\)\exp\(\wt h_i^-(z_2)\)\vac [r_ia_{ij}]_{q^{\pd{z_2}}}
  \(q^{-r\ell\pd{z_2}}-q^{r\ell\pd{z_2}}\)q^{- 2r\ell\pd{z_2}}  \pd{z_1}\log f(z_1-z_2)\\
  =&\exp\(\wt h_i^+(z_2)\)\vac [r_ia_{ij}]_{q^{\pd{z_1}}}
  \(q^{r\ell\pd{z_1}}-q^{-r\ell\pd{z_1}}\)q^{2r\ell\pd{z_1}}  \pd{z_1}\log f(z_1-z_2).
\end{align*}
Taking $z_2\to 0$, we have that
\begin{align*}
  &h_i^-(z)E_\ell(h_j)
  =E_\ell(h_j)[r_ia_{ij}]_{q^{\pd{z}}}
  \(q^{r\ell\pd{z}}-q^{-r\ell\pd{z}}\)q^{2r\ell\pd{z}}  \pd{z}\log f(z).
\end{align*}
Notice that
\begin{align*}
  Y_{\wh\ell}(h_i,z)^-=h_i^-(z)^-,\quad \Sing_z\pd{z}\log f(z)=z\inv.
\end{align*}
Then we have that
\begin{align*}
  Y_{\wh\ell}(h_i,z)^-E_\ell(h_j)=E_\ell(h_j)[r_ia_{ij}]_{q^{\pd{z}}}
  \(q^{r\ell\pd{z}}-q^{-r\ell\pd{z}}\)q^{2r\ell\pd{z}}  z\inv,
\end{align*}
which proves \eqref{eq:Y-action-1}.
The proof of the rest equations are similar.
%
%
%
\end{proof}

\begin{lem}\label{lem:S-E}
For $i,j\in I$, we have that
\begin{align}
  &S_{\ell,\ell'}(z)\(E_\ell(h_j)\ot h_i\)\label{eq:S-E-1}
  =E_\ell(h_j)\ot h_i+E_\ell(h_j)\ot \vac\\
  &\quad\ot \pd{z}\log f(z)^{-[r_ia_{ij}]_{q}[r\ell']_{q}[r\ell]_{q}
  \(q-q\inv\)^2q^{-r\ell}},\nonumber\\
  &S_{\ell,\ell'}(z)\(h_j\ot E_{\ell'}(h_i)\)
  =h_j\ot E_{\ell'}(h_i)+\vac \ot E_{\ell'}(h_i)\label{eq:S-E-2}\\
  &\quad\ot \pd{z}\log f(z)^{-[r_ja_{ji}]_{q}[r\ell']_{q}[r\ell]_{q}
  \(q-q\inv\)^2q^{r\ell'}},\nonumber\\
  &S_{\ell,\ell'}(z)\(E_\ell(h_j)\ot E_{\ell'}(h_i)\)
  =E_\ell(h_j)\ot E_{\ell'}(h_i)\label{eq:S-E-3}\\
  &\quad\ot f(z)^{ (q^{r_ia_{ij}}-q^{-r_ia_{ij}})(q^{2r\ell'}+q^{-2r\ell}-1-q^{2r(\ell'-\ell)}) }
  ,\nonumber\\
  &S_{\ell,\ell'}(z)\(x_j^\pm\ot E_{\ell'}(h_i)\)
  =x_j^\pm\ot E_{\ell'}(h_i)\label{eq:S-E-4}
  \ot f(z)^{\pm(q^{r_ia_{ij}} -q^{-r_ia_{ij}})(1-q^{2r\ell'})  }
  ,\\
  &S_{\ell,\ell'}(z)\(E_\ell(h_j)\ot x_i^\pm\)
  =E_\ell(h_j)\ot x_i^\pm\label{eq:S-E-5}
  \ot f(z)^{\pm(q^{r_ia_{ij}} -q^{-r_ia_{ij}})(1-q^{-2r\ell})  }
  .
\end{align}
\end{lem}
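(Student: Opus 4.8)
The plan is to reduce the five identities to the action of $S_{\ell,\ell'}(z)$ on the generators $h_i,x_i^\pm$, given by Lemma~\ref{lem:S-twisted}, and then to transport that information through the exponential defining $E_\ell(h_j)$ using the technical lemmas of Section~\ref{sec:twisted-tensor-prod}. To set up, write $E_\ell(h_j)=c_j^{(\ell)}\exp\big((w_j^{(\ell)})_{-1}\big)\vac$, where, with $g^{(\ell)}(\partial)=-q^{-r\ell\partial}2\hbar f_0(2\partial\hbar)$, we put $w_j^{(\ell)}=g^{(\ell)}(\partial)h_j$ and $c_j^{(\ell)}=\big(f_0(2r_j\hbar+2r\ell\hbar)/f_0(2r_j\hbar-2r\ell\hbar)\big)^{\half}$ (cf.~\eqref{eq:def-E-h}); note that $g^{(\ell)}(\partial)$ is a power series in $\partial$ with $\hbar$-adically null coefficients, so $w_j^{(\ell)}\in\hbar F_{\hat\g,\hbar}^\ell$ and the exponential converges. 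Similarly $E_{\ell'}(h_i)=c_i^{(\ell')}\exp\big((w_i^{(\ell')})_{-1}\big)\vac$, with $\ell$ replaced by $\ell'$.

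First I would compute $S_{\ell,\ell'}(z)$ on the auxiliary vectors $w_j^{(\ell)}\ot h_i$, $h_j\ot w_i^{(\ell')}$, $w_j^{(\ell)}\ot x_i^\pm$, $x_j^\pm\ot w_i^{(\ell')}$ and $w_j^{(\ell)}\ot w_i^{(\ell')}$. Since $\big((F_{\hat\g,\hbar}^{\ell_a})_a,(S_{\ell_a,\ell_b}(z))\big)$ is an $\hbar$-adic $n$-quantum VA (Lemma~\ref{lem:n-qva-F}), the shift identities of Lemma~\ref{lem:multqyb-shift-total} apply to $S_{\ell,\ell'}(z)$, and extend by $\hbar$-adic continuity to the operator $g^{(\ell)}(\partial)$; combining them with $\partial\vac=0$ and the formulas \eqref{eq:S-twisted-1}--\eqref{eq:S-twisted-4} one obtains, for example,
\begin{align*}
  S_{\ell,\ell'}(z)\big(w_j^{(\ell)}\ot h_i\big)
  &=g^{(\ell)}\(\partial\ot1+\pd{z}\)S_{\ell,\ell'}(z)(h_j\ot h_i)\\
  &=w_j^{(\ell)}\ot h_i+\vac\ot\vac\ot g^{(\ell)}\(\pd{z}\)\Phi_{ij}(z),
\end{align*}
where $\Phi_{ij}(z)=-\wh{\ell,\ell'}_{ij}(z)$ denotes the scalar series on the right of \eqref{eq:S-twisted-1}; the other four cases are identical in method (for $w_j^{(\ell)}\ot w_i^{(\ell')}$ one applies Lemma~\ref{lem:multqyb-shift-total} on both legs, which produces the extra factor $g^{(\ell')}(-\pd{z})$). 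In every case the outcome is of one of the elementary shapes listed in Lemma~\ref{lem:S-special-tech-gen2}, because $\partial\vac=0$ collapses the shift operator on any tensor leg equal to $\vac$.

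Next I would invoke Lemma~\ref{lem:S-special-tech-gen2} to pass from $w_j^{(\ell)}$, resp.\ $w_i^{(\ell')}$, to $\exp\big((w_j^{(\ell)})_{-1}\big)\vac$, resp.\ $\exp\big((w_i^{(\ell')})_{-1}\big)\vac$ --- for \eqref{eq:S-E-3} the lemma is used once in each tensor slot --- and then multiply through by $c_j^{(\ell)}$ and/or $c_i^{(\ell')}$. This yields \eqref{eq:S-E-1}--\eqref{eq:S-E-5} with the stated scalar series replaced, respectively and up to sign, by $g^{(\ell)}(\pd{z})\Phi_{ij}(z)$, $g^{(\ell')}(-\pd{z})\Phi_{ij}(z)$, $g^{(\ell)}(\pd{z})g^{(\ell')}(-\pd{z})\Phi_{ij}(z)$, $g^{(\ell')}(-\pd{z})\wh{\ell,\ell'}_{ij}^{1,+}(z)$ and $g^{(\ell)}(\pd{z})\wh{\ell,\ell'}_{ij}^{2,+}(z)$.

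Finally I would identify these with the series in the statement by means of Lemma~\ref{lem:special-tau-tech1}. Since $g^{(\ell)}(\pd{z})=-q^{-r\ell\pd{z}}\big(2\hbar f_0(2\hbar\pd{z})\big)$ and $f_0$ is even, each of the above is a composite of the operator $2\hbar f_0(2\hbar\pd{z})$ with a shift $q^{a\pd{z}}$, applied to $\wh{\ell,\ell'}_{ij}(z)$ or $\wh{\ell,\ell'}_{ij}^{k,+}(z)$; each application of $2\hbar f_0(2\hbar\pd{z})$ is evaluated by Lemma~\ref{lem:special-tau-tech1}, leaving a Laurent polynomial in $q$ as the exponent of $\log f(z)$, after which one simplifies using $r_ia_{ij}=r_ja_{ji}$, $[n]_q(q-q\inv)=q^n-q^{-n}$, $q^{a\pd{z}}\log f(z)^{g(q)}=\log f(z)^{q^ag(q)}$ and $f(z)^{g(q)}=\exp\big(\log f(z)^{g(q)}\big)$. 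For instance, for \eqref{eq:S-E-3},
\begin{align*}
  g^{(\ell)}(\pd{z})g^{(\ell')}(-\pd{z})\Phi_{ij}(z)
  &=q^{r(\ell'-\ell)\pd{z}}\big(2\hbar f_0(2\hbar\pd{z})\big)^2\Phi_{ij}(z)\\
  &=\log f(z)^{q^{r(\ell'-\ell)}(q^{r\ell}-q^{-r\ell})(q^{r\ell'}-q^{-r\ell'})(q^{r_ia_{ij}}-q^{-r_ia_{ij}})},
\end{align*}
and $q^{r(\ell'-\ell)}(q^{r\ell}-q^{-r\ell})(q^{r\ell'}-q^{-r\ell'})=q^{2r\ell'}+q^{-2r\ell}-1-q^{2r(\ell'-\ell)}$, which is precisely the exponent in \eqref{eq:S-E-3}; the remaining four cases go through in exactly the same way. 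The only genuine work here is the bookkeeping, in particular keeping straight the $z\mapsto-z$ and $q\mapsto q\inv$ sign conventions built into $S_{\ell,\ell'}(z)$ and into the formulas of Lemma~\ref{lem:S-twisted}; no step poses a conceptual obstacle, and the one point that must be checked with care is that each output of the first step fits the exact hypothesis of the relevant case of Lemma~\ref{lem:S-special-tech-gen2}.
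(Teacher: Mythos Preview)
Your proposal is correct and follows essentially the same approach as the paper's proof: apply the shift identities of Lemma~\ref{lem:multqyb-shift-total} to move the operator $g^{(\ell)}(\partial)$ through $S_{\ell,\ell'}(z)$, use the generator formulas \eqref{eq:S-twisted-1}--\eqref{eq:S-twisted-4}, simplify the resulting scalars via Lemma~\ref{lem:special-tau-tech1}, and then lift to the exponential with Lemma~\ref{lem:S-special-tech-gen2}. The paper carries this out in detail only for \eqref{eq:S-E-1} and declares the rest similar; your write-up is more explicit (in particular the two-step application of Lemma~\ref{lem:S-special-tech-gen2} for \eqref{eq:S-E-3}), but the method is identical.
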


\begin{proof}
From \eqref{eq:S-twisted-1} and \eqref{eq:multqyb-der-shift}, we have that
\begin{align}
  &S_{\ell,\ell'}(z)(q^{-r\ell\partial}2\hbar f_0(2\hbar\partial)h_j\ot h_i)\nonumber\\
  =&q^{-r\ell\partial\ot 1-r\ell\pd{z}}2\hbar f_0\(2\hbar\(\partial\ot 1+1\ot\pd{z}\)\)
    S_{\ell,\ell'}(z)(h_j\ot h_i)\nonumber\\
  =&q^{-r\ell\partial}2\hbar f_0(2\hbar\partial)h_j\ot h_i
   +\vac\ot\vac\ot q^{-r\ell\pd{z}}2\hbar f_0\(2\hbar\pd{z}\){\wh{\ell,\ell'}}_{ij}(-z)\nonumber\\
  =&q^{-r\ell\partial}2\hbar f_0(2\hbar\partial)h_j\ot h_i
  +\vac\ot\vac\ot \(1-q^{-2r\ell\pd{z}}\){\wh{\ell,\ell'}}_{ij}^{1,+}(-z)\label{eq:S-E-temp-1}\\
  =&q^{-r\ell\partial}2\hbar f_0(2\hbar\partial)h_j\ot h_i
  +\vac\ot\vac
  \ot\pd{z}f(z)^{[r_ia_{ij}]_{q}[r\ell']_{q}
  [r\ell]_{q}(q-q\inv)^2q^{-r\ell} },\nonumber
\end{align}
where the equation \eqref{eq:S-E-temp-1} follows from \eqref{eq:special-tau-tech1-3}.
Then we complete the proof of \eqref{eq:S-E-1} by using Lemma \ref{lem:S-special-tech-gen2}.
The proof of the rest equations are similar.
\end{proof}

\subsection{Proof of Proposition \ref{prop:S-Delta}}

\begin{lem}\label{lem:S3-1}
For $i,j\in I$, we have that
\begin{align}
  &S_{\{\ell,\ell'\},\ell''}(z)\(\Delta(h_j)\ot h_i\)=
  \Delta(h_j)\ot h_i+\Delta(\vac)\ot\vac\label{eq:S3-Delta-h-h}\\
  &\quad\ot \pdiff{z}{2}\log f(z)^{ [r_ia_{ij}]_{q}[r\ell'']_{q}
  [r(\ell+\ell')]_{q}
  (q-q\inv) },\nonumber\\
  &S_{\{\ell,\ell'\},\ell''}(z)\(\Delta(h_j)\ot x_i^\pm\)
  =\Delta(h_j)\ot x_i^\pm\mp \Delta(\vac)\ot x_i^\pm\label{eq:S3-Delta-h-x}\\
  &\quad\ot \pd{z}\log f(z)^{[r_ja_{ji}]_{q}[r(\ell+\ell')]_{q}(q-q\inv) },\nonumber\\
  &S_{\{\ell,\ell'\},\ell''}(z)\(\Delta(x_j^\pm)\ot h_i\)
  =\Delta(x_j^\pm)\ot h_i\pm\Delta(x_j^\pm)\ot \vac\label{eq:S3-Delta-x-h}\\
  &\quad\ot \pd{z}\log f(z)^{ [r_ia_{ij}]_{q}[r\ell'']_{q}(q-q\inv) }.\nonumber
\end{align}
\end{lem}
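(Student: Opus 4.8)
\textbf{Proof proposal for Lemma \ref{lem:S3-1}.}
The plan is to reduce everything to formulas already in hand: the definitions \eqref{eq:def-Delta-h}--\eqref{eq:def-Delta-x-} of $\Delta$; the closed forms of $S_{\ell,\ell'}$ on generators, Lemma \ref{lem:S-twisted}; the action of $S_{\ell,\ell'}$ on the $E_\ell(h_j)$-terms, Lemma \ref{lem:S-E}; the shift relation, Lemma \ref{lem:multqyb-shift-total}; and the vacuum property \eqref{eq:multqyb-vac}. First I would unwind $S_{\{\ell,\ell'\},\ell''}(z)=S_{\ell',\ell''}^{23}(z)S_{\ell,\ell''}^{13}(z)$, so that $S_{\ell,\ell''}^{13}$ is applied first and $S_{\ell',\ell''}^{23}$ second, and expand $\Delta(h_j)=q^{-r\ell'\partial}h_j\ot\vac+\vac\ot q^{r\ell\partial}h_j$ (and likewise $\Delta(x_j^-)$, $\Delta(x_j^+)$) so that each input becomes a sum of two rank-one tensors in $F_{\hat\g,\hbar}^\ell\wh\ot F_{\hat\g,\hbar}^{\ell'}$, tensored with $h_i$ or $x_i^\pm$ in the third leg. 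Whenever $\vac$ occupies leg $2$ or $3$ of an $S$-factor, \eqref{eq:multqyb-vac} makes that factor act trivially, so each summand receives a contribution from at most one of $S^{13}$, $S^{23}$.

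I would record two elementary facts to be used throughout: exponents of $\log f$ add, $\log f(z)^{g_1(q)}+\log f(z)^{g_2(q)}=\log f(z)^{g_1(q)+g_2(q)}$, and the shift acts on the exponent, $q^{c\pd{z}}\log f(z)^{g(q)}=\log f(z)^{q^{c}g(q)}$ (and the same for $\pd{z}\log f$ and $\pdiff{z}{2}\log f$). Then, computing summand by summand with Lemma \ref{lem:S-twisted} and pushing each operator $q^{\pm r\ell'\partial}$, $q^{2r\ell\partial}$ sitting on the first leg of a pair through the relevant $S$ via Lemma \ref{lem:multqyb-shift-total} (which converts $q^{c\partial}$ into the shift $q^{c\pd{z}}$ on the scalar output, using $\partial\vac=0$), the case $\Delta(h_j)\ot h_i$ produces exactly the two surviving contributions: $q^{-r\ell'\pd{z}}$ applied to the $h_j\ot h_i$-formula of $S_{\ell,\ell''}(z)$ and $q^{r\ell\pd{z}}$ applied to that of $S_{\ell',\ell''}(z)$. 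Summing the exponents and using the $q$-Laurent identity $q^{-r\ell'}[r\ell]_q+q^{r\ell}[r\ell']_q=[r(\ell+\ell')]_q$ yields \eqref{eq:S3-Delta-h-h}. The case $\Delta(h_j)\ot x_i^\pm$ is the same computation with $\pd{z}\log f$ in place of $\pdiff{z}{2}\log f$ and gives \eqref{eq:S3-Delta-h-x} by the same identity. For $\Delta(x_j^-)\ot h_i$ there is no $E_\ell$-term: the summands $x_j^-\ot\vac$ and $\vac\ot x_j^-$ each pick up the single coefficient $\pm\pd{z}\log f(z)^{[r_ia_{ij}]_q[r\ell'']_q(q-q\inv)}$ from the respective $S$-factor, and \eqref{eq:S3-Delta-x-h} follows at once.

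The one case that needs care is $\Delta(x_j^+)\ot h_i$: the summand $q^{2r\ell\partial}E_\ell(h_j)\ot q^{2r\ell\partial}x_j^+$ receives \emph{two} scalar contributions — one from $S_{\ell,\ell''}^{13}(z)$ on the pair $E_\ell(h_j)\ot h_i$, which by \eqref{eq:S-E-1} carries exponent $-[r_ia_{ij}]_q[r\ell'']_q[r\ell]_q(q-q\inv)^2q^{-r\ell}$ and is then shifted by $q^{2r\ell\pd{z}}$, and one from $S_{\ell',\ell''}^{23}(z)$ on the pair $x_j^+\ot h_i$, with exponent $[r_ia_{ij}]_q[r\ell'']_q(q-q\inv)$, also shifted by $q^{2r\ell\pd{z}}$. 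The step I expect to be the main obstacle is checking that these add up correctly: with $[r\ell]_q(q-q\inv)=q^{r\ell}-q^{-r\ell}$ the first exponent becomes $-(q^{2r\ell}-1)[r_ia_{ij}]_q[r\ell'']_q(q-q\inv)$ and the second becomes $q^{2r\ell}[r_ia_{ij}]_q[r\ell'']_q(q-q\inv)$, whose sum is precisely $[r_ia_{ij}]_q[r\ell'']_q(q-q\inv)$ — the very coefficient already obtained for the summand $x_j^+\ot\vac$. Hence both summands of $\Delta(x_j^+)$ acquire the same scalar and the total reassembles as $\Delta(x_j^+)\ot h_i+\Delta(x_j^+)\ot\vac\ot\pd{z}\log f(z)^{[r_ia_{ij}]_q[r\ell'']_q(q-q\inv)}$, which is \eqref{eq:S3-Delta-x-h} for $\pm=+$. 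Apart from this cancellation the argument is pure index bookkeeping, so I would organize the write-up by establishing the two elementary facts above and then treating the four entries in turn.
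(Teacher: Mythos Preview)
Your proposal is correct and follows essentially the same approach as the paper's proof: unwind $S_{\{\ell,\ell'\},\ell''}(z)=S_{\ell',\ell''}^{23}(z)S_{\ell,\ell''}^{13}(z)$, expand $\Delta$ on generators, use the vacuum property and the shift relation (Lemma \ref{lem:multqyb-shift-total}) to reduce to the closed formulas in Lemma \ref{lem:S-twisted} and Lemma \ref{lem:S-E}, and then combine via the identity $q^{-r\ell'}[r\ell]_q+q^{r\ell}[r\ell']_q=[r(\ell+\ell')]_q$. The only cosmetic difference is in the $\Delta(x_j^+)\ot h_i$ case: the paper first adds the two contributions (getting exponent $[r_ia_{ij}]_q[r\ell'']_q(q-q^{-1})q^{-2r\ell}$) and then applies the single shift $q^{2r\ell\pd{z}}$, whereas you shift each contribution first and then add; the arithmetic is identical.
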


\begin{proof}
Recall from \eqref{eq:def-Delta-h} that $\Delta(h_i)=q^{-r\ell'\partial}h_i\ot \vac+\vac\ot q^{r\ell\partial}h_i$. Then
\begin{align*}
  &S_{\{\ell,\ell'\},\ell''}(z)\(\Delta(h_j)\ot h_i\)\\
  =&S_{\ell',\ell''}^{23}(z)S_{\ell,\ell''}^{13}(z)\(q^{-r\ell'\partial}h_j\ot \vac\ot h_i
  +\vac\ot q^{r\ell\partial}h_j\ot h_i\)\\
  =&(q^{-r\ell'\hbar}\ot 1\ot 1) S_{\ell,\ell''}^{13}(z-r\ell'\hbar)\(h_j\ot \vac\ot h_i\)\\
  &+(1\ot q^{r\ell\hbar}\ot 1) S_{\ell',\ell''}(z+r\ell\hbar)\(\vac\ot h_j\ot h_i\)\\
  =&q^{-r\ell'\partial}h_j\ot \vac\ot h_i
  +\vac\ot q^{r\ell\partial}h_j\ot h_i\\
  &+\vac\ot\vac\ot\vac\ot
   \pdiff{z}{2}\log f(z)^{ [r_ia_{ij}]_{q}[r\ell'']_{q}
  ([r\ell]_{q}q^{-r\ell'}
  +[r\ell']_{q}q^{r\ell})(q-q\inv) }\\
  =&q^{-r\ell'\partial}h_j\ot \vac\ot h_i
  +\vac\ot q^{r\ell\partial}h_j\ot h_i\\
  &+\vac\ot\vac\ot\vac\ot
  \pdiff{z}{2}\log f(z)^{ [r_ia_{ij}]_{q}[r\ell'']_{q}
  [r(\ell+\ell')]_{q}
  \(q-q\inv\) }\\
  =&\Delta(h_j)\ot h_i+\Delta(\vac)\ot\vac\ot
  \pdiff{z}{2}\log f(z)^{ [r_ia_{ij}]_{q}[r\ell'']_{q}
  [r(\ell+\ell')]_{q}
  \(q-q\inv\) },
\end{align*}
where the second equation follows from Lemma \ref{lem:multqyb-shift-total} and
the third equation follows from \eqref{eq:S-twisted-1}.
We also have that
\begin{align*}
  &S_{\{\ell,\ell'\},\ell''}(z)\(\Delta(h_j)\ot x_i^\pm\)\\
  =&S_{\ell',\ell''}^{23}(z)S_{\ell,\ell''}^{13}(z)\(q^{-r\ell'\partial}h_j\ot \vac\ot x_i^\pm
  +\vac\ot q^{r\ell\partial}h_j\ot x_i^\pm\)\\
  =&\(q^{-r\ell'\partial}\ot 1\ot 1\)S_{\ell,\ell''}^{13}(z-r\ell'\hbar)\(h_j\ot \vac\ot x_i^\pm\)\\
  &+\(1\ot q^{r\ell\partial}\ot 1\)S_{\ell',\ell''}^{23}(z+r\ell\hbar)\(\vac\ot q^{r\ell\partial}h_j\ot x_i^\pm\)\\
  =&q^{-r\ell'\partial}h_j\ot \vac\ot x_i^\pm
  +\vac\ot q^{r\ell\partial}h_j\ot x_i^\pm\mp \vac\ot \vac\ot x_i^\pm\\
  &\ot \pd{z}\log f(z)^{ [r_ja_{ji}]_{q}([r\ell]_{q}q^{-r\ell'}
  +[r\ell']_{q}q^{r\ell})
  (q-q\inv) }\\
  =&\Delta(h_j)\ot x_i^\pm\mp \Delta(\vac)\ot x_i^\pm
  \ot \pd{z}\log f(z)^{ [r_ja_{ji}]_{q}[r(\ell+\ell')]_{q}(q-q\inv) },
\end{align*}
where the second equation follows from Lemma \ref{lem:multqyb-shift-total} and
the third equation follows from \eqref{eq:S-twisted-3}.
The ``$-$'' case of \eqref{eq:S3-Delta-x-h} follows from the following two relations:
\begin{align}
  &S_{\{\ell,\ell'\},\ell''}(z)\(x_j^\pm\ot \vac\ot h_i\)\nonumber\\
  =&S_{\ell',\ell''}^{23}(z)S_{\ell,\ell''}^{13}(z)\(x_j^\pm\ot \vac\ot h_i\)
  =S_{\ell,\ell''}^{13}(z)\(x_j^\pm\ot \vac\ot h_i\)\nonumber\\
  =&x_j^\pm\ot \vac\ot h_i\pm x_j^\pm\ot\vac\ot\vac
  \ot \pd{z}\log f(z)^{ [r_ia_{ij}]_{q}[r\ell'']_q(q-q\inv) },\label{eq:S3-x-vac-h}
\end{align}
where the last equation follows from \eqref{eq:S-twisted-2};
\begin{align}
  &S_{\{\ell,\ell'\},\ell''}(z)\(\vac\ot x_j^\pm\ot h_i\)\nonumber\\
  =&S_{\ell',\ell''}^{23}(z)S_{\ell,\ell''}^{13}(z)\(\vac\ot x_j^\pm\ot h_i\)
  =S_{\ell',\ell''}^{23}(z)\(\vac\ot x_j^\pm\ot h_i\)\nonumber\\
  =&\vac\ot x_j^\pm\ot h_i\pm \vac\ot x_j^\pm\ot\vac
  \ot
  \pd{z}\log f(z)^{[r_ia_{ij}]_{q}[r\ell'']_q
  (q-q\inv) },\label{eq:S3-vac-x-h}
\end{align}
where the last equation follows from \eqref{eq:S-twisted-2}.
In order to prove the ``$+$'' case of \eqref{eq:S3-Delta-x-h}, we need the following relation:
\begin{align}
  &S_{\{\ell,\ell'\},\ell''}(z)\( E_\ell(h_j)\ot x_j^+ \ot h_i \)
  =S_{\ell',\ell''}^{23}(z)S_{\ell,\ell''}^{13}(z)\( E_\ell(h_j)\ot x_j^+ \ot h_i \)\nonumber\\
  =&S_{\ell',\ell''}^{23}(z)\(E_\ell(h_j)\ot x_j^+ \ot h_i\)
  \nonumber\\
  &-E_\ell(h_j)\ot x_j^+\ot \vac\ot \pd{z}\log f(z)^{ [r_ia_{ij}]_{q}[r\ell]_q
  [r\ell'']_q(q-q\inv)^2q^{-r\ell} } \nonumber\\
  =&E_\ell(h_j)\ot x_j^+ \ot h_i
  +E_\ell(h_j)\ot x_j^+\ot \vac
  \ot
  \pd{z}\log f(z)^{ [r_ia_{ij}]_{q}
  [r\ell'']_q\(1-1+q^{-2r\ell}\)
  \(q-q\inv\) }\nonumber\\
  =&E_\ell(h_j)\ot x_j^+ \ot h_i
  +E_\ell(h_j)\ot x_j^+\ot \vac
  \ot
  \pd{z}\log f(z)^{[r_ia_{ij}]_{q}
  [r\ell'']_qq^{-2r\ell}(q-q\inv) },\label{eq:S3-E-x-h}
\end{align}
where the second equation follows from \eqref{eq:S-E-1} and the third equation follows from \eqref{eq:S-twisted-2}.
Then we have that
\begin{align*}
  &S_{\{\ell,\ell'\},\ell''}(z)\(\Delta(x_j^+)\ot h_i\)\\
  =&S_{\{\ell,\ell'\},\ell''}(z)(x_j^+\ot\vac \ot h_i)
  +S_{\{\ell,\ell'\},\ell''}(z)(
  q^{2r\ell\partial}E_\ell(h_j)\ot q^{2r\ell\partial} x_j^+\ot h_i)\\
  =&x_j^+\ot\vac \ot h_i+x_j^+\ot\vac \ot\vac
  \ot \pd{z}\log f(z)^{ [r_ia_{ij}]_{q}[r\ell'']_q(q-q\inv) }\\
  &+S_{\{\ell,\ell'\},\ell''}(z)(
  q^{2r\ell\partial}E_\ell(h_j)\ot q^{2r\ell\partial} x_j^+\ot h_i)\\
  =&x_j^+\ot\vac \ot h_i+x_j^+\ot\vac \ot\vac
  \ot \pd{z}\log f(z)^{ [r_ia_{ij}]_{q}[r\ell'']_q(q-q\inv) }\\
  &+q^{2r\ell\partial}E_\ell(h_j)\ot q^{2r\ell\partial} x_j^+\ot h_i
  +q^{2r\ell\partial}E_\ell(h_j)\ot q^{2r\ell\partial} x_j^+\ot\vac
  \ot \pd{z}\log f(z)^{ [r_ia_{ij}]_{q}[r\ell'']_q(q-q\inv) }\\
  =&\Delta(x_j^+)\ot h_i+\Delta(x_j^+)\ot \vac
  \ot \pd{z}\log f(z)^{ [r_ia_{ij}]_{q}[r\ell'']_q(q-q\inv) },
\end{align*}
where the second equation follows from \eqref{eq:S3-x-vac-h} and the third equation follows from Lemma \ref{lem:multqyb-shift-total} and \eqref{eq:S3-E-x-h}.
\end{proof}

\begin{lem}\label{lem:S3-2}
For $i,j\in I$ and $\epsilon\in\{\pm\}$, we have that
\begin{align}\label{eq:S3-Delta-x-x}
  &S_{\{\ell,\ell'\},\ell''}(z)\(\Delta(x_j^\pm)\ot x_i^\epsilon\)
  =\Delta(x_j^\pm)\ot x_i^\epsilon\ot f(z)^{ q^{\mp\epsilon r_ia_{ij}}-q^{\pm \epsilon r_ia_{ij}} }.
\end{align}
\end{lem}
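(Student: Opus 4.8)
The plan is to verify \eqref{eq:S3-Delta-x-x} by reducing it to the behavior of $S_{\ell,\ell''}(z)$ and $S_{\ell',\ell''}(z)$ on the generators $h_i,x_i^\pm$, exactly as in the proof of Lemma \ref{lem:S3-1}. First I would unfold $\Delta(x_j^\pm)$ using the definitions \eqref{eq:def-Delta-x+} and \eqref{eq:def-Delta-x-}. For the ``$-$'' case this is painless: $\Delta(x_j^-)=x_j^-\ot\vac+\vac\ot x_j^-$, so applying $S_{\{\ell,\ell'\},\ell''}(z)=S_{\ell',\ell''}^{23}(z)S_{\ell,\ell''}^{13}(z)$ and using the vacuum properties \eqref{eq:multqyb-vac} of the twisting operators, the term $x_j^-\ot\vac\ot x_i^\epsilon$ is acted on only by $S_{\ell,\ell''}^{13}$ and the term $\vac\ot x_j^-\ot x_i^\epsilon$ only by $S_{\ell',\ell''}^{23}$; in each case \eqref{eq:S-twisted-4} gives the scalar $f(z)^{q^{\mp\epsilon r_ia_{ij}}-q^{\pm\epsilon r_ia_{ij}}}$ (note the exponent in \eqref{eq:S-twisted-4} is independent of the level), so both summands pick up the same scalar and we recover $\Delta(x_j^-)\ot x_i^\epsilon$ times that scalar.

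The ``$+$'' case is the substantive one. Here $\Delta(x_j^+)=x_j^+\ot\vac+q^{2r\ell\partial}E_\ell(h_j)\ot q^{2r\ell\partial}x_j^+$. The first summand is handled as above via \eqref{eq:S-twisted-4}. For the second summand I would first move the shift operators outside using Lemma \ref{lem:multqyb-shift-total} (as in \eqref{eq:S3-E-x-h}), reducing to computing $S_{\ell',\ell''}^{23}(z)S_{\ell,\ell''}^{13}(z)(E_\ell(h_j)\ot x_j^+\ot x_i^\epsilon)$ and then re-applying $q^{2r\ell\partial\ot 1\ot 1}$ and $1\ot q^{2r\ell\partial}\ot 1$ with an appropriate shift of $z$ — but since the final scalar will turn out to be a power of $f(z)$ and hence $z$-shift by a constant must cancel between the two tensor legs, I expect the shifts to produce no net change (one should double-check this cancellation, it is where the particular coupling $q^{2r\ell\partial}$ on \emph{both} factors matters). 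Then $S_{\ell,\ell''}^{13}(z)$ acts on $E_\ell(h_j)\ot(\,\cdot\,)\ot x_i^\epsilon$: by \eqref{eq:S-E-5} it multiplies by $f(z)^{\epsilon(q^{r_ja_{ji}}-q^{-r_ja_{ji}})(1-q^{-2r\ell})}$ — wait, here one must be careful about indices: \eqref{eq:S-E-5} reads $S_{\ell,\ell'}(z)(E_\ell(h_j)\ot x_i^\pm)=E_\ell(h_j)\ot x_i^\pm\ot f(z)^{\pm(q^{r_ia_{ij}}-q^{-r_ia_{ij}})(1-q^{-2r\ell})}$, so with the roles here it gives the factor $f(z)^{\epsilon(q^{r_ia_{ij}}-q^{-r_ia_{ij}})(1-q^{-2r\ell})}$. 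Next $S_{\ell',\ell''}^{23}(z)$ acts on $(\,\cdot\,)\ot x_j^+\ot x_i^\epsilon$: by \eqref{eq:S-twisted-4} it multiplies by $f(z)^{q^{-\epsilon r_ia_{ij}}-q^{\epsilon r_ia_{ij}}}$. Multiplying the two factors for this summand: $f(z)^{\epsilon(q^{r_ia_{ij}}-q^{-r_ia_{ij}})(1-q^{-2r\ell})+(q^{-\epsilon r_ia_{ij}}-q^{\epsilon r_ia_{ij}})}$, and this exponent must be shown to equal $q^{-\epsilon r_ia_{ij}}-q^{\epsilon r_ia_{ij}}$, i.e. the contribution $\epsilon(q^{r_ia_{ij}}-q^{-r_ia_{ij}})(1-q^{-2r\ell})$ coming from $E_\ell(h_j)$ must be canceled by a correction from the shift operators $q^{2r\ell\partial}$ acting through the hexagon-type identities \eqref{eq:multqyb-hex1}--\eqref{eq:multqyb-hex2}. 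This bookkeeping — tracking how the $q^{2r\ell\partial}$ shifts interact with the $S^{13}$ and $S^{23}$ factors via Lemma \ref{lem:multqyb-shift-total} and producing exactly the cancelling power of $f(z)$ — is the main obstacle; it is the analogue of the delicate identity \eqref{eq:S3-E-x-h} used in Lemma \ref{lem:S3-1}, and I would model the computation on that one.

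Once both summands are shown to acquire the common scalar $f(z)^{q^{\mp\epsilon r_ia_{ij}}-q^{\pm\epsilon r_ia_{ij}}}$, linearity gives $S_{\{\ell,\ell'\},\ell''}(z)(\Delta(x_j^\pm)\ot x_i^\epsilon)=\Delta(x_j^\pm)\ot x_i^\epsilon\ot f(z)^{q^{\mp\epsilon r_ia_{ij}}-q^{\pm\epsilon r_ia_{ij}}}$, which is \eqref{eq:S3-Delta-x-x}. I would also remark that the symmetric lemma for $S_{\ell,\{\ell',\ell''\}}(z)$ acting on $h_j\ot\Delta(\cdot)$ and $x_j^\epsilon\ot\Delta(\cdot)$ follows by the same computation with the roles of the first and third factors interchanged, using \eqref{eq:S-E-4} in place of \eqref{eq:S-E-5}; together with Lemma \ref{lem:S3-1} these give all the cases of $S$ on generators needed to prove Proposition \ref{prop:S-Delta} by invoking that $F_{\hat\g,\hbar}^{\ell+\ell'}$ is generated by $\{h_i,x_i^\pm\}$ and that both sides of the claimed intertwining relation are quantum Yang-Baxter operators, so agreement on generators forces global agreement (the argument already used in the proof of Lemma \ref{lem:S-twisted-inverse}).
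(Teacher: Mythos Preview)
Your approach is essentially the paper's: split $\Delta(x_j^\pm)$ into its summands and apply $S_{\ell',\ell''}^{23}(z)S_{\ell,\ell''}^{13}(z)$ to each, using the vacuum properties, \eqref{eq:S-twisted-4}, and \eqref{eq:S-E-5}. The ``$-$'' case and the first summand of the ``$+$'' case are handled correctly.

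The one point you leave unresolved is the second summand $q^{2r\ell\partial}E_\ell(h_j)\ot q^{2r\ell\partial}x_j^+\ot x_i^\epsilon$, where you identify the shift operators as the mechanism but do not carry out the algebra. This step is in fact elementary and requires nothing beyond Lemma \ref{lem:multqyb-shift-total}. Pulling $q^{2r\ell\partial}$ out of slots $1$ and $2$ replaces $z$ by $z+2r\ell\hbar$ in \emph{both} $S^{13}$ and $S^{23}$; by the very definition of $f(z)^{g(q)}$ one has $f(z+2r\ell\hbar)^{g(q)}=f(z)^{g(q)\,q^{2r\ell}}$. Hence the factor from \eqref{eq:S-E-5} becomes
\[
f(z)^{\epsilon(q^{r_ia_{ij}}-q^{-r_ia_{ij}})(1-q^{-2r\ell})q^{2r\ell}}
=f(z)^{(q^{-\epsilon r_ia_{ij}}-q^{\epsilon r_ia_{ij}})(1-q^{2r\ell})},
\]
while the factor from \eqref{eq:S-twisted-4} becomes $f(z)^{(q^{-\epsilon r_ia_{ij}}-q^{\epsilon r_ia_{ij}})q^{2r\ell}}$; their product is $f(z)^{q^{-\epsilon r_ia_{ij}}-q^{\epsilon r_ia_{ij}}}$, matching the first summand. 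So, contrary to your initial guess, the shifts do \emph{not} produce ``no net change'': they multiply every exponent by $q^{2r\ell}$, and this is precisely what turns $(1-q^{-2r\ell})$ into $(q^{2r\ell}-1)$ so that the $E_\ell(h_j)$ contribution and the shifted $x_j^+$ contribution combine to the unshifted scalar. No hexagon identity beyond the shift lemma is needed.
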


\begin{proof}
Using \eqref{eq:S-twisted-4}, we have that
\begin{align*}
  &S_{\{\ell,\ell'\},\ell''}(z)\(\Delta(x_j^-)\ot x_i^\epsilon\)\\
  =&S_{\ell',\ell''}^{23}(z)S_{\ell,\ell''}^{13}(z)\(x_j^-\ot\vac\ot x_i^\epsilon
  +\vac\ot x_j^-\ot x_i^\epsilon\)\\
  =&x_j^-\ot\vac\ot x_i^\epsilon\ot f(z)^{ q^{\epsilon r_ia_{ij}}-q^{- \epsilon r_ia_{ij}} }
  +\vac\ot x_j^-\ot x_i^\epsilon\ot f(z)^{ q^{\epsilon r_ia_{ij}}-q^{- \epsilon r_ia_{ij}} }\\
  =&\Delta(x_j^-)\ot x_i^\epsilon\ot f(z)^{ q^{\epsilon r_ia_{ij}}-q^{- \epsilon r_ia_{ij}} }.
\end{align*}
Note that
\begin{align*}
  &S_{\{\ell,\ell'\},\ell''}(z)\(q^{2r\ell\partial}E_\ell(h_j)\ot q^{2r\ell\partial}x_j^+\ot x_i^\epsilon
  \)\\
  =&S_{\ell',\ell''}^{23}(z)S_{\ell,\ell''}^{13}(z)\(q^{2r\ell\partial}E_\ell(h_j)\ot q^{2r\ell\partial}x_j^+\ot x_i^\epsilon
  \)\\
  =&\(q^{2r\ell\partial}\ot q^{2r\ell\partial}\ot 1\)S_{\ell',\ell''}^{23}(z+2r\ell\hbar)S_{\ell,\ell''}^{13}(z+2r\ell\hbar)
  \(E_\ell(h_j)\ot x_j^+\ot x_i^\epsilon\)\\
  =&q^{2r\ell\partial}E_\ell(h_j)\ot q^{2r\ell\partial}x_j^+\ot x_i^\epsilon
  \ot f(z)^{ (q^{-\epsilon r_ia_{ij}}-q^{\epsilon r_ia_{ij}})(1-q^{2r\ell}+q^{2r\ell} }\\
  =&q^{2r\ell\partial}E_\ell(h_j)\ot q^{2r\ell\partial}x_j^+\ot x_i^\epsilon
  \ot f(z)^{ q^{-\epsilon r_ia_{ij}}-q^{\epsilon r_ia_{ij}} },
\end{align*}
where the second equation follows from Lemma \ref{lem:multqyb-shift-total} and the third equation follows from \eqref{eq:S-twisted-4} and \eqref{eq:S-E-5}.
Combining this with \eqref{eq:S-twisted-4}, we get that
\begin{align*}
  &S_{\{\ell,\ell'\},\ell''}(z)\(\Delta(x_j^+)\ot x_i^\epsilon\)\\
  =&S_{\ell',\ell''}^{23}(z)S_{\ell,\ell''}^{13}(z)\(
    x_j^+\ot\vac\ot x_i^\epsilon
    +q^{2r\ell\partial}E_\ell(h_j)\ot q^{2r\ell\partial}x_j^+\ot x_i^\epsilon
  \)\\
  =&x_j^+\ot\vac\ot x_i^\epsilon\ot f(z)^{ q^{-\epsilon r_ia_{ij}}-q^{\epsilon r_ia_{ij}} }
  +q^{2r\ell\partial}E_\ell(h_j)\ot q^{2r\ell\partial}x_j^+\ot x_i^\epsilon
  \ot f(z)^{ q^{-\epsilon r_ia_{ij}}-q^{\epsilon r_ia_{ij}} }\\
  =&\Delta(x_j^+)\ot x_i^\epsilon\ot f(z)^{ q^{-\epsilon r_ia_{ij}}-q^{\epsilon r_ia_{ij}} }.
\end{align*}
Therefore, we complete the proof of lemma.
\end{proof}

Note that $F_{\hat\g,\hbar}^{\ell}$ is generated by $\set{h_i,x_i^\pm}{i\in I}$.
Combining this with Lemmas \ref{lem:S-twisted}, \ref{lem:S3-1} and \ref{lem:S3-2}, we get the following result.
\begin{prop}\label{prop:S-Delta-1}
As operators on $F_{\hat\g,\hbar}^{\ell+\ell'}\wh\ot F_{\hat\g,\hbar}^{\ell''}$, one has
\begin{align*}
  S_{\{\ell,\ell'\},\ell''}(z) (\Delta\ot 1)=(\Delta\ot 1) S_{\ell+\ell',\ell''}(z).
\end{align*}
\end{prop}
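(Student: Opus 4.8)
The plan is a uniqueness argument of the kind used for Lemma~\ref{lem:S-twisted-inverse}: I write $P(z):=S_{\{\ell,\ell'\},\ell''}(z)\circ(\Delta\ot 1)$ and $Q(z):=(\Delta\ot 1)\circ S_{\ell+\ell',\ell''}(z)$, two $\C[[\hbar]]$-module maps from $F_{\hat\g,\hbar}^{\ell+\ell'}\wh\ot F_{\hat\g,\hbar}^{\ell''}$ to $(F_{\hat\g,\hbar}^{\ell}\wh\ot F_{\hat\g,\hbar}^{\ell'})\wh\ot F_{\hat\g,\hbar}^{\ell''}\wh\ot\C((z))[[\hbar]]$, and I show $P=Q$. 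Here $\Delta$ is the $\hbar$-adic nonlocal VA homomorphism determined on generators by \eqref{eq:def-Delta-h}--\eqref{eq:def-Delta-x-} (provided by Proposition~\ref{prop:universal-M-tau} once Propositions~\ref{prop:Y-Delta-cartan} and~\ref{prop:Y-Delta-serre} are available), and $\Delta(\vac)=\vac\ot\vac$; hence, by the vacuum property \eqref{eq:multqyb-vac} of the two twisting operators, $P(z)(\vac\ot u)=Q(z)(\vac\ot u)=\vac\ot\vac\ot u$ and $P(z)(v\ot\vac)=Q(z)(v\ot\vac)=\Delta(v)\ot\vac$.

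The base case is agreement on generators. For $v,u\in\{h_i,x_i^\pm\mid i\in I\}$ I would apply $\Delta\ot 1$ to the formulas of Lemma~\ref{lem:S-twisted} with $(\ell,\ell')$ replaced by $(\ell+\ell',\ell'')$: using $\Delta(\vac)=\vac\ot\vac$ this gives $Q(z)(v\ot u)$, and comparing term by term with Lemmas~\ref{lem:S3-1} and~\ref{lem:S3-2} shows it equals $P(z)(v\ot u)=S_{\{\ell,\ell'\},\ell''}(z)(\Delta(v)\ot u)$. (The scalar exponents already coincide; the identity $[r\ell]_q q^{-r\ell'}+[r\ell']_q q^{r\ell}=[r(\ell+\ell')]_q$ needed to see this was carried out inside the proofs of Lemmas~\ref{lem:S3-1} and~\ref{lem:S3-2}.) So $P=Q$ on $\{h_i,x_i^\pm\}\wh\ot\{h_j,x_j^\pm\}$.

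To propagate, I would observe that both $P$ and $Q$ satisfy the same hexagon-type recursions in the two tensor slots. By Lemma~\ref{lem:n-qva-F}, $S_{\ell+\ell',\ell''}(z)$ is a twisting operator for the $2$-quantum VA $(F_{\hat\g,\hbar}^{\ell+\ell'},F_{\hat\g,\hbar}^{\ell''})$, so it obeys \eqref{eq:multqyb-hex1}, \eqref{eq:multqyb-hex2}; composing with $\Delta\ot 1$ and using that $\Delta$ intertwines $Y_{\wh{\ell+\ell'}}$ with the vertex operator of $F_{\hat\g,\hbar}^{\ell}\wh\ot F_{\hat\g,\hbar}^{\ell'}$ turns these into recursions computing $Q(z_1)$ on $\big(Y_{\wh{\ell+\ell'}}(a,z_2)b\big)\ot u$ and on $v\ot Y_{\wh{\ell''}}(a,z_2)u$ from values of $Q$ on shorter arguments. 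On the other side, Proposition~\ref{prop:n-qva-to-k-qva} applied to the $3$-quantum VA $(F_{\hat\g,\hbar}^{\ell},F_{\hat\g,\hbar}^{\ell'},F_{\hat\g,\hbar}^{\ell''})$ with partition $\{1,2\}\cup\{3\}$ identifies $S_{\{\ell,\ell'\},\ell''}(z)=S_{\ell',\ell''}^{23}(z)S_{\ell,\ell''}^{13}(z)$ as a twisting operator for the $2$-quantum VA $(F_{\hat\g,\hbar}^{\ell}\wh\ot F_{\hat\g,\hbar}^{\ell'},F_{\hat\g,\hbar}^{\ell''})$; its hexagon identities, rewritten through $\Delta$ (so that the vertex operator of $F_{\hat\g,\hbar}^{\ell}\wh\ot F_{\hat\g,\hbar}^{\ell'}$ at $z_2$ applied to $\Delta a,\Delta b$ equals $\Delta\big(Y_{\wh{\ell+\ell'}}(a,z_2)b\big)$), give exactly the same recursions for $P$. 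Since $F_{\hat\g,\hbar}^{\ell+\ell'}$ and $F_{\hat\g,\hbar}^{\ell''}$ are generated by $\{h_i,x_i^\pm\mid i\in I\}$, the propagation argument invoked for Lemma~\ref{lem:S-twisted-inverse} then forces $P=Q$ on all of $F_{\hat\g,\hbar}^{\ell+\ell'}\wh\ot F_{\hat\g,\hbar}^{\ell''}$.

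The step I expect to require the most care is matching the two families of recursions: once $\Delta$ — which itself lands in a twofold tensor product — is inserted, the slot labels on the factors $S^{ab}$ appearing in \eqref{eq:multqyb-hex1}--\eqref{eq:multqyb-hex2} must be tracked carefully, and one must check that the $\C((z))[[\hbar]]$-coefficient parts and the $\vac$-parts agree term by term. Given that $S_{\ell+\ell',\ell''}(z)$ and $S_{\{\ell,\ell'\},\ell''}(z)$ are already known to be twisting operators and $\Delta$ an $\hbar$-adic nonlocal VA homomorphism, this is mechanical; alternatively, exactly as for Lemma~\ref{lem:S-twisted-inverse}, one may simply invoke the general principle that a twisting-operator-type map is determined by its values on generating subsets of the two tensor factors.
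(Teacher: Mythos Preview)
Your proposal is correct and follows essentially the same approach as the paper: verify the identity on tensors of generators via Lemmas~\ref{lem:S-twisted}, \ref{lem:S3-1}, and~\ref{lem:S3-2}, then propagate using the hexagon identities for the twisting operators together with the fact that $F_{\hat\g,\hbar}^{\ell+\ell'}$ and $F_{\hat\g,\hbar}^{\ell''}$ are generated by $\{h_i,x_i^\pm\mid i\in I\}$. The paper's proof is just the one-sentence observation that these lemmas plus generation suffice; your write-up simply makes the propagation mechanism explicit.
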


Similarly, we have:

\begin{prop}\label{prop:S-Delta-2}
As operators on $F_{\hat\g,\hbar}^{\ell}\wh\ot F_{\hat\g,\hbar}^{\ell'+\ell''}$, one has
\begin{align*}
  S_{\ell,\{\ell',\ell''\}}(z)(1\ot\Delta)=(1\ot\Delta)S_{\ell,\ell'+\ell''}(z).
\end{align*}
\end{prop}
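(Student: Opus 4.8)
The plan is to mirror the proof of Proposition~\ref{prop:S-Delta-1}, interchanging the two tensor factors: one applies $\Delta$ to the \emph{right} factor and uses the right--slot hexagon identity \eqref{eq:multqyb-hex2} where the earlier proof used \eqref{eq:multqyb-hex1}. Throughout, $\Delta\colon F_{\hat\g,\hbar}^{\ell'+\ell''}\to F_{\hat\g,\hbar}^{\ell'}\wh\ot F_{\hat\g,\hbar}^{\ell''}$ denotes the $\hbar$-adic nonlocal VA homomorphism obtained from Propositions~\ref{prop:Y-Delta-cartan}, \ref{prop:Y-Delta-serre} and the universal property Proposition~\ref{prop:universal-M-tau} (so that $(1\ot\Delta)\circ S_{\ell,\ell'+\ell''}(z)$ is defined), and $S_{\ell,\{\ell',\ell''\}}(z)=S_{\ell,\ell'}^{12}(z)S_{\ell,\ell''}^{13}(z)$ is the twisting operator for the ordered pair $\big(F_{\hat\g,\hbar}^\ell,\ F_{\hat\g,\hbar}^{\ell'}\wh\ot_{S_{\ell',\ell''}}F_{\hat\g,\hbar}^{\ell''}\big)$ furnished by Theorems~\ref{thm:3-twisted-tensor} and \ref{thm:coherence}. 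Since $F_{\hat\g,\hbar}^\ell$ and $F_{\hat\g,\hbar}^{\ell'+\ell''}$ are each generated as $\hbar$-adic nonlocal VAs by $\set{h_i,x_i^\pm}{i\in I}$, it will suffice --- exactly as in the passage from Lemmas~\ref{lem:S3-1}--\ref{lem:S3-2} to Proposition~\ref{prop:S-Delta-1} --- to verify the identity after evaluating on the pairs $h_j\ot\Delta(h_i)$, $x_j^\pm\ot\Delta(h_i)$, $h_j\ot\Delta(x_i^\pm)$ and $x_j^{\epsilon_1}\ot\Delta(x_i^{\epsilon_2})$ for $i,j\in I$ and $\epsilon_1,\epsilon_2=\pm$.

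The computation of $S_{\ell,\{\ell',\ell''\}}(z)$ on these pairs is the mirror image of Lemmas~\ref{lem:S3-1} and \ref{lem:S3-2}. Using \eqref{eq:def-Delta-h}, \eqref{eq:def-Delta-x+}, \eqref{eq:def-Delta-x-} read with $(\ell,\ell')$ replaced by $(\ell',\ell'')$ --- so that, for instance, $\Delta(x_i^+)=x_i^+\ot\vac+q^{2r\ell'\partial}E_{\ell'}(h_i)\ot q^{2r\ell'\partial}x_i^+$ --- I would move the $q^{c\partial}$--factors past $S_{\ell,\ell'}^{12}(z)$ and $S_{\ell,\ell''}^{13}(z)$ by Lemma~\ref{lem:multqyb-shift-total}, use the vacuum property \eqref{eq:multqyb-vac} on whichever slot carries $\vac$, and then substitute Lemma~\ref{lem:S-twisted} for the generator pairs and Lemma~\ref{lem:S-E} for the pairs involving $E_{\ell'}(h_i)$ (i.e.\ \eqref{eq:S-E-2} for $h_j\ot E_{\ell'}(h_i)$ and \eqref{eq:S-E-4} for $x_j^\pm\ot E_{\ell'}(h_i)$). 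Collecting the scalar coefficients then rests on the elementary identity $[r\ell']_q q^{r\ell''}+[r\ell'']_q q^{-r\ell'}=[r(\ell'+\ell'')]_q$ (together with, in the $x^+$ cases, the cancellation $1-q^{2r\ell'}+q^{2r\ell'}=1$ of the type used in \eqref{eq:S3-E-x-h}), and this is exactly what makes the outcome equal to $(1\ot\Delta)$ applied to $S_{\ell,\ell'+\ell''}(z)$ on $h_j\ot h_i$, $x_j^\pm\ot h_i$, $h_j\ot x_i^\pm$, $x_j^{\epsilon_1}\ot x_i^{\epsilon_2}$, the latter being Lemma~\ref{lem:S-twisted} with $\ell'$ replaced by $\ell'+\ell''$. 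For example one obtains $S_{\ell,\{\ell',\ell''\}}(z)(h_j\ot\Delta(h_i))=h_j\ot\Delta(h_i)+\Delta(\vac)\ot\vac\ot\pdiff{z}{2}\log f(z)^{[r_ia_{ij}]_q[r\ell]_q[r(\ell'+\ell'')]_q(q-q\inv)}$, which coincides with $(1\ot\Delta)S_{\ell,\ell'+\ell''}(z)(h_j\ot h_i)$; the remaining three pairs are handled the same way. This gives Proposition~\ref{prop:S-Delta-2}, and, with Proposition~\ref{prop:S-Delta-1}, Proposition~\ref{prop:S-Delta}.

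The main obstacle I anticipate is bookkeeping rather than anything conceptual: in the pairs $h_j\ot\Delta(x_i^+)$ and $x_j^{\epsilon}\ot\Delta(x_i^+)$ the operator $S_{\ell,\ell'}^{12}(z)$ hits the $E_{\ell'}(h_i)$--factor while $S_{\ell,\ell''}^{13}(z)$ hits the $q^{2r\ell'\partial}x_i^+$--factor, so one must chain \eqref{eq:S-E-2}/\eqref{eq:S-E-4}, \eqref{eq:S-twisted-2}/\eqref{eq:S-twisted-4} and two applications of Lemma~\ref{lem:multqyb-shift-total} --- carrying the shifts $\pm r\ell'\hbar$, $\pm r\ell''\hbar$, $\pm 2r\ell'\hbar$ --- in the correct order, precisely as in the derivation of \eqref{eq:S3-E-x-h}. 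A secondary point requiring a little care is the reduction ``equality on generator pairs $\Rightarrow$ equality on all of $F_{\hat\g,\hbar}^\ell\wh\ot F_{\hat\g,\hbar}^{\ell'+\ell''}$'': it is the same reduction used implicitly for Proposition~\ref{prop:S-Delta-1}, propagating the identity through the vertex operators of the two factors by means of the hexagon identities \eqref{eq:multqyb-hex1}, \eqref{eq:multqyb-hex2} and the fact that $\Delta$ intertwines them, so no fresh argument is needed.
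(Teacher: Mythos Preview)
Your proposal is correct and follows essentially the same approach as the paper, which simply states ``Similarly, we have'' after Proposition~\ref{prop:S-Delta-1}: you mirror the computations of Lemmas~\ref{lem:S3-1}--\ref{lem:S3-2} on generator pairs (now applying $\Delta$ in the right factor and invoking \eqref{eq:S-E-2}, \eqref{eq:S-E-4} in place of \eqref{eq:S-E-1}, \eqref{eq:S-E-5}), and then use the same generation--plus--hexagon reduction as in the passage to Proposition~\ref{prop:S-Delta-1}.
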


Proposition \ref{prop:S-Delta} is immediate from Propositions \ref{prop:S-Delta-1} and \ref{prop:S-Delta-2}.

\subsection{Proof of Propositions \ref{prop:Y-Delta-cartan} and \ref{prop:Delta-coasso}}

\begin{lem}\label{lem:S-Delta-3}
Denote by $S_\Delta(z)$ the quantum Yang-Baxter operator of $$F_{\hat\g,\hbar}^{\ell}\wh\ot F_{\hat\g,\hbar}^{\ell'}.$$
For $i,j\in I$, we have that
\begin{align}
  &S_\Delta(z)\(\Delta(h_j)\ot \Delta(h_i)\)
  =\Delta(h_j)\ot \Delta(h_i)+\vac\ot\vac\ot\vac\ot\vac\label{eq:S-Delta-1-1}\\
  &\quad\ot
  \pdiff{z}{2}\log f(z)^{[r_ia_{ij}]_{q}
  [r(\ell+\ell')]_{q}
  (q-q\inv) },\nonumber\\
  &S_\Delta(z)\(\Delta(h_j)\ot \Delta(x_i^\pm)\)=\Delta(h_j)\ot \Delta(x_i^\pm)\label{eq:S-Delta-3-1}\\
  &\quad\mp\vac\ot\vac\ot \Delta(x_i^\pm)\ot
  \pd{z}\log f(z)^{ [r_ja_{ji}]_{q}(q^{r(\ell+\ell')}-q^{-r(\ell+\ell')}) },\nonumber\\
  &S_\Delta(z)\(\Delta(x_j^\pm)\ot \Delta(h_i)\)=\Delta(x_j^\pm)\ot \Delta(h_i)\label{eq:S-Delta-3-2}\\
  &\quad\pm\Delta(x_j^\pm)\ot \vac\ot\vac\ot
  \pd{z}\log f(z)^{ [r_ia_{ij}]_{q}(q^{r(\ell+\ell')}-q^{-r(\ell+\ell')}) },\nonumber\\
  &S_\Delta(z)\(\Delta(x_j^{\epsilon_1})\ot \Delta(x_i^{\epsilon_2})\)
  =\Delta(x_j^{\epsilon_1})\ot \Delta(x_i^{\epsilon_2})\label{eq:S-Delta-3-3}
  \ot \frac{f(z)^{ q^{-\epsilon_1\epsilon_2r_ia_{ij}} }}{f(z)^{q^{\epsilon_1\epsilon_2r_ia_{ij}}} }.
\end{align}
\end{lem}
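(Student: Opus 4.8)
The plan is to compute the action of $S_\Delta(z)$ on $\Delta$-images by factoring $S_\Delta(z)$ through the inductive construction of twisting operators for iterated twisted tensor products, thereby reducing everything to the formulas of Lemmas~\ref{lem:S3-1} and~\ref{lem:S3-2}. Realize $F_{\hat\g,\hbar}^{\ell}\wh\ot F_{\hat\g,\hbar}^{\ell'}$ via the $4$-tuple $(F_{\hat\g,\hbar}^{\ell},F_{\hat\g,\hbar}^{\ell'},F_{\hat\g,\hbar}^{\ell},F_{\hat\g,\hbar}^{\ell'})$ of Lemma~\ref{lem:n-qva-F}, so that its four tensor slots are, in order, $F_{\hat\g,\hbar}^{\ell}$, $F_{\hat\g,\hbar}^{\ell'}$, $F_{\hat\g,\hbar}^{\ell}$, $F_{\hat\g,\hbar}^{\ell'}$. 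By the corollary following Proposition~\ref{prop:qva-twisted-tensor-deform} (case $n=2$),
\begin{align*}
  S_\Delta(z)=S_{\ell',\ell}^{23}(z)\,S_{\ell,\ell}^{13}(z)\,S_{\ell',\ell'}^{24}(z)\,S_{\ell,\ell'}^{14}(z),
\end{align*}
and grouping the first two and the last two factors exhibits $S_\Delta(z)$ as the composite of $S_{\{\ell,\ell'\},\ell'}(z)=S_{\ell',\ell'}^{24}(z)S_{\ell,\ell'}^{14}(z)$ acting on slots $1,2,4$, applied first, followed by $S_{\{\ell,\ell'\},\ell}(z)=S_{\ell',\ell}^{23}(z)S_{\ell,\ell}^{13}(z)$ acting on slots $1,2,3$, where $S_{\{\ell,\ell'\},\ell''}(z)$ is the operator of Lemma~\ref{lem:S3-1}. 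I would also record that $S_{\{\ell,\ell'\},\ell''}(z)$ inherits the last-slot shift identity of Lemma~\ref{lem:multqyb-shift-total}, namely $S_{\{\ell,\ell'\},\ell''}(z)g(1\ot 1\ot\partial)=g(1\ot 1\ot\partial-\pd{z})S_{\{\ell,\ell'\},\ell''}(z)$ for $g(z)\in\C[z][[\hbar]]$, which is immediate since each of $S_{\ell,\ell''}^{13}(z)$, $S_{\ell',\ell''}^{23}(z)$ does.

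For $u,v\in\{h_j,\,x_j^\pm\}$, substitute~\eqref{eq:def-Delta-h}--\eqref{eq:def-Delta-x-}, so that $\Delta(u)$ occupies slots $1,2$ and $\Delta(v)$ occupies slots $3,4$, and expand $\Delta(u)\ot\Delta(v)$ into elementary tensors in which each of slots $3,4$ carries $\vac$, a shifted generator $q^{c\partial}h_i$ or $q^{c\partial}x_i^\pm$, or --- when $v=x_i^+$ --- the vector $q^{2r\ell\partial}E_\ell(h_i)$. Applying $S_{\{\ell,\ell'\},\ell'}(z)$ (on slots $1,2,4$) first: by the vacuum property~\eqref{eq:multqyb-vac} it fixes every summand whose slot $4$ is $\vac$, while on the others it acts on $\Delta(u)$ together with a shifted vector in slot $4$, which by the shift identity above reduces to the appropriate formula of Lemmas~\ref{lem:S3-1}, \ref{lem:S3-2} --- with $\ell''$ specialized to $\ell'$, since slot $4$ is an $F_{\hat\g,\hbar}^{\ell'}$-factor --- composed with the induced shift of the scalar output. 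Then apply $S_{\{\ell,\ell'\},\ell}(z)$ (on slots $1,2,3$) in the same way, now with $\ell''=\ell$; when $v=x_i^+$ this step also requires $S_{\{\ell,\ell'\},\ell}(z)$ (and $S_{\{\ell,\ell'\},\ell'}(z)$) applied to $\Delta(u)\ot E_\ell(h_i)$, which I would compute exactly as Lemmas~\ref{lem:S3-1}, \ref{lem:S3-2} were proved, but feeding in the $E_\ell$-formulas~\eqref{eq:S-E-1}--\eqref{eq:S-E-5} of Lemma~\ref{lem:S-E} (together with Proposition~\ref{prop:Y-E}) in place of~\eqref{eq:S-twisted-1}--\eqref{eq:S-twisted-4}. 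Once both operators have acted, the leading terms reassemble --- using $\Delta(\vac)=\vac\ot\vac$ --- into $\Delta(u)\ot\Delta(v)$, leaving a sum of scalar corrections.

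It remains to collect those scalars. For~\eqref{eq:S-Delta-1-1}, the two surviving corrections are $q^{r\ell'\pd{z}}\pdiff{z}{2}\log f(z)^{[r_ia_{ij}]_{q}[r\ell]_{q}[r(\ell+\ell')]_{q}(q-q\inv)}$ and $q^{-r\ell\pd{z}}\pdiff{z}{2}\log f(z)^{[r_ia_{ij}]_{q}[r\ell']_{q}[r(\ell+\ell')]_{q}(q-q\inv)}$, and the operator form of the $q$-number addition law, $[r(\ell+\ell')]_{q^{\pd{z}}}=q^{r\ell'\pd{z}}[r\ell]_{q^{\pd{z}}}+q^{-r\ell\pd{z}}[r\ell']_{q^{\pd{z}}}$, collapses their sum to $\pdiff{z}{2}\log f(z)^{[r_ia_{ij}]_{q}[r(\ell+\ell')]_{q}[r(\ell+\ell')]_{q}(q-q\inv)}$, i.e.\ to the coefficient $S_{\ell+\ell',\ell+\ell'}(z)$ attaches to $h_j\ot h_i$, as the homomorphism property of $\Delta$ demands. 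Equations~\eqref{eq:S-Delta-3-1} and~\eqref{eq:S-Delta-3-2} follow from the same law together with $[r(\ell+\ell')]_{q}(q-q\inv)=q^{r(\ell+\ell')}-q^{-r(\ell+\ell')}$, while~\eqref{eq:S-Delta-3-3} needs in addition the cancellations of the type $(1-q^{2r\ell})+q^{2r\ell}=1$ already used in the proof of Lemma~\ref{lem:S3-2}, which fuse the $E_\ell$-contributions with the bare-$x^+$ contributions into the single power $f(z)^{q^{-\epsilon_1\epsilon_2 r_ia_{ij}}-q^{\epsilon_1\epsilon_2 r_ia_{ij}}}$. I expect the main obstacle to be exactly this last step in the $\Delta(x_i^+)$-cases: the summand $q^{2r\ell\partial}E_\ell(h_j)\ot q^{2r\ell\partial}x_j^+$ forces one to track several $q^{c\pd{z}}$-shifts of the (partly quadratic in $[r\ell]_q,[r\ell']_q$) scalar outputs of Lemma~\ref{lem:S-E} and to verify that they telescope as claimed; everything else is a routine application of the vacuum and shift identities.
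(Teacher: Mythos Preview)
Your approach is correct but takes a more computational route than the paper. Both you and the paper begin with the same factorization $S_\Delta(z)=S_{\{\ell,\ell'\},\ell}^{\{12\},3}(z)\,S_{\{\ell,\ell'\},\ell'}^{\{12\},4}(z)$, but from there the paper proceeds abstractly: it invokes Propositions~\ref{prop:S-Delta-1} and~\ref{prop:S-Delta-2} (i.e.\ Proposition~\ref{prop:S-Delta}) to obtain the intertwining relation
\[
  S_\Delta(z)\circ(\Delta\ot\Delta)=(\Delta\ot\Delta)\circ S_{\ell+\ell',\ell+\ell'}(z)
\]
as an operator identity on $F_{\hat\g,\hbar}^{\ell+\ell'}\wh\ot F_{\hat\g,\hbar}^{\ell+\ell'}$, and then simply reads off the four formulas from the generator values of $S_{\ell+\ell',\ell+\ell'}(z)$ in Lemma~\ref{lem:S-twisted}. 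Your proposal instead expands $\Delta(u)\ot\Delta(v)$ into elementary tensors, applies Lemmas~\ref{lem:S3-1} and~\ref{lem:S3-2} slot by slot, and then collapses the resulting scalars by hand via $q$-number identities. This works, but the $\Delta(x_i^+)$ cases force you to redo a computation of $S_{\{\ell,\ell'\},\ell}(z)\bigl(\Delta(u)\ot E_\ell(h_i)\bigr)$ from Lemma~\ref{lem:S-E}, whereas the paper's route absorbs all of that into the already-established Proposition~\ref{prop:S-Delta} and avoids any new scalar bookkeeping. In short: your method re-derives on generators what the paper has already packaged as an operator identity; the paper's argument is shorter and makes the structural reason for the answer (that $\Delta$ intertwines the Yang--Baxter operators) transparent.
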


\begin{proof}
Notice that
\begin{align}\label{eq:def-S-Delta}
  S_\Delta(z)=&S_{\ell',\ell}^{23}(z)S_{\ell,\ell}^{13}(z)S_{\ell',\ell'}^{24}(z)S_{\ell,\ell'}^{14}(z)
  =S_{\{\ell,\ell'\},\ell}^{\{12\},3}(z)S_{\{\ell,\ell'\},\ell'}^{\{12\},4}(z).
\end{align}
Then as operators on $F_{\hat\g,\hbar}^{\ell+\ell'}\wh\ot F_{\hat\g,\hbar}^{\ell+\ell'}$, we have that
\begin{align*}
  &S_\Delta(z)(\Delta\ot\Delta)
  =S_{\{\ell,\ell'\},\ell}^{\{12\},3}(z)S_{\{\ell,\ell'\},\ell'}^{\{12\},4}(z)
  (\Delta\ot 1\ot 1)(1\ot \Delta)\\
  =&(\Delta\ot 1\ot 1)S_{\ell+\ell',\ell}^{12}(z)S_{\ell+\ell',\ell'}^{13}(z)(1\ot \Delta)
  =(\Delta\ot 1\ot 1)S_{\ell+\ell',\{\ell,\ell'\}}(z)(1\ot\Delta)\\
  =&(\Delta\ot 1\ot 1)(1\ot\Delta)S_{\ell+\ell',\ell+\ell'}(z)
  =(\Delta\ot\Delta)S_{\ell+\ell',\ell+\ell'}(z),
\end{align*}
where the second equation follows from Proposition \ref{prop:S-Delta-1} and the forth equation follows from Proposition \ref{prop:S-Delta-2}.
Combining this with Lemma \ref{lem:S-twisted}, we complete the proof.
\end{proof}


\begin{lem}\label{lem:Sing-Y}
For $i,j\in I$, we have that
\begin{align}
  &Y_{\wh\ell}(h_i,z)^-h_j=\vac [r_ia_{ij}]_{q^{\pd{z}}}[r\ell]_{q^{\pd{z}}}q^{r\ell\pd{z}}z^{-2},\label{eq:Sing-Y-1}\\
  &Y_{\wh\ell}(h_i,z)^-x_j^\pm
  =\pm x_j^\pm [r_ia_{ij}]_{q^{\pd{z}}}q^{r\ell\pd{z}}z\inv.\label{eq:Sing-Y-2}
\end{align}
\end{lem}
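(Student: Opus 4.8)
The plan is to split $Y_{\wh\ell}(h_i,z)$ into an undeformed part and a deformation part and to compute the singular part of each separately. Recall (Proposition \ref{prop:classical-limit}, and the identities for $Y_{\wh\ell}$ recorded in Subsection \ref{subsec:some-formulas}) that $F_{\hat\g,\hbar}^{\ell}=\mathfrak D_{\wh\ell}^\rho(F_{\hat\g}^{\ell})$, so that $Y_{\wh\ell}(h_i,z)=Y(h_i,z)+\wh\ell(\al^\vee_i,z)$, where $Y$ is the vertex operator of the undeformed affine VA $F_{\hat\g}^{\ell}$ and $Y_{\wh\ell}(h_i,z)^-v$ denotes the negative-power-in-$z$ part of the vector $Y_{\wh\ell}(h_i,z)v$. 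First I would record the two inputs. From the defining relations of $F_{\hat\g}^{\ell}$ (Definition \ref{de:affVAs}) together with the locality constraints $N(h_i,h_j)=2$ and $N(h_i,x_j^\pm)=1$, the only non-negative modes of $h_i$ that act nontrivially on $h_j$ and $x_j^\pm$ are $(h_i)_1h_j=r_ia_{ij}r\ell\vac$ and $(h_i)_0x_j^\pm=\pm r_ia_{ij}x_j^\pm$, whence
\begin{align*}
  Y(h_i,z)^-h_j=r_ia_{ij}r\ell z^{-2}\vac,\qquad
  Y(h_i,z)^-x_j^\pm=\pm r_ia_{ij}x_j^\pm z\inv.
\end{align*}
On the other hand, Proposition \ref{prop:deform-datum} gives $\wh\ell(\al^\vee_i,z)h_j=\wh\ell_{ij}(z)\vac$ and $\wh\ell(\al^\vee_i,z)x_j^\pm=\pm x_j^\pm\,\wh\ell_{ij}^{1,\pm}(z)$, with $\wh\ell_{ij}(z)$ and $\wh\ell_{ij}^{1,\pm}(z)$ as in \eqref{eq:tau-1} and \eqref{eq:tau-2}.

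Next I would isolate the singular parts of these scalar series. Writing $f(z)=zf_0(z)$ with $f_0(z)\in 1+z^2\C[[z^2]]$, one has $\log f(z)=\log z+\log f_0(z)$ with $\log f_0(z)\in z^2\C[[z^2]]$, so $\pd{z}\log f(z)=z\inv+\eta(z)$ and $\pdiff{z}{2}\log f(z)=-z^{-2}+\zeta(z)$ for some $\eta(z),\zeta(z)\in\C[[z]]$. Since $\pd{z}$ preserves each of $\C[[z]][[\hbar]]$ and $z\inv\C[z\inv][[\hbar]]$, the same is true of the $\hbar$-adically convergent series in $\pd{z}$ given by $q^{c\pd{z}}$ and $[n]_{q^{\pd{z}}}$ (for any scalars $c,n$); in particular each such operator commutes with the projection $(\cdot)^-$ onto the singular part and sends $z^{-k}$ $(k\ge 1)$ to a purely singular Laurent series. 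Plugging $\pd{z}\log f(z)=z\inv+\eta(z)$ and $\pdiff{z}{2}\log f(z)=-z^{-2}+\zeta(z)$ into \eqref{eq:tau-1} and \eqref{eq:tau-2}, the regular summands $\eta(z),\zeta(z)$ are annihilated by $(\cdot)^-$, leaving
\begin{align*}
  \wh\ell_{ij}(z)^-=[r_ia_{ij}]_{q^{\pd{z}}}[r\ell]_{q^{\pd{z}}}q^{r\ell\pd{z}}z^{-2}-r_ia_{ij}r\ell z^{-2},\qquad
  \wh\ell_{ij}^{1,\pm}(z)^-=[r_ia_{ij}]_{q^{\pd{z}}}q^{r\ell\pd{z}}z\inv-r_ia_{ij}z\inv.
\end{align*}

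Finally I would add the two contributions, using that the regular part of $Y(h_i,z)v$ is killed by $(\cdot)^-$ while $\wh\ell(\al^\vee_i,z)v$ is $v$ (or $\vac$) times a scalar series: the term $r_ia_{ij}r\ell z^{-2}$ from $Y(h_i,z)^-h_j$ cancels the $-r_ia_{ij}r\ell z^{-2}$ in $\wh\ell_{ij}(z)^-$, which gives \eqref{eq:Sing-Y-1}, and the term $\pm r_ia_{ij}x_j^\pm z\inv$ from $Y(h_i,z)^-x_j^\pm$ cancels the $-r_ia_{ij}z\inv$ in $\wh\ell_{ij}^{1,\pm}(z)^-$, which gives \eqref{eq:Sing-Y-2}. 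The only point that requires real care — the ``hard part'', modest as it is — is the step in the second paragraph: one must check that $q^{c\pd{z}}$ and $[n]_{q^{\pd{z}}}$, expanded $\hbar$-adically, respect the splitting of $\C((z))[[\hbar]]$ into regular and singular parts, so that the regular contributions of $\log f_0(z)$ genuinely drop out; granting this, the rest is a direct substitution of \eqref{eq:tau-1}--\eqref{eq:tau-2}.
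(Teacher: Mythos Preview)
Your proposal is correct and follows essentially the same argument as the paper: split $Y_{\wh\ell}(h_i,z)=Y(h_i,z)+\wh\ell(\al^\vee_i,z)$, compute the undeformed singular parts from Definition \ref{de:affVAs}, read off the deformation contributions from Proposition \ref{prop:deform-datum} and the explicit formulas \eqref{eq:tau-1}--\eqref{eq:tau-2}, and observe the cancellation. Your extra care in noting that the $\hbar$-adically convergent operators $q^{c\pd{z}}$ and $[n]_{q^{\pd{z}}}$ preserve the regular/singular splitting is a point the paper leaves implicit.
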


\begin{proof}
From Proposition \ref{prop:deform-datum} and \eqref{eq:tau-1}, we get that
\begin{align*}
  &Y_{\wh\ell}(h_i,z)^-h_j=Y(h_i,z)^-h_j+
  \vac\wh\ell_{ij}(z)^-\\
  =&\vac r_ia_{ij}r\ell z^{-2}+\vac [r_ia_{ij}]_{q^{\pd{z}}}[r\ell]_{q^{\pd{z}}}q^{r\ell\pd{z}}z^{-2}
  -\vac r_ia_{ij}r\ell z^{-2}\\
  =&\vac [r_ia_{ij}]_{q^{\pd{z}}}[r\ell]_{q^{\pd{z}}}q^{r\ell\pd{z}}z^{-2}.
\end{align*}
Similarly, \eqref{eq:Sing-Y-2} follows from Proposition \ref{prop:deform-datum} and \eqref{eq:tau-1}.
\end{proof}

\begin{lem}\label{lem:Y-Delta-1}
We denote by $Y_\Delta$ the vertex operator map of the twisted tensor product $\hbar$-adic quantum VA
$F_{\hat\g,\hbar}^{\ell}\wh\ot F_{\hat\g,\hbar}^{\ell'}$.
For $i,j\in I$, we have that
\begin{align}
  &Y_\Delta\(\Delta(h_i),z\)^-\Delta(h_j)\label{eq:Y-Delta-1-1}
  =\vac\ot\vac\ot\vac\ot\vac
  \ot[r_ia_{ij}]_{q^{\pd{z}}}[r(\ell+\ell')]_{q^{\pd{z}}}q^{r(\ell+\ell')\pd{z}}z^{-2},\\
  &Y_\Delta\(\Delta(h_i),z\)^-(x_j^\pm\ot\vac)\label{eq:Y-Delta-1-2}
  =\pm x_j^\pm\ot\vac\ot
  [r_ia_{ij}]_{q^{\pd{z}}}q^{r(\ell+\ell')\pd{z}}z\inv,\\
  &Y_\Delta\(\Delta(h_i),z\)^-(\vac\ot x_j^\pm)\label{eq:Y-Delta-1-3}
  =\pm \vac\ot x_j^\pm\ot
  [r_ia_{ij}]_{q^{\pd{z}}}q^{r(\ell+\ell')\pd{z}}z\inv,\\
  &Y_\Delta\(\Delta(h_i),z\)^-(E_\ell(h_j)\ot x_j^+)\label{eq:Y-Delta-1-4}
  =E_\ell(h_j)\ot x_j^+
  \ot [r_ia_{ij}]_{q^{\pd{z}}}q^{r(\ell+\ell')\pd{z}}q^{2r\ell\pd{z}}z\inv.
\end{align}
\end{lem}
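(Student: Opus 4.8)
The plan is to compute $Y_\Delta(\Delta(h_i),z)^-$ straight from the explicit vertex operator of the twisted tensor product $F_{\hat\g,\hbar}^{\ell}\wh\ot_{S_{\ell,\ell'}}F_{\hat\g,\hbar}^{\ell'}$ supplied by Lemma \ref{lem:vertex-op-K} (case $k=2$), combined with the splitting $\Delta(h_i)=q^{-r\ell'\partial}h_i\ot\vac+\vac\ot q^{r\ell\partial}h_i$ of \eqref{eq:def-Delta-h}. For $a\ot b,\, c\ot d\in F_{\hat\g,\hbar}^{\ell}\wh\ot F_{\hat\g,\hbar}^{\ell'}$ that lemma gives $Y_\Delta(a\ot b,z)(c\ot d)=\sum Y_{\wh\ell}(a,z)c_{(0)}\ot Y_{\wh{\ell'}}(b_{(0)},z)d$, where $S_{\ell,\ell'}(-z)(c\ot b)=\sum c_{(0)}\ot b_{(0)}$ (scalar coefficients absorbed). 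I would apply this to $a\ot b = q^{-r\ell'\partial}h_i\ot\vac$ and to $a\ot b = \vac\ot q^{r\ell\partial}h_i$ separately. In the first case $S_{\ell,\ell'}(-z)$ fixes $c\ot\vac$, so the contribution is simply $q^{-r\ell'\pd{z}}\big(Y_{\wh\ell}(h_i,z)c\big)\ot d$. In the second case one must evaluate $S_{\ell,\ell'}(-z)$ on $c\ot q^{r\ell\partial}h_i$; this is done by first pulling the translation $q^{r\ell\partial}$ out through $S_{\ell,\ell'}$ with Lemma \ref{lem:multqyb-shift-total}, and then applying the explicit formulas of Lemma \ref{lem:S-twisted} for $S_{\ell,\ell'}(z)(c\ot h_i)$ when $c=h_j$ or $c=x_j^\pm$, and \eqref{eq:S-E-1} when $c=E_\ell(h_j)$.

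The next step is to substitute the singular parts of the relevant fields of $F_{\hat\g,\hbar}^{\ell}$ and $F_{\hat\g,\hbar}^{\ell'}$: $Y_{\wh\ell}(h_i,z)^-h_j$ and $Y_{\wh\ell}(h_i,z)^-x_j^\pm$ from Lemma \ref{lem:Sing-Y}, and $Y_{\wh\ell}(h_i,z)^-E_\ell(h_j)$ from \eqref{eq:Y-action-1}. Here I would use that a translation $q^{c\pd{z}}$ commutes with the passage to the singular part, that $Y(\vac,z)=\te{Id}$ and $Y(v,z)\vac=e^{z\partial}v$ is regular (so all "$\vac$ in a tensor slot" pieces produced by $S_{\ell,\ell'}$ survive only through an elementary singular-part computation), and that, since $f(z)=zf_0(z)$ with $f_0\in 1+z^2\C[[z^2]]$, each quantity $\Sing_z\big(\pd{z}\big)^m\log f(-z-c\hbar)$ is a single monomial $q^{c\pd{z}}z^{-m-1}$ up to a sign and a constant. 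After this, the whole expression has the shape (a $q$-integer operator in $\pd{z}$) applied to $z^{-1}$ or $z^{-2}$.

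It remains to recombine those operators into the asserted closed forms, using the $q$-integer additivity identities already employed in the proof of Lemma \ref{lem:S3-1} — e.g. $[r\ell]_{q^{\pd{z}}}q^{-r\ell'\pd{z}}+[r\ell']_{q^{\pd{z}}}q^{r\ell\pd{z}}=[r(\ell+\ell')]_{q^{\pd{z}}}$ and its variants — together with the Lemma \ref{lem:special-tau-tech1}-type relations among $\wh{\ell,\ell'}_{ij}$ and ${\wh{\ell,\ell'}}_{ij}^{1,\pm},{\wh{\ell,\ell'}}_{ij}^{2,\pm}$. The main obstacle is exactly this final accounting: one has to track several superimposed translations at once (the built-in $q^{\mp r\ell'\partial},q^{r\ell\partial}$ from $\Delta$; the $q^{-r\ell\pd{z}},q^{-r\ell'\pd{z}}$ produced by Lemma \ref{lem:multqyb-shift-total}; and the $q^{r\ell\pd{z}},q^{r\ell'\pd{z}},q^{2r\ell\pd{z}}$ coming from Lemma \ref{lem:Sing-Y} and \eqref{eq:Y-action-1}), keep signs straight while isolating singular parts of reflected and translated rational functions, and verify that the $S_{\ell,\ell'}$-correction terms conspire to turn the $\ell$- (or $\ell'$-) dependence into $\ell+\ell'$. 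The delicate instance is \eqref{eq:Y-Delta-1-4}: there the correction arising from $S_{\ell,\ell'}(-z)$ on $E_\ell(h_j)\ot h_i$ has to combine with $Y_{\wh\ell}(h_i,z)^-E_\ell(h_j)$ — which already carries the factor $q^{2r\ell\pd{z}}$ — so that exactly the factor $q^{r(\ell+\ell')\pd{z}}q^{2r\ell\pd{z}}$ remains in the end.
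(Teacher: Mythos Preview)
Your proposal is correct and follows essentially the same approach as the paper's own proof: both start from the explicit formula $Y_\Delta(z)=Y_{\wh\ell}^{12}(z)Y_{\wh{\ell'}}^{34}(z)S_{\ell,\ell'}^{23}(-z)\sigma$ of Lemma~\ref{lem:vertex-op-K}, split $\Delta(h_i)$ via \eqref{eq:def-Delta-h}, push the translations through $S_{\ell,\ell'}$ with Lemma~\ref{lem:multqyb-shift-total}, evaluate $S_{\ell,\ell'}(-z)$ on the relevant tensors using Lemma~\ref{lem:S-twisted} and \eqref{eq:S-E-1}, insert the singular parts from Lemma~\ref{lem:Sing-Y} and \eqref{eq:Y-action-1}, and collapse the resulting operators in $\pd{z}$ via the $q$-integer additivity $[r\ell]_{q}q^{-r\ell'}+[r\ell']_{q}q^{r\ell}=[r(\ell+\ell')]_{q}$. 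Your identification of \eqref{eq:Y-Delta-1-4} as the delicate case, with the extra $q^{2r\ell\pd{z}}$ coming from \eqref{eq:Y-action-1} combining with the $S_{\ell,\ell'}$-correction from \eqref{eq:S-E-1}, matches the paper's computation exactly.
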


\begin{proof}
From Lemma \ref{lem:vertex-op-K}, we have that
\begin{align}\label{eq:Y-Delta}
  Y_\Delta(z)=Y_{\wh\ell}^{12}(z)Y_{\wh{\ell'}}^{34}(z)S_{\ell,\ell'}^{23}(-z)\sigma.
\end{align}
Recall from \eqref{eq:def-Delta-h} that $\Delta(h_i)=q^{-r\ell'\partial}h_i\ot\vac+\vac\ot q^{r\ell\partial}h_i$.
Then we have that
\begin{align*}
  &Y_\Delta\(\Delta(h_i),z\)^-\Delta(h_j)\\
  =&Y_\Delta\(q^{-r\ell'\partial}h_i\ot\vac+\vac\ot q^{r\ell\partial}h_i,z\)^-
    \(q^{-r\ell'\partial}h_j\ot\vac+\vac\ot q^{r\ell\partial}h_j\)\\
  =&Y_{\wh\ell}(q^{-r\ell'\partial}h_i,z)^-q^{-r\ell'\partial}h_j\ot\vac
  +\vac\ot Y_{\wh{\ell'}}(q^{r\ell\partial}h_i,z)^-q^{r\ell\partial}h_j\\
  &+\Sing_z Y_{\wh\ell}^{12}(z)Y_{\wh{\ell'}}^{34}(z)\vac
  \ot S_{\ell,\ell'}(-z)\(q^{-r\ell'\partial}h_j\ot q^{r\ell\partial}h_i\)\ot\vac\\
  =&Y_{\wh\ell}(q^{-r\ell'\partial}h_i,z)^-q^{-r\ell'\partial}h_j\ot\vac
  +\vac\ot Y_{\wh{\ell'}}(q^{r\ell\partial}h_i,z)^-q^{r\ell\partial}h_j\\
  &+\Sing_z Y_{\wh\ell}^{12}(z)Y_{\wh{\ell'}}^{34}(z)\vac
  \ot \(q^{-r\ell'\partial+r\ell'\pd{z}}\ot q^{r\ell\partial+r\ell\pd{z}}\) S_{\ell,\ell'}(-z)\(h_j\ot h_i\)\ot\vac\\
  =&Y_{\wh\ell}(q^{-r\ell'\partial}h_i,z)^-q^{-r\ell'\partial}h_j\ot\vac
  +\vac\ot Y_{\wh{\ell'}}(q^{r\ell\partial}h_i,z)^-q^{r\ell\partial}h_j\\
  &+\Sing_z Y_{\wh\ell}(\vac,z)q^{r\ell\partial}h_j\ot Y_{\wh{\ell'}}(q^{-r\ell'\partial}h_i,z)\vac+\Sing_z \vac\ot\vac\\
  &\quad\ot q^{r(\ell+\ell')\pd{z}}[r_ia_{ij}]_{q^{\pd{z}}}[r\ell]_{q^{\pd{z}}}[r\ell']_{q^{\pd{z}}}
  \(q^{-\pd{z}}-q^{\pd{z}}\)\pdiff{z}{2}\log f(z)\\
  =&\vac\ot\vac
  \ot[r_ia_{ij}]_{q^{\pd{z}}}q^{r(\ell+\ell')\pd{z}}\bigg([r\ell]_{q^{\pd{z}}}q^{-r\ell'\pd{z}}\\
  &\quad+[r\ell']_{q^{\pd{z}}}q^{-r\ell\pd{z}}
   +[r\ell]_{q^{\pd{z}}}\(q^{r\ell'\pd{z}}-q^{-r\ell'\pd{z}}\)
  \bigg)z^{-2}\\
  =&\vac\ot\vac\ot[r_ia_{ij}]_{q^{\pd{z}}}[r(\ell+\ell')]_{q^{\pd{z}}}q^{r(\ell+\ell')\pd{z}}z^{-2},
\end{align*}
where the third equation follows from Lemma \ref{lem:multqyb-shift-total},
the fourth equation follows from \eqref{eq:S-twisted-1}, and the fifth equation follows from \eqref{eq:Sing-Y-1}.
For the equation \eqref{eq:Y-Delta-1-2}, we have that
\begin{align*}
  &Y_\Delta\(\Delta(h_i),z\)^-(x_j^\pm\ot\vac)\\
  =&Y_\Delta\(q^{-r\ell'\partial}h_i\ot\vac+\vac\ot q^{r\ell\partial}h_i,z\)^-
  (x_j^\pm\ot\vac)\\
  =&\Sing_z Y_{\wh\ell}^{12}(z)Y_{\wh{\ell'}}^{34}(z)S_{\ell,\ell'}^{23}(-z)
  \(
  q^{-r\ell'\partial}h_i\ot x_j^\pm\ot\vac\ot\vac
  +\vac\ot x_j^\pm\ot q^{r\ell\partial}h_i\ot\vac\)\\
  =&\Sing_z Y_{\wh\ell}^{12}(z)Y_{\wh{\ell'}}^{34}(z)\bigg(
    q^{-r\ell'\partial}h_i\ot x_j^\pm\ot\vac\ot\vac
  +\vac\ot x_j^\pm\ot q^{r\ell\partial}h_i\ot\vac\\
  &\quad\pm\vac\ot x_j^\pm \ot \vac\ot \vac\ot q^{r\ell\pd{z}}[r_ia_{ij}]_{q^{\pd{z}}}\(q^{r\ell'\pd{z}}-q^{-r\ell'\pd{z}}\)
  \pd{z}\log f(z)
  \bigg)\\
  =&\pm x_j^\pm\ot\vac\ot [r_ia_{ij}]_{q^{\pd{z}}}\(q^{r(\ell-\ell')\pd{z}}+q^{r(\ell+\ell')\pd{z}}-q^{r(\ell-\ell')\pd{z}}\)z\inv\\
  =&\pm x_j^\pm\ot \vac\ot [r_ia_{ij}]_{q^{\pd{z}}}q^{r(\ell+\ell')\pd{z}}z\inv,
\end{align*}
where the third equation follows from Lemma \ref{lem:multqyb-shift-total} and \eqref{eq:S-twisted-2},
and the forth equation follows from \eqref{eq:Sing-Y-2}.
From the equation \eqref{eq:Sing-Y-2} we get that
\begin{align*}
  &Y_\Delta\(\Delta(h_i),z\)^-(\vac\ot x_j^\pm)\\
  =&Y_\Delta\(q^{-r\ell'\partial}h_i\ot\vac+\vac\ot q^{r\ell\partial}h_i,z\)(\vac\ot x_j^\pm)^-\\
  =&\Sing_z Y_{\wh\ell}^{12}(z)Y_{\wh{\ell'}}^{34}(z)S_{\ell,\ell'}^{23}(-z)
  \(
  q^{-r\ell'\partial}h_i\ot\vac\ot\vac\ot x_j^\pm
  +\vac\ot \vac \ot q^{r\ell\partial}h_i\ot x_j^\pm\)\\
  =&\vac\ot Y_{\wh{\ell'}}(q^{r\ell\partial}h_i,z)^-x_j^\pm
  =\pm \vac\ot x_j^\pm\ot [r_ia_{ij}]_{q^{\pd{z}}}q^{r(\ell+\ell')\pd{z}}z\inv,
\end{align*}
Finally, we have that
\begin{align*}
  &Y_\Delta\(\Delta(h_i),z\)^-\(E_\ell(h_j)\ot x_j^+\)\\
  =&Y_\Delta\(q^{-r\ell'\partial}h_i\ot\vac+\vac\ot q^{r\ell\partial}h_i,z\)^-
    \(E_\ell(h_j)\ot x_j^+\)\\
  =&\Sing_z Y_{\wh\ell}^{12}(z)Y_{\wh{\ell'}}^{34}(z)S_{\ell,\ell'}^{23}(-z)
  \(
    q^{-r\ell'\partial}h_i\ot E_\ell(h_j)\ot \vac\ot x_j^+
    +\vac\ot E_\ell(h_j)\ot q^{r\ell\partial}h_i\ot x_j^+
  \)\\
  =&Y_{\wh\ell}^{12}(z)Y_{\wh{\ell'}}^{34}(z)\bigg(
    q^{-r\ell'\partial}h_i\ot E_\ell(h_j)\ot \vac\ot x_j^+
    +\vac\ot E_\ell(h_j)\ot q^{r\ell\partial}h_i\ot x_j^+\\
  &
  +\vac\ot E_\ell(h_j)\ot \vac\ot x_j^+
  \ot [r_ia_{ij}]_{q^{\pd{z}}}
  \(q^{r\ell\pd{z}}-q^{-r\ell\pd{z}}\)
  \(q^{r\ell'\pd{z}}-q^{-r\ell'\pd{z}}\)q^{2r\ell\pd{z}}\pd{z}\log f(z)
  \bigg)\\
  =&E_\ell(h_j)\ot x_j^+\\
  &\ot [r_ia_{ij}]_{q^{\pd{z}}}q^{r(\ell+\ell')\pd{z}}
  \(q^{2r(\ell-\ell')\pd{z}}-q^{-2r\ell'\pd{z}} +1+\(q^{2r\ell\pd{z}}-1\)\(1-q^{-2r\ell'\pd{z}}\) \)z\inv\\
  =&E_\ell(h_j)\ot x_j^+\ot [r_ia_{ij}]_{q^{\pd{z}}}q^{r(\ell+\ell')\pd{z}}q^{2r\ell\pd{z}}z\inv,
\end{align*}
where third equation follows from Lemma \ref{lem:multqyb-shift-total} and \eqref{eq:S-E-1},
and the forth equation follows from \eqref{eq:Sing-Y-2} and \eqref{eq:Y-action-1}.
\end{proof}

From the definition \eqref{eq:def-Delta-x+}, \eqref{eq:def-Delta-x-} and the equations \eqref{eq:Y-Delta-1-2}, \eqref{eq:Y-Delta-1-3} and \eqref{eq:Y-Delta-1-4}, we immediately get the following result.
\begin{lem}\label{lem:Y-Delta-h-x}
For $i,j\in I$, we have that
\begin{align*}
  Y_\Delta\(\Delta(h_i),z\)^-\Delta(x_j^\pm)=\pm \Delta(x_j^\pm)\ot [r_ia_{ij}]_{q^{\pd{z}}}q^{r(\ell+\ell')\pd{z}}z\inv.
\end{align*}
\end{lem}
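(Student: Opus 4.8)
The plan is to read the identity off directly from Lemma~\ref{lem:Y-Delta-1} by expanding $\Delta(x_j^\pm)$ according to \eqref{eq:def-Delta-x+}--\eqref{eq:def-Delta-x-} and using that $Y_\Delta(\Delta(h_i),z)^-$ is $\C[[\hbar]]$-linear. First I would settle the ``$-$'' case. Here $\Delta(x_j^-)=x_j^-\ot\vac+\vac\ot x_j^-$, so \eqref{eq:Y-Delta-1-2} applied to $x_j^-\ot\vac$ and \eqref{eq:Y-Delta-1-3} applied to $\vac\ot x_j^-$ each produce the same coefficient $-[r_ia_{ij}]_{q^{\pd{z}}}q^{r(\ell+\ell')\pd{z}}z\inv$, and adding the two contributions gives exactly $-\Delta(x_j^-)\ot[r_ia_{ij}]_{q^{\pd{z}}}q^{r(\ell+\ell')\pd{z}}z\inv$.

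For the ``$+$'' case I would expand $\Delta(x_j^+)=x_j^+\ot\vac+\big(q^{2r\ell\partial}\ot q^{2r\ell\partial}\big)\big(E_\ell(h_j)\ot x_j^+\big)$. The summand $x_j^+\ot\vac$ is handled by \eqref{eq:Y-Delta-1-2} exactly as above. For the second summand the point is to move the shift operator $q^{2r\ell\partial}\ot q^{2r\ell\partial}$ past $Y_\Delta(\Delta(h_i),z)^-$. Since $\partial\ot 1+1\ot\partial$ is the canonical derivation of the twisted tensor product $F_{\hat\g,\hbar}^{\ell}\wh\ot F_{\hat\g,\hbar}^{\ell'}$ (as one checks from the vacuum property of $S_{\ell,\ell'}$), translation covariance of $Y_\Delta$ gives $[\partial\ot 1+1\ot\partial,\,Y_\Delta(a,z)]=\tfrac{d}{dz}Y_\Delta(a,z)$, and this passes to singular parts because $\big(\tfrac{d}{dz}g\big)^-=\tfrac{d}{dz}(g^-)$; hence
\[
  Y_\Delta(a,z)^-\big(q^{2r\ell\partial}\ot q^{2r\ell\partial}\big)=\big(q^{2r\ell\partial}\ot q^{2r\ell\partial}\big)\,q^{-2r\ell\pd{z}}\,Y_\Delta(a,z)^-.
\]
Applying this with $a=\Delta(h_i)$ and then invoking \eqref{eq:Y-Delta-1-4}, the factor $q^{2r\ell\pd{z}}$ occurring there is cancelled by the $q^{-2r\ell\pd{z}}$ produced by the shift, so the second summand contributes $\big(q^{2r\ell\partial}E_\ell(h_j)\ot q^{2r\ell\partial}x_j^+\big)\ot[r_ia_{ij}]_{q^{\pd{z}}}q^{r(\ell+\ell')\pd{z}}z\inv$. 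Adding the two summands reconstitutes $\Delta(x_j^+)$ and yields $\Delta(x_j^+)\ot[r_ia_{ij}]_{q^{\pd{z}}}q^{r(\ell+\ell')\pd{z}}z\inv$, as claimed.

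I do not expect a genuine obstacle: every step is a direct application of a formula already established in Lemma~\ref{lem:Y-Delta-1} together with $\C[[\hbar]]$-linearity. The only point deserving a line of justification is the interchange of $\tfrac{d}{dz}$ with the singular-part projection $(\cdot)^-$, which is what licenses pulling the shift operators $q^{2r\ell\partial}\ot q^{2r\ell\partial}$ implicit in $\Delta(x_j^+)$ past $Y_\Delta(\Delta(h_i),z)^-$; the shape of \eqref{eq:Y-Delta-1-4}, with its built-in factor $q^{2r\ell\pd{z}}$, is arranged precisely so that this manipulation comes out clean.
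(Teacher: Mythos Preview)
Your argument is correct and is exactly the approach implicit in the paper: the paper merely asserts that the lemma is immediate from the definitions \eqref{eq:def-Delta-x+}, \eqref{eq:def-Delta-x-} together with \eqref{eq:Y-Delta-1-2}, \eqref{eq:Y-Delta-1-3}, \eqref{eq:Y-Delta-1-4}, and you have written out precisely those details, including the shift step needed to pass $q^{2r\ell\partial}\ot q^{2r\ell\partial}$ through $Y_\Delta(\Delta(h_i),z)^-$. The justification you give for that step (translation covariance with respect to the canonical derivation $\partial\ot 1+1\ot\partial$ of the twisted tensor product, combined with the observation that $(\cdot)^-$ commutes with $\tfrac{d}{dz}$ and hence with $q^{c\pd{z}}$ viewed as a formal series in $\hbar$) is sound.
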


\begin{lem}\label{lem:Y-Delta-x+x-}
In $V_{\hat\g,\hbar}^\ell\wh\ot V_{\hat\g,\hbar}^{\ell'}$, we have that ($i\in I$):
\begin{align*}
  &Y_\Delta\(\Delta(x_i^+),z\)^-\Delta(x_i^-)\\
  =&\frac{1}{q^{r_i}-q^{-r_i}}\(\vac\ot\vac z\inv-q^{-2r\ell'\partial}E_\ell(h_i)\ot E_{\ell'}(h_i)(z+2r(\ell+\ell')\hbar)\inv\).
\end{align*}
\end{lem}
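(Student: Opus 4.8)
The plan is to unravel the left side using the explicit formula \eqref{eq:Y-Delta}, $Y_\Delta(z)=Y_{\wh\ell}^{12}(z)Y_{\wh{\ell'}}^{34}(z)S_{\ell,\ell'}^{23}(-z)\sigma$, for the vertex operator of the twisted tensor product, after substituting the defining formulas $\Delta(x_i^+)=x_i^+\ot\vac+q^{2r\ell\partial}E_\ell(h_i)\ot q^{2r\ell\partial}x_i^+$ and $\Delta(x_i^-)=x_i^-\ot\vac+\vac\ot x_i^-$. Bilinearity splits $Y_\Delta(\Delta(x_i^+),z)^-\Delta(x_i^-)$ into four summands, one for each pairing of a term of $\Delta(x_i^+)$ with a term of $\Delta(x_i^-)$. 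A convenient preliminary observation is that the canonical derivation of $F_{\hat\g,\hbar}^{\ell}\wh\ot F_{\hat\g,\hbar}^{\ell'}$ is $\partial\ot1+1\ot\partial$, so $q^{2r\ell\partial}E_\ell(h_i)\ot q^{2r\ell\partial}x_i^+=q^{2r\ell\partial}(E_\ell(h_i)\ot x_i^+)$; hence the two summands arising from the second term of $\Delta(x_i^+)$ may be written as $q^{2r\ell\pd{z}}Y_\Delta(E_\ell(h_i)\ot x_i^+,z)(\cdot)$ applied to $x_i^-\ot\vac$ and to $\vac\ot x_i^-$, which keeps the shift operator on the outside and makes the $S_{\ell,\ell'}$-computations transparent.

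Three of the four summands are easy. First, $Y_\Delta(x_i^+\ot\vac,z)^-(\vac\ot x_i^-)=Y_{\wh\ell}(x_i^+,z)^-\vac\ot Y_{\wh{\ell'}}(\vac,z)x_i^-=0$, since $Y_{\wh\ell}(x_i^+,z)\vac$ is regular. Second, in $q^{2r\ell\pd{z}}Y_\Delta(E_\ell(h_i)\ot x_i^+,z)(x_i^-\ot\vac)$ the operator $S_{\ell,\ell'}^{23}(-z)$ acts on $x_i^-\ot x_i^+$ and by \eqref{eq:S-twisted-4} only scales by a power of $f(-z)$ that is regular in $z$, while $Y_{\wh{\ell'}}(x_i^+,z)\vac$ is regular and $Y_{\wh\ell}(E_\ell(h_i),z)x_i^-$ is regular by \eqref{eq:Y-action-3}; so the product lies in $V[[z]]$, and applying $q^{2r\ell\pd{z}}$ preserves this, whence the singular part is $0$. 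The third summand $Y_\Delta(x_i^+\ot\vac,z)^-(x_i^-\ot\vac)=Y_{\wh\ell}(x_i^+,z)^-x_i^-\ot\vac$ is evaluated using the relation $A_i(z)=0$ in $V_{\hat\g,\hbar}^{\ell}$ (Proposition \ref{prop:ideal-def-alt}), i.e. $Y_{\wh\ell}(x_i^+,z)^-x_i^-=\frac{1}{q^{r_i}-q^{-r_i}}\(\frac{\vac}{z}-\frac{E_\ell(h_i)}{z+2r\ell\hbar}\)$, giving $\frac{1}{q^{r_i}-q^{-r_i}}\(\vac\ot\vac\,z\inv-E_\ell(h_i)\ot\vac\,(z+2r\ell\hbar)\inv\)$.

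The fourth summand $q^{2r\ell\pd{z}}Y_\Delta(E_\ell(h_i)\ot x_i^+,z)(\vac\ot x_i^-)$ carries the real content. Unraveling \eqref{eq:Y-Delta}: $S_{\ell,\ell'}^{23}(-z)$ fixes $\vac\ot x_i^+$ by \eqref{eq:multqyb-vac}, $Y_{\wh\ell}(E_\ell(h_i),z)\vac=e^{z\partial}E_\ell(h_i)$, and (working now in $V_{\hat\g,\hbar}^{\ell'}$, where $A_i(z)=0$) $Y_{\wh{\ell'}}(x_i^+,z)x_i^-$ has singular part $\frac{1}{q^{r_i}-q^{-r_i}}\(\frac{\vac}{z}-\frac{E_{\ell'}(h_i)}{z+2r\ell'\hbar}\)$; thus this summand is $\Sing_z\,q^{2r\ell\pd{z}}\(e^{z\partial}E_\ell(h_i)\ot Y_{\wh{\ell'}}(x_i^+,z)x_i^-\)$, and the extraction of $\Sing_z$ must be carried out \emph{after} applying $q^{2r\ell\pd{z}}$, since the shift operator does not commute with $\Sing_z$. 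The regular part of $Y_{\wh{\ell'}}(x_i^+,z)x_i^-$ drops out; for the pole terms, expanding $e^{z\partial}E_\ell(h_i)=\sum_{n\ge0}\frac{z^n}{n!}\partial^nE_\ell(h_i)$ and $\iota_{z,\hbar}(z+2r\ell'\hbar)\inv=\sum_{k\ge0}(-2r\ell'\hbar)^kz^{-k-1}$, then retaining the strictly negative powers of $z$ after the shift and resumming the resulting $\hbar$-geometric series, one obtains
\[
\Sing_z\,q^{2r\ell\pd{z}}\(e^{z\partial}E_\ell(h_i)\ot E_{\ell'}(h_i)\,(z+2r\ell'\hbar)\inv\)=q^{-2r\ell'\partial}E_\ell(h_i)\ot E_{\ell'}(h_i)\,(z+2r(\ell+\ell')\hbar)\inv,
\]
and, with $E_{\ell'}(h_i)$ replaced by $\vac$ and $\ell'$ by $0$, the $\vac/z$ piece gives $E_\ell(h_i)\ot\vac\,(z+2r\ell\hbar)\inv$. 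Hence the fourth summand equals $\frac{1}{q^{r_i}-q^{-r_i}}\(E_\ell(h_i)\ot\vac\,(z+2r\ell\hbar)\inv-q^{-2r\ell'\partial}E_\ell(h_i)\ot E_{\ell'}(h_i)\,(z+2r(\ell+\ell')\hbar)\inv\)$. Adding this to the third summand, the two $E_\ell(h_i)\ot\vac\,(z+2r\ell\hbar)\inv$ terms cancel and the stated identity follows. The main obstacle is exactly this shift-and-singular-part computation: being disciplined that $q^{2r\ell\pd{z}}$ is applied to the whole series before $\Sing_z$ is taken, and recognizing how the geometric series in $\hbar$ reassembles into $(z+2r(\ell+\ell')\hbar)\inv$.
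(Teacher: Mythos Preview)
Your overall strategy matches the paper's: expand $\Delta(x_i^\pm)$, apply \eqref{eq:Y-Delta}, and treat the four cross-terms separately; the third summand and the fourth summand combine exactly as you describe, via the relation $A_i(z)=0$ in $V_{\hat\g,\hbar}^{\ell}$ and in $V_{\hat\g,\hbar}^{\ell'}$, and the $E_\ell(h_i)\ot\vac\,(z+2r\ell\hbar)^{-1}$ terms cancel. The final answer and the cancellation are correct.

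There is, however, a real gap in your argument that the second summand vanishes. You claim that the factor coming from $S_{\ell,\ell'}^{23}(-z)$ on $x_i^-\ot x_i^+$, namely $f(-z)^{q^{2r_i}-q^{-2r_i}}=f(z)^{q^{-2r_i}-q^{2r_i}}$, is regular in $z$, and that $Y_{\wh\ell}(E_\ell(h_i),z)x_i^-$ is regular by \eqref{eq:Y-action-3}. Neither is true: $f(z)^{q^{-2r_i}-q^{2r_i}}=\dfrac{f(z-2r_i\hbar)}{f(z+2r_i\hbar)}$ has a genuine singular part in $\C((z))[[\hbar]]$ (already at order $\hbar$ it contributes $4r_i\hbar\,z^{-1}$), and by \eqref{eq:Y-action-3} one has $Y_{\wh\ell}(E_\ell(h_i),z)x_i^-=\exp\bigl(\wt h_i^+(z)\bigr)x_i^-\cdot f(z)^{q^{2r_i}-q^{-2r_i}}$, which carries the \emph{inverse} singular factor. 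The correct reason this summand vanishes is that these two $f$-factors cancel, leaving $\exp\bigl(\wt h_i^+(z)\bigr)x_i^-\ot e^{z\partial}x_i^+$, which \emph{is} regular; after the shift $q^{2r\ell\pd{z}}$ it stays regular and $\Sing_z$ kills it. This is exactly how the paper handles that term (it writes the regular expression and then drops it). So fix this justification: the vanishing comes from a cancellation of singular factors, not from each piece being regular.

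A minor remark: your caution that $\Sing_z$ must be taken only after applying $q^{2r\ell\pd{z}}$ is unnecessary. Since the shift is by an element of $\hbar\C[[\hbar]]$, for $n\ge0$ the power $(z+2r\ell\hbar)^n$ is a polynomial in $z$ and for $n<0$ its $\iota_{z,\hbar}$-expansion has only negative powers of $z$; hence $q^{2r\ell\pd{z}}$ commutes with $\Sing_z$. The paper exploits this implicitly by writing $e^{(z+2r\ell\hbar)\partial}E_\ell(h_i)\ot(\cdots)$ and reading off the singular part directly, which is a cleaner route than your expand-and-resum computation (though yours also arrives at the right place).
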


\begin{proof}
We have that
\begin{align*}
  &Y_\Delta\(x_i^+\ot\vac+q^{2r\ell\partial}E_\ell(h_i)\ot q^{2r\ell\partial}x_i^+,z\)^-\(x_i^-\ot\vac+\vac\ot x_i^-\)\\
  =&\Sing_z Y_{\wh\ell}^{12}(z)Y_{\wh{\ell'}}^{34}(z)S_{\ell,\ell'}^{23}(-z)
  \bigg(
    x_i^+\ot x_i^-\ot\vac\ot\vac +q^{2r\ell\partial}E_\ell(h_i)\ot x_i^-\ot q^{2r\ell\partial}x_i^+\ot \vac\\
  &\quad +x_i^+\ot\vac\ot\vac\ot x_i^-
    +q^{2r\ell\partial}E_\ell(h_i)\ot \vac\ot q^{2r\ell\partial}x_i^+\ot x_i^-
  \bigg)\\
  =&\Sing_z Y_{\wh\ell}^{12}(z)Y_{\wh{\ell'}}^{34}(z)
  \bigg(
    x_i^+\ot x_i^-\ot\vac\ot\vac\\
 &\quad   +q^{2r\ell\partial}E_\ell(h_i)\ot x_i^-\ot q^{2r\ell\partial}x_i^+\ot \vac
    \ot f(z)^{ (q^{-2r_i}-q^{2r_i})q^{2r\ell} }\\
 &\quad   +x_i^+\ot\vac\ot\vac\ot x_i^-
    +q^{2r\ell\partial}E_\ell(h_i)\ot \vac\ot q^{2r\ell\partial}x_i^+\ot x_i^-
  \bigg)\\
  =&\frac{1}{q^{r_i}-q^{-r_i}}\(\vac\ot\vac z\inv- E_\ell(h_i)\ot\vac(z+2r\ell\hbar)\inv\)\\
  &+\Sing_z\exp\(\wt h_i(z+2r\ell\hbar)\)x_i^-\ot e^{(z+2r\ell\hbar)\partial}x_i^+\\
  &+\frac{1}{q^{r_i}-q^{-r_i}}\Sing_z\Big(e^{(z+2r\ell\hbar)\partial}E_\ell(h_i)\ot \vac(z+2r\ell\hbar)\inv\\
  &\quad  -e^{(z+2r\ell\hbar)\partial}E_\ell(h_i)\ot E_{\ell'}(h_i)(z+2r(\ell+\ell')\hbar)\inv
  \Big)\\
  =&\frac{1}{q^{r_i}-q^{-r_i}}\Big(\vac\ot\vac z\inv
  -E_\ell(h_i)\ot\vac(z+2r\ell\hbar)\inv+E_\ell(h_i)\ot\vac(z+2r\ell\hbar)\inv\\
  &\quad-q^{-2r\ell'\partial}E_\ell(h_i)\ot E_{\ell'}(h_i)(z+2r(\ell+\ell')\hbar)\inv\Big)\\
  =&\frac{1}{q^{r_i}-q^{-r_i}}\(\vac\ot\vac z\inv
  -q^{-2r\ell'\partial}E_\ell(h_i)\ot E_{\ell'}(h_i)(z+2r(\ell+\ell')\hbar)\inv\).
\end{align*}
where the first equation follows from \eqref{eq:Y-Delta} and the definition of $\Delta(x_i^\pm)$ (see \eqref{eq:def-Delta-x+}, \eqref{eq:def-Delta-x-}),
the second equation follows from Lemma \ref{lem:multqyb-shift-total} and \eqref{eq:S-E-4},
and the third equation follows from Proposition \ref{prop:ideal-def-alt} and \eqref{eq:Y-action-3}.
\end{proof}

\begin{lem}\label{lem:E-ell+}
For any $i\in I$, we have that
\begin{align*}
  E_{\ell+\ell'}\(\Delta(h_i)\)
  =q^{-2r\ell'\partial} E_\ell(h_i)\ot E_{\ell'}(h_i).
\end{align*}
\end{lem}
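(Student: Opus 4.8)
The plan is to unwind the definition \eqref{eq:def-E-h} of $E_{\ell+\ell'}\(\Delta(h_i)\)$ inside the twisted tensor product $F_{\hat\g,\hbar}^{\ell}\wh\ot F_{\hat\g,\hbar}^{\ell'}$ and reduce the asserted identity to a normal-ordering computation of Baker--Campbell--Hausdorff type together with a bookkeeping of scalars. By \eqref{eq:def-E-h}, applied with level $\ell+\ell'$ and Cartan element $\Delta(h_i)$,
\begin{align*}
  E_{\ell+\ell'}\(\Delta(h_i)\)=\(\frac{f_0\(2r_i\hbar+2r(\ell+\ell')\hbar\)}{f_0\(2r_i\hbar-2r(\ell+\ell')\hbar\)}\)^\half
  \exp\(\(-q^{-r(\ell+\ell')\partial}2\hbar f_0\(2\partial\hbar\)\Delta(h_i)\)_{-1}\)(\vac\ot\vac),
\end{align*}
where all modes are those of $Y_\Delta$. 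Since $\partial$ acts on $F_{\hat\g,\hbar}^{\ell}\wh\ot F_{\hat\g,\hbar}^{\ell'}$ as $\partial\ot1+1\ot\partial$ and annihilates $\vac$ in either tensor slot, any power series $g(\partial)$ satisfies $g(\partial)(u\ot\vac)=(g(\partial)u)\ot\vac$ and $g(\partial)(\vac\ot v)=\vac\ot(g(\partial)v)$; combining this with $\Delta(h_i)=q^{-r\ell'\partial}h_i\ot\vac+\vac\ot q^{r\ell\partial}h_i$ from \eqref{eq:def-Delta-h} and the relations $q^{-r(\ell+\ell')\partial}q^{-r\ell'\partial}=q^{-2r\ell'\partial}q^{-r\ell\partial}$ and $q^{-r(\ell+\ell')\partial}q^{r\ell\partial}=q^{-r\ell'\partial}$, I would rewrite the exponent as $w=\alpha\ot\vac+\vac\ot\beta$ with $\alpha=q^{-2r\ell'\partial}a\in F_{\hat\g,\hbar}^{\ell}$ and $\beta=a'\in F_{\hat\g,\hbar}^{\ell'}$, where $a=-q^{-r\ell\partial}2\hbar f_0(2\partial\hbar)h_i$ and $a'=-q^{-r\ell'\partial}2\hbar f_0(2\partial\hbar)h_i$ are the exponent elements appearing in $E_\ell(h_i)$ and $E_{\ell'}(h_i)$ respectively. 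Because the translation operators $q^{c\partial}$ intertwine all the $n$-th products and kill $\vac$, the right-hand side of the lemma then equals $C^{(\ell)}C^{(\ell')}\(\exp(\alpha_{-1})\vac\)\ot\(\exp(\beta_{-1})\vac\)$, where $C^{(m)}=\(f_0(2r_i\hbar+2rm\hbar)/f_0(2r_i\hbar-2rm\hbar)\)^{\half}$.

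It then remains to evaluate $\exp(w_{-1})(\vac\ot\vac)$ in $F_{\hat\g,\hbar}^{\ell}\wh\ot F_{\hat\g,\hbar}^{\ell'}$. From \eqref{eq:Y-Delta}, that is $Y_\Delta(z)=Y_{\wh\ell}^{12}(z)Y_{\wh{\ell'}}^{34}(z)S_{\ell,\ell'}^{23}(-z)\sigma$, together with the vacuum property $S_{\ell,\ell'}(z)(v\ot\vac)=v\ot\vac$ of \eqref{eq:multqyb-vac}, one checks that $Y_\Delta(\alpha\ot\vac,z)=Y_{\wh\ell}(\alpha,z)\ot1$, so $(\alpha\ot\vac)_{-1}=\alpha_{-1}\ot1$ acts diagonally and $\exp\((\alpha\ot\vac)_{-1}\)(\vac\ot\vac)=\(\exp(\alpha_{-1})\vac\)\ot\vac$. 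The operator $(\vac\ot\beta)_{-1}$ is genuinely twisted, but because $\alpha$ and $\beta$ are built out of the $h_i$'s, the only obstruction to the commutativity of $(\alpha\ot\vac)_{-1}$ and $(\vac\ot\beta)_{-1}$ comes from the $\vac\ot\vac$-valued correction of $S_{\ell,\ell'}(z)$ on $h_i\ot h_i$ recorded in \eqref{eq:S-twisted-1}. Propagating this correction through the action of $S_{\ell,\ell'}(z)$ on exponentials of $h_i$'s (Lemmas \ref{lem:S-special-tech-gen2}--\ref{lem:S-special-tech-gen4}) and running the same Baker--Campbell--Hausdorff argument as in Lemma \ref{lem:exp-cal}, one obtains
\begin{align*}
  \exp(w_{-1})(\vac\ot\vac)=c\cdot\(\exp(\alpha_{-1})\vac\)\ot\(\exp(\beta_{-1})\vac\)
\end{align*}
for an explicit scalar $c\in\C[[\hbar]]^\times$ which is a product of $\half\Res$-type contributions determined by ${\wh{\ell,\ell'}}_{ii}(z)$ and Lemma \ref{lem:special-tau-tech1}, exactly as in the proof of Proposition \ref{prop:Y-E}.

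The main obstacle is the final step: matching the scalars. One must verify that $c$ times the prefactor $C^{(\ell+\ell')}=\(f_0(2r_i\hbar+2r(\ell+\ell')\hbar)/f_0(2r_i\hbar-2r(\ell+\ell')\hbar)\)^{\half}$ of $E_{\ell+\ell'}(\Delta(h_i))$ equals $C^{(\ell)}C^{(\ell')}$ --- equivalently, an identity among the six numbers $f_0(2r_i\hbar\pm2r\ell\hbar)$, $f_0(2r_i\hbar\pm2r\ell'\hbar)$, $f_0(2r_i\hbar\pm2r(\ell+\ell')\hbar)$. This is the three-parameter analogue of the residue computation $\Res_z z\inv\gamma(-z)=\log\(f_0(2r_i\hbar-2r\ell\hbar)/f_0(2r_i\hbar+2r\ell\hbar)\)$ carried out in the proof of Proposition \ref{prop:Y-E}, and I expect it to follow from the same manipulation of $\Res_z z\inv\log f_0(z)^{(\cdots)}$ once the exponents produced by Lemma \ref{lem:special-tau-tech1} are collected. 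Assembling the three displayed identities then yields $E_{\ell+\ell'}\(\Delta(h_i)\)=q^{-2r\ell'\partial}E_\ell(h_i)\ot E_{\ell'}(h_i)$.
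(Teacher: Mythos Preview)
Your approach is correct and essentially identical to the paper's: both split the exponent as $a+b$ with $a=\alpha\ot\vac$ and $b=\vac\ot\beta$ exactly as you do, compute $[a_{-1},b_{-1}]$ as a central scalar via the $S$-locality of the twisted tensor product (the paper does this concretely by evaluating $S_\Delta(z)(b\ot a)$ and invoking Remark~\ref{rem:Jacobi-S}), and then apply Baker--Campbell--Hausdorff. The scalar identity you flag as the remaining obstacle is precisely what the paper checks: the residue computation yields $[a_{-1},b_{-1}]=\log\dfrac{f_0(2(r_i+r\ell+r\ell')\hbar)\,f_0(2(r_i-r\ell)\hbar)\,f_0(2(r_i-r\ell')\hbar)}{f_0(2(r_i-r\ell-r\ell')\hbar)\,f_0(2(r_i+r\ell)\hbar)\,f_0(2(r_i+r\ell')\hbar)}$, so that $C^{(\ell+\ell')}\exp\(-\tfrac12[a_{-1},b_{-1}]\)=C^{(\ell)}C^{(\ell')}$ as you anticipated.
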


\begin{proof}
Set
\begin{align*}
    &a=-q^{-r(\ell+2\ell')\partial}2\hbar f_0(2\hbar\partial) h_i\ot \vac,\quad
    b=-\vac\ot q^{-r\ell'\partial}2\hbar f_0(2\hbar\partial) h_i
\end{align*}
and $\wt h_i=a+b$.
Recall from \eqref{eq:def-S-Delta} that $S_\Delta(z)=S_{\ell',\ell}^{23}(z)S_{\ell,\ell}^{13}(z)S_{\ell',\ell'}^{24}(z)S_{\ell,\ell'}^{14}(z)$.
Then
\begin{align*}
  &S_\Delta(z)(b\ot a)\\
  =&S_{\ell',\ell}^{23}(z)S_{\ell,\ell}^{13}(z)S_{\ell',\ell'}^{24}(z)S_{\ell,\ell'}^{14}(z)
  \(\vac\ot q^{-r\ell'\partial}2\hbar f_0(2\hbar\partial)h_i\ot q^{-r(\ell+2\ell')\partial}2\hbar f_0(2\hbar\partial)h_i\ot \vac\)\\
  =&\bigg(1\ot q^{-r\ell'\partial-r\ell'\pd{z}}2\hbar f_0\(2\hbar\(\partial+\pd{z}\)\)\ot q^{-r(\ell+2\ell')\partial+r(\ell+2\ell')\pd{z}}\\
  &\times 2\hbar f_0\(2\hbar\(\partial-\pd{z}\)\)\ot 1\bigg)
  S_{\ell',\ell}^{23}(z)(\vac\ot h_i\ot h_i\ot\vac)\\
  =&b\ot a+\vac\ot\vac\ot\vac\ot\vac\ot q^{r(\ell+\ell')\pd{z}}4\hbar^2f_0\(2\hbar\pd{z}\)^2
   \pdiff{z}{2}\log f(z)^{ [2r_i]_{q} [r\ell]_{q}
  [r\ell']_{q}(q-q\inv) }\\
  =&b\ot a+\vac\ot\vac\ot\vac\ot\vac\ot
  \log f(z)^{ (q^{2r_i}-q^{-2r_i})(q^{2r\ell}-1)(q^{2r\ell'}-1) },
\end{align*}
where the second equation follows from Lemma \ref{lem:multqyb-shift-total} and
the third equation follows from \eqref{eq:S-twisted-1}.
Then
\begin{align*}
  &[Y_\Delta(a,z_1),Y_\Delta(b,z_2)]w\\
  =&Y_\Delta(a,z_1)Y_\Delta(b,z_2)w-Y_\Delta(z_2)Y_\Delta^{23}(z_1)S_\Delta^{12}(z_2-z_1)(b\ot a\ot w)\\
  &+Y_\Delta(z_2)Y_\Delta^{23}(z_1)S_\Delta^{12}(z_2-z_1)(b\ot a\ot w)-Y_\Delta(b,z_2)Y_\Delta(a,z_1)w\\
  =&Y_\Delta(Y_\Delta(a,z_1-z_2)b-Y_\Delta(a,-z_2+z_1)b,z_2)\\
  &+Y_\Delta(z_2)Y_\Delta^{23}(z_1)S_\Delta^{12}(z_2-z_1)(b\ot a\ot w)-Y_\Delta(b,z_2)Y_\Delta(a,z_1)w\\
  =&Y_\Delta(z_2)Y_\Delta^{23}(z_1)\(S_\Delta^{12}(z_2-z_1)(b\ot a\ot w)-b\ot a\ot w\)\\
  =&w\ot \iota_{z_2,z_1}\log f(z_2-z_1)^{ (q^{2r_i}-q^{-2r_i})(q^{2r\ell}-1)
  (q^{2r\ell'}-1) },
\end{align*}
where the second equation follows from \cite[(2.25)]{Li-h-adic}.
Applying $\Res_{z_1,z_2}z_1\inv z_2\inv$ on both hand sides, we get that
\begin{align*}
  &[a_{-1},b_{-1}]\\
  =&\Res_{z_1,z_2}z_1\inv z_2\inv
  \log f(z_2-z_1)^{ (q^{2r_i}-q^{-2r_i})(q^{2r\ell}-1)
  (q^{2r\ell'}-1) }\\
  =&\Res_{z_2}z_2\inv \log f(z_2)^{ (q^{2r_i}-q^{-2r_i})(q^{2r\ell}-1)
  (q^{2r\ell'}-1) }\\
  =&\Res_{z_2}z_2\inv \log f_0(z_2)^{ (q^{2r_i}-q^{-2r_i})(q^{2r\ell}-1)
  (q^{2r\ell'}-1) }\\
  =&\log \frac{f_0(2(r_i+r\ell+r\ell')\hbar)f_0(2r_i\hbar) }{f_0(2(r_i+r\ell)\hbar)f_0(2(r_i+r\ell')\hbar) }
  \frac{f_0(2(-r_i+r\ell)\hbar)f_0(2(-r_i+r\ell')\hbar) }{f_0(2(-r_i+r\ell+r\ell')\hbar)f_0(-2r_i\hbar) }\\
  =&\log \frac{f_0(2(r_i+r\ell+r\ell')\hbar)}{f_0(2(r_i-r\ell-r\ell')\hbar) }
  \frac{f_0(2(r_i-r\ell)\hbar) }{f_0(2(r_i+r\ell)\hbar)}
  \frac{f_0(2(r_i-r\ell')\hbar)}{f_0(2(r_i+r\ell')\hbar) }.
\end{align*}
From Baker-Campbell-Hausdorff formula, we have that
\begin{align*}
  &E_{\ell+\ell'}\(\Delta(h_i)\)
  =\(\frac{f_0(2(r_i+r\ell+r\ell')\hbar)}{f_0(2(r_i-r\ell-r\ell')\hbar) }\)^\half\exp(a_{-1}+b_{-1})\vac\ot\vac\\
  =&\(\frac{f_0(2(r_i+r\ell+r\ell')\hbar)}{f_0(2(r_i-r\ell-r\ell')\hbar) }\)^\half
  \exp(a_{-1})\vac\ot \exp(b_{-1})\vac\ot \exp(-[a_{-1},b_{-1}]/2)
  \\
  =&\exp(a_{-1})\vac\ot \exp(b_{-1})\vac
  \(\frac{f_0(2(r_i+r\ell)\hbar)}{f_0(2(r_i-r\ell)\hbar) }\)^\half
  \(\frac{f_0(2(r_i+r\ell')\hbar) }{f_0(2(r_i-r\ell')\hbar)}\)^\half\\
  =&
  q^{-2r\ell'\partial} E_\ell(h_i)\ot E_{\ell'}(h_i),
\end{align*}
which complete the proof.
\end{proof}

\noindent\emph{Proof of Proposition \ref{prop:Y-Delta-cartan}:}
In view of Remark \ref{rem:Jacobi-S},
the relation \eqref{eq:prop-Y-Delta-1} follows from \eqref{eq:S-Delta-3-1} and \eqref{eq:Y-Delta-1-1},
the relation \eqref{eq:prop-Y-Delta-2} follows from \eqref{eq:S-Delta-3-2} and Lemma \ref{lem:Y-Delta-h-x},
and the relation \ref{eq:prop-Y-Delta-3} follows from \eqref{eq:S-Delta-3-3} and Lemmas \ref{lem:Y-Delta-x+x-}, \ref{lem:E-ell+}.

\vspace{2mm}
\noindent\emph{Proof of Proposition \ref{prop:Delta-coasso}:}
It is easy to check that
\begin{align*}
  &(\Delta\ot 1)\circ\Delta(h_i)=(1\ot \Delta)\circ\Delta(h_i),\quad
  (\Delta\ot 1)\circ\Delta(x_i^-)=(1\ot \Delta)\circ\Delta(x_i^-)\quad\te{for }i\in I.
\end{align*}
By utilizing Lemma \ref{lem:E-ell+}, one can straightforwardly verify that
\begin{align*}
  &(\Delta\ot 1)\circ\Delta(x_i^+)=(1\ot \Delta)\circ\Delta(x_i^+)\quad\te{for }i\in I.
\end{align*}
Since $F_{\hat\g,\hbar}^\ell$ is generated by
$\set{h_i,x_i^\pm}{i\in I}$,
we complete the proof of Proposition \ref{prop:Delta-coasso}.

\subsection{Proof of Proposition \ref{prop:Y-Delta-serre}}

\begin{lem}\label{lem:Delta-x-pm-q-local}
For $i,j\in I$, we have that
\begin{align*}
  \Sing_z z^{-\delta_{ij}}(z-r_ia_{ij}\hbar)Y_\Delta(\Delta(x_i^\pm),z)\Delta(x_j^\pm)=0.
\end{align*}
\end{lem}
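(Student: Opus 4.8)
The plan is to prove the lemma by a direct computation of $Y_\Delta\big(\Delta(x_i^\pm),z\big)\Delta(x_j^\pm)$ from the explicit form of the vertex operator of the twisted tensor product. By Lemma~\ref{lem:vertex-op-K} (see \eqref{eq:Y-Delta}) one has $Y_\Delta(z)=Y_{\wh\ell}^{12}(z)Y_{\wh{\ell'}}^{34}(z)S_{\ell,\ell'}^{23}(-z)\sigma^{23}$, hence
\begin{align*}
  Y_\Delta(v_1\ot v_2,z)(u_1\ot u_2)=\sum Y_{\wh\ell}(v_1,z)u_1'\ot Y_{\wh{\ell'}}(v_2',z)u_2\cdot g(-z),
\end{align*}
where $S_{\ell,\ell'}(-z)(u_1\ot v_2)=\sum u_1'\ot v_2'\ot g(-z)$. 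A harmless reformulation lightens the bookkeeping: since $f(z)=zf_0(z)$ with $f_0\in\C[[z]][[\hbar]]^{\times}$, the prefactor $z^{-\delta_{ij}}(z-r_ia_{ij}\hbar)$ differs from $f(z)^{-\delta_{ij}+q^{-r_ia_{ij}}}$ only by the unit $f_0(z)^{-\delta_{ij}}f_0(z-r_ia_{ij}\hbar)$, so it is equivalent to show that $f(z)^{-\delta_{ij}+q^{-r_ia_{ij}}}Y_\Delta\big(\Delta(x_i^\pm),z\big)\Delta(x_j^\pm)$ lies in $\big(F_{\hat\g,\hbar}^{\ell}\wh\ot F_{\hat\g,\hbar}^{\ell'}\big)[[z]]$.

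I would then expand $\Delta(x_i^\pm)$ and $\Delta(x_j^\pm)$ into the summands of \eqref{eq:def-Delta-x+} and \eqref{eq:def-Delta-x-} and evaluate each resulting term of $Y_\Delta$ using: the twisting-operator action formulas \eqref{eq:S-twisted-4} and \eqref{eq:S-E-5}; the action formulas \eqref{eq:Y-action-2}, \eqref{eq:Y-action-3}, \eqref{eq:Y-action-4}; the creation property $Y(v,z)\vac=e^{z\partial}v$ together with the conjugation identity $Y(a,z)e^{c\partial}b=e^{c\partial}Y(a,z-c)b$, used to pull out the operators $q^{2r\ell\partial}$ occurring in $\Delta(x_i^+)$; Lemma~\ref{lem:multqyb-shift-total}, to commute those operators through $S_{\ell,\ell'}$; and, most importantly, the single-algebra relation \eqref{eq:local-h-3}, which evaluated on $\vac$ says precisely that $f(z)^{-\delta_{ij}+q^{-r_ia_{ij}}}Y_{\wh{\ell}}(x_i^\pm,z)x_j^\pm\in F_{\hat\g,\hbar}^{\ell}[[z]]$ (and the same with $\ell$ replaced by $\ell'$).

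For $X=x^-$ the computation is short. From $\Delta(x_i^-)=x_i^-\ot\vac+\vac\ot x_i^-$ one gets four terms; the two diagonal ones yield $Y_{\wh\ell}(x_i^-,z)x_j^-\ot\vac$ and $\vac\ot Y_{\wh{\ell'}}(x_i^-,z)x_j^-$, which become regular after multiplication by $f(z)^{-\delta_{ij}+q^{-r_ia_{ij}}}$ by \eqref{eq:local-h-3}; the remaining two are $\vac$-creation terms, one plain and one carrying the $S$-matrix scalar $f(-z)^{q^{r_ia_{ij}}-q^{-r_ia_{ij}}}$ from \eqref{eq:S-twisted-4}, whose pole at $z=r_ia_{ij}\hbar$ is absorbed by the prefactor. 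After multiplication the only residual singularity is a simple pole at $z=0$, present precisely when $i=j$, and a one-line residue computation shows the two such contributions are opposite and cancel.

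The case $X=x^+$ is the main difficulty, since $\Delta(x_i^+)=x_i^+\ot\vac+q^{2r\ell\partial}E_\ell(h_i)\ot q^{2r\ell\partial}x_i^+$ produces a longer list of terms. The strategy is the same: show each term is either $\hbar$-adically regular in $z$ after multiplication by $f(z)^{-\delta_{ij}+q^{-r_ia_{ij}}}$, or else pairs with another term whose singular part it cancels. Regularity of several terms follows because the $f$-power scalars they accumulate---one from the twisting operator through \eqref{eq:S-twisted-4} or \eqref{eq:S-E-5}, one from $Y_{\wh\ell}(E_\ell(h_i),z)$ acting through \eqref{eq:Y-action-2} or \eqref{eq:Y-action-3}---turn out to be mutually reciprocal once the $q^{2r\ell\partial}$-shifts have been commuted through (the recurring identity is $g(w)g(-w)=1$ for $g(w)=f(w)^{q^{-r_ia_{ij}}-q^{r_ia_{ij}}}$, together with its $q^{2r\ell}$-twisted analogue). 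The surviving terms---$Y_{\wh\ell}(x_i^+,z)x_j^+\ot\vac$, handled by \eqref{eq:local-h-3}, together with the few terms built from \eqref{eq:Y-action-4}---have residual singular parts that must be checked to cancel, once more by a residue computation as in the $x^-$ case. I expect the main obstacle to be exactly this last step: keeping track of the many $\pm r_ia_{ij}\hbar$ and $\pm 2r\ell\hbar$ shifts carefully enough that all $f$-exponents organize into reciprocal pairs and the residual $z^{-1}$-residues telescope to zero. Carrying this out for all $i,j\in I$ and both signs completes the proof.
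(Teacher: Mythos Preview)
Your proposal is correct and follows essentially the same approach as the paper: expand $\Delta(x_i^\pm)$ and $\Delta(x_j^\pm)$ into their summands, evaluate each resulting term of $Y_\Delta$ via the explicit formula $Y_\Delta(z)=Y_{\wh\ell}^{12}(z)Y_{\wh{\ell'}}^{34}(z)S_{\ell,\ell'}^{23}(-z)\sigma^{23}$ together with \eqref{eq:S-twisted-4}, \eqref{eq:S-E-5}, \eqref{eq:Y-action-2}--\eqref{eq:Y-action-4}, and then check that after multiplication by $z^{-\delta_{ij}}(z-r_ia_{ij}\hbar)$ the singular parts cancel. The paper presents only the ``$+$'' case explicitly and organizes the computation by first evaluating each of the four cross-terms separately before assembling them; your reformulation via the unit factor $f(z)^{-\delta_{ij}+q^{-r_ia_{ij}}}$ and your explicit treatment of the ``$-$'' case are minor variations on the same argument.
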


\begin{proof}
We only need to consider the ``$+$'' case. From \eqref{eq:Y-Delta}, we have that
\begin{align*}
  &Y_\Delta(E_\ell(h_i)\ot x_i^+,z)(E_\ell(h_j)\ot x_j^+)\\
  =&Y_{\wh\ell}^{12}(z)Y_{\wh{\ell'}}^{34}(z)S_{\ell,\ell'}^{23}(-z)
  \(E_\ell(h_i)\ot E_\ell(h_j)\ot x_i^+\ot x_j^+\)\\
  =&Y_{\wh\ell}(E_\ell(h_i),z)E_\ell(h_j)\ot Y_{\wh{\ell'}}(x_i^+,z)x_j^+ \ot
  f(z)^{(q^{-r_ia_{ij}}-q^{r_ia_{ij}})(1-q^{2r\ell})}\\
  =&\exp\(\wt h_i^+(z)\)E_\ell(h_j)\ot Y_{\wh{\ell'}}(x_i^+,z)x_j^+,
\end{align*}
where the second equation follows from \eqref{eq:S-E-5} and the last equation follows from \eqref{eq:Y-action-2}.
Then we have
\begin{align*}
  \Sing_z z^{-\delta_{ij}}(z-r_ia_{ij}\hbar)Y_\Delta(E_\ell(h_i)\ot x_i^+,z)(E_\ell(h_j)\ot x_j^+)=0.
\end{align*}
Similarly, by using \eqref{eq:Y-action-4} we have that
\begin{align*}
  &Y_\Delta(x_i^+\ot\vac,z)(E_\ell(h_j)\ot x_j^+)
  =Y_{\wh\ell}^{12}(z)Y_{\wh{\ell'}}^{34}(z)S_{\ell,\ell'}(-z)\(x_i^+\ot E_\ell(h_j)\ot \vac\ot x_j^+\)\\
  =&Y_{\wh\ell}(x_i^+,z)E_\ell(h_j)\ot x_j^+
  =\exp\(\wt h_j^+(0)\)x_i^+\ot x_j^+\ot f(z)^{(q^{r_ia_{ij}}-q^{-r_ia_{ij}})q^{2r\ell}},
\end{align*}
and by using \eqref{eq:S-twisted-4} and \eqref{eq:Y-action-3}, we have that
\begin{align*}
  &Y_\Delta(E_\ell(h_i)\ot x_i^+,z)(x_j^+\ot\vac)
  =Y_{\wh\ell}^{12}(z)Y_{\wh{\ell'}}^{34}(z)S_{\ell,\ell'}(-z)\( E_\ell(h_i)\ot x_j^+\ot x_i^+\ot\vac \)\\
  =&Y_{\wh\ell}^{12}(z)Y_{\wh{\ell'}}^{34}(z)\( E_\ell(h_i)\ot x_j^+\ot x_i^+\ot\vac  \)\ot f(z)^{q^{r_ia_{ij}}-q^{-r_ia_{ij}}}\\
  =&Y_{\wh\ell}(E_\ell(h_i),z)x_j^+\ot e^{z\partial}x_i\ot f(z)^{q^{r_ia_{ij}}-q^{-r_ia_{ij}}}
  =\exp\(\wt h_i^+(z)\)x_j^+\ot e^{z\partial}x_i^+.
\end{align*}
Then we have that
\begin{align*}
  &\Sing_zz^{-\delta_{ij}}(z-r_ia_{ij}\hbar)Y_\Delta(\Delta(x_i^+),z)\Delta(x_j^+)\\
  =&\Sing_zz^{-\delta_{ij}}(z-r_ia_{ij}\hbar)Y_\Delta(x_i^+\ot \vac+q^{2r\ell\partial} E_\ell(h_i)\ot q^{2r\ell\partial} x_i^+,z)\\
   &\quad(x_j^+\ot \vac+q^{2r\ell\partial}E_\ell(h_j)\ot q^{2r\ell\partial}x_j^+)\\
  =&\Sing_zz^{-\delta_{ij}}(z-r_ia_{ij}\hbar)Y_{\wh\ell}(x_i^+,z)x_j^+\ot\vac\\
    &+q^{2r\ell(\partial\ot1+1\ot\partial)}\Sing_zz^{-\delta_{ij}}(z-r_ia_{ij}\hbar)
    Y_\Delta(x_i^+\ot \vac,z-2r\ell\hbar)(E_\ell(h_j)\ot x_j^+)\\
    &+\Sing_zz^{-\delta_{ij}}(z-r_ia_{ij}\hbar)Y_\Delta(E_\ell(h_i)\ot x_i^+,z+2r\ell\hbar)(x_j^+\ot \vac)\\
    &+q^{2r\ell(\partial\ot1+1\ot\partial)}\Sing_zz^{-\delta_{ij}}(z-r_ia_{ij}\hbar)
     Y_\Delta(E_\ell(h_i)\ot x_i^+,z)(E_\ell(h_j)\ot x_j^+)\\
  =&\exp\(\wt h_j^+(2r\ell\hbar)\)q^{2r\ell\partial}x_i^+\ot q^{2r\ell\partial}x_j^+\\
  &\quad\ot \Sing_zz^{-\delta_{ij}}(z+r_ia_{ij}\hbar)f_0(z+r_ia_{ij}\hbar)f_0(z-r_ia_{ij}\hbar)\inv\\
  &+\Sing_zz^{-\delta_{ij}}(z-r_ia_{ij}\hbar)\exp\(\wt h_i^+(z+2r\ell\hbar)\)x_j^+\ot e^{z\partial}q^{2r\ell\partial}x_i^+\\
  =&\delta_{ij}\exp\(\wt h_j^+(2r\ell\hbar)\)q^{2r\ell\partial}x_i^+\ot q^{2r\ell\partial}x_i^+
  \ot z\inv \((r_ia_{ii}\hbar)f_0(r_ia_{ii}\hbar)f_0(-r_ia_{ii}\hbar)\inv-r_ia_{ii}\hbar\)\\
  =&0,
\end{align*}
which completes the proof of lemma.
\end{proof}

Combining Remark \ref{rem:Jacobi-S}, Lemma \ref{lem:Delta-x-pm-q-local} and \eqref{eq:S-Delta-3-3}, we get the following result.

\begin{lem}\label{lem:normal-ordering-Delta}
For $i,j\in I$, we have that
\begin{align*}
  &\iota_{z_1,z_2} f(z_1-z_2)^{-\delta_{ij}+q^{-r_ia_{ij}}} Y_\Delta(\Delta(x_i^\pm),z_1)
  Y_\Delta(\Delta(x_j^\pm),z_2)\\
  =&\iota_{z_2,z_1}f(-z_2+z_1)^{-\delta_{ij}+q^{r_ia_{ij}}}Y_\Delta(\Delta(x_j^\pm),z_2)
  Y_\Delta(\Delta(x_i^\pm),z_1).
\end{align*}
\end{lem}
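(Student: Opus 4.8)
The plan is to derive this $q$-locality relation from the $S$-Jacobi identity of the twisted tensor product $\hbar$-adic quantum VA $F_{\hat\g,\hbar}^{\ell}\wh\ot F_{\hat\g,\hbar}^{\ell'}$, feeding in the value \eqref{eq:S-Delta-3-3} of its quantum Yang--Baxter operator $S_\Delta(z)$ on $\Delta(x_j^{\pm})\ot\Delta(x_i^{\pm})$ and the singularity bound of Lemma \ref{lem:Delta-x-pm-q-local}. Write $u=\Delta(x_i^{\pm})$, $v=\Delta(x_j^{\pm})$, and $a=r_ia_{ij}\hbar$. Applying Remark \ref{rem:Jacobi-S} to the pair $u,v$ gives
\begin{align*}
  Y_\Delta(u,z_1)Y_\Delta(v,z_2)-Y_\Delta(z_2)(1\ot Y_\Delta(z_1))S_\Delta(z_2-z_1)(v\ot u)=Y_\Delta(D,z_2),
\end{align*}
where $D=Y_\Delta(u,z_1-z_2)^-v-Y_\Delta(u,-z_2+z_1)^-v$. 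Since $\Delta(x_i^{\pm})$ and $\Delta(x_j^{\pm})$ carry the same sign, \eqref{eq:S-Delta-3-3} with $\epsilon_1=\epsilon_2$ reads $S_\Delta(z)(v\ot u)=v\ot u\ot f(z)^{q^{-r_ia_{ij}}-q^{r_ia_{ij}}}$, which as a rational function is $f(z-a)/f(z+a)$, having a simple zero at $z=a$ and a simple pole at $z=-a$. Pushing this through $Y_\Delta(z_2)(1\ot Y_\Delta(z_1))$ and using that $f$ is odd, the middle term becomes $Y_\Delta(v,z_2)Y_\Delta(u,z_1)$ multiplied by the $z_1$-expansion of $f(z_1-z_2+a)/f(z_1-z_2-a)$.

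Next I would invoke Lemma \ref{lem:Delta-x-pm-q-local} twice. First, the vanishing of $\Sing_z\big(z^{-\delta_{ij}}(z-a)Y_\Delta(u,z)v\big)$ forces $Y_\Delta(u,z)v=z^{\delta_{ij}}g_{ij}(z)/(z-a)$ with $g_{ij}(z)$ having no negative powers of $z$; hence the singular part $Y_\Delta(u,z_0)^-v$ has a single simple pole, located at $z_0=a$, so that $D=c\,z_1^{-1}\delta\big((z_2+a)/z_1\big)$ for one vector $c$, and in particular $(z_1-z_2-a)Y_\Delta(D,z_2)=0$. Second, the same factorisation of $Y_\Delta(u,z)v$, together with the iterate property of $Y_\Delta$, shows that $Y_\Delta(u,z_1)Y_\Delta(v,z_2)$ (equivalently $Y_\Delta(v,z_2)Y_\Delta(u,z_1)$) vanishes to order $\delta_{ij}$ at $z_1=z_2$.

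Finally I would multiply the displayed identity on the left by $\iota_{z_1,z_2}f(z_1-z_2)^{-\delta_{ij}+q^{-r_ia_{ij}}}$. Since $f(z)^{-\delta_{ij}+q^{-r_ia_{ij}}}=f(z)^{-\delta_{ij}}f(z-a)$ contains the linear factor $z-a$ (matching the pole location in Lemma \ref{lem:Delta-x-pm-q-local}), this annihilates $Y_\Delta(D,z_2)$. In the surviving term the zero of $f(z_1-z_2-a)$ cancels the pole of $f(z_1-z_2+a)/f(z_1-z_2-a)$; regrouping the products of the $\iota_{z_1,z_2}$- and $\iota_{z_2,z_1}$-expanded rational factors in the $\hbar$-adically completed ring and collecting the $f$-exponents leaves $\iota_{z_1,z_2}f(z_1-z_2)^{-\delta_{ij}+q^{r_ia_{ij}}}\,Y_\Delta(v,z_2)Y_\Delta(u,z_1)$; and because $Y_\Delta(v,z_2)Y_\Delta(u,z_1)$ vanishes to order $\delta_{ij}$ at $z_1=z_2$, the operator $f(z_1-z_2)^{-\delta_{ij}}$ acts regularly on it there, so the expansion may be rewritten with $\iota_{z_2,z_1}$ and $f(-z_2+z_1)$, giving exactly the asserted right-hand side. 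The main obstacle is precisely this last step: tracking which variable each rational factor is expanded in, justifying the regroupings in the completed ring, and (for $i=j$, where $\delta_{ij}=1$) using the order-$\delta_{ij}$ vanishing at $z_1=z_2$ to pass from $\iota_{z_1,z_2}$ to $\iota_{z_2,z_1}$; the rest is a direct transcription of the three cited inputs, parallel to the computations of \cite{K-Quantum-aff-va}.
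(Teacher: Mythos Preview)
Your proposal is correct and follows exactly the paper's approach: the paper's proof is the single sentence ``Combining Remark \ref{rem:Jacobi-S}, Lemma \ref{lem:Delta-x-pm-q-local} and \eqref{eq:S-Delta-3-3}, we get the following result,'' and you have supplied precisely the unpacking of how those three inputs combine. Your acknowledged ``main obstacle''---the expansion bookkeeping and, for $i=j$, the passage from $\iota_{z_1,z_2}$ to $\iota_{z_2,z_1}$ using the order-$\delta_{ij}$ vanishing---is indeed the only nontrivial step, and as you note it is handled exactly as in \cite{K-Quantum-aff-va}.
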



\begin{lem}\label{lem:Y-Delta-type2}
For $n\in\Z_+$, we have that
\begin{align}\label{eq:Y-Delta-n}
  &Y_\Delta^{12,34}(z_1)Y_\Delta^{34,56}(z_2)\cdots Y_\Delta^{2n-3,2n-2,2n-1,2n}(z_{n-1})\nonumber\\
  =&Y_{\wh\ell}^{12}(z_1)Y_{\wh\ell}^{23}(z_2)\cdots Y_{\wh\ell}^{n-1,n}(z_{n-1})
   Y_{\wh{\ell'}}^{n+1,n+2}(z_1)Y_{\wh{\ell'}}^{n+2,n+3}(z_2)\cdots Y_{\wh{\ell'}}^{2n-1,2n}(z_{n-1})\\
  &\times\prod_{a-b=n-1}S_{\ell,\ell'}^{a,n+b}(-z_b+z_a)
    \prod_{a-b=n-2}S_{\ell,\ell'}^{a,n+b}(-z_b+z_a)\cdots \prod_{a-b=1}S_{\ell,\ell'}^{a,n+b}(-z_b+z_a)\nonumber\\
  & \times
  \sigma^{n,n+1,n+2,\dots,2n-1}\sigma^{n-1,n,n+1,\dots,2n-3} \cdots\sigma^{23},\nonumber
\end{align}
where $\sigma^{a,a+1,\dots,b}=\sigma^{a,a+1}\sigma^{a+1,a+2}\cdots\sigma^{b-1,b}$ and $z_n=0$.
\end{lem}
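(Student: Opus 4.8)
The plan is to prove the identity by induction on $n$, following the same scheme as the proof of Lemma~\ref{lem:vertex-op-K}. For $n=1$ both sides are the identity map on a two-slot space, and for $n=2$ the asserted formula is exactly the defining expression \eqref{eq:Y-Delta} for the vertex operator of the twisted tensor product $F_{\hat\g,\hbar}^{\ell}\wh\ot F_{\hat\g,\hbar}^{\ell'}$ (which in turn comes from Theorem~\ref{thm:twist-tensor}). So I would take $n=2$ as the base case.

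For the inductive step I would peel off the outermost factor, writing the left-hand side as
\[
  Y_\Delta^{12,34}(z_1)\cdot\Big(Y_\Delta^{34,56}(z_2)Y_\Delta^{56,78}(z_3)\cdots Y_\Delta^{2n-3,2n-2,2n-1,2n}(z_{n-1})\Big),
\]
where the bracketed product is the $(n-1)$-instance of the lemma, but with all slot labels shifted up by $2$ and with spectral parameters $z_2,\dots,z_{n-1}$; so I replace it by its normal form via the induction hypothesis, and I replace the single factor $Y_\Delta^{12,34}(z_1)$ by its unravelled form $Y_{\wh\ell}^{12}(z_1)Y_{\wh{\ell'}}^{34}(z_1)S_{\ell,\ell'}^{23}(-z_1)\sigma^{23}$ from \eqref{eq:Y-Delta}. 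It then remains to bring the resulting composite into the claimed normal form for $n$. This I would do in three moves: (i) push the flip $\sigma^{23}$ to the right through the whole unravelled expression, using the naturality of transpositions (each passage through an operator just relabels the slots that operator acts on), until it merges with the flips already present to form the block $\sigma^{n,n+1,\dots,2n-1}\cdots\sigma^{23}$; (ii) push the lone factor $S_{\ell,\ell'}^{23}(-z_1)$ to the right past the $Y_{\wh\ell}$'s and $Y_{\wh{\ell'}}$'s by repeated use of the hexagon relations \eqref{eq:multqyb-hex1} and \eqref{eq:multqyb-hex2}, each such passage turning it into a product of two copies of $S_{\ell,\ell'}$ with shifted arguments; and (iii) reorder the resulting collection of $S_{\ell,\ell'}$-factors, using the fact that operators on disjoint slots commute together with the Yang--Baxter equation \eqref{eq:multqyb-qybeq} where slots overlap, until their superscripts and arguments are precisely the $S_{\ell,\ell'}^{a,n+b}(-z_b+z_a)$ of the statement (with $z_n=0$). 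All the algebraic relations invoked here are among the defining identities of the $\hbar$-adic $n$-quantum vertex algebra $((F_{\hat\g,\hbar}^{\ell_i})_{i=1}^n,(S_{\ell_i,\ell_j}(z))_{i,j=1}^n)$ supplied by Lemma~\ref{lem:n-qva-F}, taking the $\ell_i$ to be $\ell$ in the first $n$ slots and $\ell'$ in the last $n$ slots.

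The only real difficulty, which I expect to be the bulk of the work, is the combinatorial bookkeeping: tracking which slot each $Y_{\wh\ell}$, $Y_{\wh{\ell'}}$, $S_{\ell,\ell'}$ and $\sigma$ acts on and the spectral shift accumulated on each $S_{\ell,\ell'}$ as it moves past vertex operators. I would organize this by fixing the ``sorted'' arrangement of the $2n$ slots once and for all (the $n$ copies of $F_{\hat\g,\hbar}^{\ell}$ in slots $1,\dots,n$ and the $n$ copies of $F_{\hat\g,\hbar}^{\ell'}$ in slots $n+1,\dots,2n$), recording at each stage the permutation relating the current arrangement to the sorted one, and finally checking that the $S_{\ell,\ell'}$-block produced in (ii)--(iii) is organized by the anti-diagonals $a-b=n-1,\dots,1$ exactly as claimed --- the terms with $a\le b$ never occurring because the flip in \eqref{eq:Y-Delta} always moves the $V_1$-component of an inner tensor factor to the left of the $V_2$-component of an outer one. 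Beyond this bookkeeping, no idea is needed that does not already appear in the proof of Lemma~\ref{lem:vertex-op-K}.
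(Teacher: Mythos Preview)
Your proposal is correct and follows essentially the same approach as the paper: the paper's proof reads, in its entirety, ``By utilizing \eqref{eq:multqyb-hex1} and \eqref{eq:multqyb-hex2}, one can straightforwardly prove the lemma using induction on $n$.''  Your plan---induction on $n$, unravelling the outermost $Y_\Delta$ via \eqref{eq:Y-Delta}, and pushing the resulting $S_{\ell,\ell'}$ and $\sigma$ factors into place with the hexagon identities---is exactly this; the Yang--Baxter equation \eqref{eq:multqyb-qybeq} you invoke in step~(iii) is in fact not needed, since the $S_{\ell,\ell'}$-factors emerge already in the correct anti-diagonal order (or commute because they act on disjoint slots), but including it does no harm.
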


\begin{proof}
By utilizing \eqref{eq:multqyb-hex1} and \eqref{eq:multqyb-hex2}, one can straightforwardly prove the lemma using induction on $n$.
\end{proof}

\begin{prop}\label{prop:normal-ordering-J+}
Let $n\in\Z_+$, and $i_1,\dots,i_n\in I$. For a subset $J$ of $\{1,2,\dots,n\}$,
we set
\begin{align*}
  u_{i_a}^{J}=\begin{cases}
    q^{2r\ell\partial}E_\ell(h_{i_a}),&\mbox{if }a\in J,\\
    x_{i_a}^+,&\mbox{if }a\not\in J,
  \end{cases}
  \quad\te{and}\quad
  v_{i_a}^{J}=\begin{cases}
              q^{2r\ell\partial}x_{i_a}^+,&\mbox{if }a\in J,\\
              \vac,&\mbox{if }a\not\in J.
            \end{cases}
\end{align*}
Assume $J=\{a_1<a_2<\cdots<a_k\}$ and the complementary set $\{b_1<b_2<\cdots<b_{n-k}\}$. We set
\begin{align*}
  &Y_J^+(z_1,\dots,z_n)
  =\:Y_{\wh\ell}(x_{i_{b_1}}^+,z_{b_1})Y_{\wh\ell}(x_{i_{b_2}}^+,z_{b_2})\cdots Y_{\wh\ell}(x_{i_{b_{n-k}}}^+,z_{b_{n-k}})\;\\
  &{Y_J'}^+(z_1,\dots,z_n)
  =\:Y_{\wh\ell'}(x_{i_{a_1}}^+,z_{a_1}+2r\ell\hbar)
  Y_{\wh\ell'}(x_{i_{a_2}}^+,z_{a_2}+2r\ell\hbar)\cdots Y_{\wh\ell'}(x_{i_{a_k}}^+,z_{a_k}+2r\ell\hbar)\;,\\
  &Y_{i_1,\dots,i_n,J,\Delta}^+(z_1,\dots,z_n)
  =\iota_{z_1,\dots,z_n}\prod_{1\le a<b\le n}
  f(z_a-z_b)^{-\delta_{i_a,i_b}+q^{-r_{i_a}a_{i_a,i_b}}} \\
  &\qquad\times Y_\Delta(u_{i_1}^{J}\ot v_{i_1}^{J},z_1)\cdots Y_\Delta(u_{i_n}^{J}\ot v_{i_n}^{J},z_n).
\end{align*}
Then
\begin{align*}
  &Y_{i_1,\dots,i_n,J,\Delta}^+(z_1,\dots,z_n)\vac\ot\vac
  =\iota_{z_1,\dots,z_n}f_{i_1,\dots,i_n,J,\hbar}^+(z_1,\dots,z_n)
  X_{i_1,\dots,i_n,J,\Delta}^+(z_1,\dots,z_n),
\end{align*}
where $X_{i_1,\dots,i_n,J,\Delta}^+(z_1,\dots,z_n)\in \(F_{\hat\g}^\ell\wh\ot F_{\hat\g}^{\ell'}\)((z_1,\dots,z_n))[[\hbar]]$ is defined by
\begin{align*}
  &\prod_{\substack{a<b\\ a\not\in J,b\in J}}(-1)^{\delta_{i_a,i_b}+1}
  \prod_{a\in J}\exp\(\wt h_{i_a}^-(z_a+2r\ell\hbar)\)Y_J^+(z_1,\dots,z_n)\vac \ot {Y_J'}^+(z_1,\dots,z_n)\vac
\end{align*}
and
\begin{align}
  f_{i_1,\dots,i_n,J,\hbar}^+(z_1,\dots,z_n)=\prod_{a\in J,b\not\in J}
  f(z_a-z_b)^{ -\delta_{i_a,i_b}+q^{-r_{i_a}a_{i_a,i_b} } }.
\end{align}
\end{prop}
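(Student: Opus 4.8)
The plan is to compute the iterated product $Y_\Delta(u_{i_1}^J\ot v_{i_1}^J,z_1)\cdots Y_\Delta(u_{i_n}^J\ot v_{i_n}^J,z_n)(\vac\ot\vac)$ directly, by reducing it to the factored description of the twisted tensor vertex operator. First I would apply Lemma \ref{lem:Y-Delta-type2}, which rewrites this product as a product of $Y_{\wh\ell}$-operators on the $F_{\hat\g,\hbar}^{\ell}$-slots (evaluated on $u_{i_1}^J,\dots,u_{i_n}^J$ and $\vac$), times a product of $Y_{\wh{\ell'}}$-operators on the $F_{\hat\g,\hbar}^{\ell'}$-slots (evaluated on $v_{i_1}^J,\dots,v_{i_n}^J$ and $\vac$), conjugated by the quantum Yang--Baxter operators $S_{\ell,\ell'}$ and regrouped by flip maps. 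Before evaluating, I would push the factors $q^{2r\ell\partial}$ occurring in $u_{i_a}^J,v_{i_a}^J$ for $a\in J$ through the $S_{\ell,\ell'}$'s and the $Y$'s, using Lemma \ref{lem:multqyb-shift-total} together with the $\partial$-shift property of vertex operators; this turns them into the shifts $z_a\mapsto z_a+2r\ell\hbar$ that appear in ${Y_J'}^+$ and in the exponential factors of $X_{i_1,\dots,i_n,J,\Delta}^+$.

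The next step is to evaluate the operators $S_{\ell,\ell'}$. Each of them is applied to a pure tensor whose two entries lie (after the shifts) among $E_\ell(h_i)$ and $x_i^\pm$, and on such tensors $S_{\ell,\ell'}(z)$ acts diagonally: by \eqref{eq:S-E-3}, \eqref{eq:S-E-4}, \eqref{eq:S-E-5} and \eqref{eq:S-twisted-4} it returns the same tensor times a scalar built from powers of $f$. Hence, after evaluating all the $S$'s and rearranging the slots by the flips, the product factors as an expression on the $F_{\hat\g,\hbar}^{\ell}$-side tensored with an expression on the $F_{\hat\g,\hbar}^{\ell'}$-side, times a scalar which is a product of powers of $f(z_a-z_b)$. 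On the $F_{\hat\g,\hbar}^{\ell'}$-side the $\vac$-slots contribute nothing, since $Y_{\wh{\ell'}}(\vac,z)=\te{Id}$, and Lemma \ref{lem:normal-ordering-general} turns the remaining factors $Y_{\wh{\ell'}}(x_{i_a}^+,z_a+2r\ell\hbar)$, $a\in J$, into the normal-ordered product ${Y_J'}^+$ at the cost of another product of powers of $f$. On the $F_{\hat\g,\hbar}^{\ell}$-side I would use Proposition \ref{prop:Y-E} to expand each contribution of an $E_\ell(h_{i_a})$, $a\in J$, and then use the commutation formulas \eqref{eq:com-formulas-9}--\eqref{eq:com-formulas-13} together with \eqref{eq:Y-action-2}--\eqref{eq:Y-action-4} to move all these exponential factors to the position asserted in $X_{i_1,\dots,i_n,J,\Delta}^+$, picking up further powers of $f$, while Lemma \ref{lem:normal-ordering-general} assembles the remaining $Y_{\wh\ell}(x_{i_b}^+,z_b)$, $b\notin J$, into $Y_J^+$. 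The reorderings in these two applications of Lemma \ref{lem:normal-ordering-general}, together with the two possible expansions $\iota_{z_1,z_2}$ and $\iota_{z_2,z_1}$ of $f(z_a-z_b)=(z_a-z_b)f_0(z_a-z_b)$, produce the sign $\prod_{a<b,\,a\notin J,\,b\in J}(-1)^{\delta_{i_a,i_b}+1}$.

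Finally, one collects all the powers of $f(z_a-z_b)$ accumulated in the previous steps and checks that, after multiplying by the prefactor $\iota_{z_1,\dots,z_n}\prod_{1\le a<b\le n}f(z_a-z_b)^{-\delta_{i_a,i_b}+q^{-r_{i_a}a_{i_a,i_b}}}$ built into $Y_{i_1,\dots,i_n,J,\Delta}^+$, everything collapses to $\iota_{z_1,\dots,z_n}f_{i_1,\dots,i_n,J,\hbar}^+(z_1,\dots,z_n)$, i.e. only the cross pairs $a\in J$, $b\notin J$ survive, each carrying the exponent $-\delta_{i_a,i_b}+q^{-r_{i_a}a_{i_a,i_b}}$. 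This exponent identity is the main obstacle: the powers of $f$ enter from four sources --- the operators $S_{\ell,\ell'}$, the action of $Y_{\wh\ell}(E_\ell(h_i),\cdot)$, the two normal orderings, and the commutators \eqref{eq:com-formulas-9}--\eqref{eq:com-formulas-13} --- each carrying $q$-power exponents that mix $r_ia_{ij}$, $r\ell$ and $r\ell'$; for the pairs with $a,b$ both in $J$ or both outside $J$ the cancellation is immediate from the definitions of $Y_J^+$ and ${Y_J'}^+$, whereas for the cross pairs it rests on the identities collected in Lemma \ref{lem:special-tau-tech1}. An alternative organisation, which avoids Lemma \ref{lem:Y-Delta-type2}, is to induct on $n$, peeling off $Y_\Delta(u_{i_n}^J\ot v_{i_n}^J,z_n)$, which acts on $\vac\ot\vac$ by $e^{z_n\partial}$, and folding it back in by means of the hexagon identities \eqref{eq:multqyb-hex1} and \eqref{eq:multqyb-hex2}; the scalar bookkeeping is the same.
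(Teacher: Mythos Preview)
Your proposal follows essentially the same route as the paper's proof: apply Lemma~\ref{lem:Y-Delta-type2} to factor the iterated $Y_\Delta$, evaluate the $S_{\ell,\ell'}$-operators diagonally (the paper uses only \eqref{eq:S-twisted-4} and \eqref{eq:S-E-5} here, not \eqref{eq:S-E-3} or \eqref{eq:S-E-4}), normal-order each tensor slot via \eqref{eq:def-normal-ordering}, expand the $E_\ell(h_i)$-factors through Proposition~\ref{prop:Y-E} and the commutation relations \eqref{eq:com-formulas-11}--\eqref{eq:com-formulas-13}, and then collapse the accumulated $f$-exponents. One small correction: the cross-pair cancellation does not invoke Lemma~\ref{lem:special-tau-tech1}; it is a direct algebraic identity in the exponent calculus $f(z)^{g(q)}$, using that $f(z_a-z_b)^{-\delta_{i_a,i_b}+q^{r_{i_a}a_{i_a,i_b}}}$ and $f(z_b-z_a)^{-\delta_{i_a,i_b}+q^{-r_{i_a}a_{i_a,i_b}}}$ differ by the sign $(-1)^{\delta_{i_a,i_b}+1}$, which is exactly how the paper produces that sign in its last display.
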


\begin{proof}
From Lemma \ref{lem:multqyb-shift-total}, \eqref{eq:S-twisted-4} and \eqref{eq:S-E-5}, we have that
\begin{align*}
  &S_{\ell,\ell'}(z)(u_{i_a}^{J}\ot v_{i_b}^{J})=u_{i_a}^{J}\ot v_{i_b}^{J}\ot g(u_{i_a}^{J},v_{i_b}^{J},z),
\end{align*}
where
\begin{align*}
  g(u_{i_a}^{J},v_{i_b}^{J},z)
  =\begin{cases}
        f(z)^{(q^{r_{i_a} a_{i_a,i_b} } -q^{-r_{i_a}a_{i_a,i_b}} )(1-q^{-2r\ell})}
        ,& \mbox{if }a,b\in J,\\
        f(z)^{ q^{-2r\ell}(q^{-r_{i_a}a_{i_a,i_b}}-q^{r_{i_a}a_{i_a,i_b}}) }
        ,&\mbox{if }a\not\in J,\,b\in J,\\
    1,&\mbox{otherwise}.
  \end{cases}
\end{align*}
From Lemma \ref{lem:Y-Delta-type2}, it follows that
\begin{align*}
  &Y_\Delta^{12,34}(z_1)\cdots Y_\Delta^{2n-1,2n,2n+1,2n+2}(z_n)
  \(u_{i_1}^{J}\ot v_{i_1}^{J}\ot \cdots \ot u_{i_n}^{J}\ot v_{i_n}^{J}\ot \vac\ot\vac\)\nonumber\\
  =&Y_{\wh\ell}^{12}(z_1)Y_{\wh\ell}^{23}(z_2)\cdots Y_{\wh\ell}^{n,n+1}(z_n)
  Y_{\wh{\ell'}}^{n+2,n+3}(z_1)
  Y_{\wh{\ell'}}^{n+3,n+4}(z_2)\cdots Y_{\wh{\ell'}}^{2n+1,2n+2}(z_n)\nonumber\\
  &\times
  \prod_{a-b=n-1}S_{\ell,\ell'}^{a,n+b}(-z_b+z_a)
  \prod_{a-b=n-2}S_{\ell,\ell'}^{a,n+b}(-z_b+z_a) \cdots \prod_{a-b=1}S_{\ell,\ell'}^{a,n+b}(-z_b+z_a)\nonumber\\
  &\quad\(u_{i_1}^{J}\ot \cdots\ot u_{i_n}^{J}\ot v_{i_1}^{J}\ot \cdots v_{i_n}^{J}\ot\vac\ot\vac\)\nonumber\\
  =&Y_{\wh\ell}^{12}(z_1)Y_{\wh\ell}^{23}(z_2)\cdots Y_{\wh\ell}^{n,n+1}(z_n)
  Y_{\wh{\ell'}}^{n+2,n+3}(z_1)
  Y_{\wh{\ell'}}^{n+3,n+4}(z_2)\cdots Y_{\wh{\ell'}}^{2n+1,2n+2}(z_n)\nonumber\\
  &\quad
  \(u_{i_1}^{J}\ot \cdots\ot u_{i_n}^{J}\ot\vac\ot v_{i_1}^{J}\ot \cdots v_{i_n}^{J}\ot\vac\)
  \prod_{b=1}^{n-1}\prod_{a=b+1}^n g(u_{i_a}^{J},v_{i_b}^{J},-z_b+z_a)\nonumber\\
  =&Y_{\wh\ell}(u_{i_1}^{J},z_1)\cdots Y_{\wh\ell}(u_{i_n}^{J},z_n)\vac
  \ot Y_{\wh{\ell'}}(v_{i_1}^{J},z_1)\cdots Y_{\wh{\ell'}}(v_{i_n}^{J},z_n)\vac\nonumber\\
  &\times \prod_{\substack{b<a\\ a,b\in J}}
  f(-z_b+z_a)^{(q^{r_{i_a} a_{i_a,i_b} } -q^{-r_{i_a}a_{i_a,i_b}} )(1-q^{-2r\ell})}\nonumber\\
  &\times\prod_{\substack{b<a\\ b\in J,a\not\in J}}
  f(-z_b+z_a)^{ q^{-2r\ell}(q^{-r_{i_a}a_{i_a,i_b}}-q^{r_{i_a}a_{i_a,i_b}}) }\nonumber\\
  =&Y_{\wh\ell}(u_{i_1}^{J},z_1)\cdots Y_{\wh\ell}(u_{i_n}^{J},z_n)\vac
  \ot Y_{\wh{\ell'}}(v_{i_1}^{J},z_1)\cdots Y_{\wh{\ell'}}(v_{i_n}^{J},z_n)\vac\nonumber\\
  &\times \prod_{\substack{a<b\\ a,b\in J}}
  f(z_a-z_b)^{(q^{-r_{i_a} a_{i_a,i_b} } -q^{r_{i_a}a_{i_a,i_b}} )(1-q^{2r\ell})}
  \prod_{\substack{a<b\\ a\in J,b\not\in J}}
  f(z_a-z_b)^{(q^{r_{i_a} a_{i_a,i_b} } -q^{-r_{i_a}a_{i_a,i_b}} )q^{2r\ell}}
        \nonumber.
\end{align*}
The definition of normal ordering produces (see \eqref{eq:def-normal-ordering}) provides that
\begin{align*}
  &Y_{\wh{\ell'}}(v_{i_1}^{J},z_1)\cdots Y_{\wh{\ell'}}(v_{i_n}^{J},z_n)\vac
  = \iota_{z_1,\dots,z_n}\prod_{\substack{a<b\\ a,b\in J}}
  f(z_a-z_b)^{\delta_{i_a,i_b}-q^{-r_{i_a}a_{i_a,i_b}}}
   {Y_J'}^+(z_1,\dots,z_n)\vac.
\end{align*}
Using \eqref{eq:com-formulas-11}, \eqref{eq:com-formulas-12}, \eqref{eq:com-formulas-13}
and Proposition \ref{prop:Y-E}, we get that
\begin{align*}
  &Y_{\wh\ell}(u_{i_1}^{J},z_1)\cdots Y_{\wh\ell}(u_{i_n}^{J},z_n)\vac\\
  =&\prod_{a\in J}\exp\(\wt h_{i_a}^-(z_a+2r\ell\hbar)\)
  Y_J^+(z_1,\dots,z_n)
  \prod_{a\in J}\exp\(\wt h_{i_a}^+(z_a+2r\ell\hbar)\)\vac\\
  &\times
  \prod_{\substack{a<b\\ a,b\in J}}
  f(z_a-z_b)^{ (q^{r_{i_a}a_{i_a,i_b}}-q^{-r_{i_a}a_{i_a,i_b}})(1-q^{2r\ell})  }
  \prod_{\substack{a<b\\ a\in J,b\not\in J}}
  f(z_a-z_b)^{ (q^{-r_{i_a}a_{i_a,i_b}}-q^{r_{i_a}a_{i_a,i_b}})q^{2r\ell} }
  \\
  &\times \prod_{\substack{a<b\\ a,b\not\in J}}
  f(z_a-z_b)^{ \delta_{i_a,i_b}-q^{-r_{i_a}a_{i_a,i_b}} }
  \prod_{\substack{a<b\\ a\not\in J,b\in J}}
  f(z_a-z_b)^{q^{r_{i_a}a_{i_a,i_b}}-q^{-r_{i_a}a_{i_a,i_b}}}
        \\
  =&\prod_{a\in J}\exp\(\wt h_{i_a}^-(z_a+2r\ell\hbar)\)
  Y_J^+(z_1,\dots,z_n)\vac\\
  &\times
  \prod_{\substack{a<b\\ a,b\in J}}
  f(z_a-z_b)^{ (q^{r_{i_a}a_{i_a,i_b}}-q^{-r_{i_a}a_{i_a,i_b}})(1-q^{2r\ell})  }
  \prod_{\substack{a<b\\ a\in J,b\not\in J}}
  f(z_a-z_b)^{ (q^{-r_{i_a}a_{i_a,i_b}}-q^{r_{i_a}a_{i_a,i_b}})q^{2r\ell} }\\
  &\times
  \prod_{\substack{a<b\\ a,b\not\in J}}
  f(z_a-z_b)^{ \delta_{i_a,i_b}-q^{-r_{i_a}a_{i_a,i_b}} }
  \prod_{\substack{a<b\\ a\not\in J,b\in J}}
  f(z_a-z_b)^{q^{r_{i_a}a_{i_a,i_b}}-q^{-r_{i_a}a_{i_a,i_b}}}.
\end{align*}
Combining these equations, we finally get that
\begin{align*}
  &Y_\Delta(u_{i_1}^{J}\ot v_{i_1}^{J},z_1)\cdots Y_\Delta(u_{i_n}^{J}\ot v_{i_n}^{J},z_n)\vac\ot\vac\\
  =&\prod_{a\in J}\exp\(\wt h_{i_a}^-(z_a+2r\ell\hbar)\)
  Y_J^+(z_1,\dots,z_n)\vac \ot {Y_J'}^+(z_1,\dots,z_n)\vac\\
  &\times
  \prod_{\substack{a<b\\ a,b\not\in J}}
  f(z_a-z_b)^{ \delta_{i_a,i_b}-q^{-r_{i_a}a_{i_a,i_b}} }
  \prod_{\substack{a<b\\ a\not\in J,b\in J}}
  f(z_a-z_b)^{q^{r_{i_a}a_{i_a,i_b}}-q^{-r_{i_a}a_{i_a,i_b}}}
        \\
  &\times\prod_{\substack{a<b\\ a,b\in J}}
  f(z_a-z_b)^{ \delta_{i_a,i_b}-q^{-r_{i_a}a_{i_a,i_b}} }
  \\
  =&\prod_{a\in J}\exp\(\wt h_{i_a}^-(z_a+2r\ell\hbar)\)
  Y_J^+(z_1,\dots,z_n)\vac \ot {Y_J'}^+(z_1,\dots,z_n)\vac\\
  &\times
  \prod_{\substack{a<b\\ a\in J,b\not\in J}}
  f(z_a-z_b)^{ -\delta_{i_a,i_b}+q^{-r_{i_a}a_{i_a,i_b}} }
  \prod_{\substack{a<b\\ a\not\in J,b\in J}}
  f(z_a-z_b)^{ -\delta_{i_a,i_b}+q^{r_{i_a}a_{i_a,i_b}} }
        \\
  &\times\prod_{a<b}
  f(z_a-z_b)^{ \delta_{i_a,i_b}-q^{-r_{i_a}a_{i_a,i_b}} }
  \\
  =&\prod_{a\in J}\exp\(\wt h_{i_a}^-(z_a+2r\ell\hbar)\)
  Y_J^+(z_1,\dots,z_n)\vac \ot {Y_J'}^+(z_1,\dots,z_n)\vac\\
  &\times
  \prod_{\substack{a<b\\ a\not\in J,b\in J}}(-1)^{\delta_{i_a,i_b}+1}
  \prod_{a<b}
  f(z_a-z_b)^{ \delta_{i_a,i_b}-q^{-r_{i_a}a_{i_a,i_b}} }
  \prod_{a\in J,b\not\in J}
  f(z_a-z_b)^{ -\delta_{i_a,i_b}+q^{-r_{i_a}a_{i_a,i_b}} }.
\end{align*}
The proof of the proposition is now complete.
\end{proof}

\begin{prop}\label{prop:normal-ordering-J-}
Let $n\in\Z_+$, and $i_1,\dots,i_n\in I$. For a subset $J$ of $\{1,2,\dots,n\}$, we set
\begin{align*}
  u_{i_a}^J=\begin{cases}
              \vac, & \mbox{if }a\in J,\\
              x_{i_a}^-,&\mbox{if }a\not\in J,
            \end{cases}
  \quad\te{and}\quad
  v_{i_a}^J=\begin{cases}
              x_{i_a}^-, & \mbox{if }a\in J,\\
              \vac,&\mbox{if }a\not\in J.
            \end{cases}
\end{align*}
Assume $J=\{a_1<a_2<\cdots<a_k\}$ and the complementary set $\{b_1<b_2<\cdots<b_{n-k}\}$. We set
\begin{align*}
  &Y_J^-(z_1,\dots,z_n)
  =\:Y_{\wh\ell}(x_{i_{b_1}}^-,z_{b_1})Y_{\wh\ell}(x_{i_{b_2}}^-,z_{b_2})\cdots Y_{\wh\ell}(x_{i_{b_{n-k}}}^-,z_{b_{n-k}})\;\\
  &{Y_J'}^-(z_1,\dots,z_n)
  =\:Y_{\wh\ell'}(x_{i_{a_1}}^-,z_{a_1})
  Y_{\wh\ell'}(x_{i_{a_2}}^-,z_{a_2})\cdots Y_{\wh\ell'}(x_{i_{a_k}}^-,z_{a_k})\;,\\
  &Y_{i_1,\dots,i_n,J,\Delta}^-(z_1,\dots,z_n)
  =\iota_{z_1,\dots,z_n}\prod_{1\le a<b\le n}
  f(z_a-z_b)^{ -\delta_{i_a,i_b}+q^{-r_{i_a}a_{i_a,i_b}} }
  \\
  &\qquad\times Y_\Delta(u_{i_1}^{J}\ot v_{i_1}^{J},z_1)\cdots Y_\Delta(u_{i_n}^{J}\ot v_{i_n}^{J},z_n).
\end{align*}
Then
\begin{align*}
  &Y_{i_1,\dots,i_n,J,\Delta}^-(z_1,\dots,z_n)\vac\ot\vac
  =\iota_{z_1,\dots,z_n}f_{i_1,\dots,i_n,J,\hbar}^-(z_1,\dots,z_n)
  X_{i_1,\dots,i_n,J,\Delta}^-(z_1,\dots,z_n),
\end{align*}
where $X_{i_1,\dots,i_n,J,\Delta}^-(z_1,\dots,z_n)\in \(F_{\hat\g}^\ell\wh\ot F_{\hat\g}^{\ell'}\)((z_1,\dots,z_n))[[\hbar]]$ is defined by
\begin{align*}
  &\prod_{\substack{a<b\\ a\in J,b\not\in J}}(-1)^{\delta_{i_a,i_b}+1}
  Y_J^-(z_1,\dots,z_n)\vac \ot {Y_J'}^-(z_1,\dots,z_n)\vac
\end{align*}
and
\begin{align}
  f_{i_1,\dots,i_n,J,\hbar}^-(z_1,\dots,z_n)=\prod_{a\not\in J,b\in J}
  f(z_a-z_b)^{ -\delta_{i_a,i_b}+q^{-r_{i_a}a_{i_a,i_b}} }.
\end{align}
\end{prop}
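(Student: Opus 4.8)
The plan is to follow the proof of Proposition~\ref{prop:normal-ordering-J+}, working with the two-term expansion $\Delta(x_i^-)=x_i^-\otimes\vac+\vac\otimes x_i^-$ in place of that of $\Delta(x_i^+)$. Expanding every factor $Y_\Delta(\Delta(x_{i_a}^-),z_a)$ accordingly, the product $Y_\Delta(\Delta(x_{i_1}^-),z_1)\cdots Y_\Delta(\Delta(x_{i_n}^-),z_n)$ applied to $\vac\otimes\vac$ becomes a sum over subsets $J\subset\{1,\dots,n\}$ of the quantities $Y_\Delta(u_{i_1}^J\otimes v_{i_1}^J,z_1)\cdots Y_\Delta(u_{i_n}^J\otimes v_{i_n}^J,z_n)\vac\otimes\vac$, with $u^J,v^J$ as in the statement; it thus suffices to evaluate a single $J$-term, and then to multiply by the prefactor $\iota_{z_1,\dots,z_n}\prod_{a<b}f(z_a-z_b)^{-\delta_{i_a,i_b}+q^{-r_{i_a}a_{i_a,i_b}}}$ from the definition of $Y_{i_1,\dots,i_n,J,\Delta}^-$.

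First I would apply Lemma~\ref{lem:Y-Delta-type2} to move all the $Y_{\wh\ell}$-factors onto the first tensor component and all the $Y_{\wh{\ell'}}$-factors onto the second, at the cost of a string of operators $S_{\ell,\ell'}(-z_b+z_a)$ and flips $\sigma$. When this is fed the input $u_{i_1}^J\otimes v_{i_1}^J\otimes\cdots\otimes u_{i_n}^J\otimes v_{i_n}^J\otimes\vac\otimes\vac$, the flips separate the first-factor inputs from the second-factor ones, and each $S_{\ell,\ell'}(-z_b+z_a)$ (for $b<a$) acts on the pair $(u_{i_a}^J,v_{i_b}^J)$; by the vacuum property \eqref{eq:multqyb-vac} this pair acts trivially unless $a\notin J$ and $b\in J$ (so that both entries equal $x^-$), in which case \eqref{eq:S-twisted-4}, together with $r_{i_a}a_{i_a,i_b}=r_{i_b}a_{i_b,i_a}$, gives the scalar $f(z_a-z_b)^{q^{-r_{i_a}a_{i_a,i_b}}-q^{r_{i_a}a_{i_a,i_b}}}$. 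Since $Y_{\wh\ell}(\vac,z)=Y_{\wh{\ell'}}(\vac,z)=\mathrm{id}$, the $\vac$-inputs drop out, and what remains is a scalar times $Y_{\wh\ell}(x_{i_{b_1}}^-,z_{b_1})\cdots Y_{\wh\ell}(x_{i_{b_{n-k}}}^-,z_{b_{n-k}})\vac\otimes Y_{\wh{\ell'}}(x_{i_{a_1}}^-,z_{a_1})\cdots Y_{\wh{\ell'}}(x_{i_{a_k}}^-,z_{a_k})\vac$, where $\{a_1<\cdots<a_k\}=J$ and $\{b_1<\cdots<b_{n-k}\}$ is its complement. Both $\{Y_{\wh\ell}(x_i^-,z)\}_{i\in I}$ and $\{Y_{\wh{\ell'}}(x_i^-,z)\}_{i\in I}$ satisfy the $q$-locality \eqref{eq:q-local} by \eqref{eq:local-h-3} of Lemma~\ref{lem:M-wh-ell}, so Lemma~\ref{lem:normal-ordering-general} pulls out the normal-ordering prefactors, turning these two products into $Y_J^-(z_1,\dots,z_n)\vac$ and ${Y_J'}^-(z_1,\dots,z_n)\vac$, at the cost of explicit products of $f(z_a-z_b)$ over the pairs with $a,b\notin J$, respectively $a,b\in J$.

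It then remains to assemble: the $S$-matrix scalars, over the pairs $b<a$ with $b\in J$, $a\notin J$; the two normal-ordering products, over the pairs with both indices outside $J$, respectively inside $J$; and the prefactor $\prod_{a<b}f(z_a-z_b)^{-\delta_{i_a,i_b}+q^{-r_{i_a}a_{i_a,i_b}}}$. Splitting the prefactor into the four pair-types and using $f(-z)=-f(z)$ to rewrite the $S$-matrix scalars (which naturally carry the larger variable first) in the $\iota_{z_1,\dots,z_n}$-ordering, the ``both outside $J$'' and ``both inside $J$'' parts of the prefactor cancel the corresponding normal-ordering products, the mixed-pair parts combine into $f_{i_1,\dots,i_n,J,\hbar}^-(z_1,\dots,z_n)$, and the leftover powers of $-1$ produce the sign $\prod_{a<b,\,a\in J,\,b\notin J}(-1)^{\delta_{i_a,i_b}+1}$; together with $Y_J^-(z_1,\dots,z_n)\vac\otimes{Y_J'}^-(z_1,\dots,z_n)\vac$ this is exactly $X_{i_1,\dots,i_n,J,\Delta}^-(z_1,\dots,z_n)$. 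The only delicate point is this last exponent-and-sign bookkeeping, but it is strictly lighter than in Proposition~\ref{prop:normal-ordering-J+}: here the first tensor factor never involves an $E_\ell$-factor, $\Delta(x_i^-)$ carries no $q^{2r\ell\partial}$-shift, and hence no $\exp(\wt h_i^\pm)$-corrections occur, so the whole argument is a streamlined copy of that proof.
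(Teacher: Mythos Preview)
Your proposal is correct and is precisely the approach indicated by the paper, which merely says that the proof is similar to but simpler than that of Proposition~\ref{prop:normal-ordering-J+}. You have faithfully carried out that analogy: applying Lemma~\ref{lem:Y-Delta-type2}, observing that only the pairs with $a\notin J$, $b\in J$ pick up a nontrivial $S_{\ell,\ell'}$-scalar via \eqref{eq:S-twisted-4}, extracting the normal-ordering factors from Lemma~\ref{lem:normal-ordering-general}, and matching the combined scalars against $f_{i_1,\dots,i_n,J,\hbar}^-$ together with the sign $\prod_{a<b,\,a\in J,\,b\notin J}(-1)^{\delta_{i_a,i_b}+1}$; your remark that the absence of $E_\ell$-factors and $q^{2r\ell\partial}$-shifts makes this case strictly simpler is exactly the point.
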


\begin{proof}
The proof is similar to, but simpler than the proof of Proposition \ref{prop:normal-ordering-J+}.
\end{proof}

\begin{prop}\label{prop:delta-serre+}
For $i,j\in I$ with $a_{ij}\le 0$ and $k\ge 0$, we have that
\begin{align*}
  &\(\Delta(x_i^+)\)_0^k\Delta(x_j^+)=(x_i^+)_0^kx_j^+\ot\vac\\
  +&\sum_{t=0}^{k}
    r_i^t[t]_{q^{r_i}}!(q^{r_i}-q^{-r_i})^t\binom{k}{t}_{q^{r_i}}\binom{k-1+a_{ij}}{t}_{q^{r_i}}\\
  &\times  \prod_{a=t+1}^k\exp\(\wt h_i^+((2r_i(k-a)+r_ia_{ij}+2r\ell)\hbar)\)
    \exp\(\wt h_j^+(2r\ell\hbar)\)\\
  &\times q^{(2k+a_{ij}-2t)r_i\partial}(x_i^+)_{-1}^t\vac\ot q^{2r\ell\partial}(x_i^+)_0^{k-t}x_j^+.
\end{align*}
\end{prop}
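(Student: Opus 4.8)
The plan is to realize $\(\Delta(x_i^+)\)_0^k\Delta(x_j^+)$ as the polar coefficient of an iterated product of vertex operators, expand each copy of $\Delta(x_i^+)$ and of $\Delta(x_j^+)$ according to \eqref{eq:def-Delta-x+}, and then evaluate the resulting sum with Proposition \ref{prop:normal-ordering-J+}. First, by Lemma \ref{lem:normal-ordering-Delta} the fields $\{Y_\Delta(\Delta(x_i^+),z)\}_{i\in I}$ satisfy the $q$-locality relations \eqref{eq:q-local} on $F_{\hat\g,\hbar}^\ell\wh\ot F_{\hat\g,\hbar}^{\ell'}$, so Proposition \ref{prop:normal-ordering-rel-general} applies with $\zeta_i=\Delta(x_i^+)$ and $\zeta_j=\Delta(x_j^+)$ and yields
\begin{align*}
  &\Sing_{z_1,\dots,z_k}Y_\Delta(\Delta(x_i^+),z_1)\cdots Y_\Delta(\Delta(x_i^+),z_k)\Delta(x_j^+)\\
  &\qquad=\prod_{a=1}^k\frac{1}{z_a-r_i((k-a)a_{ii}+a_{ij})\hbar}\(\Delta(x_i^+)\)_0^k\Delta(x_j^+).
\end{align*}
Hence it suffices to compute the left side and read off the coefficient of $\prod_{a=1}^k\(z_a-r_i((k-a)a_{ii}+a_{ij})\hbar\)\inv$, i.e.\ to take nested residues in $z_k,z_{k-1},\dots,z_1$ along the chain $z_{k+1}=0$, $z_a-z_{a+1}=r_ia_{ii}\hbar$, $z_k=r_ia_{ij}\hbar$.

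Next I would substitute $\Delta(x_m^+)=x_m^+\ot\vac+q^{2r\ell\partial}E_\ell(h_m)\ot q^{2r\ell\partial}x_m^+$ for $m=i,j$, treating $\Delta(x_j^+)$ as $Y_\Delta(\Delta(x_j^+),z_{k+1})(\vac\ot\vac)|_{z_{k+1}=0}$, and expand the product into a sum over subsets $J\subset\{1,\dots,k+1\}$ with $k+1$ the $x_j^+$-slot: slots in $J$ carry the $q^{2r\ell\partial}E_\ell$-term, slots outside $J$ the plain $x^+$-term, exactly as in Proposition \ref{prop:normal-ordering-J+} with $i_1=\cdots=i_k=i$, $i_{k+1}=j$, $n=k+1$. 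That proposition rewrites the $J$-th summand as $\iota_{z_1,\dots,z_{k+1}}\bigl(\prod_{1\le a<b\le k+1}f(z_a-z_b)^{-\delta_{i_a,i_b}+q^{-r_{i_a}a_{i_a,i_b}}}\bigr)^{-1}f^+_{\dots,J,\hbar}(z)X^+_{\dots,J,\Delta}(z)$ evaluated at $z_{k+1}=0$. The key point is that only $J=\varnothing$ and $J=\{t+1,\dots,k,k+1\}$ (for $t=0,\dots,k$) survive the nested residue: inspecting $f^+_{\dots,J,\hbar}=\prod_{a\in J,\,b\notin J}f(z_a-z_b)^{-\delta_{i_a,i_b}+q^{-r_{i_a}a_{i_a,i_b}}}$, whenever $a\in J$ and $a+1\notin J$ with $a+1\le k$ the factor $f(z_a-z_{a+1})^{-1+q^{-r_ia_{ii}}}=f(z_a-z_{a+1}-r_ia_{ii}\hbar)/f(z_a-z_{a+1})$ vanishes on the chain $z_a-z_{a+1}=r_ia_{ii}\hbar$, and likewise the factor $f(z_k)^{-\delta_{ij}+q^{-r_ia_{ij}}}$ vanishes at $z_k=r_ia_{ij}\hbar$ when $k\in J$ but $k+1\notin J$; so $J\cap\{1,\dots,k\}$ must be an upper segment and must drag $k+1$ into $J$ unless it is empty. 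The subset $J=\varnothing$ contributes precisely $(x_i^+)_0^kx_j^+\ot\vac$, since then $Y_\Delta(x_i^+\ot\vac,z)$ acts only on the first tensor factor and Proposition \ref{prop:normal-ordering-rel-general} applies inside $F_{\hat\g,\hbar}^\ell$.

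For $J=\{t+1,\dots,k,k+1\}$ I would evaluate $X^+_{\dots,J,\Delta}$ on the chain. Its second tensor factor, built from the normal-ordered product ${Y_J'}^+$ of $k-t$ copies of $Y_{\wh{\ell'}}(x_i^+,\cdot)$ and one $Y_{\wh{\ell'}}(x_j^+,\cdot)$, contributes $q^{2r\ell\partial}(x_i^+)_0^{k-t}x_j^+$ by Proposition \ref{prop:normal-ordering-rel-general} inside $F_{\hat\g,\hbar}^{\ell'}$ (the shift $q^{2r\ell\partial}$ coming from the $z_a+2r\ell\hbar$ in the arguments). Its first tensor factor is $\prod_{a\in J}\exp(\wt h_{i_a}^-(z_a+2r\ell\hbar))$ applied to the normal-ordered product $Y_J^+$ of $Y_{\wh\ell}(x_i^+,z_1),\dots,Y_{\wh\ell}(x_i^+,z_t)$ on $\vac$; on the chain $Y_J^+\vac$ contributes $q^{(2k+a_{ij}-2t)r_i\partial}(x_i^+)_{-1}^t\vac$ (again Proposition \ref{prop:normal-ordering-rel-general}, as $z_t$ is the smallest and equals $r_i((k-t)a_{ii}+a_{ij})\hbar$), while the operators $\exp(\wt h_{i_a}^-(z_a+2r\ell\hbar))$, rewritten by means of Proposition \ref{prop:Y-E} together with \eqref{eq:com-formulas-12} and \eqref{eq:com-formulas-13}, produce $\prod_{a=t+1}^k\exp(\wt h_i^+((2r_i(k-a)+r_ia_{ij}+2r\ell)\hbar))$ and $\exp(\wt h_j^+(2r\ell\hbar))$. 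Finally the scalar coefficient of the $t$-th term — assembled from the $\prod f_0(2ar_i\hbar)$'s furnished by Proposition \ref{prop:normal-ordering-rel-general}, the residues of the remaining $f(z_a-z_b)$-factors along the chain, and the signs $\prod_{a<b,\,a\notin J,b\in J}(-1)^{\delta_{i_a,i_b}+1}$ — must be shown to equal $r_i^t[t]_{q^{r_i}}!(q^{r_i}-q^{-r_i})^t\binom{k}{t}_{q^{r_i}}\binom{k-1+a_{ij}}{t}_{q^{r_i}}$, the two $q$-binomials arising from the Vandermonde-type sum over which of the $t$ non-$J$ slots pair with which pole from the $x_i^+$-chain and which from the $x_j^+$-endpoint. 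This is the same $q$-combinatorial identity already carried out in \cite[Lemmas 6.19--6.21]{K-Quantum-aff-va}, which I would invoke rather than repeat. Summing the $J=\varnothing$ and $J=\{t+1,\dots,k,k+1\}$ contributions gives the stated formula.

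The hardest part will be the last two steps: establishing that exactly the subsets $J=\varnothing$ and $J=\{t+1,\dots,k,k+1\}$ survive, and then doing the $q$-binomial bookkeeping of the iterated residues so that the scalar coefficients come out precisely as asserted; everything else is a direct application of the normal-ordering machinery and the formulas already proved in this section.
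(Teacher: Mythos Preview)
Your proposal follows essentially the same route as the paper's own proof: both invoke Proposition~\ref{prop:normal-ordering-rel-general} to pass to a normal-ordered product at the shifted arguments $z_a=r_i((k-a)a_{ii}+a_{ij})\hbar$, expand $\Delta(x_i^+)$ and $\Delta(x_j^+)$ into a sum over subsets $J\subset\{1,\dots,k+1\}$, apply Proposition~\ref{prop:normal-ordering-J+}, and then observe from the zeros of $f_{i,\dots,i,j,J,\hbar}^+$ that only $J=\varnothing$ and $J=\{t+1,\dots,k+1\}$ survive.  One small correction: the scalar coefficient is not a Vandermonde-type sum and does not come from \cite[Lemmas 6.19--6.21]{K-Quantum-aff-va} (those concern the $S(z)$-action); the paper obtains it directly, in four lines, by evaluating the telescoping product $\prod_{b\le t<a\le k}\frac{f(2r_i(b-a-1)\hbar)}{f(2r_i(b-a)\hbar)}\cdot\prod_{b\le t}f(-2r_i(k-b+a_{ij})\hbar)$, which immediately yields $(-r_i)^t[t]_{q^{r_i}}!(q^{r_i}-q^{-r_i})^t\binom{k}{t}_{q^{r_i}}\binom{k-1+a_{ij}}{t}_{q^{r_i}}$ (the sign is then absorbed by the factor $\prod_{a<b,\,a\notin J,\,b\in J}(-1)^{\delta_{i_a,i_b}+1}$).
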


\begin{proof}
From Proposition \ref{prop:normal-ordering-rel-general}, we have that
\begin{align*}
  &Y_\Delta\(\(\Delta\(x_i^+\)\)_0^k \Delta\(x_j^+\),z\)
  =\:Y_\Delta\(\Delta\(x_i^+\),z+r_i(2(k-1)+a_{ij})\hbar\)\\
  \times& Y_\Delta\(\Delta\(x_i^+\),z+r_i(2(k-2)+a_{ij})\hbar\)
  \cdots Y_\Delta\(\Delta\(x_i^+\),z+r_ia_{ij}\hbar\)
  Y_\Delta\(\Delta\(x_j^+\),z\)\;.
\end{align*}
Fix a $(k+1)$-tuple $(i,i,\dots,i,j)$.
For any $J\subset\{1,2,\dots,k+1\}$, we see that
\begin{align}\label{eq:Delta-Serre-+temp1}
  &f_{i,\dots,i,j,J,\hbar}^+(z+r_i(2(k-1)+a_{ij})\hbar,\cdots,z+r_ia_{ij}\hbar,z)\ne 0
\end{align}
if and only if
\begin{align*}
  &\prod_{\substack{a\in J,b\not\in J\\ a,b\le k}}f(2r_i(b-a-1)\hbar)
  \prod_{k+1\in J,b\not\in J}f(-2r_i(k-b+a_{ij})\hbar)
  \prod_{a\in J,k+1\not\in J}f(2r_i(k-a)\hbar)\ne 0.
\end{align*}
So relation \eqref{eq:Delta-Serre-+temp1} holds only if
\begin{align*}
  J=\emptyset \quad\te{or}\quad J=\{t+1,\dots,k+1\}\quad\te{for some }0\le t\le k.
\end{align*}
For the case $J=\{t+1,\dots,k+1\}$, we have that
\begin{align*}
  &f_{i,\dots,i,j,J,\hbar}^+(z+r_i(2(k-1)+a_{ij})\hbar,\cdots,z+r_ia_{ij}\hbar,z)\\
  =&\prod_{b\le t< a\le k}\frac{f(2r_i(b-a-1)\hbar)}{f(2r_i(b-a)\hbar)}
  \prod_{b\le t}f(-2r_i(k-b+a_{ij})\hbar)\\
  =&\prod_{b=1}^t\frac{f(2r_i(b-k-1)\hbar)}{f(2r_i(b-t-1)\hbar)}\prod_{b\le t}f(-2r_i(k-b+a_{ij})\hbar)\\
  =&(-1)^t\prod_{b=1}^t\frac{f(2r_i(k+1-b)\hbar)}{f(2r_i(t-b)\hbar)}\prod_{b=1}^tf(2r_i(k-b+a_{ij})\hbar)\\
  =&(-r_i)^t[t]_{q^{r_i}}!(q^{r_i}-q^{-r_i})^t\binom{k}{t}_{q^{r_i}}\binom{k-1+a_{ij}}{t}_{q^{r_i}}.
\end{align*}
Then we get from Proposition \ref{prop:normal-ordering-J+} that
\begin{align*}
  &\:Y_\Delta\(\Delta\(x_i^+\),z+r_i(2(k-1)+a_{ij})\hbar\)
  \cdots Y_\Delta\(\Delta\(x_i^+\),z+r_ia_{ij}\hbar\)
  Y_\Delta\(\Delta\(x_j^+\),z\)\;\(\vac\ot\vac\)\\
  =&\sum_{J\subset\{1,2,\dots,k+1\}}Y_{J,\Delta}^+\(z+r_i(2(k-1)+a_{ij})\hbar,\cdots,z+r_ia_{ij}\hbar,z\)\(\vac\ot\vac\)\\
  =&\:Y_{\wh\ell}\(x_i^+,z+r_i(2(k-1)+a_{ij})\hbar\)\cdots
  Y_{\wh\ell}\(x_i^+,z+r_ia_{ij}\hbar\)Y_{\wh\ell}\(x_j^+,z\)\;\vac\ot\vac\\
  +&\sum_{t=0}^{k}
    r_i^t[t]_{q^{r_i}}!(q^{r_i}-q^{-r_i})^t\binom{k}{t}_{q^{r_i}}\binom{k-1+a_{ij}}{t}_{q^{r_i}}\\
  &\times q^{2r\ell\pd{z}} \prod_{a=t+1}^k\exp\(\wt h_i^+(z+(2r_i(k-a)+r_ia_{ij})\hbar)\)
    \exp\(\wt h_j^+(z)\)\\
  &\times q^{(2k+a_{ij}-2t)r_i\pd{z}}\:Y_{\wh\ell}\(x_i^+,z+2(t-1)r_i\hbar\)
  \cdots Y_{\wh\ell}\(x_i^+,z\)\;\vac\\
  &\quad\ot q^{2r\ell\pd{z}}\:Y_{\wh{\ell'}}\(x_i^+,z+(2r_i(k-t-1)+r_ia_{ij})\hbar\)
  \cdots Y_{\wh{\ell'}}\(x_i^+,z+r_ia_{ij}\hbar\)
    Y_{\wh{\ell'}}\(x_j^+,z\)\;\vac.
\end{align*}
Taking $z\to 0$, we complete the proof of proposition.
\end{proof}

Similarly, we have the following three results.

\begin{prop}\label{prop:delta-serre-}
For $i,j\in I$ with $a_{ij}\le 0$ and $k\ge 0$, we have that
\begin{align*}
  &\(\Delta(x_i^-)\)_0^k\Delta(x_j^-)=\vac\ot(x_i^-)_0^kx_j^-\\
  +&\sum_{t=0}^kr_i^t[t]_{q^{r_i}}!(q^{r_i}-q^{-r_i})^t\binom{k}{t}_{q^{r_i}}\binom{k-1+a_{ij}}{t}_{q^{r_i}}
   (x_i^-)_0^{k-t}x_j^-\ot q^{r_i(2(k-t)+a_{ij})\partial}(x_i^-)_{-1}^t\vac.
\end{align*}
\end{prop}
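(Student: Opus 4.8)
The plan is to follow the proof of Proposition~\ref{prop:delta-serre+} essentially verbatim, with Proposition~\ref{prop:normal-ordering-J-} in place of Proposition~\ref{prop:normal-ordering-J+}; the argument is strictly simpler, since $\Delta(x_i^-)=x_i^-\ot\vac+\vac\ot x_i^-$ contains no $E_\ell(h_i)$-factor, so none of the $\exp\bigl(\wt h_i^{\pm}\bigr)$-terms that appear in the ``$+$'' case are present. By Lemma~\ref{lem:normal-ordering-Delta} the family $\{Y_\Delta(\Delta(x_i^-),z)\}_{i\in I}$ satisfies the $q$-locality relations~\eqref{eq:q-local}, so Proposition~\ref{prop:normal-ordering-rel-general} applies with $\zeta_i=\Delta(x_i^-)$; for $k\ge 1$ (the case $k=0$ being immediate from \eqref{eq:def-Delta-x-}) it gives, applied to the $(k+1)$-tuple $(i,\dots,i,j)$,
\begin{align*}
  &Y_\Delta\bigl((\Delta(x_i^-))_0^k\Delta(x_j^-),z\bigr)\\
  =&\:Y_\Delta\bigl(\Delta(x_i^-),z+r_i(2(k-1)+a_{ij})\hbar\bigr)\cdots Y_\Delta\bigl(\Delta(x_i^-),z+r_ia_{ij}\hbar\bigr)Y_\Delta\bigl(\Delta(x_j^-),z\bigr)\;.
\end{align*}

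Next I would expand $Y_\Delta(\Delta(x_i^-),z_a)=Y_\Delta(x_i^-\ot\vac,z_a)+Y_\Delta(\vac\ot x_i^-,z_a)$ in each factor; applying the resulting expression to $\vac\ot\vac$ turns the right-hand side into $\sum_{J\subseteq\{1,\dots,k+1\}}Y^-_{i,\dots,i,j,J,\Delta}(z_1,\dots,z_{k+1})(\vac\ot\vac)$, where $z_a=z+r_i(2(k-a)+a_{ij})\hbar$ for $a\le k$ and $z_{k+1}=z$, and by Proposition~\ref{prop:normal-ordering-J-} each summand equals $\iota\, f^-_{i,\dots,i,j,J,\hbar}(z_1,\dots,z_{k+1})\,X^-_{i,\dots,i,j,J,\Delta}(z_1,\dots,z_{k+1})$. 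The crux is to determine which $J$ contribute. After the substitution, $f^-_{i,\dots,i,j,J,\hbar}$ --- whose factors are indexed by the pairs $a\notin J$, $b\in J$, complementary to those of $f^+$ in the ``$+$'' case --- reduces by partial telescoping to a product of values $f(2r_i m\hbar)$ with $m$ ranging over a set of integers determined by $J$; since $f(w)=wf_0(w)$ with $f_0$ a unit of $\C[[w]]$, this product is a nonzero element of $\C[[\hbar]]$ exactly when none of these $m$ is $0$. A short check then shows that this forces $J=\{1,2,\dots,t\}$ for some $0\le t\le k$ or $J=\{1,2,\dots,k+1\}$; that is, the surviving subsets are now the \emph{initial} segments of $\{1,\dots,k+1\}$, rather than the final segments as in the ``$+$'' case.

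The subset $J=\{1,\dots,k+1\}$ contributes, after rewriting $\:Y_{\wh{\ell'}}(x_i^-,z_1)\cdots Y_{\wh{\ell'}}(x_j^-,z_{k+1})\;\vac$ as $Y_{\wh{\ell'}}\bigl((x_i^-)_0^kx_j^-,z\bigr)\vac$ via Proposition~\ref{prop:normal-ordering-rel-general} and letting $z\to0$, the term $\vac\ot(x_i^-)_0^kx_j^-$. The subset $J=\{1,\dots,t\}$ contributes the $t$-th summand: rewriting the two normal-ordered products (again via Proposition~\ref{prop:normal-ordering-rel-general}) gives $(x_i^-)_0^{k-t}x_j^-$ on the first tensor factor and $q^{r_i(2(k-t)+a_{ij})\partial}(x_i^-)_{-1}^t\vac$ on the second, while the scalar is the product of the evaluated factor $f^-_{i,\dots,i,j,\{1,\dots,t\},\hbar}$ at the shifted points, the sign $\prod_{a<b,\ a\in J,\ b\notin J}(-1)^{\delta_{i_a,i_b}+1}$ from $X^-$, and the $f_0$-normalizations produced when a normal-ordered product of $t$ copies of $x_i^-$ is converted into $(x_i^-)_{-1}^t\vac$. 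Using $f(2r_i m\hbar)=(q^{r_i}-q^{-r_i})[m]_{q^{r_i}}$, this evaluation telescopes just as in the proof of Proposition~\ref{prop:delta-serre+} and assembles into the stated coefficient $r_i^t[t]_{q^{r_i}}!(q^{r_i}-q^{-r_i})^t\binom{k}{t}_{q^{r_i}}\binom{k-1+a_{ij}}{t}_{q^{r_i}}$; moreover, when $k\ge m_{ij}$ the values of $t$ for which $J=\{1,\dots,t\}$ fails the nonvanishing test are exactly those with $\binom{k-1+a_{ij}}{t}_{q^{r_i}}=0$, so the sum may be taken freely over $0\le t\le k$, as stated. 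I expect the one genuinely laborious point to be this final bookkeeping --- keeping the signs, the $f_0$-normalizations and the telescoping product aligned so that they collapse to exactly the claimed scalar --- but it is routine and, as noted, simpler than its counterpart in Proposition~\ref{prop:delta-serre+}.
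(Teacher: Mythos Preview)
Your proposal is correct and follows exactly the approach the paper intends: the paper itself does not give a separate proof of Proposition~\ref{prop:delta-serre-} but simply records it as one of the results proved ``similarly'' to Proposition~\ref{prop:delta-serre+}, and your sketch carries this out, correctly identifying that in the ``$-$'' case the surviving subsets $J$ are the initial segments of $\{1,\dots,k+1\}$ (dual to the final segments in the ``$+$'' case, reflecting the swap $a\in J,\,b\notin J\;\leftrightarrow\;a\notin J,\,b\in J$ between $f^+$ and $f^-$) and that the evaluated scalar telescopes to the same $q$-binomial expression. Your observation that the $q$-binomial $\binom{k-1+a_{ij}}{t}_{q^{r_i}}$ vanishes precisely for those $t$ whose $J$ fails the nonvanishing test is a nice bookkeeping point that the paper leaves implicit.
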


\begin{prop}\label{prop:delta-int+}
For $i\in I$ and $k\ge 0$, we have that
\begin{align*}
  &\(\Delta\(x_i^+\)\)_{-1}^k(\vac\ot\vac)
  =\sum_{t=0}^k f_0(2tr_i\hbar)\binom{k-1}{t}_{q^{r_i}}
  \binom{k}{t}_{q^{r_i}}\binom{k-1}{t}\inv\\
  \times&\prod_{a=0}^{k-t-1}\exp\(\wt h_i^+(2(ar_i+r\ell)\hbar)\)
  q^{2r_i(k-t)\partial}\(x_i^+\)_{-1}^t\vac
  \ot q^{2r\ell\partial}\(x_i^+\)_{-1}^{k-t}\vac.
\end{align*}
\end{prop}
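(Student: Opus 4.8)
The plan is to imitate the proof of Proposition \ref{prop:delta-serre+}, with the $_0$-mode products replaced throughout by $_{-1}$-mode products. First I would observe that, by Lemma \ref{lem:normal-ordering-Delta} with $j=i$, the field $Y_\Delta(\Delta(x_i^+),z)$ satisfies the $q$-locality relation \eqref{eq:q-local} of Lemma \ref{lem:normal-ordering-general} on $F_{\hat\g,\hbar}^\ell\wh\ot F_{\hat\g,\hbar}^{\ell'}$, so Proposition \ref{prop:normal-ordering-rel-general} applies and gives
\begin{align*}
  Y_\Delta\(\(\Delta(x_i^+)\)_{-1}^k\vac,z\)
  =\prod_{a=1}^{k-1}f_0(2ar_i\hbar)\times
  \:Y_\Delta\(\Delta(x_i^+),z+2(k-1)r_i\hbar\)\cdots Y_\Delta\(\Delta(x_i^+),z\)\;;
\end{align*}
applying this to $\vac\ot\vac$ and letting $z\to 0$ produces $\(\Delta(x_i^+)\)_{-1}^k(\vac\ot\vac)$. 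By Lemma \ref{lem:normal-ordering-general} the normal-ordered product above equals $\sum_J Y_{i,\dots,i,J,\Delta}^+(z_1,\dots,z_k)$ with $z_a=z+2(k-a)r_i\hbar$, and Proposition \ref{prop:normal-ordering-J+} evaluates each summand on $\vac\ot\vac$.

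The combinatorial heart is to single out the surviving subsets $J\subseteq\{1,\dots,k\}$. Since $z_a-z_b=2(b-a)r_i\hbar$ and the exponent $-\delta_{i_a,i_b}+q^{-r_{i_a}a_{i_a,i_b}}$ equals $-1+q^{-2r_i}$ for all $a,b$, one has
\begin{align*}
  f_{i,\dots,i,J,\hbar}^+(z_1,\dots,z_k)=\prod_{a\in J,\,b\notin J}\frac{f\(2(b-a-1)r_i\hbar\)}{f\(2(b-a)r_i\hbar\)},
\end{align*}
and because $f(0)=0$ this vanishes unless $J$ contains no $a$ with $a+1\notin J$ and $a+1\le k$, that is, unless $J=\{t+1,\dots,k\}$ for some $0\le t\le k$. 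For such a $J$, telescoping the product in $a$ and using the oddness of $f$ collapses $f_{i,\dots,i,J,\hbar}^+$ to $\prod_{c=k-t+1}^{k}f(2cr_i\hbar)\big/\prod_{c=1}^{t}f(2cr_i\hbar)$, which by $f(2cr_i\hbar)=(q^{r_i}-q^{-r_i})[c]_{q^{r_i}}$ is $\binom{k}{t}_{q^{r_i}}$. For this $J$ the two tensor slots of $X_{i,\dots,i,J,\Delta}^+$, namely $Y_J^+(z_1,\dots,z_k)\vac$ and ${Y_J'}^+(z_1,\dots,z_k)\vac$, are themselves normal-ordered products of $Y_{\wh\ell}(x_i^+,\cdot)$ (resp.\ $Y_{\wh{\ell'}}(x_i^+,\cdot)$) over arithmetic progressions of lengths $t$ and $k-t$; applying Proposition \ref{prop:normal-ordering-rel-general} a second time turns them into $\prod_{a=1}^{t-1}f_0(2ar_i\hbar)\inv e^{(z+2(k-t)r_i\hbar)\partial}(x_i^+)_{-1}^t\vac$ and $\prod_{a=1}^{k-t-1}f_0(2ar_i\hbar)\inv e^{(z+2r\ell\hbar)\partial}(x_i^+)_{-1}^{k-t}\vac$, while the remaining operator $\prod_{a\in J}\exp\(\wt h_i^+(z_a+2r\ell\hbar)\)$ becomes $\prod_{a=0}^{k-t-1}\exp\(\wt h_i^+(2(ar_i+r\ell)\hbar)\)$ after $z\to 0$, and the sign prefactor $\prod_{a<b,\,a\notin J,\,b\in J}(-1)^{\delta_{i_a,i_b}+1}$ is $1$ since every index equals $i$.

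Letting $z\to 0$ and combining all the factors, the $t$-th term of the answer is $\prod_{a=0}^{k-t-1}\exp\(\wt h_i^+(2(ar_i+r\ell)\hbar)\)\,q^{2r_i(k-t)\partial}(x_i^+)_{-1}^t\vac\ot q^{2r\ell\partial}(x_i^+)_{-1}^{k-t}\vac$ with scalar coefficient
\begin{align*}
  \binom{k}{t}_{q^{r_i}}\,\frac{\prod_{a=1}^{k-1}f_0(2ar_i\hbar)}{\prod_{a=1}^{t-1}f_0(2ar_i\hbar)\,\prod_{a=1}^{k-t-1}f_0(2ar_i\hbar)},
\end{align*}
and the last step is to verify, using $f_0(2mr_i\hbar)=(q^{r_im}-q^{-r_im})/(2mr_i\hbar)$ so that the ordinary factorials (hence the classical binomial $\binom{k-1}{t}\inv$) emerge from the denominators, that this scalar is exactly $f_0(2tr_i\hbar)\binom{k-1}{t}_{q^{r_i}}\binom{k}{t}_{q^{r_i}}\binom{k-1}{t}\inv$. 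Since $F_{\hat\g,\hbar}^\ell$ is generated by $\set{h_i,x_i^\pm}{i\in I}$, this finishes the proof. I expect the main obstacle to be this final bookkeeping — disentangling the three $f_0$-products together with the $q$-binomial of the previous step into the asymmetric closed form in the statement, with appropriate care at the endpoints $t=0$ and $t=k$ — along with keeping track of the $\wt h_i^+$-factors and of the normal-ordering exponents as they pass through Proposition \ref{prop:normal-ordering-J+}.
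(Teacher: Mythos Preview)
Your proposal is correct and follows essentially the same route as the paper, which simply notes that Propositions \ref{prop:delta-int+}--\ref{prop:delta-int-} are proved ``similarly'' to Proposition \ref{prop:delta-serre+}. Your identification of the surviving subsets $J=\{t+1,\dots,k\}$, the telescoping of $f_{i,\dots,i,J,\hbar}^+$ to $\binom{k}{t}_{q^{r_i}}$, and the reduction of the three $f_0$-products to $f_0(2tr_i\hbar)\binom{k-1}{t}_{q^{r_i}}\binom{k-1}{t}^{-1}$ are all accurate (the last sentence about $F_{\hat\g,\hbar}^\ell$ being generated by $\{h_i,x_i^\pm\}$ is not needed here and can be dropped).
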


\begin{prop}\label{prop:delta-int-}
For $i\in I$ and $k\ge 0$, we have that
\begin{align*}
  &\(\Delta(x_i^-)\)_{-1}^k(\vac\ot\vac)\\
  =&\sum_{t=0}^k(x_i^-)_{-1}^{k-t}\vac
  \ot q^{2(k-t)r_i\partial}(x_i^-)_{-1}^t\vac
  \ot f_0(2tr_i\hbar)\binom{k-1}{t}_{q^{r_i}}
  \binom{k}{t}_{q^{r_i}}\binom{k-1}{t}\inv.
\end{align*}
\end{prop}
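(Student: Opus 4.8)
The approach mirrors, but is lighter than, the proof of Proposition~\ref{prop:delta-serre+}: since $\Delta(x_i^-)=x_i^-\ot\vac+\vac\ot x_i^-$ is primitive, none of the $E_\ell(h_i)$-factors — hence none of the $\exp$-corrections visible in Proposition~\ref{prop:delta-int+} — occur. First, by Lemma~\ref{lem:normal-ordering-Delta} the family $\{Y_\Delta(\Delta(x_i^-),z)\}_{i\in I}$ satisfies the $q$-locality \eqref{eq:q-local} on $F_{\hat\g,\hbar}^\ell\wh\ot F_{\hat\g,\hbar}^{\ell'}$, so Proposition~\ref{prop:normal-ordering-rel-general} applies with $\zeta_i=\Delta(x_i^-)$; acting on $\vac\ot\vac$ and letting $z\to 0$ gives
\begin{align*}
  \(\Delta(x_i^-)\)_{-1}^k(\vac\ot\vac)
  =\prod_{a=1}^{k-1}f_0(2ar_i\hbar)\,\lim_{z\to 0}\:Y_\Delta(\Delta(x_i^-),z+2(k-1)r_i\hbar)\cdots Y_\Delta(\Delta(x_i^-),z)\;(\vac\ot\vac).
\end{align*}

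I would then expand each $\Delta(x_i^-)$ inside the normal-ordered product as $x_i^-\ot\vac+\vac\ot x_i^-$, indexing the $2^k$ resulting terms by the set $J\subseteq\{1,\dots,k\}$ of slots supplying $\vac\ot x_i^-$. For the constant tuple $(i,\dots,i)$ with variables specialized at $z_a=z+2(k-a)r_i\hbar$, Proposition~\ref{prop:normal-ordering-J-} rewrites the $J$-term — all signs $(-1)^{\delta_{ii}+1}$ being $1$ — as $\iota_{z_1,\dots,z_k}f_{i,\dots,i,J,\hbar}^-\,Y_J^-\vac\ot{Y_J'}^-\vac$, the overall normal-ordering prefactor cancelling the one carried by $Y_{i,\dots,i,J,\Delta}^-$. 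Because $z_a-z_b=2(b-a)r_i\hbar$ here,
\begin{align*}
  f_{i,\dots,i,J,\hbar}^-=\prod_{a\notin J,\,b\in J}\frac{f\big(2(b-a-1)r_i\hbar\big)}{f\big(2(b-a)r_i\hbar\big)},
\end{align*}
which vanishes whenever some $a\notin J$ has $a+1\in J$ (a numerator becomes $f(0)=0$, while no denominator vanishes since $a\neq b$); as each $J$-term already lies in $\E_\hbar^{(k)}$ before specialization there is no compensating pole, so only the ``prefixes'' $J=\{1,\dots,t\}$, $0\le t\le k$, survive, and for these the product telescopes — using $f(2mr_i\hbar)=[m]_{q^{r_i}}f(2r_i\hbar)$ — to $\binom{k}{t}_{q^{r_i}}$.

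For $J=\{1,\dots,t\}$ the remaining factors $Y_J^-$ and ${Y_J'}^-$ are themselves consecutive-shift normal-ordered products, so Proposition~\ref{prop:normal-ordering-rel-general} applies again (valid on both $F_{\hat\g,\hbar}^\ell$ and $F_{\hat\g,\hbar}^{\ell'}$ since $x_i^-$ obeys the $\ell$-independent relation \eqref{eq:local-h-3}): ${Y_J'}^-$ specializes to $\big(\prod_{a=1}^{t-1}f_0(2ar_i\hbar)\big)^{-1}e^{(z+2(k-t)r_i\hbar)\partial}(x_i^-)_{-1}^t\vac$ and $Y_J^-$ to $\big(\prod_{a=1}^{k-t-1}f_0(2ar_i\hbar)\big)^{-1}e^{z\partial}(x_i^-)_{-1}^{k-t}\vac$. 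Letting $z\to 0$ turns $e^{(z+2(k-t)r_i\hbar)\partial}$ into $q^{2(k-t)r_i\partial}$, so the $t$-summand of $\(\Delta(x_i^-)\)_{-1}^k(\vac\ot\vac)$ equals
\begin{align*}
  \binom{k}{t}_{q^{r_i}}\frac{\prod_{a=1}^{k-1}f_0(2ar_i\hbar)}{\prod_{a=1}^{k-t-1}f_0(2ar_i\hbar)\,\prod_{a=1}^{t-1}f_0(2ar_i\hbar)}\;(x_i^-)_{-1}^{k-t}\vac\ot q^{2(k-t)r_i\partial}(x_i^-)_{-1}^t\vac,
\end{align*}
and the scalar collapses to $f_0(2tr_i\hbar)\binom{k-1}{t}_{q^{r_i}}\binom{k}{t}_{q^{r_i}}\binom{k-1}{t}^{-1}$ via $\prod_{a=1}^{m}f_0(2ar_i\hbar)=\frac{[m]_{q^{r_i}}!}{m!}f_0(2r_i\hbar)^m$ and $f_0(2tr_i\hbar)=\frac{[t]_{q^{r_i}}}{t}f_0(2r_i\hbar)$. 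Summing over $t$ gives the asserted identity.

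The main obstacle is the combinatorial heart of the second step: recognizing that the specialization of $f_{i,\dots,i,J,\hbar}^-$ annihilates exactly the non-prefix subsets (the vanishing and no-pole check) and evaluating the telescoping product cleanly. Once that is in place, the second use of Proposition~\ref{prop:normal-ordering-rel-general} and the $q$-binomial bookkeeping collapsing the scalar are entirely routine, paralleling the analogous steps in the proofs of Propositions~\ref{prop:normal-ordering-J+} and~\ref{prop:delta-serre+}.
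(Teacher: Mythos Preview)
Your proposal is correct and follows precisely the approach the paper intends: it is the direct analogue of the proof of Proposition~\ref{prop:delta-serre+}, using Proposition~\ref{prop:normal-ordering-J-} in place of Proposition~\ref{prop:normal-ordering-J+}, with the simplification that no $E_\ell(h_i)$-factors appear because $\Delta(x_i^-)$ is primitive. The identification of surviving subsets as prefixes $J=\{1,\dots,t\}$ (dual to the suffixes in Proposition~\ref{prop:delta-serre+}, since $f^-_{\dots,J,\hbar}$ runs over $a\notin J,\,b\in J$ rather than $a\in J,\,b\notin J$), the telescoping to $\binom{k}{t}_{q^{r_i}}$, and the final scalar bookkeeping are all carried out correctly.
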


\noindent\emph{Proof of Proposition \ref{prop:Y-Delta-serre}:}
The equation \eqref{eq:prop-Y-Delta-locality} is a consequence of Lemma \ref{lem:normal-ordering-Delta}, while the equation \eqref{eq:prop-Y-Delta-serre} can be deduced from Propositions \ref{prop:delta-serre+} and \ref{prop:delta-serre-}, and the equation \eqref{eq:prop-Y-Delta-int} can be derived from Propositions \ref{prop:delta-int+} and \ref{prop:delta-int-}.

\bibliographystyle{unsrt}


\begin{bibdiv}
\begin{biblist}

\bib{BJK-qva-BCD}{article}{
      author={Butorac, M.},
      author={Jing, N.},
      author={Ko{\v{z}}i{\'{c}}, S.},
       title={$\hbar$-adic quantum vertex algebras associated with rational
  ${R}$-matrix in types ${B}$, ${C}$ and ${D}$},
        date={2019},
     journal={Lett. Math. Phys.},
      volume={109},
       pages={2439\ndash 2471},
}

\bib{DF-qaff-RTT-Dr}{article}{
      author={Ding, J.},
      author={Frenkel, I.},
       title={Isomorphism of two realizations of quantum affine algebra
  $\mathcal{U}_q(\widehat{\mathfrak{gl}(n)})$},
        date={1993},
     journal={Comm. Math. Phys.},
      volume={156},
       pages={277\ndash 300},
}

\bib{Dr-new}{inproceedings}{
      author={{D}rinfeld, V.},
       title={A new realization of {Y}angians and quantized affine algebras},
        date={1988},
   booktitle={Soviet {M}ath. {D}okl},
      volume={36},
       pages={212\ndash 216},
}

\bib{EK-qva}{article}{
      author={Etingof, P.},
      author={Kazhdan, D.},
       title={Quantization of {L}ie bialgebras, {P}art {V}: {Q}uantum vertex
  operator algebras},
        date={2000},
     journal={Selecta Math.},
      volume={6},
      number={1},
       pages={105},
}

\bib{J-KM}{article}{
      author={Jing, N.},
       title={Quantum {K}ac-{M}oody algebras and vertex representations},
        date={1998},
     journal={Lett. Math. Phys.},
      volume={44},
      number={4},
       pages={261\ndash 271},
}

\bib{JKLT-Defom-va}{article}{
      author={Jing, N.},
      author={Kong, F.},
      author={Li, H.},
      author={Tan, S.},
       title={Deforming vertex algebras by vertex bialgebras},
        date={2024},
     journal={Comm. Cont. Math.},
      volume={26},
       pages={2250067},
}

\bib{JLM-qaff-RTT-Dr-BD}{article}{
      author={Jing, N.},
      author={Liu, M.},
      author={Molev, A.},
       title={Isomorphism between the ${R}$-matrix and {D}rinfeld presentations
  of quantum affine algebra: {T}ype ${B}$ and ${D}$},
        date={2020},
     journal={SIGMA},
      volume={16},
       pages={043},
}

\bib{JLM-qaff-RTT-Dr-C}{article}{
      author={Jing, N.},
      author={Liu, M.},
      author={Molev, A.},
       title={Isomorphism between the ${R}$-matrix and {D}rinfeld presentations
  of quantum affine algebra: {T}ype ${C}$},
        date={2020},
     journal={J. Math. Phys.},
      volume={61},
       pages={031701},
}

\bib{JYL-R-mat-DY}{article}{
      author={Jing, N.},
      author={Yang, F.},
      author={Liu, M.},
       title={Yangian doubles of classical types and their vertex
  representations},
        date={2020},
     journal={J. Math. Phys.},
      volume={61},
       pages={051704},
}

\bib{Kassel-topologically-free}{book}{
      author={Kassel, C.},
       title={Quantum groups, volume 155 of graduate texts in mathematics},
   publisher={Springer-Verlag, New York},
        date={1995},
}

\bib{K-Quantum-aff-va}{article}{
      author={Kong, F.},
       title={Quantum affine vertex algebras associated to untwisted quantum
  affinization algebras},
        date={2023},
     journal={Comm. Math. Phys.},
      volume={402},
       pages={2577\ndash 2625},
}

\bib{Kozic-qva-tri-A}{article}{
      author={Ko{\v{z}}i{\'{c}}, S.},
       title={On the quantum affine vertex algebra associated with
  trigonometric ${R}$-matrix},
        date={2021},
     journal={Selecta Math. (N. S.)},
      volume={27},
       pages={45},
}

\bib{K-qva-phi-mod-BCD}{article}{
      author={Ko{\v{z}}i{\'{c}}, S.},
       title={$\hbar$-adic quantum vertex algebras in types ${B}$, ${C}$, ${D}$
  and their $\phi$-coordinated modules},
        date={2021},
     journal={J. Phys. A: Math. Theor.},
      volume={54},
       pages={485202},
}

\bib{Li-nonlocal}{article}{
      author={Li, H.},
       title={Nonlocal vertex algebras generated by formal vertex operators},
        date={2006},
     journal={Selecta Math.},
      volume={11},
      number={3-4},
       pages={349},
}

\bib{Li-smash}{article}{
      author={Li, H.},
       title={A smash product construction of nonlocal vertex algebras},
        date={2007},
     journal={Comm. Cont. Math.},
      volume={9},
      number={05},
       pages={605\ndash 637},
}

\bib{Li-h-adic}{article}{
      author={Li, H.},
       title={{$\hbar$-adic quantum vertex algebras and their modules}},
        date={2010},
     journal={Comm. Math. Phys.},
      volume={296},
       pages={475\ndash 523},
}

\bib{LS-twisted-tensor}{article}{
      author={Li, H},
      author={Sun, J.},
       title={Twisted tensor products of nonlocal vertex algebras},
        date={2011},
        ISSN={0021-8693},
     journal={Journal of Algebra},
      volume={345},
      number={1},
       pages={266 \ndash  294},
  url={http://www.sciencedirect.com/science/article/pii/S002186931100425X},
}

\bib{Naka-quiver}{article}{
      author={Nakajima, H.},
       title={Quiver varieties and finite dimensional representations of
  quantum affine algebras},
        date={2001},
     journal={J. Amer. Math. Soc.},
      volume={14},
      number={1},
       pages={145\ndash 238},
}

\bib{RS-RTT}{article}{
      author={Reshetikhin, Y.},
      author={Semenov-{T}ian {S}hansky, A.},
       title={Central extensions of quantum current groups},
        date={1990},
     journal={Lett. Math. Phys.},
      volume={19},
       pages={133\ndash 142},
}

\bib{R-free-conformal-free-va}{article}{
      author={Roitman, M.},
       title={On free conformal and vertex algebras},
        date={1999},
     journal={J. Algebra},
      volume={217},
       pages={496\ndash 527},
}

\bib{S-iter-twisted-tensor}{article}{
      author={Sun, J.},
       title={Iterated twisted tensor products of nonlocal vertex algebras},
        date={2013},
        ISSN={0021-8693},
     journal={Journal of Algebra},
      volume={381},
       pages={233 \ndash  259},
  url={http://www.sciencedirect.com/science/article/pii/S0021869313000902},
}

\end{biblist}
\end{bibdiv}

\end{document}